\documentclass[11pt]{article}
\usepackage{amssymb,amsmath,amsthm}
\usepackage{upgreek}
\usepackage{yfonts}
\usepackage{mathrsfs}
\usepackage{stmaryrd}
\parindent=10pt

\newtheorem{remark}{Remark}

\newtheorem{proposition}{Proposition}
\newtheorem{lemma}{Lemma}
\newtheorem{corollary}{Corollary}
\newtheorem{theorem}{Theorem}

\def\rest{\hskip 1pt{\hbox to 10.8pt{\hfill
\vrule height 7pt width 0.4pt depth 0pt\hbox{\vrule height 0.4pt width 7.6pt depth 0pt}\hfill}}}

\newcommand{\eps}{\varepsilon}
\def\beq{\begin{equation}}
\def\eeq{\end{equation}}

\def \rd {{\rm d}}
\def\B{{\mathbb B}}
\def\E{{\rm E}}
\def\Eps {{\rm E}_\varepsilon}
\def\C{{\mathbb C}}
\def\D{{\mathbb D}}

\def\R{{\mathbb R}}
\def\N{{\mathbb N}}
\def\S{{\mathbb S}}
\def \Cnrg{{\rm C}_{\rm nrg}}

\def \be{\vec {\bf e}}
\def \beprime{\vec {\bf e'}}

\def \upsigmam{{\upsigma}_{\rm main}}
\def \upsigman{{\upsigma}_{\rm bis}}

\def \Cdec {{\rm C}_{\rm dec}}
\DeclareMathAlphabet{\mathpzc}{OT1}{pzc}{m}{it}

\def\QED{\hbox{${\vcenter{\vbox{
   \hrule height 0.4pt\hbox{\vrule width 0.4pt height 6pt
   \kern5pt\vrule width 0.4pt}\hrule height 0.4pt}}}$}\vspace{7pt}}

\setlength{\textwidth}{15.5cm}
\setlength{\oddsidemargin}{.5cm}
\setlength{\evensidemargin}{0.5cm}
\setlength{\textheight}{22cm}
\setlength{\topmargin}{-1cm}
\setlength{\footskip}{2.5cm}

\begin{document}

\author{Fabrice BETHUEL\thanks{UPMC-Paris6, UMR 7598 LJLL, Paris, F-75005 France, 
} }
\title{Concentration sets for  multiple equal-depth wells potentials in the 2D elliptic case}
\date{}
\maketitle

\begin{abstract}
The formation of codimension-one interfaces for multiwell gradient-driven problems  is well-known and established in the scalar case,   where the equation  is often referred 
to as the Allen-Cahn equation. The vectorial case  in contrast is quite open.  This lack of results and insight  is to a large extend  related  to the absence  of known appropriate  monotonicity formula. In this paper, we focus on the  elliptic case in two dimensions,  and introduce some methods which allow    to circumvent  the lack of monotonicity formula. This methods lead, as expected,   to  concentration  on one-dimensional rectifiable sets. 
\end{abstract}

\bigskip
\noindent
\section{Introduction}
\subsection{Statement of the  main results}
 Let $\Omega$ be a   smooth bouned  domain in $\R^2$. In the present paper we investigate  asymptotic properties of  families of solutions $(u_\eps)_\eps>0$ of the systems of  equations having the general form  
 \begin{equation}
 \label{elipes}
  -\Delta u_\eps=-\eps^{-2}\nabla_u V(u_\eps)  {\rm \ in \ } \Omega \subset \R^2,  
  \end{equation}
as the parameter $\eps >0$ tends to zero.  
 The function $V$, usually termed the \emph{potential},  denotes   a smooth scalar   function on   $\R^k$, where $k \in \N$ is a given integer.  Given $\eps>0$, the function  $v_\eps$  denotes   a function defined on  the domain $\Omega$ with values into the euclidian space $\R^k$, so that equation \eqref{elipes} is  a system of $k$ scalar partial differential equations  for each of the components of the map $v_\eps$. 
 
    Equation \eqref{elipes}   corresponds to the  Euler-Lagrange equation   of the  energy functional  $\mathcal{E_\eps}$ which is defined for a function $ u:\Omega \mapsto \R^k$ by the formula
    \begin{equation}
 \label{glfunctional}
 \E_\eps(u)= \int_\Omega e_\eps(u)=\int_{\Omega} \eps \frac{\vert \nabla u \vert ^2}{2}+\frac{1}{\eps} V(u).
 \end{equation}
   We  assume that the potential $V$ is bounded below, so that we may impose, without loss of generality and changing $V$ by a suitable  constant,  that
   \begin{equation}
   \label{infimitude}
   \inf V=0.
   \end{equation}
 We introduce the  set $\Sigma$ of minimizers of $V$,  sometimes called the vacuum manifold, that is the  subset of $\R^k$ defined 
     $$\Sigma\equiv \{ y \in \R^k, V(y)=0 \}.$$
   Properties of solutions to \eqref{elipes} crucially depend on the nature of $\Sigma$. In this paper, we  will assume that the 
    vacuum manifold is finite, with at least two distinct elements, so that   \\
    
    \noindent
   $\displaystyle{ (\text{H}_1) \ \ \ \ \  
 \Sigma=\{\upsigma_1, ..., \upsigma_q\},\ q\geq 2, \ \upsigma_i \in \R^k, \ \forall i=1,...,q.
 }$ \\
 
 \noindent
  We impose furthermore a condition on the behavior of $V$ near its zeroes, namely:
    
    \smallskip
    \noindent
${(\text{H}_2)}$   {\it  The matrix $\nabla^2V(\upsigma_i)$ is positive definite at each point $\upsigma_i$ of $\Sigma$, in other words, if $\lambda_i^-$ denotes its smallest eigenvalue, then  $\lambda_i^->0$. We denote by $\lambda_i^+$ its largest eigenvalue. }\\

\noindent
Finally, we also impose a growth conditions at infinity: 

\smallskip
\noindent
  ${(\text{H}_3)}$ {\it There exists  constants  $\upalpha_\infty >0$ and  $R_\infty >0$  such that  
  \begin{equation}
  \label{condinfty}
  \left\{
   \begin{aligned}
   y\cdot\nabla V( y )&\geq \upalpha_\infty \vert y \vert ^2, \ \hbox {if }  \vert y \vert >R_\infty {\rm \  and \ }  \\
 V(x) \to &+\infty {\rm \ as  \  }  \vert x \vert \to + \infty. 
 \end{aligned}
 \right.
 \end{equation}   
 }
 \smallskip
\noindent
A potential $V$ which fulfills conditions conditions ${(\text{H}_1)}$, ${(\text{H}_2)}$ and ${(\text{H}_3)}$   is  termed  throughout  the paper  a potential with multiple  equal depth wells. 
\smallskip

 A  typical  example is provided  in the scalar case $k=1$ 
 by the potential, often termed \emph{Allen-Cahn} or \emph{Ginzburg-Landau} potential,
 \begin{equation}
 \label{glexemple}
  V(u)=\frac{(1-u^2)^2}{4}, 
  \end{equation}  
  whose  infimum equals $0$ and whose minimizers are $+1$ and $-1$, 
  so that   $\Sigma=\{+1,-1\}.$  It is used as  an elementary model for \emph{phase transitions} for  materials with  two equally  preferred states,   the   minimizers $+1$ and $-1$ of the potential $V$.  
  
  \smallskip

Important efforts have been devoted so far to the study  of  solutions of the stationary \emph{Allen-Cahn} equations, i.e. solutions to \eqref{elipes} for the special choice  of potential \eqref{glexemple},   or to  the corresponding parabolic evolution equations,  in the asymptotic limit $\eps \to 0$, in arbitrary dimension $N$ of the domain $\Omega$ . The  mathematical  theory for this question is now well advanced and may be considered as satisfactory. The results found there    provides a sound mathematical foundation  to the intuitive idea that       the domain $\Omega$ decomposes  into regions  where the solution takes  values  either close to  $+1$ or  close to  $-1$, the   regions being separated by interfaces  of width of order $\eps$. These interfaces,   termed  \emph{fronts},  are expected to converge  to hypersurfaces of codimension 1. These hypersurfaces are shown to be   \emph{generalized minimal surfaces}  in the stationary case, or \emph{moved by mean curvature} for the parabolic evolution equations. 
  Several of the arguments rely on   \emph{integral methods} and \emph{energy estimates}. For instance in \cite{ilmanen},  T.Ilmanen proved  convergence  \emph {for all time}, in particular past possible singularities of the flow, to   motion  by mean curvature  in the \emph{weak sense of Brakke},   a notion  relying on  the language,  concepts  and methods of\emph{ geometric measure theory}. In the elliptic case considered in this paper, convergence to minimal surfaces was established by Modica and Mortola in their celebrated paper \cite{mortadela},  F. Hutchinson and Y. Tonegawa in \cite{hutchtone} established related results for non-minimizing solutions in \cite{hutchtone}. In \cite{ilmanen, hutchtone} and related works, the fact that the solutions are scalar are used in several arguments, in first place for the proof of a suitable monotonicity formula yielding concentration on $N-1$ dimensional set. In the present context, setting for an arbitrary subdomain $G \in \Omega$, 
  \begin{equation}
  \label{energyu}
  \Eps\left(u_\eps, G\right)=\int_{\mathcal U} e_\eps(u)  {\rm d} x, 
  \end{equation}
   we recall that the monotonicity formula
   \begin{equation*}
  \label{monotoniee}
  \frac{d}{dr} \left(\frac{1}{r^{N-2} }  \Eps\left(u_\eps, \B^N(x_0, r)\right) \right) \geq 0,  {\rm \ for \ any \ } x_0 \in \Omega,  
  \end{equation*} 
   holds for arbitrary potentials, and is relevant if one wants to establish  concentration on $N-2$ dimensional sets, as it occurs in Ginzburg-Landau theory. If one wants instead to establish concentration on $N-1$ dimensional sets, then the stronger monotonicity formula 
    \begin{equation}
  \label{monotonie}
  \frac{d}{dr} \left(\frac{1}{r^{N-1} }  \Eps\left(u_\eps, \B^N(x_0, r)\right) \right) \geq 0,  {\rm \ for \ any \ } x_0 \in \Omega,  
  \end{equation} 
  is more appropriate:   The proof of formula \eqref{monotonie} in the \emph{scalar case} relies the positivity of the  \emph{discrepancy}  function
 \begin{equation}
 \label{discretpanse}
 \xi_\eps (u_\eps)= \frac{1}{\eps} V(u_\eps)-\eps \frac{\vert \nabla  u \vert^2}{2}, 
 \end{equation}
 a property established as mentioned thanks to the maximum principle. Notice that in the one dimensional case,  that is for the equation 
 $\displaystyle{-\eps^2 \ddot u= -\nabla_u V(u)}$ on some interval  $I$,
 one has the conservation law 
 $$
 \frac{d}{dx} \left(\frac{1}{\eps} V(u)-\eps\frac{\vert \dot u\vert^2}{2}\right)=0, 
 $$
 so that the discrepancy corresponds to a Lagrangian, and it is therefore constant on any interval. In higher dimensions, 
the fact that $\xi_\eps$ is positive for \emph{scalar solutions} of \eqref{elipes} was observed first by L. Modica in \cite{modica} for entire solutions. On the other hand, concerning the vectorial case,  positivity of the discrepancy as well
 as the monotonicity formula are known to fail for some solutions of the \emph{Ginzburg-Landau system}, so that the question whether they might still hold  under some possible additional conditions on the potential  or the solution itself is widely open to our knowledge (see \cite{Ali1} for a discussion  of these issues and for additional references).
 
 \begin{remark}
 \label{mini}
 {\rm  The case of \emph{minimizing solutions} was treated  by Modica and Mortola   in \cite{mortadela} for  the  Allen-Cahn potential. In  \cite{baldo, fontar}, Baldo and Fonseca and Tartar treated  the vectorial case, for which he   obtained quite similar results. The approaches rely on ideas from Gamma convergence, and du not rely on monotonicity formulas as for general stationary solutions or solutions of the corresponding evolution equations.
 }
 \end{remark}
 
\medskip     
  The purpose of  the present  paper is to show that,  to  a  large extend,   the results obtained in the scalar case, can be transposed to the vectorial case for  potentials $V$ which fulfill  conditions ${(\text{H}_1)}$, ${(\text{H}_2)}$ and ${(\text{H}_3)}$, that is potentials with multiple  equal depth wells, if we restrict ourselves to \emph{two dimensional domains}. Since  no monotonicity  formula in this case is know, new arguments have to be worked out. Several of them  rely strongly on some specificities of dimension two.
    
  \smallskip
  We assume that we are given a constant ${\rm M}_0>0$ and a family $(u_\eps)_{0<\eps\leq 1}$ of solutions to the equation \eqref{elipes} for the corresponding value of the parameter $\eps$, satisfying the natural energy bound 
   \begin{equation}
  \label{naturalbound}
  \Eps(u_\eps) \leq{\rm  M}_0,   \    \forall \eps >0.
      \end{equation}
      Assumption \eqref{naturalbound}  is rather  standard in the field, since it corresponds to the energy magnitude required for the creation of $(N-1)$-dimensional interfaces.  
  We introduce the family $(\upnu_\eps)_{0<\eps\leq 1} $ of measures defined on $\Omega$  by
  \begin{equation}
  \label{mesure}
  \upnu_\eps \equiv e_\eps(u_\eps) \, {\rm d}\, x    {\rm  \ on  \ } \Omega.
    \end{equation}
   In view of \eqref{naturalbound}, the total mass of the measures is bounded by ${\rm M}_0$, that is 
   $\upnu_\eps (\Omega) \leq {\rm M}_0.$
 By compactness, there exists   therefore a decreasing subsequence $(\eps_n)_{n\in \N}$  tending to $0$ and a limiting measure $\upnu_\star$    on $\Omega$ with $\upnu_\star (\Omega) \leq {\rm M}_0$, such that 
  \begin{equation}
  \label{choux}
  \upnu_{\eps_n}\rightharpoonup \upnu_\star  {\rm \ in \ the \ sense \ of \ measures  \ on \ }
  \Omega  {\rm \ as \ } n\to +\infty. 
  \end{equation}
  Our main result  is the following.
  
  \begin{theorem}
 \label{maintheo}  Let $(u_{\eps_n})_{n \in \N}$  be  a sequence of solutions to \eqref{elipes}  satisfying \eqref{naturalbound} and \eqref{choux}. 
 There exist a subset $\mathfrak S_\star$of  $\Omega$ and a subsequence of $(\eps_n)_{n \in \N} $, still denoted $(\eps_n)_{n \in \N}$ for sake of simplicity,
such that the following properties hold: 
\begin{enumerate}
\item[i)] $\mathfrak S_\star$  is  a closed  1 dimensional \emph{rectifiable}  subset of $\Omega$ 
 such that 
 \begin{equation}
 \label{herbert}
 \mathcal H^1(\mathfrak S_\star)\leq \rm C_{\rm H}\, {\rm M}_0,
 \end{equation}
  where ${\rm C}_{\rm H}$ is a constant depending only on the potential $V$.
\item[ii)]   Set $\mathfrak U_\star=\Omega\setminus  \mathfrak S_\star$, and   let $(\mathfrak U_\star^i)_{i \in I}$ be  the connected components of $\mathfrak U_\star$.   For each $i\in I$ there exists an element $\upsigma_i \in \Sigma$ such that 
$$
u_ {\eps_n}  \to \upsigma_i  {\rm \ uniformly  \ on  \ every  \ compact \ subset \  of \ } \mathfrak U_\star  {\rm \ as \ } n \to +\infty.
$$
\end{enumerate}
  \end{theorem}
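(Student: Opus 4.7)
The strategy rests on three ingredients: an $\eta$-compactness (clearing-out) lemma adapted to dimension two, a density-based definition of $\mathfrak{S}_\star$, and a rectifiability argument obtained from matching density bounds.

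The first step, which I expect to be the main obstacle, is to establish a two-dimensional clearing-out lemma: there exist constants $\eta_0, \mathrm{C}_0, \alpha > 0$, depending only on $V$, such that whenever $\B^2(x_0, r) \subset \Omega$ with $\eps^{1/2} \leq r$ and $\Eps(u_\eps, \B^2(x_0, r)) \leq \eta_0\, r$, there exists $i \in \{1,\dots,q\}$ with $|u_\eps - \upsigma_i| \leq \mathrm{C}_0 (\eps/r)^\alpha$ on $\B^2(x_0, r/2)$. The idea is to use Fubini in polar coordinates to select a \emph{good radius} $\rho \in (r/2, r)$ for which the circular energy $\int_{\partial \B^2(x_0, \rho)} e_\eps(u_\eps) \, \rd \mathcal{H}^1$ is small. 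On such a circle the restriction of $u_\eps$ carries small one-dimensional Ginzburg-Landau-type energy, and the 1D conservation of the Lagrangian (which does hold for systems) traps it in a small neighbourhood of a single well $\upsigma_i$. Interior trapping is then propagated via an energy-competitor argument inside $\B^2(x_0, \rho)$ together with the coercivity supplied by ${(\text{H}_2)}$ and ${(\text{H}_3)}$. The absence of a monotonicity formula forces us to substitute these specifically two-dimensional devices (good slicing, a 1D Lagrangian, and an energy comparison) for the standard scalar argument, which is the conceptual heart of the paper.

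With the clearing-out lemma in hand, I would set
$$
\mathfrak{S}_\star = \bigl\{ x \in \Omega : \limsup_{r \to 0^+} r^{-1}\, \upnu_\star(\B^2(x,r)) \geq \eta_0 / 2 \bigr\}.
$$
This set is closed in $\Omega$ by upper semicontinuity of the upper density, and a Vitali covering combined with $\upnu_\star(\Omega) \leq \mathrm{M}_0$ yields \eqref{herbert} with $\mathrm{C}_{\rm H} = 8/\eta_0$. For \emph{ii)}, if $x \in \mathfrak{U}_\star = \Omega \setminus \mathfrak{S}_\star$ there is $r>0$ with $\upnu_\star(\B^2(x, 2r)) < (\eta_0/2)\cdot r$; weak convergence of $\upnu_{\eps_n}$ on continuity balls then gives $\Eps(u_{\eps_n}, \B^2(x, r)) \leq \eta_0 r$ for all large $n$, and the clearing-out lemma supplies a well $\upsigma_{i(x)}$ together with uniform convergence of $u_{\eps_n}$ to $\upsigma_{i(x)}$ on $\B^2(x, r/2)$. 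Since $x \mapsto i(x)$ is locally constant on $\mathfrak{U}_\star$, it is constant on each connected component $\mathfrak{U}_\star^i$.

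It remains to establish $\mathcal{H}^1$-rectifiability of $\mathfrak{S}_\star$. Iterating the clearing-out lemma on dyadic shells produces a matching upper-density bound $\Theta^{*1}(\upnu_\star, x) \leq \overline{\mathrm{C}}$ on $\mathfrak{S}_\star$; together with the lower bound $\eta_0/2$ built into the definition, this places the measure $\upnu_\star$ restricted to $\mathfrak{S}_\star$ within the hypotheses of the Preiss rectifiability criterion in $\R^2$. A more constructive two-dimensional alternative is to build, at $\upnu_\star$-a.e.\ $x \in \mathfrak{S}_\star$, an approximate tangent line by selecting, at each dyadic scale $r$, a good circle around $x$ on which the 1D analysis singles out a finite collection of arcs carrying the concentration, and showing that the angular positions of these arcs converge as $r \to 0$. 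Concatenation of these arc positions yields local Lipschitz parametrizations of $\mathfrak{S}_\star$ off an $\mathcal{H}^1$-null set. This is the secondary difficulty of the proof, since it requires a geometric regularity of the concentration pattern that is not furnished automatically by the clearing-out lemma.
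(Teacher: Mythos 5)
Your plan is broadly consonant with the paper for the ``easy'' parts (defining $\mathfrak S_\star$ by a density threshold, closedness, the Vitali covering for \eqref{herbert}, and part~ii) via local clearing-out and local constancy of $x\mapsto i(x)$), but it has two genuine gaps precisely where you flag ``obstacle'' and ``secondary difficulty,'' and in both places the paper's actual route is quite different from what you sketch.

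First, the clearing-out lemma. You propose to propagate the boundary trapping into $\B^2(x_0,\rho)$ by ``an energy-competitor argument,'' but the solutions here are only critical points, not minimizers, so comparison with a constant competitor on $\B^2(x_0,\rho)$ gives no information; this is exactly the place where one normally invokes the monotonicity formula, whose absence the paper is designed to work around. What the paper does instead is purely elliptic, not variational: it multiplies equation \eqref{elipes} by $u_\eps-\upsigma_i$ and integrates on the sublevel set $\Upsilon_\eps(\varrho_\eps,\upkappa)$ where $u_\eps$ is $\upkappa$-close to a well (Proposition~\ref{kappacity}), then combines this with the two-dimensional Pohozaev identity \eqref{poho1} on a good circle to obtain the self-improving decay
\[
\int_{\D^2(9/16)} e_\eps(u_\eps) \le \Cdec\Bigl[\bigl(\int_{\D^2} e_\eps(u_\eps)\bigr)^{3/2}+\eps\int_{\D^2} e_\eps(u_\eps)\Bigr]
\]
(Proposition~\ref{sindec}). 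Iterating this superlinear inequality over dyadic scales drives the energy to the scale-$\eps$ regime, where a kernel/local estimate (Proposition~\ref{brioche}) finishes. Your ``good slicing $+$ 1D Lagrangian'' idea is the right starting point for the boundary control, but the interior propagation step is what you would actually have to supply, and the competitor argument does not do it.

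Second, rectifiability. Your Route~A (matching upper and lower density bounds, then Preiss) is not sufficient: the Preiss theorem requires the $1$-density to \emph{exist} $\upnu_\star$-a.e., not merely to be bounded above and below, and nothing in the clearing-out iteration gives convergence of $r^{-1}\upnu_\star(\B^2(x,r))$. Your Route~B is closer in spirit to the open problem of showing that the limiting concentration set is a stationary varifold, which the paper explicitly does \emph{not} resolve. The paper's actual route is qualitatively different and more elementary: it proves a second, boundary-type clearing-out (Theorem~\ref{bordurer}: if $\upnu_\star$ vanishes in an annular neighborhood of $\partial\mathcal U$ then it vanishes on $\overline{\mathcal U}$; this comes from a Pohozaev-type estimate on general subdomains, Proposition~\ref{pascapit}, combined with Proposition~\ref{borneo}), uses it to show that $\mathfrak S_\star\cap\overline{\D^2(x_0,r)}\cup\S^1(x_0,r)$ is a \emph{continuum} (Proposition~\ref{lesconti}), and then invokes the Besicovitch--Wa\.zewski theorem that a continuum of finite $\mathcal H^1$ measure is $1$-rectifiable. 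This connectedness mechanism is the genuinely two-dimensional substitute for the density/varifold machinery, and it is entirely absent from your sketch.
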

  
 Similar to the results obtained for \emph{the scalar case},  Theorem 1 expresses, \emph{for the vectorial case in dimension two},  the fact that the domain can be decomposed into subdomains,  where, for $n$ large,  the maps $u_{\eps_n}$ takes values close to an element of the vacuum set $\Sigma$. This subdomains  which are separated by a one dimensional subdomain, on which the map  $u_{\eps_n}$ might possibly undergo a transition from one element of $\Sigma$ to  another.  Our result extends also to \emph{non-minimizing} solutions the results\footnote{This result  hold however in arbitrary dimension and yield stronger properties for $\mathfrak S_\star$.} of \cite{baldo,fontar} (see Remark \ref{mini}).
 
 \smallskip
  An important property of the set $\mathfrak S_\star$ stated in Theorem \ref{maintheo} is its rectifiability. Recall that  a Borel set $\mathcal S\subset \R^2$
is rectifiable of dimension 1  if its one-dimensional Hausdorff dimension is locally finite, and if there there is a countable family of $C^1$ one dimensional submanifolds of $\R^2$ which cover $\mathcal H^1$ almost all of $\mathcal S$. Rectifiability of $\mathcal S$
  implies in particular, that  the set $\mathcal  S$ has \emph{an approximate tangent line} at $\mathcal H^1$-almost every point $x_0 \in \mathcal S$.   This means that there exists  a unit vector  $\vec e_{x_0}$ (depending on  the point $x_0$)  such that,  for \emph{any  number}  $\uptheta >0$ we have
 \begin{equation}
 \label{tangent}
 {\underset {r\to 0}\lim} 
 \frac{ 
 \mathcal H^1 \left( \mathcal S \cap\left( \D^2\left(x_0, r\right) \setminus   \mathcal C_{\rm one}\left(x_0, \vec e_{x_0}, \uptheta \right) \right) \right)
 }
{r}=0, 
 \end{equation}
 where, for a unit vector $\vec e$ and $\uptheta>0$, the set  $\mathcal C_{\rm one}\left(x_0, \vec e, \uptheta  \right)$ is the cone given by
 \begin{equation}
\label{conalpha}
\mathcal C_{\rm one}\left(x_0, \vec e, \uptheta  \right)=
\left \{ y \in \R^2,   \vert  \vec e^\perp \cdot  (y-x_0)  \vert \leq \uptheta \vert  \vec e \cdot  (y-x_0) \vert\right\},
 \end{equation}
 $\vec e^\perp$ being a unit vector  orthonormal to $\vec e$. A point $x_0$  such that \eqref{tangent} holds for some unit vector $\vec e_{x_0}$  is termed a \emph{regular point} of $\mathcal S$.   For  the  set  $\mathfrak S_\star$ given by Theorem \ref{maintheo}, property \eqref{tangent}  can be strengthened as follows:
 
 \begin{proposition}
 \label{tangentfort} 
 Let $x_0$ be a regular point of $\mathfrak S_\star$. Given any $\uptheta>0$ there exists  a radius $R_{\rm cone}(\uptheta, x_0)$ such that 
 \begin{equation}
 \label{radius}
\mathfrak S_\star \cap  \D^2\left(x_0, r\right) \subset  \mathcal C_{\rm one}\left(x_0, \vec e_{x_0}, \uptheta  \right), 
 {\rm  \  for \   any \  }  0<r\leq R_{\rm cone}(\uptheta, x_0).
 \end{equation}
 \end{proposition}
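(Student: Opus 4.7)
The strategy is a contradiction argument combining the approximate tangent property \eqref{tangent}, which is automatic at any regular point of a rectifiable set, with a uniform lower bound on the one-dimensional density of $\mathfrak S_\star$ at each of its points. Suppose the conclusion fails at a regular point $x_0$: then for some $\uptheta_0>0$ there is a sequence $y_n \in \mathfrak S_\star$ with $r_n := |y_n-x_0| \to 0$ and $y_n \notin \mathcal C_{\rm one}(x_0, \vec e_{x_0}, \uptheta_0)$. Since $y_n$ sits well outside the wider cone, a direct planar computation produces a constant $\beta = \beta(\uptheta_0)>0$, independent of $n$, such that the disk $\D^2(y_n, \beta r_n)$ is entirely contained in $\D^2(x_0, 2 r_n) \setminus \mathcal C_{\rm one}(x_0, \vec e_{x_0}, \uptheta_0/2)$.

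The key ingredient is a uniform lower density bound for $\mathfrak S_\star$: there exists $\eta_0>0$, depending only on $V$, such that for every $y \in \mathfrak S_\star$ and every small enough $\rho$,
\begin{equation*}
\mathcal H^1\bigl(\mathfrak S_\star \cap \D^2(y, \rho)\bigr) \ge \eta_0 \rho.
\end{equation*}
Granting this, the two geometric facts above yield
\begin{equation*}
\mathcal H^1\bigl(\mathfrak S_\star \cap \bigl(\D^2(x_0, 2r_n) \setminus \mathcal C_{\rm one}(x_0, \vec e_{x_0}, \uptheta_0/2)\bigr)\bigr) \ge \mathcal H^1\bigl(\mathfrak S_\star \cap \D^2(y_n, \beta r_n)\bigr) \ge \eta_0 \beta r_n,
\end{equation*}
while, because $x_0$ is regular, \eqref{tangent} applied with angle $\uptheta_0/2$ and radius $2r_n$ forces the left-hand side to be $o(r_n)$. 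The contradiction for $n$ large proves \eqref{radius}.

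The principal obstacle is clearly the lower $\mathcal H^1$-density estimate, since it cannot be extracted from rectifiability alone. Its natural proof goes through a clearing-out statement of the form: if $\upnu_\star(\D^2(y, \rho)) < \eta_0 \rho$ for some sufficiently small $\rho>0$, then a definite neighbourhood of $y$ is disjoint from $\mathfrak S_\star$, so that $y \notin \mathfrak S_\star$. Such a statement is standard in the scalar case via the monotonicity formula \eqref{monotonie}, but in the present vectorial two-dimensional setting the absence of monotonicity emphasised in the introduction means it must be secured by the specifically two-dimensional techniques built up in the course of proving Theorem \ref{maintheo}. Once this clearing-out lemma is in hand, the geometric contradiction above is immediate and requires no further input.
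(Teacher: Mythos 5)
Your geometric reduction is sound: combining the approximate tangent property \eqref{tangent} at aperture $\uptheta_0/2$ with a uniform lower $\mathcal H^1$-density bound at every point of $\mathfrak S_\star$ does force the cone inclusion, and the contradiction you set up is close in spirit to the one the paper exploits in Lemma~\ref{ratus}.

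The gap lies in the route you propose to the key estimate $\mathcal H^1\bigl(\mathfrak S_\star \cap \D^2(y, \rho)\bigr) \geq \eta_0 \rho$. You would extract it from the disk clearing-out for the energy measure (Theorem~\ref{claire}); combined with the argument of Proposition~\ref{danes}, that result does give $\upnu_\star(\D^2(y,\rho)) \geq \upeta_0 \rho$ for $y \in \mathfrak S_\star$. But this is a lower bound on the concentration \emph{measure} $\upnu_\star$, not on the Hausdorff measure of the \emph{set} $\mathfrak S_\star$. The two are not interchangeable, and the paper proves no upper-density estimate $\upnu_\star(\D^2(y,\rho)) \leq C\rho$ that would let you transfer one to the other; a priori $\upnu_\star$ could be carried by a very thin portion of $\mathfrak S_\star$ near $y$, leaving $\mathcal H^1\bigl(\mathfrak S_\star \cap \D^2(y,\rho)\bigr)$ far smaller than $\rho$. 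So Theorem~\ref{claire} alone does not deliver the estimate you need, and as written your argument does not close.

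The correct mechanism is the connectedness obtained from the boundary clearing-out, Theorem~\ref{bordurer}: the set $\mathfrak Q_{\star, \rho}(y) = \bigl(\mathfrak S_\star \cap \overline{\D^2(y,\rho)}\bigr) \cup \S^1(y,\rho)$ is a continuum of finite $\mathcal H^1$ measure (Proposition~\ref{lesconti}), hence arcwise connected (Corollary~\ref{lesconti2}). A path in this continuum from $y$ to $\S^1(y,\rho)$ must, up to its first hit of the circle, lie in $\mathfrak S_\star \cap \overline{\D^2(y,\rho)}$ and have length at least $\rho$, which gives your density bound with constant $1$. That is in fact exactly how the paper's own proof proceeds: Lemma~\ref{ratus} produces a path from the putative bad point to $\S^1(x_0, 2\uptau)$ and quantifies the length that path must spend outside the narrow cone, contradicting the smallness supplied by \eqref{tangent}. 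Once you replace the appeal to Theorem~\ref{claire} by the continuum argument from Theorem~\ref{bordurer} and its corollaries, your proof becomes a valid variant of the paper's.
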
 
 
 \medskip
 Compared to the scalar case, the picture is obviously still incomplete. In particular, one would like to obtain further properties of the set $\mathfrak S_\star$. Indeed, in the scalar case, it is know that this set is a stationary varifold,  a weak notion of minimal surfaces, so that one might conjecture that a similar property holds for the vectorial case. This   results remains still an  important challenge\footnote{This is however established in \cite{baldo, fontar} for minimizing solutions}.

  \smallskip
  The set $\mathfrak S_\star$  in the above theorem is obtained as a concentration set of the energy.  The properties stated in Theorem \ref{maintheo} are, for a large part,  consequences of the two results  we present next. The first one represents  a classical form of a  clearing-out result for the measure $\upnu_\star$ and leads directly to the fact that energy concentrates on sets which are at most one-dimensional.  
  \begin{theorem} 
\label{claire}
Let $x_0 \in \Omega$ and $r>0$ be given such that $\D^2(x_0, r) \subset \Omega$.  There exists a constant $\upeta_0>0$ such that, if we have 
\begin{equation}
\frac{\upnu_\star \left(\overline{\D^2(x_0, r)} \right)}{r} < \upeta_0, {\rm \ then   \  it  \  holds \  }
\displaystyle{\upnu_\star \left(\overline{\D^2(x_0,\frac{r}{2}}) \right)=0.}
\end{equation}
\end{theorem}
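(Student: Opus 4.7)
The plan is to exploit the weak convergence $\upnu_{\eps_n}\rightharpoonup \upnu_\star$ together with two features specific to dimension two: a Fubini argument on concentric circles locating a ``good'' boundary with controlled line-energy, and a Pohozaev-type identity whose standard coefficient $(N-2)/2\cdot\int |\nabla u|^2$ vanishes precisely when $N=2$, directly bounding the bulk potential energy by boundary data.

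First, from $\upnu_{\eps_n}\rightharpoonup \upnu_\star$ one has $\Eps(u_{\eps_n},\D^2(x_0,r)) \leq 2\eta_0 r$ for $n$ large. A mean-value argument on the annulus $\D^2(x_0,r)\setminus \D^2(x_0,r/2)$ then yields $\rho=\rho_n\in(r/2,r)$ with
\[
\int_{\bd \D^2(x_0,\rho)} e_{\eps_n}(u_{\eps_n})\,d\mathcal{H}^1 \leq C\eta_0.
\]
Parametrizing $\bd \D^2(x_0,\rho)$ by arc-length, the pointwise inequality $\eps a^2/2+V/\eps\geq a\sqrt{2V}$ converts this into a bound $C\eta_0$ on the length of the curve $u_{\eps_n}\circ\gamma$ in the degenerate Agmon metric $\sqrt{2V}$. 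Since distinct wells are separated by a positive $\sqrt{2V}$-geodesic distance $d_V^{\min}>0$, fixing $\eta_0 < d_V^{\min}/C$ prevents any well-to-well transition, so $u_{\eps_n}$ takes values in a single ball $B(\upsigma_i,\delta_0)$ on all of $\bd \D^2(x_0,\rho)$.

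Next, test \eqref{elipes} against $(x-x_0)\cdot\nabla u_{\eps_n}$ and integrate over $\D^2(x_0,\rho)$. The usual Pohozaev computation produces a term $(N/2-1)\int|\nabla u|^2$ that vanishes precisely for $N=2$; what remains is
\[
\frac{2}{\eps_n}\int_{\D^2(x_0,\rho)} V(u_{\eps_n})\,dx
= \rho \int_{\bd \D^2(x_0,\rho)} \Bigl[\eps_n\frac{|\partial_\tau u_{\eps_n}|^2-|\partial_\nu u_{\eps_n}|^2}{2}+\frac{V(u_{\eps_n})}{\eps_n}\Bigr] d\mathcal{H}^1.
\]
Dropping the (negative) normal-derivative term and invoking the previous step gives $\int_{\D^2(x_0,\rho)} V(u_{\eps_n})/\eps_n \leq C\eta_0 r$. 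Combined with the quadratic lower bound $V(u)\geq c|u-\upsigma_i|^2$ near $\upsigma_i$ coming from $(\mathrm{H}_2)$, this provides $L^2$-control of $u_{\eps_n}-\upsigma_i$ on the portion of $\D^2(x_0,\rho)$ where $u_{\eps_n}$ stays close to $\upsigma_i$.

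The final step upgrades this into full energy decay. As long as $u_{\eps_n}$ remains in $B(\upsigma_i,\delta_0)$, the scalar $h=|u_{\eps_n}-\upsigma_i|^2$ is a subsolution of $\Delta h \geq \lambda_i^-\eps_n^{-2} h$ (by $(\mathrm{H}_2)$), so the maximum principle and standard exponential-barrier estimates for the operator $-\Delta + \lambda_i^-\eps_n^{-2}\mathrm{Id}$ propagate the boundary closeness inward at rate $\sqrt{\lambda_i^-}/\eps_n$, yielding $\Eps(u_{\eps_n},\D^2(x_0,r/2))\to 0$ and hence $\upnu_\star(\overline{\D^2(x_0,r/2)})=0$. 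The \emph{main obstacle} is that this bootstrap requires excluding interior ``pockets'' where $u_{\eps_n}$ lies near a \emph{different} well $\upsigma_j\neq\upsigma_i$: any such pocket would be bounded by a transition curve whose Agmon length is controlled by $\eta_0$ via the same $d_V^{\min}$ mechanism, but quantifying this — in the absence of a monotonicity formula — is the delicate point and rests essentially on the 2D topology of the disk together with the nondegeneracy of $V$ at its minima.
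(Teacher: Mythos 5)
Your opening steps parallel the paper's: the mean-value choice of a good circle (Lemma~\ref{moyenne}), the Agmon/Modica--Mortola length bound pinning $u_{\eps_n}$ to a single well $\upsigma_i$ on that circle (Lemma~\ref{valli}), and Pohozaev to bound $\int V/\eps$ (Proposition~\ref{pascap0}). But the final bootstrap step contains a genuine gap, and it is precisely the difficulty the paper's machinery is designed to handle. The inequality $\Delta|u_{\eps_n}-\upsigma_i|^2\geq\lambda_i^-\eps_n^{-2}|u_{\eps_n}-\upsigma_i|^2$ holds only where $u_{\eps_n}$ takes values in $B(\upsigma_i,2\upmu_0)$; inside an interior pocket where $u_{\eps_n}$ is near another well $\upsigma_j$, the quantity $(u-\upsigma_i)\cdot\nabla V(u)$ has no sign (it vanishes at $\upsigma_j$), while $|u_{\eps_n}-\upsigma_i|^2\approx|\upsigma_j-\upsigma_i|^2$ stays bounded away from zero, so the subsolution inequality fails badly there. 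The exponential barrier for $-\Delta+\lambda_i^-\eps^{-2}$ therefore cannot be erected on the whole disk, and invoking ``2D topology and nondegeneracy'' does not quantify anything: a priori a pocket of macroscopic area with small boundary Agmon length is not excluded. You correctly flag this, but the point is that it is not a technicality --- it is the crux, and the proof is not complete.

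The paper resolves it by avoiding maximum-principle propagation altogether and instead proving an unconditional energy-decay inequality (Proposition~\ref{sindec}):
\[
\E_\eps\left(u_\eps,\D^2\left(\tfrac{9}{16}\right)\right)\leq\Cdec\left[\left(\E_\eps(u_\eps,\D^2)\right)^{3/2}+\eps\,\E_\eps(u_\eps,\D^2)\right].
\]
The crucial ingredient is Proposition~\ref{kappacity}: for \emph{each} well $\upsigma_i$, multiply~\eqref{elipes} by $u_\eps-\upsigma_i$ and integrate by parts on the level set $\Upsilon_{\eps,i}$ where $|u_\eps-\upsigma_i|<\upkappa$; by the local convexity of $V$ this bounds the energy on each such set --- including interior pockets near any well, whether or not they touch $\partial\D^2$ --- by $\upkappa\int V/\eps$ plus boundary flux terms, and the fluxes are tamed via the coarea formula and a flux-monotonicity lemma (Lemmas~\ref{claudius}, \ref{repro}, \ref{denada}). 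On the transition region away from all wells, the gradient bound $|\nabla u_\eps|\lesssim\eps^{-1}$ together with $V\geq\upalpha_0$ gives $e_\eps\lesssim V/\eps$ pointwise (Lemma~\ref{bornepote}), and Pohozaev closes the loop by bounding $\int V/\eps$ by line energy. Choosing $\upkappa\sim\sqrt{\E_\eps}$ yields the $3/2$-power gain. This estimate is then iterated on dyadic disks, driving the scale-invariant ratio $\E_\eps/r$ down to the $\eps$-regime where the elementary weak clearing-out (Proposition~\ref{brioche}) applies; Theorem~\ref{claire} follows from the scaled consequence, Proposition~\ref{cestclair}, upon passing to the limit $n\to\infty$. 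Your approach cannot be completed without essentially reproducing this level-set energy accounting.
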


  The  previous  statement leads   to consider the 1-dimensional  lower density   of the measure $\upnu_\star$ defined, for $x \in \Omega$,  by 
$$\theta_\star (x_0)={\underset{r \to 0} \liminf } \frac{\upnu_\star \left(\overline{\D^2(x_0, r)} \right)}{r},   $$
 and motivates us to define the set $\mathfrak S_\star$ as the concentration set  of the measure $\upnu_\star$. More precisely,  we set 
\begin{equation}
\label{mathfrakSstar}
\mathfrak S_\star=\{ x \in \Omega, \theta_\star (x_0)\geq\upeta_0\}, 
\end{equation}
where $\upeta_0>0$ is the constant provided by Theorem \ref{claire}. 
The fact that $\mathfrak S_\star$ is   closed of finite one-dimensional Hausdorff measure is then a rather direct consequence of the clearing-out property for the measure $\upnu_\star$ stated in Theorem \ref{claire}. The connectedness properties of $\mathfrak S_\star$ stated in Theorem \ref{maintheo}, part ii) require  a different type of clearing-out result.   Its statement involves general  regular subdomains $\mathcal U \subset \Omega$,  and,  for $\updelta>0$,  the related sets 
\begin{equation}
\label{Udelta}
\left\{
\begin{aligned}
 \mathcal U_\updelta&=\left \{  x \in \Omega,  {\rm dist}(x, \mathcal U)\leq \updelta\right\} {\rm \ and \ }  \\
 \mathcal V_\updelta&=\mathcal U_\delta \setminus \mathcal U
 =\left \{  x \in \Omega, 0\leq  {\rm dist}(x, \mathcal U)\leq \updelta\right\}. 
 \end{aligned}
\right.
\end{equation}

\begin{theorem}
\label{bordurer} 
 Let $\mathcal U\subset \Omega$ be a open  subset of $\Omega$ and $\updelta>0$ be given. If we have  
\begin{equation}
\label{clearstream}
 \upnu_\star  (\mathcal V_\delta)=0  , {\rm \ then   \  it  \  holds \  }
\upnu_\star \left(\overline{\mathcal U} \right)=0.
\end{equation}
\end{theorem}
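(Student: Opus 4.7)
The strategy is to exploit the vanishing of energy in the collar $\mathcal V_\updelta$ to produce, via co-area slicing, a favorable boundary curve $\gamma_n \subset \mathcal V_\updelta$ of vanishing line energy, then propagate smallness to the interior via an energy identity. Since $\upnu_\star(\mathcal V_\updelta)=0$ and $\mathcal V_\updelta$ is closed in $\Omega$, weak convergence gives $\Eps(u_{\eps_n},\mathcal V_\updelta) \to 0$. Applying the co-area formula to $x \mapsto \mathrm{dist}(x,\mathcal U)$ on $\mathcal V_\updelta$, for each $n$ I would extract $s_n\in(\updelta/3,2\updelta/3)$ such that the smooth level curve $\gamma_n:=\{\mathrm{dist}(\cdot,\mathcal U)=s_n\}$ carries vanishing line energy $\ell_n:=\int_{\gamma_n} e_{\eps_n}(u_{\eps_n})\,d\mathcal H^1 \to 0$. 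Denoting by $\widetilde{\mathcal U}_n$ the open region enclosed by $\gamma_n$ (which contains $\overline{\mathcal U}$), it suffices to prove $\Eps(u_{\eps_n},\widetilde{\mathcal U}_n)\to 0$.

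A one-dimensional Modica--Mortola argument along $\gamma_n$---the AM--GM bound $\int_{\gamma_n}\sqrt{2V(u_{\eps_n})}\,|\partial_\tau u_{\eps_n}|\,d\mathcal H^1\leq \ell_n$---combined with the positivity of the degenerate geodesic distance between distinct wells and the connectedness of each component of $\gamma_n$, forces $u_{\eps_n}|_{\gamma_n} \to \upsigma_i$ uniformly on each component, for a single $\upsigma_i\in\Sigma$. To propagate the smallness inward, I would use the divergence-free stress-energy tensor $T_{ij}=\eps\,\partial_i u\cdot\partial_j u-\delta_{ij}\,e_\eps(u)$, whose trace in dimension two equals $-\tfrac{2}{\eps}V(u)$. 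Integration of $\partial_j(T_{ij}(x-x_0)_i)=\mathrm{tr}\,T$ over $\widetilde{\mathcal U}_n$ yields
\[
\frac{2}{\eps_n}\int_{\widetilde{\mathcal U}_n}V(u_{\eps_n})\,dx=-\int_{\gamma_n} T(x-x_0,\nu)\,d\sigma,
\]
whose right-hand side is bounded by $\mathrm{diam}(\widetilde{\mathcal U}_n)\cdot \ell_n\to 0$, killing the potential part of the interior energy. A companion identity, obtained by multiplying \eqref{elipes} by $u_{\eps_n}-\upsigma_i$ and integrating by parts, reads
\[
\eps_n\!\int_{\widetilde{\mathcal U}_n}\!|\nabla u_{\eps_n}|^2 + \tfrac{1}{\eps_n}\!\int_{\widetilde{\mathcal U}_n}\!\nabla V(u_{\eps_n})\!\cdot\!(u_{\eps_n}-\upsigma_i)\,dx = \eps_n\!\int_{\gamma_n}\!\partial_\nu u_{\eps_n}\!\cdot\!(u_{\eps_n}-\upsigma_i)\,d\sigma,
\]
whose right-hand side tends to zero by Cauchy--Schwarz combined with the preceding steps.

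The main obstacle is that the bulk integrand $\nabla V(u)\cdot(u-\upsigma_i)$ is not globally signed in the multi-well case: it is nonnegative near $\upsigma_i$ by $(\text H_2)$ and for large $|u|$ by $(\text H_3)$, but may have the wrong sign in regions where $u_{\eps_n}$ transitions toward a rival well $\upsigma_j$. To handle this I would argue by a nested-slicing bootstrap: any macroscopic subregion of $\widetilde{\mathcal U}_n$ on which $u_{\eps_n}\approx\upsigma_j$ would be bounded by a 1-dimensional concentration curve lying strictly inside $\widetilde{\mathcal U}_n$; applying the co-area argument once more produces small-energy slices on either side of that curve, and reapplying the Pohozaev identity on the resulting subdomains should force---thanks to the finite-energy bound \eqref{naturalbound} and to Theorem \ref{claire} bounding the number of such nested loops---the transition to be absent after finitely many iterations. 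The combination of the 2D Pohozaev identity with this nested-slicing scheme, exploiting that any 1-rectifiable curve in the plane cleanly separates its complement into components, is what substitutes here for the missing monotonicity formula.
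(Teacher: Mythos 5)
Your outline matches the paper's proof for the first half: the paper also extracts, via Proposition \ref{pascapit}, a Pohozaev-type estimate that bounds $\eps^{-1}\int_{\mathcal U_{\delta/2}}V(u_\eps)$ by $\int_{\mathcal V_\delta}e_\eps(u_\eps)$ (your stress-energy computation with the vector field $x-x_0$, implemented with a cutoff), and this indeed kills the potential part of the interior energy. The divergence point is how you go from ``small potential integral'' to ``small energy,'' i.e.\ how to handle the sign problem for $\nabla V(u)\cdot(u-\upsigma_i)$ away from the wells, which you correctly identify as the main obstacle.

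Your proposed remedy --- a ``nested-slicing bootstrap,'' iteratively isolating hypothetical subregions where $u\approx\upsigma_j$ and re-slicing around them --- does not go through as written. If a concentration curve were present inside $\widetilde{\mathcal U}_n$, the co-area argument in an annulus around it would \emph{not} produce small-energy slices: the slice energy is bounded only by the total energy in the annulus divided by its width, and the latter is precisely not small near a concentration set. The appeal to Theorem~\ref{claire} to bound the number of nested loops is also circular here, since you are in the middle of establishing the clearing-out used to control $\mathfrak S_\star$, and Theorem~\ref{claire} pertains to the limiting measure, not to a single $u_{\eps_n}$. The paper avoids this entirely. Instead of trying to exclude interior concentration, it proves a \emph{local} quantitative statement, Proposition~\ref{borneo}: on any disk of scale $\delta$ whose potential integral is small enough (below $K_{\rm pot}(M_0)\cdot\delta$), the full energy is controlled by the potential integral plus a lower-order $\eps$ term. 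This local lemma handles the sign issue by splitting the disk into the level regions $\Upsilon_{\eps,i}$, where $|u_\eps-\upsigma_i|\le\upkappa$ and $\nabla V\cdot(u-\upsigma_i)$ is signed by $({\rm H}_2)$, plus the ``far'' region $\Theta_\eps$, where the PDE gradient bound $|\nabla u_\eps|\lesssim\eps^{-1}$ (Proposition \ref{classic}) shows the energy is pointwise dominated by the potential (Lemma \ref{bornepote}); the boundary contribution from the internal level curves $\Gamma_{\eps,i}$ is controlled via the monotonicity of Lemma \ref{repro} and the co-area bound of Lemma \ref{claudius}. The covering of $\mathcal U_{\delta/4}$ by such small disks, combined with the Pohozaev bound on $\int V$ so that each disk satisfies the smallness hypothesis, then yields Proposition \ref{pave} and hence the Theorem. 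So the missing ingredient in your argument is precisely this local ``energy $\lesssim$ potential'' estimate; the Stokes step with $u-\upsigma_i$ must be localized to a good disk and to the level set where the sign is controlled, not performed directly on the whole interior domain.
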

In other terms, if the  measure $\upnu_\star$ vanishes in some neighborhood of the boundary $\partial \mathcal U$, then it vanishes on 
$\overline {\mathcal U}$. This result will allow us to establish  connectedness properties of $\mathfrak S_\star$.   For instance, we will prove the following \emph{local connectedness property}:

\begin{proposition}
\label{connective}
Let $x_0\in \Omega$, $r>0$ tels que $\D^2(x_0, 2r) \subset \Omega$. There exists a radius $r_0\in (r, 2r)$ such that $\mathfrak S_\star \cup \D^2(x_0, r_0)$ contains a finite union of path-connected components. 
\end{proposition}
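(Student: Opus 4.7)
The strategy is to choose a slicing radius $r_0 \in (r, 2r)$ via a coarea/Fubini argument and then to argue by contradiction, using Theorem \ref{bordurer}, that $\mathfrak S_\star \cap \overline{\D^2(x_0, r_0)}$ has only finitely many path-connected components (which, given the typographical ambiguity in the statement, is what I take the claim to mean).

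First, by Theorem \ref{maintheo} I have $\mathcal H^1(\mathfrak S_\star) \le \mathrm C_{\mathrm H} \mathrm M_0 < \infty$, so the coarea inequality applied to $y \mapsto |y - x_0|$ yields $\int_r^{2r} \mathcal H^0(\mathfrak S_\star \cap \partial \D^2(x_0, \rho))\, \rd \rho \le \mathcal H^1(\mathfrak S_\star) < \infty$, while $\int_r^{2r} \upnu_\star(\partial \D^2(x_0, \rho))\, \rd \rho \le \upnu_\star(\D^2(x_0, 2r)) < \infty$. Hence for a.e.\ $\rho \in (r, 2r)$ the circle $\partial \D^2(x_0, \rho)$ intersects $\mathfrak S_\star$ in a finite set and carries no $\upnu_\star$-mass; I fix such an $r_0$ and write $\mathfrak S_\star \cap \partial \D^2(x_0, r_0) = \{q_1, \dots, q_N\}$. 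Set $A = \mathfrak S_\star \cap \overline{\D^2(x_0, r_0)}$, a compact rectifiable set of finite $\mathcal H^1$-measure. Path-connected and connected components of $A$ coincide (classical fact: compact connected sets of finite length in $\R^2$ are arcwise connected). Each component of $A$ meeting $\partial \D^2(x_0, r_0)$ contains some $q_i$, so at most $N$ components touch the boundary.

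It remains to bound the interior components. Assume by contradiction that infinitely many exist. Since any connected $K \subset \R^2$ satisfies $\mathcal H^1(K) \ge \mathrm{diam}(K)$ and $\mathcal H^1(A) < \infty$, only finitely many components have diameter $\ge \delta$ for each $\delta > 0$, so I extract a sequence $(K_n)$ of pairwise distinct interior components with $\mathrm{diam}(K_n) \to 0$. The crucial step is to produce $n$ with $d_n := \mathrm{dist}(K_n, \mathfrak S_\star \setminus K_n) > 0$. Granting this, take $\mathcal U$ to be the open $(d_n/4)$-neighborhood of $K_n$ inside $\Omega$; then $\mathcal V_{d_n/4} = \mathcal U_{d_n/4} \setminus \mathcal U$ sits at distance in $(d_n/4, d_n/2]$ from $K_n$ and therefore misses $\mathfrak S_\star$. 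Since $\mathrm{supp}(\upnu_\star) \subset \mathfrak S_\star$ (a direct consequence of Theorem \ref{claire}, as any $x \notin \mathfrak S_\star$ has $\theta_\star(x) < \upeta_0$ and hence a neighborhood of vanishing $\upnu_\star$-mass), we get $\upnu_\star(\mathcal V_{d_n/4}) = 0$. Theorem \ref{bordurer} then yields $\upnu_\star(\overline{\mathcal U}) = 0$, which contradicts $K_n \subset \mathcal U$ together with $\theta_\star \ge \upeta_0$ on $K_n$.

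The main obstacle is the isolation claim. My plan is to pass to a Hausdorff-convergent subsequence $K_n \to \{p^*\} \subset A$ and examine the component $K^*$ of $A$ containing $p^*$: either no other component accumulates on $K^*$, in which case an isolated small $K_n$ sits near $p^*$ and we are done, or small components pile up on $K^*$. In the latter situation I would exploit the strong cone property of Proposition \ref{tangentfort} at $\mathcal H^1$-almost every point of $K^*$, together with a finer slicing around $p^*$ and a recursive application of Theorem \ref{bordurer} at the smaller scale $|p_n - p^*|$, aiming either to extract an isolated component at a smaller scale or to generate an amount of $\upnu_\star$-mass exceeding $\mathrm M_0$, contradicting the global energy bound. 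Closing this iterative descent is the technical heart of the proof.
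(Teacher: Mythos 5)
Your slicing of the radius $r_0$ so that $\mathfrak S_\star \cap \partial\D^2(x_0,r_0)$ is a finite set $\{q_1,\dots,q_N\}$, and your subsequent use of the classical fact that a compact connected set of finite one-dimensional Hausdorff measure in the plane is arcwise connected, both agree with what the paper does. However, there is a genuine gap in the rest, and you flag it yourself: the isolation claim $\mathrm{dist}(K_n,\mathfrak S_\star\setminus K_n)>0$ can fail outright. For a general compact set of finite $\mathcal H^1$ measure, components can accumulate on each other (a comb-type configuration with segments $\{1/n\}\times[0,a_n]$ piling up on $\{0\}\times[0,1]$ with $\sum a_n<\infty$), so one cannot hope to isolate a small component in a tubular neighborhood and feed it to Theorem~\ref{bordurer}. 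Your proposed remedy -- the iterative descent around accumulation points using Proposition~\ref{tangentfort} -- is sketched but not carried out, and it is not clear that it closes; this is precisely the hard case, and as written the argument does not reach a contradiction.

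The paper avoids this obstacle by first proving a \emph{stronger} statement (Proposition~\ref{lesconti}): the set $\mathfrak Q_{\star,r}=\mathfrak S_{\star,r}\cup\S^1(x_0,r)$ is a continuum, i.e.\ connected, so that in fact there are \emph{no} interior components at all. The key technical device you are missing is a preliminary discretization: cover $\mathfrak S_{\star,r}$ by a Besicovitch family of closed disks of small radius $\updelta$ centered on $\mathfrak S_\star$, obtaining a finite union $\mathfrak S_{\updelta,r}$ of disks, and set $\mathfrak Q_{\updelta,r}=\mathfrak S_{\updelta,r}\cup\S^1(x_0,r)$. Because $\mathfrak Q_{\updelta,r}$ has only \emph{finitely} many components, any component not containing the circle sits at a \emph{positive} distance $\upbeta$ from the rest of the set -- exactly the isolation your argument could not guarantee. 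One then applies Theorem~\ref{bordurer} to the $\upbeta/4$-neighborhood of that component, obtains $\upnu_\star=0$ there, and gets a contradiction since that component contains a disk centered at a point of $\mathfrak S_\star$ where $\theta_\star\ge\upeta_0$. Hence $\mathfrak Q_{\updelta,r}$ is connected for every $\updelta$, and letting $\updelta\to 0$ (Hausdorff limit of continua is a continuum) shows $\mathfrak Q_{\star,r}$ is a continuum, then arcwise connected by the Falconer result. The finite-components conclusion of Proposition~\ref{connective} then drops out by your boundary-slicing argument: every point of $\mathfrak S_{\star,r_0}$ is joined by a path inside $\mathfrak Q_{\star,r_0}$ to the boundary circle, and the first boundary hit must be one of the $q_i$, so there are at most $N$ components. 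In short: do not try to separate components of $\mathfrak S_\star$ directly; discretize, prove connectedness of the discretized union with the outer circle, pass to the limit, and read off the bound from the finitely many boundary points.
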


 This connected properties  imply  the  rectifiability  of $\mathfrak S_\star$, invoking classical  results on continua  of bounded one-dimensional Hausdorff measure (see e.g \cite{falconer}). 

  \subsection{Elements in  the proofs of Theorem \ref{claire} and  Theorem \ref{bordurer}}
  The proofs of the above theorems are derived from corresponding results at the $\eps$ level for the map $u_\eps$, for given $\eps >0$.  We describe nexts these results. 
  
  \subsubsection{Invariance of the equation}
  \label{squale}
  As a first  preliminary remark,    we notice the invariance of the equation by translations as well as scale changes, which  plays an important role in our later arguments. Given  fixed  $r>0$  and $\eps>0$,  we consider the scalar parameter $\displaystyle{ \tilde \eps=\frac{\eps}{r} }$. For a given map $u_\eps : \D^2(x_à, r) \to \R^k$, we   introduce  the  \emph{scaled map}  $ \tilde u_{\eps}$ defined on the disk $\D^2$  by 
  $$ \tilde u_{\eps}(x)=u_\eps ( rx+ x_0)), \forall x \in \D^2.$$
  If the map $u_\eps$  is a solution to \eqref{elipes}, when  the map $\tilde u_{\eps}$ is a solution to \eqref{elipes} with  the parameter $\eps$ changed into $\tilde \eps$.  The scale invariance of the energy  is given by  the relation 
   \begin{equation}
   \label{scaling0}
   e_{\tilde \eps}(\tilde u_\eps)(x)= r e_\eps(u)(rx+ x_0), \, \forall x \in \D^2,
   \end{equation}
    which yields in its  integral forms
   \begin{equation}
   \label{scalingv}
   {\E_\eps}\left(u_\eps, \D^2(r)\right)= {r} {\E}_{\tilde \eps}\left(\tilde u_\eps, \D^2(1)\right)  {\rm \ and \ } 
   \mathbb V_\eps \left(u_\eps, \D^2(r)\right)= {r} \mathbb V_{\tilde \eps} \left(\tilde u_\eps, \D^2(1)\right),
      \end{equation}
   where we have set, for a  given domain $G$ and a map $u: G \to \R^k$
   $$ \E_\eps \left(u, G \right)\equiv \int_G e_\eps(u){\rm d}x  {\rm \ and \ }
   \mathbb V_\eps \left(u, G\right)  \equiv \int_G \frac{V(u)}{\eps}{\rm d}x. 
   $$
It follows from the previous discussion that the parameter $\eps$ as well as  the energy $\E_\eps$  behave, according to scaling, essentially  as lengths. In this  loose  sense,  inequality \eqref{scalingv} shows  that the quantity $\eps^{-1} E_\eps$ is scale invariant, according to the previous scale changes. 
  
  \subsubsection{The $\eps$-clearing-out theorems}
  We next provide clearing-out results for solutions of the PDE \eqref{elipes}. 
  
  \smallskip
    In view of the   assumptions  ${(\text{H}_1)}$, ${(\text{H}_2)}$ and ${(\text{H}_3)}$ on the potential $V$,  we may choose some  constant  $\upmu_0>0$ sufficiently small so that
    \begin{equation}
    \label{kiv}
    \left\{
    \begin{aligned}
 B^k(\upsigma_i, 2\upmu_0)\cap \B^k(\upsigma_j, 2\upmu_0) &= \emptyset 
 {\rm \ for \  all  \ } i\neq j  {\rm \  in \ }  \{1,\cdots,q\} {\rm \ and \  such  \   that  \ }  \\
 \frac{1}{2}\lambda_i^-{\rm Id} \leq  
 \nabla^2 V (y) &\leq 2\lambda_i^+{\rm Id}   \  \ 
  {\rm \ for \  all  \ } i\in \{1,\cdots,q\} {\rm \  and  \ }  y \in B(\upsigma_i,2\upmu_0).
 \end{aligned}
  \right. 
\end{equation}
We then have:

 \begin{theorem}
 \label{clearingoutth}
  Let  $0<\eps \leq 1$ and  $u_\eps$  be a solution of 
  \eqref{elipes}  on $\D^2$. There exists some constant $\upeta_0>0$ such that 
  if 
  \begin{equation}
  \label{petitou}
   \Eps(u_\eps, \D^2) \leq  \upeta_0,
   \end{equation}
  then  there exists some $\upsigma \in \Sigma$ such that 
  \begin{equation}
  \label{benkon}
  \vert u_\eps(x)-\upsigma  \vert  \leq \frac{\upmu_0}{2}, {\rm \ for \ every \ } x \in \D^2(\frac34), 
  \end{equation}
  where $\upsigma_0$ is defined in \eqref{kiv}. Moreover, we have the energy estimate, for  some constant 
  $\Cnrg>0$ depending only on the potential $V$
 \begin{equation}
 \label{engie}
\E_\eps\left(u_\eps, \D^2\left(\frac 58\right)\right) \leq \Cnrg \, \eps E_\eps(u_\eps, \D^2). 
 \end{equation}
 \end{theorem}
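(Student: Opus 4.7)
The plan is to reduce the two-dimensional problem to a one-dimensional one on a well-chosen circle, then propagate the resulting boundary control into the interior using the convexity of $V$ near the wells encoded by $(\text{H}_2)$ and \eqref{kiv}. As a preliminary, I establish \emph{a priori} regularity: from $(\text{H}_3)$ and a maximum-principle argument applied to $|u_\eps|^2$, one obtains a uniform $L^\infty$ bound $|u_\eps| \leq R_0$ on a slightly smaller disk. The rescaling $\tilde u(y) = u_\eps(\eps y)$ then reduces the equation to $-\Delta \tilde u = -\nabla V(\tilde u)$ with bounded right-hand side, so that standard interior elliptic estimates yield the Lipschitz bound $\|\nabla u_\eps\|_\infty \leq C/\eps$ on $\D^2(7/8)$. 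Fubini's theorem then furnishes a radius $r_0 \in (13/16, 7/8)$ on which $\int_{\partial \D^2(r_0)} e_\eps(u_\eps)\,ds \leq C\upeta_0$.

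On this good circle I would exploit the Modica--Mortola inequality $\eps|u'|^2/2 + V(u)/\eps \geq \sqrt{2V(u)}\,|u'|$, the disjointness of the balls $\B^k(\upsigma_i, 2\upmu_0)$ from \eqref{kiv}, and the positive lower bound of $V$ on $\R^k \setminus \bigcup_i \B^k(\upsigma_i,\upmu_0)$. Any connected subarc on which $u_\eps$ transitions between two distinct wells, or makes an excursion of amplitude at least $\upmu_0$ away from a single well, carries 1D energy bounded below by a positive constant depending only on $V$. Choosing $\upeta_0$ small enough therefore produces a single $\upsigma \in \Sigma$ such that $|u_\eps(x) - \upsigma| \leq \upmu_0/8$ for every $x \in \partial \D^2(r_0)$.

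For propagation into the interior, I use that on $\B^k(\upsigma, 2\upmu_0)$ the convexity estimate from \eqref{kiv} gives $(u_\eps - \upsigma) \cdot \nabla V(u_\eps) \geq 0$, so a direct computation using the equation yields
$$\Delta |u_\eps - \upsigma|^2 = 2|\nabla u_\eps|^2 + 2\eps^{-2}(u_\eps - \upsigma) \cdot \nabla V(u_\eps) \geq 0.$$
Thus $|u_\eps - \upsigma|^2$ is subharmonic on the connected open set $D^\star \subset \D^2(r_0)$ containing $\partial \D^2(r_0)$ on which $u_\eps$ remains in $\B^k(\upsigma, 2\upmu_0)$, and the maximum principle propagates the boundary bound $\upmu_0/8$ throughout $D^\star$. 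The main obstacle is showing $D^\star = \D^2(r_0)$, i.e., excluding interior bubbles where $u_\eps$ would leave $\B^k(\upsigma, 2\upmu_0)$: such a bubble must have a transition layer whose 2D energy admits a lower bound uniform in $\eps$, obtained by combining the Lipschitz bound of the preliminary step with the Modica--Mortola inequality applied to radial segments crossing the boundary of the bubble. This contradicts \eqref{petitou} for $\upeta_0$ small enough and forces $D^\star = \D^2(r_0)$, whence \eqref{benkon} on $\D^2(3/4)$.

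Finally, the energy bound \eqref{engie} is derived by a Caccioppoli-type computation: testing the equation against $\eta^2 (u_\eps - \upsigma)$, where $\eta$ is a smooth cutoff equal to $1$ on $\D^2(5/8)$ and vanishing outside $\D^2(11/16)$, and combining the convexity inequality $(u_\eps - \upsigma) \cdot \nabla V(u_\eps) \geq c |u_\eps - \upsigma|^2$ (valid on $\B^k(\upsigma,2\upmu_0)$) with Young's inequality, produces
$$\int \eta^2 |\nabla u_\eps|^2 + \eps^{-2}\int \eta^2 |u_\eps - \upsigma|^2 \leq C\int_{\D^2(11/16)\setminus \D^2(5/8)} |u_\eps - \upsigma|^2.$$
The right-hand side is bounded by $C\eps\,\E_\eps(u_\eps, \D^2)$ using the quadratic lower bound $V(u_\eps) \geq c|u_\eps - \upsigma|^2$ from $(\text{H}_2)$, yielding $\E_\eps(u_\eps, \D^2(5/8)) \leq \Cnrg \, \eps\, \E_\eps(u_\eps, \D^2)$ after recombining the two left-hand side contributions.
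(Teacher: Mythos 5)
Your preliminary steps ($L^\infty$ bound by the maximum principle, gradient bound $|\nabla u_\eps|\leq C/\eps$, a good circle with controlled one-dimensional energy and a uniform bound $|u_\eps-\upsigma|\leq\upmu_0/8$ on it via a Modica--Mortola argument) all match results in the paper, and the Caccioppoli computation you propose for \eqref{engie} is essentially the one the paper uses and is sound once \eqref{benkon} is known. The difficulty is your step excluding interior bubbles, and it is where your argument has a genuine gap.

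You claim that any interior ``bubble'' where $u_\eps$ leaves $\B^k(\upsigma,2\upmu_0)$ carries a two-dimensional energy bounded below \emph{uniformly in $\eps$}, by combining the Lipschitz bound with the one-dimensional Modica--Mortola inequality on radial segments. This is false. The gradient bound $|\nabla u_\eps|\leq C/\eps$ only forces a transition layer of \emph{width} $\gtrsim\eps$; it does not force the bubble to have macroscopic size. A bubble of diameter $\sim\eps$ is compatible with the gradient bound, and integrating the per-segment $1$D transition energy $\gtrsim c$ over the rays crossing it gives, after including the Jacobian factor (distance from the bubble center), a $2$D energy $\gtrsim\eps$, not $\gtrsim 1$. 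Equivalently, on such a bubble $V(u_\eps)\geq\upalpha_0/2$ on a set of area $\sim\eps^2$, so $\int V(u_\eps)/\eps\gtrsim\eps$, which tends to $0$. Hence no choice of $\upeta_0$ \emph{independent of $\eps$} rules these bubbles out, and your maximum-principle propagation on $D^\star$ stalls: the boundary of $D^\star$ contains bubble boundaries on which $|u_\eps-\upsigma|=2\upmu_0$, so the maximum principle gives nothing useful unless you have already proved $D^\star=\D^2(r_0)$.

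This is exactly the obstacle the paper is designed to overcome, and it is where the real work lies. The paper first proves a \emph{weak} clearing-out (Proposition \ref{brioche}): if $\E_\eps(u_\eps)\leq\upeta_2\,\eps$ --- an $\eps$-dependent smallness --- then $\int V(u_\eps)\lesssim\eps^2$ is smaller than the $\gtrsim\eps^2$ contribution of any bubble, and \eqref{benkon} follows directly from the gradient bound. To reduce the fixed hypothesis $\E_\eps\leq\upeta_0$ to this $\eps$-dependent smallness, the paper iterates the decay estimate of Proposition \ref{sindec} on the dyadic disks $r_n=2^{-n}$, obtaining the super-geometric decay $\E_\eps(\D^2(r_n))\lesssim\exp[-(3/2)^n]$ down to a scale $r_{n_\eps}\gtrsim|\log\eps|^{-\gamma_1}$ where the rescaled energy satisfies the hypothesis of Proposition \ref{brioche}; a final translation-and-scaling argument transports the pointwise conclusion to every point of $\D^2(3/4)$. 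Your proposal has no analogue of this iteration, and without it the bubble-exclusion step cannot be made to work.
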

  
 The main ingredient in the proof of Theorem \ref{clearingoutth} is provided by the following  estimate:
  
  \begin{proposition}
  \label{sindec}
   Let  $0<\eps \leq 1$ and  $u_\eps$  be a solution of 
  \eqref{elipes}  on $\D^2$. There exists a constant  $\Cdec>0$ such that
   \begin{equation}
   \label{bien}
    \int_{\D^2(\frac {9}{16})}e_\eps (u_\eps) {\rm d} x \leq  \Cdec  \left[ 
    \left(\int_{\D^2} e_\eps (u_\eps){\rm d} x\right)^{\frac 32}+ 
   \eps  \int_{\D^2} e_\eps (u_\eps){\rm d} x
    \right]. 
   \end{equation}
  \end{proposition}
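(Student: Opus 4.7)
My plan is to combine three ingredients: a slicing/Fubini argument to select a good radius, the two-dimensional Pohozaev identity (whose bulk kinetic term vanishes in dimension $n=2$), and a localized Caccioppoli-type estimate near the wells. Throughout, denote $E := \Eps(u_\eps, \D^2)$. By Fubini applied to the annulus $\D^2(5/8) \setminus \D^2(9/16)$, I first pick a radius $r_0 \in (9/16, 5/8)$ such that
\[
\int_{\partial \D^2(r_0)} e_\eps(u_\eps)\, d\sigma \;\leq\; 16\, E.
\]
Testing \eqref{elipes} with the multiplier $x \cdot \nabla u_\eps$ on $\D^2(r_0)$ and carrying out the standard Pohozaev integration by parts, the bulk term $(1 - n/2)\int |\nabla u_\eps|^2 = 0$ drops out in 2D, leaving after selecting favorable signs the estimate
\[
\int_{\D^2(r_0)} \frac{V(u_\eps)}{\eps}\, dx \;\leq\; \frac{r_0}{2} \int_{\partial \D^2(r_0)} e_\eps(u_\eps)\, d\sigma \;\leq\; C\, E,
\]
which already controls the potential part of the energy in any subdisk of $\D^2(r_0)$.

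To handle the kinetic part I then secure pointwise a priori bounds. The growth condition $(\text{H}_3)$ makes $|u_\eps|^2$ subharmonic on $\{|u_\eps| \geq R_\infty\}$, so the maximum principle yields $\|u_\eps\|_{L^\infty(\D^2(3/4))} \leq M$ uniformly in $\eps$, and interior Schauder estimates applied to \eqref{elipes} give $\|\nabla u_\eps\|_{L^\infty(\D^2(5/8))} \leq C/\eps$. Combining the Pohozaev bound with the positive lower bound of $V$ outside $\bigcup_i B^k(\upsigma_i, \upmu_0)$ (consequence of $(\text{H}_1)$ and $(\text{H}_2)$), together with Chebyshev's inequality, the \emph{bad set}
\[
B \;:=\; \bigl\{x \in \D^2(r_0) \,:\, u_\eps(x) \notin \textstyle\bigcup_i B^k(\upsigma_i, \upmu_0)\bigr\}
\]
has Lebesgue measure $|B| \leq C \eps E$. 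On its complement $A = \D^2(r_0) \setminus B$, the \emph{well-projection} $\upsigma : A \to \Sigma$, sending $x$ to the unique $\upsigma_i$ with $u_\eps(x) \in B^k(\upsigma_i, \upmu_0)$, is locally constant, and $(\text{H}_2)$ yields the pointwise coercivity $(u_\eps(x) - \upsigma(x)) \cdot \nabla V(u_\eps(x)) \geq c\, V(u_\eps(x))$ for $x \in A$.

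The last step is to test \eqref{elipes} against $\phi^2(u_\eps - \upsigma(\cdot))$ for a smooth cutoff $\phi$ supported in $\D^2(r_0)$ with $\phi \equiv 1$ on $\D^2(9/16)$. On $A$ the coercivity above turns the $\eps^{-2} \nabla V$ term into a favorable absorption; on $B$ the measure bound $|B| \leq C \eps E$ together with the Schauder gradient bound contributes an $O(\eps E)$ correction, while Young's inequality handles the cross-terms. The sharp exponent $3/2$ emerges after Cauchy--Schwarz and a 2D Gagliardo--Nirenberg-type interpolation between the $L^1$ estimate $\int_A V(u_\eps)/\eps \leq C E$ and the uniform pointwise bound $|u_\eps - \upsigma| \leq \upmu_0$ on $A$, yielding the announced inequality $\int_{\D^2(9/16)} e_\eps(u_\eps)\, dx \leq \Cdec(E^{3/2} + \eps E)$. \emph{The main obstacle} is the rigorous execution of this last step: the shift $\upsigma(\cdot)$ is only piecewise constant on $A$, so testing against $\phi^2(u_\eps - \upsigma)$ generates distributional terms on the interfaces where $\upsigma$ switches wells; these interfaces lie in $\overline{A}$ adjacent to $B$ and must be controlled via the two-dimensional Modica--Mortola-type inequality $e_\eps(u_\eps) \geq |\nabla u_\eps|\sqrt{2 V(u_\eps)}$ in order to avoid a spurious linear-in-$E$ remainder and recover the sharp $3/2$-exponent.
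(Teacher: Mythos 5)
Your strategy correctly identifies the ingredients that \emph{do} appear in the paper (Pohozaev with vanishing bulk in $2$D, the gradient bound $\vert\nabla u_\eps\vert\leq C/\eps$, the coercivity of $V$ near the wells, and a multiplier of the form $u_\eps-\upsigma$), but there is a genuine gap exactly where the exponent $3/2$ is supposed to appear, and the mechanism you gesture at (Cauchy--Schwarz plus a ``Gagliardo--Nirenberg-type interpolation'' between $\int V/\eps\leq CE$ and the pointwise bound $\vert u_\eps-\upsigma\vert\leq\upmu_0$) cannot deliver it: interpolating an $L^1$ bound linear in $E$ against an $\eps$- and $E$-\emph{independent} $L^\infty$ constant only reproduces a bound linear in $E$, never a superlinear one. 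Likewise the cross term from your Caccioppoli test, $\int\phi\nabla\phi\cdot\nabla u_\eps\,(u_\eps-\upsigma)$, is at best controlled by $\int\vert\nabla u_\eps\vert\sqrt{V(u_\eps)}\leq E$ via the Modica--Mortola inequality, again linear. So the plan as stated, even setting aside the interface/jump issue you flag, proves only $\int_{\D^2(9/16)} e_\eps \leq CE$, which is trivially true and not the claimed inequality.

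What the paper does, and what is missing from your proposal, is to replace the Euclidean cutoff by a \emph{level-set} cutoff. One multiplies the equation by $u_\eps-\upsigma_i$ and integrates not over a smaller disk but over the sublevel set $\Upsilon_{\eps,i}(\varrho,\upkappa)=\{\vert u_\eps-\upsigma_i\vert\leq\upkappa\}$, whose boundary $\Gamma_{\eps,i}$ is exactly the level curve $\vert u_\eps-\upsigma_i\vert=\upkappa$. On that curve $u_\eps-\upsigma_i$ has magnitude \emph{exactly} $\upkappa$, so the boundary flux term carries a prefactor $\upkappa$ (Lemma \ref{eliot}), and the remaining co-normal derivative integral is controlled by $\int V/\eps$ via the coarea formula and the gradient bound (Lemmas \ref{claudius}, \ref{denada}). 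This yields Proposition \ref{kappacity}: $\int_{\Upsilon_\eps(\varrho,\upkappa)}e_\eps\leq C\left[\upkappa\int V/\eps+\eps\int_{\partial\D^2(\varrho)}e_\eps\right]$. Choosing $\upkappa\sim\sqrt{E}$ then gives $E^{3/2}+\eps E$ for the energy \emph{on the thin tube}. The second missing step is the bootstrap: Lemma \ref{remoyen} selects a circle lying entirely inside $\Upsilon_\eps(\varrho,\upkappa_\eps)$ (this is where Proposition \ref{jarre}, controlling the measure of good radii, is essential), so that $\int_{\S^1(\uptau_\eps)}e_\eps$ is bounded by the tube energy, and Pohozaev on that circle improves $\int_{\D^2}V/\eps$ from $O(E)$ to $O(E^{3/2}+\eps E)$ (Proposition \ref{propsac}). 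Only then does the global Caccioppoli estimate with the fixed radius $\upkappa=\upmu_0/4$ (Proposition \ref{bornepoting}) close the loop. Without the level-set geometry that produces the extra factor $\upkappa\sim\sqrt{E}$ and without the bootstrap from tube energy to improved potential estimate, your route cannot reach the $3/2$ exponent.
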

  
  Proposition \ref{sindec} is perhaps the main new ingredient provided by the present paper: When both $\E_\eps(u_\eps)$ and $\eps$ are small, it provides a fast decay of  the energy on smaller balls.

 \smallskip
  
  Combining the result \eqref{bien} of proposition \ref{sindec}  with the scale invariance properties of the equation given in  subsection \ref{squale},  we obtain corresponding results for arbitrary discs $\D^2(x_0, r)$.  Indeed, applying  Proposition \ref{sindec}  to the map $\tilde u_{\eps}$ with parameter $\tilde \eps$ and  expressing the corresponding inequlity \eqref{bien}  back by scale invariance  in terms of the original map $u_\eps$, we are led, provide $\eps \leq  r$, to the inequality
  \begin{equation}
  \label{bienscale}
 \E_\eps\left(u_\eps, \D^2\left(x_0, \frac{9r}{16}\right)\right)  \leq  \Cdec
  \left[\frac{1} {\sqrt r}\, 
    {\left(\E_\eps \left(u_\eps, \D^2(x_0, r)\right) \right)}^{\frac 32}+ 
  \frac{ \eps}{r} \E_\eps \left(u_\eps, \D^2(x_0, r)\right)
    \right]. 
    \end{equation}

 Iterating this decay estimate  on concentric discs centered at $x_0$, and combinig with elementary properties of the solution $u_\eps$, we eventually obtain  the proof of Theorem \ref{clearingoutth}.

 \smallskip
 Invoking once more  the scale invariance properties of the equation given in  subsection \ref{squale}, the scaled version of Theorem \ref{clearingoutth} writes then  as follows: 
 
 \begin{proposition} 
 \label{cestclair}
 Let $x_0 \in \Omega$ and $0<r \leq \eps$ be given,  assume that $\D^2(x_0, r) \subset \Omega$ and let $u_\eps$ be a solution of \eqref{elipes} on $\Omega$. If
 \begin{equation}
 \label{donnez}
 \frac{\E_\eps\left (u_\eps, \D^2\left(x_0, r\right)\right)}{r} \leq  \upeta_0, 
 \end{equation}
 then there exist some $\upsigma \in \Sigma$ such that 
  \begin{equation}
  \label{benkono}
  \left\{
  \begin{aligned}
  \vert u_\eps(x)-\upsigma  \vert & \leq \frac{\upmu_0}{2}, {\rm \ for \ }  x \in \D^2(x_0, \frac{3r}{4}) {\rm \ and }\\\
 \E_\eps\left(u_\eps, \D^2\left(x_0, \frac{ 5r}{8}\right)\right)& \leq \Cnrg \, \frac{\eps}{r} E_\eps\left(u_\eps, \D^2\left(x_0, r\right)\right).
  \end{aligned}
  \right.
  \end{equation}
\end{proposition}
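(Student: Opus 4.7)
The proof is essentially a direct application of the scale invariance properties of equation \eqref{elipes} spelled out in subsection \ref{squale}, together with Theorem \ref{clearingoutth}. The plan is to rescale the disk $\D^2(x_0, r)$ to the unit disk, verify that the rescaled map satisfies the hypotheses of Theorem \ref{clearingoutth}, apply that theorem on $\D^2$, and then translate the conclusions back.

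Concretely, I would set $\tilde\eps = \eps/r$ and introduce the scaled map
\begin{equation*}
\tilde u_\eps(x) = u_\eps(rx + x_0), \qquad x \in \D^2.
\end{equation*}
Since by hypothesis $\eps \leq r$ (reading the inequality $0 < r \leq \eps$ of the statement as the natural scaling constraint $0 < \eps \leq r$ consistent with \eqref{bienscale}), one has $0 < \tilde\eps \leq 1$, and as recalled in subsection \ref{squale} the map $\tilde u_\eps$ is a solution of \eqref{elipes} on $\D^2$ for the parameter $\tilde\eps$. Using the integral scaling identity \eqref{scalingv}, the energy hypothesis \eqref{donnez} translates into
\begin{equation*}
\E_{\tilde\eps}(\tilde u_\eps, \D^2) \;=\; \frac{1}{r}\,\E_\eps\bigl(u_\eps, \D^2(x_0, r)\bigr) \;\leq\; \upeta_0,
\end{equation*}
which is exactly the smallness assumption \eqref{petitou} of Theorem \ref{clearingoutth} applied to $\tilde u_\eps$.

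Theorem \ref{clearingoutth} then yields the existence of $\upsigma \in \Sigma$ such that $|\tilde u_\eps(y) - \upsigma| \leq \upmu_0/2$ for every $y \in \D^2(3/4)$, together with the energy bound $\E_{\tilde\eps}(\tilde u_\eps, \D^2(5/8)) \leq \Cnrg\,\tilde\eps\,\E_{\tilde\eps}(\tilde u_\eps, \D^2)$. To conclude, I would write any $x \in \D^2(x_0, 3r/4)$ as $x = ry + x_0$ with $y \in \D^2(3/4)$, so that $u_\eps(x) = \tilde u_\eps(y)$ and the pointwise estimate in \eqref{benkono} follows. Applying the scaling identity \eqref{scalingv} once more to the energy bound, both on $\D^2(5/8)$ and on $\D^2$, produces the factor $1/r$ on each side, and these two factors combine to yield
\begin{equation*}
\E_\eps\bigl(u_\eps, \D^2(x_0, 5r/8)\bigr) \;\leq\; \Cnrg\,\frac{\eps}{r}\,\E_\eps\bigl(u_\eps, \D^2(x_0, r)\bigr),
\end{equation*}
as required.

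There is no real obstacle here: once Theorem \ref{clearingoutth} is available the argument is purely a book-keeping of the scaling relations \eqref{scaling0}--\eqml{scalingv}. The only subtlety worth double-checking is the range of the rescaled parameter $\tilde\eps$, which is precisely the condition $\eps \leq r$ that makes Theorem \ref{clearingoutth} applicable; everything else is a direct substitution.
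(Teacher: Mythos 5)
Your argument is correct and is exactly the intended one: the paper itself states only that ``the proof of Proposition~\ref{cestclair} is straightforward,'' and the straightforward route is precisely the rescaling to the unit disk, the application of Theorem~\ref{clearingoutth}, and the translation of both conclusions back via~\eqref{scalingv}. You were also right to read the hypothesis $0<r\leq\eps$ as the obvious typo for $0<\eps\leq r$, since this is the condition guaranteeing $\tilde\eps=\eps/r\leq 1$, without which Theorem~\ref{clearingoutth} cannot be invoked.
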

 The proof of Proposition \ref{cestclair} is straightforward. 
  Passing to the limit $\eps \to 0$, Proposition \ref{cestclair} yields rather directly a proof to Theorem \ref{claire}. 

\smallskip
The proof of  Theorem \ref{bordurer} requires some slightly different argument. The main step, at the $\eps$-level, is provided by Proposition \ref{pave}.

\subsection{Plan of the paper}
  This paper is organized as follows.   The next two section are devoted to preliminary results paving the way to the proofs of the main results: Section \ref{engie2} presents some consequences of the energy bound, starting with  estimates  on one-dimensional sets, as well as consequences of the co-area formula, whereas Section \ref{pde} presents   properties,  including standard ones,  of the PDE \eqref{elipes}. For a large part, in both parts,  special  emphasis is put on energy estimates on  \emph{level sets}.  Section \ref{challengedata} presents the proof of Proposition \ref{sindec}.  In Section \ref{solde}, we provide the proof of Theorem \ref{clearingoutth}.  Section \ref{sectionsix} provides properties of the set $\mathfrak S_\star$. The proof of the main result is completed in Section \ref{proofmain}.

   \numberwithin{theorem}{section} \numberwithin{lemma}{section}
\numberwithin{proposition}{section} \numberwithin{remark}{section}
\numberwithin{corollary}{section}
\numberwithin{equation}{section}

 \section{First consequences of the  energy bounds}   
 \label{engie2} 
    The next results are  based on an idea of Modica and Mortola \cite{mortadela} adapted to the vectorial case in \cite{baldo, fontar}. The results in this section  apply to maps having a suitable bound   on there  energy $\Eps$, of the type of the bound \eqref{naturalbound}. They \emph{do not involve the PDE}. We stress in particular $BV$ type bounds obtained under these energy bound.  
\subsection {Properties of the potential}
\label{potentiel}
 It follows  from the definition of $\upmu_0$ and  property \eqref{kiv}  that we have the following behavior near the points of $\Sigma$:
 
\begin{proposition} 
\label{potto}
 For any $i=1, \ldots, q$  and any   $y \in \B^k(\upsigma_i, 2\upmu_0)$, we have the local bound
\begin{equation}
\label{glutz}
\left\{
\begin{aligned}
\frac{1}{4}\lambda_i^-\vert y-\upsigma_i \vert^2 & \leq V(y) \leq  \lambda_i^+\vert y-\upsigma_i \vert^2
  \\
  \frac{1}{2}\lambda_i^-\vert y-\upsigma_i \vert^2 & \leq  \nabla V(y) \cdot (y-\upsigma_i) \leq  2\lambda_i^+\vert y-\upsigma_i \vert^2, 
    \end{aligned}
  \right.
 \end{equation}   
  Setting $\lambda_0=\inf\{\lambda_i^-, i=1, \ldots, q\}$, we may assume, choosing possibly en even smaller constant $\upmu_0$,  that 
   \begin{equation}
   \label{extrut}
   V(y) \geq  \upalpha_0\equiv\frac{1}{2} \lambda_0 \upmu_0^2 {\rm \ on \ } 
   \R^k  \setminus  \underset{i=1}{\overset {q} \cup} \B^k(\upsigma_i, \upmu_0). 
  \end{equation}
  
  \end{proposition}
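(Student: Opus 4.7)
The plan is to derive \eqref{glutz} from second-order Taylor expansion around each minimizer, combined with the Hessian control already secured by \eqref{kiv}. The key preliminary observation is that for $\upsigma_i\in\Sigma$ one has simultaneously $V(\upsigma_i)=0$ (since $\inf V=0$ and $\upsigma_i$ is a minimizer) and $\nabla V(\upsigma_i)=0$ (since $\upsigma_i$ is a critical point). For any $y\in\B^k(\upsigma_i,2\upmu_0)$, the segment $[\upsigma_i,y]$ is contained in $\B^k(\upsigma_i,2\upmu_0)$ by convexity, so the fundamental theorem of calculus applied twice gives the integral Taylor representations
\begin{equation*}
V(y)=\int_0^1(1-t)\,\nabla^2 V\bigl(\upsigma_i+t(y-\upsigma_i)\bigr)(y-\upsigma_i)\cdot(y-\upsigma_i)\,dt,
\end{equation*}
\begin{equation*}
\nabla V(y)\cdot(y-\upsigma_i)=\int_0^1 \nabla^2 V\bigl(\upsigma_i+t(y-\upsigma_i)\bigr)(y-\upsigma_i)\cdot(y-\upsigma_i)\,dt.
\end{equation*}
Inserting the bilinear bounds $\tfrac12\lambda_i^-|w|^2\le \nabla^2 V(z)w\cdot w\le 2\lambda_i^+|w|^2$ from \eqref{kiv} and using the elementary values $\int_0^1(1-t)\,dt=1/2$ and $\int_0^1 dt=1$ produces the four inequalities of \eqref{glutz}.

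For the global lower bound \eqref{extrut}, I would split the exterior set $\R^k\setminus\bigcup_{i=1}^q\B^k(\upsigma_i,\upmu_0)$ into three regions. On each annulus $\B^k(\upsigma_i,2\upmu_0)\setminus\B^k(\upsigma_i,\upmu_0)$, the first estimate of \eqref{glutz} gives $V(y)\ge \tfrac14\lambda_0\upmu_0^2$, since $|y-\upsigma_i|\ge\upmu_0$ and $\lambda_i^-\ge\lambda_0$. On the compact set $\{|y|\le R_\infty\}\setminus\bigcup_i\B^k(\upsigma_i,2\upmu_0)$, continuity of $V$ combined with assumption $\hzero$—which forces $\Sigma$ to be exactly the zero set of $V$—gives a uniform positive lower bound $c_K>0$. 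Finally, on $\{|y|>R_\infty\}$ the growth condition \eqref{condinfty} of $(H_3)$ produces a lower bound $c_\infty>0$ (indeed $V\to+\infty$ there). Taking $\upmu_0$ smaller if necessary so that the constant $\upalpha_0$ of the claimed form is dominated by $\min(c_K,c_\infty)$, and absorbing the annular constant into the same $\upalpha_0$, yields \eqref{extrut}.

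There is no genuine obstacle: the proof is an exercise in Taylor's theorem and compactness. The only mild points of care are (i) ensuring the Taylor segment stays inside $\B^k(\upsigma_i,2\upmu_0)$, which is automatic by convexity, and (ii) choosing the final value of $\upmu_0$ small enough that the elementary annular constant dominates the asserted $\upalpha_0$, which is explicitly permitted by the statement. Both are routine.
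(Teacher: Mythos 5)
Your Taylor-with-integral-remainder derivation of \eqref{glutz} is correct and is precisely the ``straightforward integration of \eqref{kiv}'' the paper alludes to: the two representations you write down, combined with the bilinear bounds $\tfrac12\lambda_i^-|w|^2\le \nabla^2 V(z)w\cdot w\le 2\lambda_i^+|w|^2$ valid on $\B^k(\upsigma_i,2\upmu_0)$ and the values $\int_0^1(1-t)\,dt=\tfrac12$, $\int_0^1 dt=1$, give exactly the four inequalities. Your region split for \eqref{extrut} (annuli, compact intermediate region, far field) is also the right one, and the compactness argument on $\{|y|\le R_\infty\}\setminus\bigcup_i\B^k(\upsigma_i,2\upmu_0)$ works because $\Sigma$ is by definition the zero set of $V$ and $\inf V=0$ (there is no assumption $(\text{H}_0)$ in the paper; what you invoke is simply the definition of $\Sigma$ together with \eqref{infimitude}).

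The gap is in the last step. On the annulus $\B^k(\upsigma_i,2\upmu_0)\setminus\B^k(\upsigma_i,\upmu_0)$, the first estimate of \eqref{glutz} gives only $V(y)\ge\tfrac14\lambda_0\upmu_0^2$ at the worst point $|y-\upsigma_i|=\upmu_0$, whereas the claim requires $V(y)\ge\upalpha_0=\tfrac12\lambda_0\upmu_0^2$. You propose to repair this by ``absorbing the annular constant into the same $\upalpha_0$'' after shrinking $\upmu_0$, but shrinking $\upmu_0$ cannot help here: both the annular lower bound $\tfrac14\lambda_0\upmu_0^2$ and the target $\tfrac12\lambda_0\upmu_0^2$ scale identically in $\upmu_0$, so the deficit is a fixed factor of $2$ for every choice of $\upmu_0$. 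Shrinking $\upmu_0$ only serves to dominate the constants $c_K,c_\infty$ coming from the other two regions. So as written, your argument proves $V\ge\tfrac14\lambda_0\upmu_0^2$ off the balls, not $V\ge\tfrac12\lambda_0\upmu_0^2$. It is worth noting that this factor of $2$ appears to be a slip in the paper's own statement: the bound $V(u_\eps)\ge\tfrac{\upalpha_0}{16}$ used in the proof of Lemma \ref{bornepote} on the set where $|u_\eps-\upsigma_i|\ge\tfrac{\upmu_0}{4}$ is consistent with $\upalpha_0=\tfrac14\lambda_0\upmu_0^2$ (since $\tfrac14\lambda_0\bigl(\tfrac{\upmu_0}{4}\bigr)^2=\tfrac{\lambda_0\upmu_0^2}{64}=\tfrac{\upalpha_0}{16}$), not with $\upalpha_0=\tfrac12\lambda_0\upmu_0^2$. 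So the intended constant was presumably $\tfrac14$, and your derivation is sound; but you should have flagged that the stated $\tfrac12$ is unreachable by this method rather than asserting the mismatch can be absorbed.
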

  The proof relies on a straightforward integration of \eqref{kiv} an we therefore omit it .
 Proposition \ref{potto} hence shows that  the potential   $V$  essentially behaves  as  a quadratic potential near points of the vacuum manifolds $\Sigma$.   This will be used throughout as a guiding thread. Proposition \ref{potto}  leads to a first  elementary observation: 
  
  \begin{lemma}
  \label{watson} Let $ y \in \R^k$ be such that $V(y)<\upalpha_0$. Then there exists some point $\upsigma\in \Sigma$ such that 
  $$\displaystyle{ \vert y-\upsigma  \vert \leq \upmu_0.}$$
  Moreover, we have the upper bound
  $$ \vert y-\upsigma  \vert  \leq \sqrt{ 4\lambda_0^{-1} V(y)}.$$
  \end{lemma}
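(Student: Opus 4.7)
The proof is essentially a direct unpacking of the two assertions already recorded in Proposition~\ref{potto}. The plan is to use \eqref{extrut} to locate $y$ in one of the balls $\B^k(\upsigma_i,\upmu_0)$, and then to read off the quantitative bound from the quadratic lower bound in \eqref{glutz}.

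First, by the contrapositive of \eqref{extrut}, any point $y$ with $V(y)<\upalpha_0$ must lie in $\bigcup_{i=1}^q \B^k(\upsigma_i,\upmu_0)$. Picking $i$ so that $y\in \B^k(\upsigma_i,\upmu_0)$ and setting $\upsigma=\upsigma_i\in\Sigma$, we immediately obtain $|y-\upsigma|\leq \upmu_0$, which is the first assertion.

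Next, because $\upmu_0<2\upmu_0$, the same $y$ also lies in $\B^k(\upsigma_i,2\upmu_0)$, so the left inequality of the first line in \eqref{glutz} applies and yields
\begin{equation*}
\frac{1}{4}\lambda_i^-|y-\upsigma|^2 \leq V(y).
\end{equation*}
Using $\lambda_i^-\geq \lambda_0$ and rearranging gives $|y-\upsigma|\leq \sqrt{4\lambda_0^{-1} V(y)}$, which is exactly the second assertion.

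There is no real obstacle here: both claims are direct consequences of the quantitative information on $V$ near $\Sigma$ already packaged in Proposition~\ref{potto}. The only mild point to check is the consistency of $\upalpha_0=\tfrac{1}{2}\lambda_0\upmu_0^2$ with the quadratic bound at radius $\upmu_0$, which is built into the choice of $\upalpha_0$ made in \eqref{extrut}.
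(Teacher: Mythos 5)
Your proof is correct, and it fills in the argument that the paper leaves implicit (Lemma~\ref{watson} is stated without proof, presented as an ``elementary observation'' following Proposition~\ref{potto}). You use precisely the intended route: the contrapositive of \eqref{extrut} locates $y$ in some $\B^k(\upsigma_i,\upmu_0)$, and then the lower quadratic bound in \eqref{glutz} together with $\lambda_i^-\geq\lambda_0$ gives $|y-\upsigma_i|\leq\sqrt{4\lambda_0^{-1}V(y)}$.
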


  We next turn to the behavior at infinity. For that purpose, we introduce the radius 
\begin{equation}
\label{grandr0}
{\rm R}_0=\sup\{\vert \upsigma  \vert  , \upsigma \in \Sigma\}
\end{equation}
On study the properties of $V$ on the  set  $\R^k\setminus \B^k(2{\rm R}_0)$.

\begin{proposition} 
\label{barre}
 There exists a constant $\upbeta_\infty >0$ such that 
\begin{equation}
\label{upbetainfty}
V(y) \geq \upbeta_\infty \vert y \vert^2  {\rm \ for \ any \ } y {\rm \ such \ that \ }  \vert y \vert  \geq  2 {\rm R}_0.  
\end{equation}
\end{proposition}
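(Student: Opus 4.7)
The plan is to combine the radial gradient estimate from hypothesis $(\text{H}_3)$ with a compactness argument on an intermediate annulus. The hypothesis only gives useful information for $|y|>R_\infty$, and a priori $R_\infty$ may be much larger than $2{\rm R}_0$, so both ingredients are genuinely needed.

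First I would exploit the pointwise bound $y\cdot\nabla V(y)\geq\upalpha_\infty|y|^2$ for $|y|>R_\infty$ by writing $y=r\omega$ with $\omega\in\S^{k-1}$ and observing that
\begin{equation*}
\frac{\rd}{\rd r}V(r\omega)=\omega\cdot\nabla V(r\omega)=\frac{1}{r}(r\omega)\cdot\nabla V(r\omega)\geq \upalpha_\infty r,\qquad r>R_\infty.
\end{equation*}
Integrating from $R_\infty$ to $r\geq R_\infty$ and using $V\geq 0$ yields $V(r\omega)\geq \frac{\upalpha_\infty}{2}(r^2-R_\infty^2)$. Consequently, for $r\geq\sqrt{2}R_\infty$ we obtain $V(y)\geq\frac{\upalpha_\infty}{4}|y|^2$.

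Next I would treat the intermediate annulus $A=\{y\in\R^k:2{\rm R}_0\leq|y|\leq R_1\}$ with $R_1=\max(2{\rm R}_0,\sqrt{2}R_\infty)$. Since $\Sigma\subset \overline{\B^k({\rm R}_0)}$ by definition of ${\rm R}_0$, no zero of $V$ lies in $A$, so $V$ is strictly positive on the compact set $A$. By continuity, there exists $c>0$ such that $V\geq c$ on $A$. In particular $V(y)\geq c\geq \frac{c}{R_1^{2}}|y|^2$ for every $y\in A$.

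Combining the two regimes, it suffices to take
\begin{equation*}
\upbeta_\infty=\min\!\left(\frac{\upalpha_\infty}{4},\ \frac{c}{R_1^{2}}\right)>0
\end{equation*}
to conclude \eqref{upbetainfty}. No step is particularly difficult here; the only point requiring a modicum of care is to make sure the two regimes cover the entire half-line $\{|y|\geq 2{\rm R}_0\}$, which is precisely why $R_1$ is taken as the maximum of $2{\rm R}_0$ and $\sqrt{2}R_\infty$. I would not expect any genuine obstacle.
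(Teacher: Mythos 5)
Your proof is correct and follows essentially the same route as the paper: both integrate the radial condition from $(\text{H}_3)$ to get a quadratic lower bound for large $|y|$, then handle the intermediate compact annulus $\{2{\rm R}_0\leq|y|\leq R_1\}$ (where $V$ has no zeros, as $\Sigma\subset\overline{\B^k({\rm R}_0)}$) by compactness, and take the minimum of the two constants. The only cosmetic difference is that the paper records the integrated bound as $V(y)\geq\frac{\upalpha_\infty}{2}|y|^2-C_\infty$ globally and then chooses the cutoff radius from $C_\infty$, whereas you keep the bound in the form $V(r\omega)\geq\frac{\upalpha_\infty}{2}(r^2-R_\infty^2)$ and cut off at $\sqrt2\,R_\infty$.
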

\begin{proof}
Integrating assumption ${\rm H}_3$ we obtain that, for some constant $C_\infty>0$, we have
   \begin{equation}
   \label{cahors}
   V(y) \geq \frac{\upalpha_\infty  \vert y \vert^2}{2}-C_{\infty}, {\rm \ for \ any \ } y \in \R^k. 
   \end{equation}
It follows that
\begin{equation}
\label{tramoo}
 V(y) \geq  \frac{\upalpha_\infty  \vert y \vert^2}{4}, {\rm \ provided   \  } \vert y \vert  \geq  {\rm  R'}_0 \equiv \sup \left\{ 2\sqrt{\frac{C_\infty}{\upalpha_\infty}},\,  4R_0\right\}.
 \end{equation}
On the other hand, by assumption
$$\frac{V(y)}{\vert y \vert^2} >0 {\rm \ for \ } y \in \overline{\B^k({\rm R'}_0)\setminus \B^k(2{\rm R}_0)}, $$
so that, by compactness, we deduce that there exist some constant $\upalpha'_\infty>0$, such that
$$V(y)  \geq  \upalpha'_\infty \vert y \vert^2   {\rm \ for \ } y \in \overline{\B^k(2{\rm R'}_0)\setminus \B^k(2{\rm R}_0)}. $$
Combining the last inequality with \eqref{tramoo},  the conclusion follows, choosing 
$\displaystyle{\upbeta=\inf \{ \frac{\upalpha_\infty}{4}, \upbeta'_\infty\}  }$.
\end{proof}
\subsection{Modica-Mortola type inequalities}  
\label{momo} 
   
  Let $\upsigma_i$ be an arbitratry  element in $\Sigma$.   We consider  the function  $\chi_i: \R^k \to \R^+$  defined by
  $$ \chi_{_i}(y)=\varphi (\vert y-\upsigma_i  \vert) {\rm \ for \ } y \in \R^k, $$
where $\varphi$ denotes a function  $\varphi : [0, +\infty [ \to \R^+ $ such that $0\leq \varphi'  \leq 1 $  and 
    $$  \varphi(t)=t {\rm \ if \  } 0\leq t \leq {\upmu_0}  {\rm \ and \ }  \varphi (t)=\frac{5\upmu_0 }{4}
     {\rm \ if \  }  t\geq \upmu_0.$$
  Given a function $u: \Omega \to \R^k$ we finally define the \emph{scalar} function $w_i$  on $\Omega$  as
  \begin{equation}
  \label{doublevi}
    w_i(x)=\chi_i ( u(x)), \forall x  \in \Omega. 
 \end{equation}
 First properties of the map $w_i$ are summarized in the next Lemma.

   \begin{lemma} Let $w_i$ be as above. We have    
   \begin{equation}
      \label{monster}
      \left\{
      \begin{aligned}
   w_i (x)&=\vert u(x)-\upsigma_i  \vert,   {\rm \ if \ }\vert u(x)-\upsigma_i \vert \leq  \frac{\upmu_0}{2},\\
    w_i(x)&= \frac{3\upmu_0 }{4},   {\rm \ hence  \  }  \nabla w_i= 0 \ {\rm \ if \ }\vert u(x)-\upsigma_i \vert  \geq \upmu_0, \\
    \vert \nabla w^i \vert &\leq \vert \nabla u \vert {\rm \  on  \ }  \Omega, 
    \end{aligned}
    \right.
    \end{equation}
and 
 \begin{equation}
      \label{bvbound}
    \vert   \nabla (w_i)^2\vert
     \leq 4\sqrt{\lambda_0}^{-1} J (u) (x), 
           \end{equation}
            where we have set 
           \begin{equation}
           \label{jetski}
           J(u)= \vert \nabla u \vert \sqrt{V(u)}.
           \end{equation}
\end{lemma}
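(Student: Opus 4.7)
The plan is to reduce both claims to pointwise computations via the chain rule applied to the composition $w_i = \varphi(|u - \sigma_i|)$, using as the sole analytic input the quadratic lower bound on $V$ near the wells provided by Proposition \ref{potto}. The unifying observation is that $\nabla w_i$ is automatically supported in the set $\{|u - \sigma_i| \leq \mu_0\}$, which is exactly the region where $V$ is comparable to $|u - \sigma_i|^2$.

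For the three statements in \eqref{monster}, I would argue as follows. The first identity is immediate because $\varphi$ coincides with the identity on $[0, \mu_0]$, so on $\{|u - \sigma_i| \leq \mu_0/2\}$ we have $w_i = |u - \sigma_i|$. The second identity follows since $\varphi$ is constant on $[\mu_0, \infty)$, forcing $\nabla w_i = 0$ there. For the gradient inequality, the chain rule gives, at any point where $u \neq \sigma_i$,
$$
\nabla w_i = \varphi'\bigl(|u - \sigma_i|\bigr)\, \frac{(u - \sigma_i)\cdot \nabla u}{|u - \sigma_i|},
$$
and the bound $|\nabla w_i| \leq |\nabla u|$ follows from Cauchy--Schwarz and $0 \leq \varphi' \leq 1$. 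The residual set $\{u = \sigma_i\}$ is handled either by noting that $w_i$ is $1$-Lipschitz in the variable $u$, or by approximating $|\cdot|$ by $\sqrt{|\cdot|^2 + \delta^2}$ and passing to the limit.

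For the BV-type bound \eqref{bvbound}, I would expand $\nabla(w_i^2) = 2 w_i \nabla w_i$ and split according to whether $|u - \sigma_i|$ lies above or below $\mu_0$. Where $|u - \sigma_i| > \mu_0$, one has $\varphi'=0$ and $\nabla w_i = 0$, so the estimate is trivial. Where $|u - \sigma_i| \leq \mu_0$, the relation $\varphi(0)=0$ together with $\varphi' \leq 1$ yields $w_i \leq |u - \sigma_i|$, and Proposition \ref{potto} applies to give $V(u) \geq \tfrac{1}{4} \lambda_0 |u - \sigma_i|^2$, i.e.\ $|u - \sigma_i| \leq 2 \lambda_0^{-1/2} \sqrt{V(u)}$. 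Combining these with $|\nabla w_i| \leq |\nabla u|$ from the first part yields
$$
\bigl|\nabla(w_i^2)\bigr| \leq 2 w_i |\nabla u| \leq 2 |u - \sigma_i| \, |\nabla u| \leq 4 \lambda_0^{-1/2} \sqrt{V(u)}\, |\nabla u| = 4 \lambda_0^{-1/2} J(u),
$$
as claimed. The only potential obstacle is the classical non-differentiability of $|u - \sigma_i|$ at the zero set of $u - \sigma_i$, but on that set $w_i^2$ coincides with $|u - \sigma_i|^2$, which is $C^2$, so the chain rule applies pointwise without qualification and the estimate holds everywhere in $\Omega$.
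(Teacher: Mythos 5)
Your proof is correct and follows essentially the same route as the paper: reduce to the case $|u - \upsigma_i| \leq \upmu_0$ (since $\nabla w_i = 0$ otherwise), bound $w_i \leq |u - \upsigma_i|$, invoke the quadratic lower bound $V(u) \geq \tfrac14 \lambda_0 |u - \upsigma_i|^2$ from the behavior of $V$ near the wells, and combine with $|\nabla w_i| \leq |\nabla u|$. The only cosmetic difference is that you cite Proposition \ref{potto} directly where the paper invokes Lemma \ref{watson}, but the latter is itself a restatement of the same estimate, so the analytic input is identical.
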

\begin{proof}
Properties \eqref{monster} is a straightforward consequence of the definition \eqref{doublevi}. For \eqref{bvbound}, we  notice that, in view of \eqref{monster}, we may restrict ourselves to the case $u(x) \in \B^k(\upsigma_i, \upmu_0)$, since otherwise $\nabla w_i=0$, and inequality \eqref{bvbound} is hence straightforwardly satisfied. In that case, it follows from \eqref{watson}, we have 
$$
 \vert  w_i (x) \vert \leq  \vert u(x)-\upsigma_i\vert \leq  \sqrt{ 4\lambda_0^{-1} V(u(x))},  {\rm \ for \ all \ } x   {\rm \ such \ that \ }  u(x) \in \B^k(\upsigma_i, \upmu_0), 
$$
 so that 
 \begin{equation}
      \label{bvbounda}
    \vert   \nabla (w_i)^2(x)\vert=2\left\vert w_i  (x) \right\vert.
    \left\vert \nabla \left \vert  w_i (x) \right\vert \right \vert \,  
    \leq  2 \vert \nabla u \vert \sqrt{ 4\lambda_0^{-1} V(u(x))} 
     \leq 4\sqrt{\lambda_0}^{-1} J (u) (x), 
           \end{equation}
 and the proof  is complete.
  \end{proof}

\begin{lemma} 
\label{ab0}
 We have, for any $x \in \Omega$, the inequality
   \begin{equation}
          \label{ab}
         J(u(x)) 
        \leq 
         e_\eps(u(x)). 
\end{equation}
\end{lemma}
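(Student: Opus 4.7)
The plan is to apply the elementary AM--GM (or Young's) inequality $2ab \leq a^2 + b^2$ with a judicious choice of $a$ and $b$ designed to produce $J(u)$ on the left and $e_\eps(u)$ on the right, so that the statement reduces to a purely algebraic pointwise computation (no use of the PDE, no smallness, no restriction to $\B^k(\upsigma_i,\upmu_0)$ needed).

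Concretely, I would set $a=\sqrt{\eps}\,|\nabla u(x)|$ and $b=\sqrt{V(u(x))/\eps}$. Then $ab=|\nabla u(x)|\sqrt{V(u(x))}=J(u(x))$, while
\begin{equation*}
\frac{a^2}{2}+\frac{b^2}{2}=\frac{\eps|\nabla u(x)|^2}{2}+\frac{V(u(x))}{2\eps}\leq \frac{\eps|\nabla u(x)|^2}{2}+\frac{V(u(x))}{\eps}=e_\eps(u(x)).
\end{equation*}
The inequality $ab\leq \tfrac{a^2}{2}+\tfrac{b^2}{2}$ then yields $J(u(x))\leq e_\eps(u(x))$, which is the claim. (In fact one obtains the slightly sharper bound $J(u(x))\leq \tfrac{1}{\sqrt{2}}e_\eps(u(x))$, but the weaker form \eqref{ab} is all that is needed.)

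There is no real obstacle here; this is the classical Modica--Mortola trick, which is the whole motivation for the definition of the energy density $e_\eps$ (it is designed so that the geometric quantity $J(u)$ arising from the co-area formula is controlled pointwise by $e_\eps$). The only remark worth making is that the inequality is scale-invariant in the sense of Section~\ref{squale}: both $J$ and $e_\eps$ transform by the same factor $r$ under the rescaling $\tilde u_\eps(x)=u_\eps(rx+x_0)$, so \eqref{ab} is preserved when passing to rescaled solutions.
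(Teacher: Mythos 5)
Your proof is correct and is essentially identical to the paper's: both write $J(u)=\bigl(\sqrt{\eps}\,|\nabla u|\bigr)\cdot\sqrt{\eps^{-1}V(u)}$ and apply $ab\leq\tfrac12(a^2+b^2)$. Your write-up is slightly more careful in noting that the resulting bound $\tfrac{\eps}{2}|\nabla u|^2+\tfrac{1}{2\eps}V(u)$ sits below $e_\eps(u)$ because of the extra factor $\tfrac12$ on the potential term, and your parenthetical sharpening to $J(u)\leq \tfrac{1}{\sqrt 2}\,e_\eps(u)$ (via the weighted Young inequality) is also correct, though not used in the paper.
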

\begin{proof}
We have, by definition of the energy $e_\eps(u)$,
 \begin{equation}
          \label{ab}
         J(u(x)) =(\sqrt{\eps} \vert \nabla u(x) \vert ). \sqrt{\eps^{-1}V(u(x) } 
          \end{equation}
          We invoke next the inequality $\displaystyle{ab\leq \frac{1}{2}(a^2+b^2)}$ to obtain
          $$J(u(x) \leq \frac12 \left (  \eps\vert \nabla u (x)\vert^2+ \eps^{-1} V(u(x)\right), $$ 
          which yields the desired result. 
      \end{proof}
    \subsection{The one-dimensional case}

     In dimension $1$  estimate \eqref{bvbound} directly leads to uniform bound on $w_i$, as expressed in our next result. 
 For that purpose, we consider, for $r>0$, the  circle $\S^1(r)=\{x\in \R^2, \vert x \vert =r\}$ and maps $u: \S^1(r) \to \R^k$.

     \begin{lemma} 
     \label{valli}
     Let $0< \eps\leq 1$ and $\eps<r\leq1$ be given. There exists a constant ${\rm C}_{\rm unf}>0$ such that, for any   given   $u:\mathbb S^1(r) \to \R^k$,  there exists an element $\upsigmam \in \Sigma$ such that 
      \begin{equation}
      \label{bornuni}
            \vert u(\ell)-\upsigmam \vert \leq  {\rm C}_{\rm unf}\sqrt{\int_{\S^1(r)} \frac 12 (J(u(\ell))+r^{-1} V(u(\ell))){\rm d} \ell}, \\
\  \   {\rm \ for \ all \ } \ell \in \S^1(r),
      \end{equation}
    and hence 
      \begin{equation}
      \label{bornunif}
     \vert u(\ell)-\upsigmam \vert    \leq  {\rm C}_{\rm unf}\sqrt{\int_{\S^1 (r)} e_\eps(u){\rm d}\ell}.  
      \end{equation}
       \end{lemma}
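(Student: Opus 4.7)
Denote $A = \int_{\S^1(r)} \tfrac12(J(u(\ell)) + r^{-1}V(u(\ell)))\,d\ell$, so that \eqref{bornuni} reads $|u(\ell)-\upsigmam|\leq {\rm C}_{\rm unf}\sqrt{A}$. Inequality \eqref{bornunif} is an immediate corollary of \eqref{bornuni}: Lemma~\ref{ab0} gives $J(u)\leq e_\eps(u)$, and the hypothesis $\eps<r$ yields $V(u)/r\leq V(u)/\eps\leq e_\eps(u)$, hence $A\leq \int_{\S^1(r)}e_\eps(u)\,d\ell$. I therefore focus on \eqref{bornuni}, splitting the analysis into a small-$A$ regime controlled by the BV inequality \eqref{bvbound} near a well, and a large-$A$ regime controlled by the quadratic growth of $V$ at infinity from Proposition~\ref{barre}.

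\textbf{Small-$A$ regime.} Since $\int_{\S^1(r)} V(u)/r\,d\ell \leq 2A$ on an arc of length $2\pi r$, a pigeonhole argument produces $\ell_0\in\S^1(r)$ with $V(u(\ell_0))\leq A/\pi$. I fix a threshold $A_1>0$ small enough that $A_1/\pi<\upalpha_0$; assuming $A\leq A_1$, Lemma~\ref{watson} yields $\upsigmam\in\Sigma$ with $|u(\ell_0)-\upsigmam|^2\leq 4\lambda_0^{-1}V(u(\ell_0))\leq 4A/(\pi\lambda_0)$. Shrinking $A_1$ if necessary ensures $|u(\ell_0)-\upsigmam|\leq\upmu_0/2$, so \eqref{monster} gives $w_m(\ell_0)=|u(\ell_0)-\upsigmam|$. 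Integrating the arc-length bound $|(w_m^2)'(\ell)|\leq 4\lambda_0^{-1/2}J(u(\ell))$ furnished by \eqref{bvbound} along any arc joining $\ell_0$ to a generic $\ell$,
\begin{equation*}
w_m^2(\ell)\leq w_m^2(\ell_0)+4\lambda_0^{-1/2}\int_{\S^1(r)}J(u)\,d\ell'\leq \frac{4A}{\pi\lambda_0}+\frac{8A}{\sqrt{\lambda_0}}\leq C_1 A
\end{equation*}
for a constant $C_1>0$ depending only on $V$. Shrinking $A_1$ once more so that $C_1 A_1\leq(\upmu_0/2)^2$, \eqref{monster} then gives $w_m(\ell)=|u(\ell)-\upsigmam|$ for every $\ell$, whence $|u(\ell)-\upsigmam|\leq\sqrt{C_1 A}$.

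\textbf{Large-$A$ regime ($A>A_1$).} Here $\sqrt{A}\geq\sqrt{A_1}$, so it suffices to bound $|u(\ell)|$ by a constant multiple of $1+\sqrt{A}$ and take $\upsigmam$ to be any fixed element of $\Sigma$, using $|u-\upsigmam|\leq|u|+{\rm R}_0$. Introduce $h(\ell)=\max(|u(\ell)|^2-4{\rm R}_0^2,0)$. By Proposition~\ref{barre}, wherever $|u|\geq 2{\rm R}_0$ one has $\sqrt{V(u)}\geq\sqrt{\upbeta_\infty}|u|$, and the chain rule gives $|h'|\leq 2|u||\nabla u|\leq (2/\sqrt{\upbeta_\infty})J(u)$ on $\{|u|>2{\rm R}_0\}$, while $h'=0$ elsewhere. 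At the same mean-value point $\ell_0$, either $|u(\ell_0)|\leq 2{\rm R}_0$ and $h(\ell_0)=0$, or $|u(\ell_0)|^2\leq V(u(\ell_0))/\upbeta_\infty\leq A/(\pi\upbeta_\infty)$; in both cases $h(\ell_0)\leq A/(\pi\upbeta_\infty)$. Propagating along the circle yields $h(\ell)\leq h(\ell_0)+(4/\sqrt{\upbeta_\infty})A\leq C_2 A$, so $|u(\ell)|^2\leq 4{\rm R}_0^2+C_2 A$ and finally $|u(\ell)-\upsigmam|\leq {\rm R}_0+\sqrt{4{\rm R}_0^2+C_2 A}\leq {\rm C}_{\rm unf}\sqrt{A}$ once ${\rm C}_{\rm unf}$ is chosen large enough relative to ${\rm R}_0$ and $A_1$.

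\textbf{Main difficulty.} The crux lies in the large-$A$ regime: the PDE is not used in this lemma, so $u$ could in principle take very large values, and the bound must absorb that. It is precisely the quadratic coercivity of $V$ at infinity (Proposition~\ref{barre}) that upgrades $J(u)$ into an $L^1$ gradient control for $|u|^2$ on the tail $\{|u|\geq 2{\rm R}_0\}$, allowing the same circle-propagation argument used near the well to close the estimate on the complementary range.
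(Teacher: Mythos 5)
Your proof is correct, but it takes a genuinely different route from the paper's. Both arguments share the same building blocks (a mean-value point $\ell_0$ on $\S^1(r)$, Lemma~\ref{watson}, the BV-type inequality \eqref{bvbound}, and the coercivity bound of Proposition~\ref{barre}), and the derivation of \eqref{bornunif} from \eqref{bornuni} is identical. The decompositions differ, however. The paper thresholds the mean potential $\frac{1}{2\pi r}\int_{\S^1(r)}V(u)$ against $\upalpha_0$: Case~1 (small mean potential) is handled by integrating the BV bound near a single well, and Case~2 (large mean potential) is further split into three subcases (2a, 2b, 2c) according to whether the values of $u$ stay inside $\B^k(2{\rm R}_0)$, cross $\partial\B^k(2{\rm R}_0)$, or stay outside, each treated by a separate computation. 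You instead threshold on the full right-hand-side quantity $A$, and in the large-$A$ regime collapse the paper's three subcases into one by propagating the truncated quantity $h(\ell)=\max(|u(\ell)|^2-4{\rm R}_0^2,0)$ around the circle, with $|h'|$ dominated a.e.\ by $(2/\sqrt{\upbeta_\infty})J(u)$ wherever $h>0$; this single auxiliary function absorbs all the position-based casework. Your dichotomy also buys a little more: smallness of $A$ simultaneously controls $\int J$ and $r^{-1}\int V$, which is exactly what lets you pick $A_1$ so that $C_1A_1\leq(\upmu_0/2)^2$ and conclude $w_m(\ell)\leq\upmu_0/2$ everywhere, hence $w_m=|u-\upsigmam|$ on all of $\S^1(r)$ — the identity that makes the transfer of the BV bound for $w_m^2$ onto $|u-\upsigmam|^2$ legitimate. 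The paper's Case~1, which thresholds only the potential, leaves this transfer implicit; your presentation makes the justification explicit and self-contained.
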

      
      \begin{proof} 
      By the mean-value formula, there exists some point $\ell_0 \in \S^1(r)$ such that 
      \begin{equation}
      \label{buena}
      V(u(\ell_0)) = \frac{1}{2\pi r} \int_{\S^1(r) } V(u(\ell)) {\rm d} \ell.
      \end{equation}
   We distinguish two cases.  
   
   \smallskip
   \noindent
   {\bf  Case 1.} {\it The function $u$ satisfies additionnally the estimate }
   \begin{equation}
    \label{val}
   \frac{1}{2\pi r} \int_{\S^1(r)} V(u(\ell))  \, {\rm d} \ell <  \upalpha_0, 
     \end{equation}
{\it  where $\upalpha_0$ is the constant introduced in Lemma \ref{watson}}.
    Then,  we deduce from inequality \eqref{val}  that
   \begin{equation*}
      \label{buena}
      V(u(\ell_0)) \leq \frac{1}{2\pi r}  \int_{\S^1(r)} V(u(\ell))  \, {\rm d} \ell < \upalpha_0.
            \end{equation*}
      It follows from Lemma \ref{watson} that there exists some $\upsigmam \in \Sigma$ such that 
     $$ \vert u(\ell_0))-\upsigmam  \vert^2  \leq 4 \lambda_0^{-1} V(u(\ell_0) )\leq 
      \frac {2\lambda_0^{-1}}{\pi r} \int_{\S^1(r)} V(u)  {\rm d} \ell. $$
    On the other hand,  we deduce,  integrating  the bound \eqref{bvbound},  that, for any $\ell \in \S^1(r)$, we have 
    $$ \vert \left\vert u-\upsigmam\right\vert^2(\ell)-\left \vert u-\upsigmam\right\vert ^2(\ell_0))\vert \, {\rm d} \ell
     \leq
      4\sqrt{\lambda_0^{-1}} \int_{\S^1(r)} J(u).$$
    Combining the two previous estimates,  we obtain the desired result  in case 1, using the fact that $\eps \leq 1$ and provided the constant $ {\rm C}_{\rm unf}$ satisfies the bound
    $$ {\rm C}_{\rm unf}^2 \geq  4\sqrt{\lambda_0^{-1}}+2\lambda_0^{-1}.$$
   
   \bigskip
   \noindent
   {\bf  Case 2}. {\it  Inequality \eqref{val} does not hold}. 
   In that case,  we have hence 
  \begin{equation}
  \label{trouville}
\frac{1}{2\pi r}  \int_{\S^1(r)} V(u(\ell))  {\rm   d} \ell \geq \upalpha_0.
   \end{equation}
 We consider  the number $\rm R_0=\sup\{\vert \upsigma  \vert  , \upsigma \in \Sigma\}$,   introduced in definition \eqref{cahors} of the proof of Proposition  \ref{barre},   and discuss next three subcases.
 
 \smallskip
 \noindent
 {\it Subcase 2a : For any $\ell \in \S^1(r)$,   we have
 $$ u(\ell) \in \B^k(2 \rm R_0).$$}
Then, in this case,  for any  $\upsigma \in \Sigma$, we have
\begin{equation}
\vert u(\ell)- \upsigma\vert^2 \leq 9{\rm R}_0^2=\left( \frac{9{\rm R}_0^2}{\upalpha_0}\right)\upalpha_0 \leq   
\left( \frac{9{\rm R}_0^2}{\upalpha_0}\right)\frac{1}{2\pi r}  \int_{\S^1(r)} V(u(\ell))  {\rm   d} \ell, 
\end{equation}
 so that in that case, inequality \eqref{bornuni} is immediately satisfied, whatever the choice of $\upsigma_{\rm main}$,  provided we impose the additional condition
 \begin{equation}
 \label{fayat}
  {\rm C}_{\rm unf}^2 \geq \frac{9{\rm R}_0^2}{2\upalpha_0}.
\end{equation}

\smallskip
 \noindent
 {\it Subcase 2b : There exists some  $\ell_1 \in \S^1(r)$, and some $\ell_2\in \S^1(r)$ such that,  we have}
 $$ u(\ell_1) \in \B^k(2 {\rm R}_0)  {\rm \ and \ }  u(\ell_2) \not  \in \B^k(2 \rm R_0).$$
Let $\ell \in \S^1(r)$. If $u(\ell) \in \B^k(2\rm R_0)$, then we argue as in subcase 2a, and we are done. Otherwise, by continuity, there exists some $\ell' \in \S^1(r)$ such that $u(\ell') \in \partial \B^k(2{\rm R}_0)$ and such for  any point $a\in \mathcal C(\ell, \ell')$ we have   $u(a) \not \in \B^k(2 \rm R_0)$,   where $\mathcal C(\ell, \ell')$ denotes the arc on $\S^1(r)$ joining counterclockwise on $\ell$ and $\ell'$.   We have, by integration  and using inequality \eqref{upbetainfty}, 
\begin{equation*}
\begin{aligned}
\vert u(\ell)\vert^2-\vert u(\ell')\vert ^2 &\leq 2\int_\ell^{\ell'} \vert u(a) \vert \cdot \vert \nabla u(a) \vert  \,  {\rm d} a  \\
 &\leq   \frac{2}{\sqrt{\upbeta_\infty}  }\int_\ell^{\ell'} V(u(a)) \vert \nabla u(a) \vert  \,  {\rm d} a
 \leq  \frac{2}{\sqrt{\upbeta_\infty}  } \int_{\S^1(r)} J(u(a))   {\rm d} a.
 \end{aligned}
\end{equation*}
Since $\vert u (\ell') \vert=2{\rm R}_0$, we obtain, for any $\upsigma \in \Sigma$, 
\begin{equation*}
\begin{aligned}
  \vert u(\ell)- \upsigma\vert^2  &\leq 2 \left(\vert u(\ell)\vert^2 + \vert \upsigma \vert^2 \right)\leq 2 \left(\vert u(\ell)\vert^2 + {\rm R}_0^2 \right) \\
   &\leq 2 \left( \frac{2}{\sqrt{\upbeta_\infty}  } \int_{\S^1(r)} J(u(a))   {\rm d} a +{\rm R}_0^2 + \vert u(\ell') \vert^2  \right) 
   \leq  \left( \frac{4}{\sqrt{\upbeta_\infty}  } \int_{\S^1(r)} J(u(a))   {\rm d} a + 10{\rm R}_0^2   \right) \\
   &\leq   \left( \frac{4}{\sqrt{\upbeta_\infty}  } \int_{\S^1(r)} J(u(a))   {\rm d} a + 10\frac{{\rm R}_0^2 }{\upalpha_\infty}\upalpha_\infty   \right) \\
  & \leq   \left( \frac{4}{\sqrt{\upbeta_\infty}  } \int_{\S^1(r)} J(u(a))   {\rm d} a +
   \frac{10{\rm R}_0^2 }{2\pi\upalpha_0 r}  \int_{\S^1(r)} V(u(\ell))  {\rm   d} \ell 
    \right) \\
   \end{aligned}
\end{equation*}
So that the conclusion follows, imposing  again  an appropriate lower bound on   ${\rm C}_{\rm unf}$.

  \medskip
 \noindent
 {\it Subcase 2c :  For any $\ell \in \S^1(r)$,   we have
 $$  \vert  u(\ell) \vert \geq 2{ \rm R}_0 .$$}
  Let $\ell_0$ satisfy \eqref{buena}, so that, in view of Proposition \ref{barre}
  \begin{equation*}
  \vert u(\ell_0)\vert^2 \leq  \frac{1}{\upbeta_\infty}  V(u(\ell_0))=  
  \frac{1}{\upbeta_\infty } \left( \frac{1}{2\pi r}\int_{\S^1(r)} V(u(\ell)\right).
  \end{equation*}
  We obtain hence, for any arbitrary $\upsigma \in \Sigma$ 
 \begin{equation}
 \label{monge}
 \begin{aligned}
  \vert u(\ell_0)- \upsigma\vert^2 &\leq 2 \left(\vert u(\ell_0)\vert^2 + \vert \upsigma \vert^2 \right)\leq 
   \frac{2}{\upbeta_\infty} \left( \frac{1}{2\pi r} \int_{\S^1(r)} V(u(\ell)) { \rm d} \ell +  {\rm R}_0^2 \upbeta_\infty\right ) \\
   &\leq \frac{2}{\upbeta_\infty} \left( \frac{1}{2\pi r} \int_{\S^1(r)} V(u(\ell) ){ \rm d} \ell + 
   \upalpha_0\left( \frac{  {\rm R}_0^2 \upbeta_\infty}{\upalpha_0}\right)\right ) \\
   &\leq  \  \frac{1}{\pi \upbeta_\infty  } \left(  1+ \left(\frac{2{\rm R}_0^2 \upbeta_\infty}{\upalpha_0}\right) \right)
   \left( r^{-1} \int_{\S^1(r)} V(u(\ell)){ \rm d} \ell\right). 
    \end{aligned}
     \end{equation}
This yields again \eqref{bornuni} for an arbitrary choice of $\upsigmam \in \Sigma$ and imposing an additional  suitable lower bound on 
${\rm C}_{\rm unf}$.     

We have hence established  for upper bound \eqref{bornuni}  in all three possible  cases $2a, 2b$ and $2c$, for a suitable an arbitrary choice of $\upsigmam \in \Sigma$ and imposing an additional  suitable lower bound on 
${\rm C}_{\rm unf}$.      It is hence established in case $2$. Since we alreday establishes it in Case 1, the proof of \eqref{bornuni} is complete. 

\medskip 
Turning to inequality \eqref{bornunif}, we first observe that, since  by assumption $r\geq \eps$, we have 
\begin{equation}
\label{lepape}
r^{-1} \int_{\S^1(r)} V(u(\ell)){ \rm d} \ell \leq \int_{\S^1(r)} \eps^{-1}  V(u(\ell)){ \rm d} \ell
      \leq     \int_{\S^1(r)} e_\eps(u(\ell)){ \rm d} \ell.  
      \end{equation}
      Combining \eqref{bornuni} with \eqref{ab} and \eqref{lepape}, we obtain the desired result \eqref{bornunif}. 
     \end{proof}
    
   \subsection{Controlling the energy on  circles}
     \label{radamel}
    When working on two dimensional disk, the tools  developed in the previous section allow to choose  radii with appropriate control on the energy, invoking a standard  mean-value argument. More precisely, we have: 
     
     \begin{lemma}  
     \label{moyenne}
     Let   $\eps \leq r_0< r_1\leq 1$  and $u: \D^2 \to \R^k$ be given. There exists a radius $\mathfrak r_\eps \in [r_0, r_1]$ such that     
     $$ \int_{\S^1(\mathfrak r_\eps)}e_\eps(u){\rm  d } \ell  \leq \frac{1}{r_1-r_0} \, \E_\eps(u, \D^2(r_1)). $$
     \end{lemma}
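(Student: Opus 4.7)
The plan is a one-line Fubini-plus-averaging argument: express the energy on the annulus $\D^2(r_1)\setminus\D^2(r_0)$ as an iterated integral in polar coordinates, then use the mean-value principle on the radial variable to extract a good circle.

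More concretely, I would first write, using polar coordinates centered at the origin,
\[
\int_{r_0}^{r_1}\!\left(\int_{\S^1(r)} e_\eps(u)\,{\rm d}\ell\right){\rm d}r
\;=\;\int_{\D^2(r_1)\setminus\D^2(r_0)} e_\eps(u)\,{\rm d}x
\;\leq\;\E_\eps\bigl(u,\D^2(r_1)\bigr).
\]
Define the nonnegative function $\Phi:[r_0,r_1]\to\R^+$ by $\Phi(r)=\int_{\S^1(r)} e_\eps(u)\,{\rm d}\ell$. The inequality above says that the integral of $\Phi$ over $[r_0,r_1]$ is bounded by $\E_\eps(u,\D^2(r_1))$. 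Dividing by the length of the interval, the average value of $\Phi$ on $[r_0,r_1]$ is at most $(r_1-r_0)^{-1}\E_\eps(u,\D^2(r_1))$, and therefore (by the mean value inequality for integrable functions) there exists at least one $\mathfrak r_\eps\in[r_0,r_1]$ with $\Phi(\mathfrak r_\eps)$ not exceeding this average. This is precisely the announced bound.

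There is essentially no obstacle here: the energy density $e_\eps(u)$ is nonnegative and integrable on $\D^2(r_1)$, so Fubini applies without difficulty, and the existence of a radius where $\Phi$ does not exceed its mean is elementary. The hypothesis $\eps\leq r_0$ plays no role in the statement itself; it is recorded only to make the lemma usable in the later applications (where one needs $\mathfrak r_\eps$ to be at least of order $\eps$ so that estimates like \eqref{bornunif} may be invoked on $\S^1(\mathfrak r_\eps)$).
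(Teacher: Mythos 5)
Your proof is correct and is exactly the standard mean-value argument the paper has in mind (the paper omits the proof, calling it a ``standard mean-value argument'' in the surrounding text). Your closing observation about the hypothesis $\eps\leq r_0$ being recorded only for downstream applications is also accurate.
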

   Energy estimates yield also uniform bounds in dimension one: Indeed, it     follows from Lemma \ref{valli} that there exists 
  some point  $\upsigma_{\mathfrak r_\eps} \in \Sigma$, \emph{depending on $\mathfrak r_\eps$},  such that 
   \begin{equation}
      \label{bornuni2}
     \vert u(\ell)-\upsigma_{\mathfrak r_\eps}  \vert \leq \frac{ {\rm C}_{\rm unf}}{\sqrt{r_1-r_0}} \sqrt{{\E}_\eps(u, \D^2(r_1)}),  \  \   {\rm \ for \ all \ } \ell \in 
     \S^1(\mathfrak r_\eps).
      \end{equation}
 Moreover, it follows from \eqref{bvbounda} that 
 \begin{equation}
 \label{bvbound2}
 \int_{\S^1(\mathfrak r_\eps)} \vert J(u) \vert \leq \frac{1}{r_1-r_0} \int_{\D^2(r_1)} e_\eps(u_\eps) {\rm d}x. 
 \end{equation}
 
     \subsection {BV estimates and the coarea formula}
     \label{detroit}
The right-hand side  of estimate \eqref{bornunif}, in particular the term  involving  $J(u)$, may be analyzed as a  $BV$ estimate (as in \cite{mortadela}). In dimension $1$, as expected, it yields  used a uniform estimates.  In higher dimensions of course, this is no longer true. Nevertheless our $BV$-estimates  interesting estimates on the measure of specific  level sets. In order to state the kind of results we have in mind,  we consider   a smooth function $\varphi:  \Omega\to \R$,  where $\Omega \subset \R^N$ is a general domain, and introduce, for an arbitrary number $s \in \R$,  the  level set
$$\varphi^{-1} (s)=\{s \in \Omega, {\rm \ such \ that \ } \varphi(x)=s\}.$$
If $w$ is assumed to be sufficiently smooth, then Sard's theorem asserts that $w^{-1}(s)$ is a  regular submanifold of dimension $(N-1)$, for almost every $s\in \R$, and the coarea formula relates the integral of the total  length of these curves to the $BV$-norm  through the formula     
     \begin{equation}
     \label{coarea}
     \int_{\R} {\mathcal H}^{N-1}\left(\varphi^{-1} (s)\right){\rm d}s=\int_\Omega  \vert  \nabla  \varphi  (x)\vert {\rm d}x. 
     \end{equation}
We specify this formula to the case $N=2$, $\Omega=\D^2(r)$, for some $r>\eps$,   and $\varphi=(w_i)^2$,   where $i \in \{1, \ldots, q\}$ and   where $w_i$ is the map constructed   in \eqref{doublevi} for a given $u: \Omega\to \R^k$. Combining \eqref{coarea} with \eqref{bvbound} and \eqref{ab}, we are led to the inequality
\begin{equation}
\label{coaforme}
\begin{aligned}
\int_{\R^+} {\mathcal L}\left((w_i^2)^{-1} (s)\right){\rm d}s&\leq 4 \sqrt{\lambda_0}^{-1}\int_{\D^2(r)}  J(u(x)){\rm d}x \\
&\leq  4 \sqrt{\lambda_0}^{-1}\int_{\D^2(r)}  e_\eps(u){\rm d}x=  4 \sqrt{\lambda_0}^{-1} \E_\eps\left(u_\eps, \D^2(r)\right), 
\end{aligned}
\end{equation}
where 
$\mathcal  L=\mathcal H^{1}$ denotes length. In most places, we will invoke this inequality jointly with a mean value argument.  This yields:

\begin{lemma} 
\label{claudio}
Let $u$, $w_i$ and $r$  be as above. Given  any  number $A>0$,     there exists some 
$\displaystyle{A_0  \in [ \frac{A}{2}, A]}$ such that $w_i^{-1} (s_0)$ is a regular curve and such that 
\begin{equation}
{\mathcal L}\left(w_i^{-1} (A_0)\right)  \leq\frac{6}{\sqrt{\lambda_0}A^2}\int_{\D^2}  e_\eps(u){\rm d}x
=\frac{8 \,  \E_\eps\left(u_\eps\right)} {\sqrt{\lambda_0}A^2}.
\end{equation}
\end{lemma}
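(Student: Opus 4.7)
The plan is to start from the coarea inequality \eqref{coaforme} applied to $\varphi = w_i^2$, then perform a change of variables to recast it in terms of level sets of $w_i$ itself, and finally extract the desired $A_0$ by a mean value / Sard argument on the interval $[A/2,A]$.

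First I would observe that since $w_i \geq 0$ by construction, the level set $(w_i^2)^{-1}(s)$ coincides with $w_i^{-1}(\sqrt{s})$ for $s>0$. Setting $\tau = \sqrt{s}$, so that $ds = 2\tau\, d\tau$, inequality \eqref{coaforme} rewrites as
\begin{equation*}
\int_0^{+\infty} 2\tau \,\mathcal{L}\!\left(w_i^{-1}(\tau)\right) d\tau \,\leq\, 4\sqrt{\lambda_0}^{-1}\, \E_\eps\!\left(u_\eps, \D^2\right).
\end{equation*}
Restricting the integration to $\tau \in [A/2, A]$ and using the lower bound $2\tau \geq A$ on that interval, I obtain
\begin{equation*}
A\int_{A/2}^{A} \mathcal{L}\!\left(w_i^{-1}(\tau)\right) d\tau \,\leq\, \frac{4\, \E_\eps(u_\eps)}{\sqrt{\lambda_0}}.
\end{equation*}

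Next, I apply a mean value argument on the interval $[A/2,A]$, whose length equals $A/2$. The set $\mathcal{B}$ of values $\tau \in [A/2,A]$ for which $\mathcal{L}(w_i^{-1}(\tau)) > 8\E_\eps(u_\eps)/(\sqrt{\lambda_0}A^2)$ must have Lebesgue measure strictly smaller than $A/2$; otherwise the displayed inequality would fail. Simultaneously, Sard's theorem guarantees that the set $\mathcal{R}$ of regular values of the smooth function $w_i$ has full measure in $\R$. Since $[A/2,A]\setminus \mathcal{B}$ has positive measure and $\mathcal{R}$ has full measure, their intersection is non-empty; any element $A_0$ of this intersection simultaneously satisfies the length bound and makes $w_i^{-1}(A_0)$ a regular one-dimensional submanifold.

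There is essentially no obstacle here beyond bookkeeping: the coarea formula \eqref{coarea}, the $BV$ bound \eqref{bvbound}, the pointwise inequality \eqref{ab}, and Sard's theorem are the only ingredients, all of them already recorded in the excerpt. The only delicate point is to remember the factor $2\tau$ from the change of variables, which is precisely what produces the $A^{-2}$ decay in the final estimate rather than merely $A^{-1}$.
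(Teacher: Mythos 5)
Your argument is correct and follows essentially the same path as the paper's proof: both rest on the coarea inequality \eqref{coaforme} together with a mean-value selection of a good level over a subinterval of length comparable to $A^2$. The paper works directly in the variable $s=w_i^2$ on the interval $[A^2/4, A^2]$ and extracts $A_0=\sqrt{s_0}$ at the end (which actually gives the slightly sharper constant $16/3$, so the two constants $6$ and $8$ appearing in the lemma statement are both correct upper bounds though mutually inconsistent), while your preliminary substitution $\tau=\sqrt{s}$ is a cosmetic variant that cleanly reproduces the stated factor $8$; your added appeal to Sard's theorem to guarantee that $A_0$ can be chosen a regular value is a point the paper leaves implicit.
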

\begin{proof}  In view of  Definition \ref{doublevi}, the map $w_i$ takes values in the interval $\displaystyle{[0, \frac{3\upmu_0}{4}]}$, so that 
$\displaystyle{w_i^{-1}(s) =\emptyset}$, if $\displaystyle{s>\frac{3\upmu_0}{4}}$. Hence, it remains only to consider the case $\displaystyle{A \leq  \frac{3\upmu_0}{4}}$.  We introduce the domain $\displaystyle{\Omega_{i, A}=\{x\in \D^2(r),\frac{A}{2} \leq  \vert u(x)-\upsigma_i \vert \leq A\}}$. Using formula \eqref{coaforme} on this domain, we are led to the inequality 
\begin{equation*}
\int_{\frac {A^2} {4}} ^{A^2} {\mathcal L}((w_i^2)^{-1} (s) ){\rm d} s  \leq 4 \sqrt{\lambda_0}^{-1} \E_\eps\left(u_\eps, \D^2(r)\right).
\end{equation*}
The conclusion that follows by a mean-value argument. 
\end{proof}


  \subsection{Controlling uniform bounds on good circles  }
Whereas in subsection  \ref{radamel} we have selected radii with controlled energy for the map $u$,   in this subsection, we select radii with appropriate uniform bounds on $u$. 
 We assume that we  are given a radius $\varrho \in [\frac 12, 1]$, a number 
  $\displaystyle{0<\upkappa <\frac{\upmu_0}{2}}$, a smooth map $u:\overline{ \D^2(\varrho)} \to \R^k$   and an element $\upsigma \in \Sigma$ such that 
   \begin{equation}
  \label{kappacite}
   \vert u-\upsigma \vert  < \upkappa  {\rm \ on \ } \partial \D^2(\varrho).
  \end{equation}
  We introduce  the subset  $\mathcal I (u, \upkappa)$  of radii $\displaystyle{r \in  [\frac 12, \varrho]}$  such that 
\begin{equation}
\label{sunyu} 
\mathcal I (u, \upkappa) =\left \{ r \in [\frac{1}{2}, \varrho ] {\rm \ such \ that \ }
 \vert u  (\ell) -\upsigma \vert \leq  \upkappa, \, \forall \ell \in \S^1(r) \right \}. 
 \end{equation}
   We have: 

  \begin{proposition}   
  \label{jarre}
  We have  the lower bound
 \begin{equation}
    \label{clamart}
    \vert \mathcal I (u, \upkappa) \vert \geq \varrho-\frac{9}{16}, 
    \end{equation}
    provided 
    \begin{equation} 
    \label{camembert}
 \upkappa^2  \geq  \frac{1}{32\sqrt{\lambda_0}} \E_\eps(u). 
\end{equation}

    \end{proposition}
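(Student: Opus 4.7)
I will control the defect set $[1/2,\varrho]\setminus \mathcal I(u,\upkappa)$ via level-set techniques for the scalar companion $w=\chi_\upsigma(u)$ introduced in \eqref{doublevi}, exploiting the Jordan-curve topology available in the plane. Since $\upkappa<\upmu_0/2$, property \eqref{monster} yields the equivalence $|u(x)-\upsigma|>\upkappa \iff w(x)>\upkappa$, so the defect set is reformulated as
\[
[1/2,\varrho]\setminus\mathcal I(u,\upkappa)\;=\;\bigl\{r\in[1/2,\varrho]:\max_{\mathbb{S}^1(r)} w>\upkappa\bigr\},
\]
and the boundary condition \eqref{kappacite} reads $w<\upkappa$ on $\partial \D^2(\varrho)$. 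Setting $M:=\max_{\partial \D^2(\varrho)} w<\upkappa$, I would apply Lemma \ref{claudio} (whose hypothesis $A\leq 3\upmu_0/4$ is satisfied since $\upkappa<\upmu_0/2$) with $A=\upkappa$, combined with Sard's theorem, to select a regular value $A_0$ of $w$ lying in $(M,\upkappa)\cap[\upkappa/2,\upkappa]$ and satisfying
\[
\mathcal L\bigl(w^{-1}(A_0)\bigr)\;\leq\;\frac{C}{\sqrt{\lambda_0}\,\upkappa^2}\,\E_\eps(u)
\]
for an absolute constant $C$ arising from the coarea inequality \eqref{coaforme}. Because $A_0>M$, the superlevel set $U:=\{x\in\D^2(\varrho):w(x)>A_0\}$ is compactly contained in $\D^2(\varrho)$, so $\partial U=w^{-1}(A_0)$ is a finite disjoint union of Jordan curves lying entirely in the interior.

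The geometric core of the argument is to bound the radial projection $|\pi(U)|$, where $\pi(x)=|x|$. Since $A_0<\upkappa$, every $r\in[1/2,\varrho]\setminus \mathcal I(u,\upkappa)$ satisfies $\mathbb{S}^1(r)\cap U\neq\emptyset$ and therefore belongs to $\pi(U)$. I then decompose $U$ into its connected components $\{V_j\}_j$; each $V_j$ is enclosed by a unique outermost Jordan boundary component $\gamma_j^{\mathrm{out}}\subset\partial U$, and since any two points on a Jordan curve of length $L$ lie at mutual distance at most $L/2$, we have $\operatorname{diam}(V_j)\leq \mathcal L(\gamma_j^{\mathrm{out}})/2$. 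The $1$-Lipschitz property of $\pi$ then yields
\[
\bigl|[1/2,\varrho]\setminus\mathcal I(u,\upkappa)\bigr|\;\leq\;\sum_j|\pi(V_j)|\;\leq\;\sum_j\frac{\mathcal L(\gamma_j^{\mathrm{out}})}{2}\;\leq\;\frac{\mathcal L(w^{-1}(A_0))}{2}.
\]
Inserting the length bound and applying hypothesis \eqref{camembert}, which says precisely that $\E_\eps(u)/(\sqrt{\lambda_0}\,\upkappa^2)\leq 32$, produces $\bigl|[1/2,\varrho]\setminus\mathcal I(u,\upkappa)\bigr|\leq 1/16$ once the absolute constants are tracked. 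Since $[1/2,\varrho]$ has length $\varrho-1/2=\varrho-8/16$, this is exactly the claim $|\mathcal I(u,\upkappa)|\geq \varrho-9/16$ of \eqref{clamart}.

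The main obstacle is the joint selection of $A_0$: it must simultaneously be a regular value of $w$ (so that $\partial U$ is a smooth $1$-submanifold built from Jordan curves), strictly above $M$ (so that $\overline U$ remains compactly contained in $\D^2(\varrho)$ and each component $V_j$ admits a single outermost Jordan enclosure), and satisfy the quantitative length estimate. This is handled by a short case analysis on whether $M\leq\upkappa/2$ or $M\in(\upkappa/2,\upkappa)$, combined with Sard's theorem providing a positive-measure family of regular values within the relevant subinterval of $(M,\upkappa)$, so that the mean-value form of the coarea inequality applied on that subinterval extracts an $A_0$ with all three required properties.
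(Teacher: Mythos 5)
Your geometric core is genuinely different from the paper's. The paper first shows that the set $\mathcal Z$ of radii $r$ whose circle $\S^1(r)$ lies entirely inside the superlevel set is empty (using the smallness of $\mathcal L(w^{-1}(A_0))$), then argues via the intermediate value theorem that $w^{-1}(A_0)$ meets every circle $\S^1(r)$ with $r\notin\mathcal J$, and finally uses a Fubini--Eilenberg inequality for the radial projection to bound $\vert[1/2,\varrho]\setminus\mathcal J\vert$. You instead decompose $U=\{w>A_0\}$ into connected components, enclose each by the outermost Jordan curve of $\partial U\subset w^{-1}(A_0)$, bound the diameter by half the length of that curve, and project. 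This is self-contained and arguably cleaner: circles entirely inside $U$ (the paper's $\mathcal Z$-radii) are automatically absorbed, since such a circle lies in some $V_j$ whose radial projection is already bounded, so no separate emptiness argument is needed.

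There is, however, a genuine gap in the joint selection of $A_0$, which you flag but do not resolve. You need simultaneously (i) $A_0>M:=\max_{\partial\D^2(\varrho)}w$ so that $\overline U$ stays inside $\D^2(\varrho)$ and $\partial U$ is a finite union of Jordan curves, (ii) $A_0<\upkappa$ so that $r\notin\mathcal I(u,\upkappa)$ forces $\S^1(r)\cap U\neq\emptyset$, and (iii) $\mathcal L(w^{-1}(A_0))$ controlled by $\E_\eps(u)/\upkappa^2$. Lemma \ref{claudio} with $A=\upkappa$ produces an $A_0\in[\upkappa/2,\upkappa]$ satisfying (iii), and this automatically satisfies (i) only when $M\leq\upkappa/2$. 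In the case $M\in(\upkappa/2,\upkappa)$, your proposed remedy --- a mean-value selection over the subinterval $(M,\upkappa)$ --- yields a length bound of order $\E_\eps(u)/(\upkappa^2-M^2)$, which degenerates as $M\to\upkappa$; since hypothesis \eqref{kappacite} imposes only the strict inequality $M<\upkappa$ with no quantitative gap, the claim that the case analysis ``extracts an $A_0$ with all three required properties'' is unsubstantiated. For context, the paper's written proof also stumbles at this very point: it chooses $A_0\in[\upkappa,2\upkappa]$, which does guarantee $A_0>M$, but then the final set inclusion $\mathcal J\subset\mathcal I$ it invokes runs the wrong way, since $A_0\geq\upkappa$ gives $\mathcal I\subset\mathcal J$. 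A robust repair is to secure $M\leq\upkappa/2$ (as the intended application does, by taking $\upkappa$ a sufficiently large multiple of $\sqrt{\E_\eps(u)}$); you should also note that the explicit constants needed to reach the bound $1/16$ do not match as written, a deficiency shared with the paper.
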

\begin{proof}
We consider the number $\displaystyle{A_0  \in [ \upkappa , 2\upkappa]} $ given by Lemma \ref{claudio} with the choice $r=\varrho$ and $A=\upkappa$, so that $w^{-1} (A_0)$ is smooth and 
\begin{equation*}
\mathcal L (w^{-1} (A_0)) \leq \frac{6E_\eps (u_\eps)}{4\sqrt{\lambda_0} \upkappa^2} \leq \frac{2E_\eps (u_\eps)}{\sqrt{\lambda_0} \upkappa^2}.
 \end{equation*}
If moreover \eqref{camembert} is satisfied, then we have
\begin{equation}
\label{brie}
\mathcal L (w^{-1} (A_0)) < \frac{1}{16}.
\end{equation}
We introduce the auxiliary set 
\begin{equation*}
\left\{
\begin{aligned}
\mathcal J (u, \upkappa) &=\{ r \in [\frac{1}{2}, \varrho],  {\rm \ such \ that \ }
 \vert u_\eps  (\ell) -\upsigma \vert < A_0, \, \forall \ell \in \S^1(r) \},   {\rm \ and \ } \\
 \mathcal Z (u, \upkappa) &=\{ r \in [\frac{1}{2}, \varrho],  {\rm \ such \ that \ }
 \vert u_\eps  (\ell) -\upsigma \vert > A_0, \, \forall \ell \in \S^1(r) \}.
   \end{aligned}
 \right.
  \end{equation*}
We first show that 
\begin{equation}
\label{euphrate0}
\displaystyle{\mathcal Z (u, \upkappa)= \emptyset}. 
\end{equation}
  Indeed,  consider any arbitrary radius 
 $\frac 12 \leq r\leq \varrho$ in  $\mathcal Z (u, \upkappa)$.     Since $\vert u_\eps-\upsigma \vert <\upkappa <A_0$ on $\partial \D^2(\varrho)$ and since,    by definition of $\mathcal Z(u, \upkappa)$,  we have $\vert u_\eps-\upsigma \vert >A_0$ on $\partial \D^2(r)$, it follows that there is a  smooth domain $V$ such that  $u(x)=A_0$ for $ x \in \partial V$ and $\D^2(r)\subset V\subset \D^2(\varrho)$.  We deduce  from the two previous assertions   that, since by assumption $1\slash 2 \leq r \leq \varrho$,  
 $$
    \partial V\subset w^{-1}(A_0)               {\rm \ and \ } \mathcal L(\partial V) \geq 2\pi r \geq \pi , 
 $$
Hence, we obtain  
$$
\mathcal L(w^{-1}(A_0) ) \geq \pi. 
 $$
 This however contradicts  inequality \eqref{brie} and hence establishes \eqref{euphrate0}.

We next  consider an arbitrary radius 
 $\frac 12 \leq r\leq \varrho$  such that $r \not \in  \mathcal   J (u, \upkappa)$. It follows from  the definition of $\mathcal J(u, \upkappa)$ that there exists some $\ell_r \in \S^1(\varrho)$ such that  $\vert u_\eps(\ell_r) -\upsigma \vert \geq  A_0$. We deduce therefore from \eqref{euphrate0} and the intermediate value theorem that 
  $$ 
 w^{-1}(A_0)   \cap \S^1(r) \neq \emptyset,  \, \,  \forall r \not \in \mathcal J(u, \upkappa).
$$
This relation implies,  by Fubini's theorem,  that 
$$
\mathcal  L ( w^{-1}(A_0) ) \geq \left (\varrho-\frac 12\right)- \vert \mathcal J (u, \upkappa)\vert, 
 $$
 so that 
 \begin{equation}
 \label{carensac0}
  \vert \mathcal J (u, \upkappa) \vert \geq  \left (\varrho-\frac 12\right)-\mathcal  L ( w^{-1}(A_0) ) \geq 
  \varrho- \frac{9}{16}, 
  \end{equation}
where we made use of  estimate \eqref{brie}.  Since  $0< \upkappa \leq  A_0 $ by construction, we have 
$$\mathcal J (u, \upkappa) \subset \mathcal I (u, \upkappa),  {\rm so \ that \ }
\vert \mathcal J (u, \upkappa) \vert \leq \vert \mathcal I (u, \upkappa) \vert.
$$  
 Combining with inequality \eqref{carensac0}, we obtain   the desired inequality \eqref{clamart}. 
 \end{proof}
 
 \subsection{Revisiting the control of the energy on concentric circles}
Using  the results of  the previous section, we  may work out variants of the Lemma \ref{moyenne}. For that purpose, given a radius $\varrho \in [\frac{3}{4}, 1]$, a number 
  $\displaystyle{0<\upkappa \leq \frac{\upmu_0}{2}}$, a smooth map $u:\overline{ \D^2(\varrho)} \to \R^k$   and an element $\upsigma \in \Sigma$ such that \eqref{kappacite}  holds, we introduce the set 
  \begin{equation}
  \label{subsub}
  \Upsilon_\upsigma(u,  \varrho,  \upkappa)=\left \{ x\in \D^2(\varrho),{ \rm \ such \ that \ } \vert u(x)-\upsigma \vert \rm  \leq \upkappa\right\}.
  \end{equation}
The following result is a major tool  in the proof of our main results:

     \begin{lemma}  
     \label{remoyen}
     Let $u, \varrho$ and $\upkappa$ be as above  and assume that the bound \eqref{camembert} holds.  Assume that $\varrho \geq \frac 34 $ There exists a radius $\displaystyle{ \uptau_\eps \in[\frac{5}{8}, \varrho]}$  such that $\S^1(\uptau_\eps) \subset \Upsilon_\upsigma(u,  \varrho,  \upkappa)$, i.e. 
     $$
     \vert u(\ell)-\upsigma) \vert \leq \upkappa,   {\rm  \ for \ any \ } \ell \in \S^1(\uptau_\eps), 
     $$
      and such that 
         $$ \int_{\S^1(\uptau_\eps)}e_\eps(u){\rm  d } \ell  \leq   \frac{1}{\varrho-\frac{11}{16}} \, \E_\eps(u, \Upsilon_\upsigma(u,  \varrho,  \upkappa)). $$
     \end{lemma}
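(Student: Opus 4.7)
The plan is to combine Proposition \ref{jarre} with a Fubini--mean value argument, essentially in the spirit of Lemma \ref{moyenne} but restricted to the good set of radii produced by Proposition \ref{jarre}. First, since hypothesis \eqref{camembert} is precisely what Proposition \ref{jarre} requires, I obtain
$$\vert \mathcal I(u, \upkappa) \vert \geq \varrho - \tfrac{9}{16},$$
where $\mathcal I(u,\upkappa) \subset [\tfrac12,\varrho]$ is the set of radii whose circle sits entirely in $\Upsilon_\upsigma(u,\varrho,\upkappa)$. I then restrict to the subinterval $[\tfrac58,\varrho]$: the piece discarded has Lebesgue measure at most $\vert [\tfrac12,\tfrac58] \vert = \tfrac18$, so
$$\vert \mathcal I(u,\upkappa) \cap [\tfrac58,\varrho] \vert \geq (\varrho - \tfrac{9}{16}) - \tfrac18 = \varrho - \tfrac{11}{16},$$
which is strictly positive because $\varrho \geq \tfrac34 > \tfrac{11}{16}$.

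Next, for every $r \in \mathcal I(u,\upkappa) \cap [\tfrac58,\varrho]$ the circle $\S^1(r)$ is contained in $\Upsilon_\upsigma(u,\varrho,\upkappa)$, and the circles corresponding to distinct radii are disjoint. Using polar coordinates (the length element on $\S^1(r)$ is $r\, d\theta$ while the area element is $r\, dr\, d\theta$, so the two Jacobian factors match), Fubini yields
$$\int_{\mathcal I(u,\upkappa) \cap [\tfrac58,\varrho]} \int_{\S^1(r)} e_\eps(u)\, {\rm d}\ell\, {\rm d}r \leq \E_\eps\bigl(u, \Upsilon_\upsigma(u,\varrho,\upkappa)\bigr).$$
Averaging over the set $\mathcal I(u,\upkappa)\cap [\tfrac58,\varrho]$ and invoking the mean value theorem, I select some radius $\uptau_\eps$ inside this set for which
$$\int_{\S^1(\uptau_\eps)} e_\eps(u)\, {\rm d}\ell \leq \frac{1}{\vert \mathcal I(u,\upkappa)\cap [\tfrac58,\varrho] \vert}\, \E_\eps\bigl(u, \Upsilon_\upsigma(u,\varrho,\upkappa)\bigr) \leq \frac{1}{\varrho - \tfrac{11}{16}}\, \E_\eps\bigl(u, \Upsilon_\upsigma(u,\varrho,\upkappa)\bigr),$$
and by the very membership $\uptau_\eps \in \mathcal I(u,\upkappa)$ the circle $\S^1(\uptau_\eps)$ lies in $\Upsilon_\upsigma$, i.e.\ $\vert u(\ell)-\upsigma \vert \leq \upkappa$ on $\S^1(\uptau_\eps)$, which gives both conclusions simultaneously.

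There is no genuine obstacle: the argument is purely a quantitative bookkeeping exercise combining the measure estimate of Proposition \ref{jarre} with a standard Fubini plus mean-value selection. The only points one should double-check are the arithmetic $(\varrho-\tfrac{9}{16})-\tfrac18 = \varrho-\tfrac{11}{16}$ (which dictates precisely why the lower endpoint $\tfrac58$ and the hypothesis $\varrho \geq \tfrac34$ are chosen as they are), and the exact matching of the polar-coordinate Jacobians, so that the Fubini step loses no multiplicative constant. All the heavy lifting has already been done in Proposition \ref{jarre}.
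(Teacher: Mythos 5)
Your proof is correct and follows essentially the same route as the paper: invoke Proposition \ref{jarre} to get $\vert\mathcal I(u,\upkappa)\vert\geq\varrho-\tfrac{9}{16}$, intersect with $[\tfrac58,\varrho]$ to lose at most measure $\tfrac18$, then apply Fubini in polar coordinates over the good radii and a mean-value selection. Your bookkeeping on the Jacobian matching and the arithmetic $(\varrho-\tfrac{9}{16})-\tfrac18=\varrho-\tfrac{11}{16}$ is accurate, and in fact your notation for the inner Fubini integral is cleaner than the paper's, which has a typographical slip writing $\S^1(\varrho)$ where $\S^1(r)$ is meant.
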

     
\begin{proof} In view of definition  \eqref{subsub} of $  \Upsilon_\upsigma(u,  \varrho,  \upkappa)$ and the definition \eqref{sunyu} of $\mathcal I(u, \upkappa)$, we have 
 $ \S^1(r) \subset  \Upsilon_\upsigma(u,  \varrho,  \upkappa) $ for  any $r \in \mathcal I (u, \upkappa)$, so that, by Fubini's theorem, we have 
$$ 
\int_{\mathcal I (u, \upkappa)}  \left( \int_{\S^1(\varrho)}e_\eps(u_\eps){\rm d} \ell \right) {\rm d}\varrho \leq 
\int_{  \Upsilon_\upsigma(u,  \varrho,  \upkappa)} e_\eps(u_\eps){\rm d}x.
$$ 
Since we assume that the bound \eqref{camembert} holds, it follows from Proposition \ref{jarre} that 
$$ 
\vert \mathcal I (u, \upkappa) \vert \geq \varrho-\frac{9}{16} {\rm \ and  \ hence \ }
\vert \mathcal I (u, \upkappa) \cap   [\frac 58, \varrho] \vert \geq \varrho-\frac{11}{16}. 
$$
 Hence by a mean value argument that there exists some radius $\uptau_\eps   \in [\frac 58, \varrho] \cap \mathcal I_\eps$ such that 
 $$
  \int_{\S^1(\uptau_\eps )}e_\eps(u_\eps){\rm d} \ell \leq  \frac{1}{\varrho-\frac{11}{16}}\int_{ \Upsilon_\upsigma(u,  \varrho,  \upkappa)} e_\eps(u_\eps){\rm d}x,
 $$
 which is precisely  the conclusion. 
\end{proof}     

\noindent
{\bf Comment.}  The result above will be used in connection with the estimates for $u$ when $u$ is the solution to \eqref{elipes}. Thanks to the equation, we will be able to estimate the growth of $\E_\eps(u, \Upsilon_\upsigma(u,  \varrho,  \upkappa))$ with $\upkappa$. We will choose $\upkappa$ as small as possible to satify \eqref{camembert}, which amounts to choose of the magnitude of $\sqrt{\E_\eps(u)}$, as we will see in \eqref{cabrovski}.
\subsection{Gradient estimates on level sets}
  Given a arbitrary   smooth function $\varphi : \Omega \to \R$, where   $\Omega$  denotes a denote of $\R^N$, and an arbitrary integrable  function $f: \Omega \to \R$,  the coarea formula  \eqref{coarea} generalized as 
  \begin{equation}
     \label{coarea2}
     \int_{\R} \left (\int_{\varphi^{-1}(s)}  f(\ell){\rm d}\ell \right){\rm d} s=     \int_\Omega  \vert  \nabla  \varphi  (x)\vert  f(x){\rm d}x.
     \end{equation}
Given a smooth function $u: \Omega \to \R^k$,  we specify identity \eqref{coarea2}  with choices $\varphi =\vert u \vert$ and $f=\vert \nabla u \vert$: We are led to  the identity
 \begin{equation}
 \begin{aligned}
     \label{coarea3}
     \int_{\R} \left (\int_{\vert u \vert ^{-1}(s)} \vert \nabla u \vert (\ell){\rm d}\ell\right){\rm d} s&=     \int_\Omega  \vert  \nabla  u  (x)\vert. 
     \vert \nabla \vert u \vert  \vert {\rm d}x,  \\
     &\leq \int_\Omega  \vert  \nabla  u  (x)\vert^2{\rm d} x. 
    \end{aligned}
     \end{equation}
     We specify furthermore  this formula,  as in Subsection \ref{detroit},   for  a given map  $u$ defined on a disk $\D^2(r)$ and $w_i$ being  the corresponding  maps $w_i$ defined  on $\D^2(r)$ by formula  \eqref{doublevi}.  We introduce the subdomain 
       \begin{equation}
 \label{nablalala}
 \begin{aligned}
 \Theta (u, r)&=\left\{  x \in \D^2(r)  {\rm \ such \ that \ } u(x) \in \D^2(r) \setminus 
 \underset{i=1}{\overset {q} \cup} \B^k (\upsigma_i, \frac{\upmu_0}{2})
   \right \}   \\
   &=u^{-1}\left(\D^2(r) \setminus \underset{i=1}{\overset {q} \cup} \B^k(\upsigma_i, \frac{\upmu_0}{2}\right) 
   =  \underset{i=1}{\overset {q} \cup}  \Upsilon_{\upsigma_i} (u, r, \frac{\upmu_0}{2}).\\
  \end{aligned}
 \end{equation}
     We have: 
     
     \begin{lemma}
     \label{claudius}  
     Let $u$  be as above. 
  There exists   some  number   
  $\displaystyle{  \tilde \upmu  \in  [\frac{\upmu_0}{2},\upmu_0] }$, where $\upmu_0$ denotes the constant introduced Paragraph \ref{potentiel},  such that 
  \begin{equation}
  \label{claudius2}
\underset{i=1}{\overset{q}  \sum }  \int_{w_i^{-1}(\tilde \upmu)} \vert \nabla u \vert (\ell){\rm d}\ell  \leq  
  \frac{2}{\upmu_0} \int_{\Theta(u, r)} \vert \nabla u \vert^2 \leq \frac{4}{\upmu_0 \eps} \E_\eps(u, \Theta (u, r)).
  \end{equation}
        \end {lemma}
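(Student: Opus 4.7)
\medskip
\noindent
\textbf{Proof plan for Lemma \ref{claudius}.} The argument combines the coarea formula \eqref{coarea2} applied to each of the scalar functions $w_i$ with a mean-value selection in the level parameter $s$. The only properties of the $w_i$ we use are recorded in \eqref{monster}: $\nabla w_i$ vanishes outside the preimage $\{\vert u-\upsigma_i\vert<\upmu_0\}$, on this preimage $w_i=\vert u-\upsigma_i\vert$, and the pointwise bound $\vert \nabla w_i\vert \leq \vert \nabla u\vert$ holds.

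First, I would apply the coarea formula \eqref{coarea2} to $\varphi=w_i$ with the integrand $f=\vert \nabla u\vert$ on $\D^2(r)$ and restrict the $s$--integration to $[\upmu_0/2,\upmu_0]$: by \eqref{monster}, the level set $w_i^{-1}(s)$ for $s$ in this range coincides with $\{x\in\D^2(r):\vert u(x)-\upsigma_i\vert=s\}$, and is contained in the annular region
\[
A_i=\bigl\{x\in\D^2(r):\, \upmu_0/2\leq \vert u(x)-\upsigma_i\vert\leq \upmu_0\bigr\}.
\]
Using $\vert \nabla w_i\vert \leq \vert \nabla u\vert$ and the fact that $\nabla w_i=0$ outside $\{\vert u-\upsigma_i\vert<\upmu_0\}$, the coarea formula gives
\[
\int_{\upmu_0/2}^{\upmu_0}\Bigl(\int_{w_i^{-1}(s)}\vert \nabla u\vert(\ell)\,\mathrm d\ell\Bigr)\mathrm d s
=\int_{A_i}\vert \nabla w_i\vert\,\vert \nabla u\vert\,\mathrm d x
\leq \int_{A_i}\vert \nabla u\vert^2\,\mathrm d x.
\]

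Next I would use the disjointness of the annuli $A_i$: by the separation condition in \eqref{kiv}, the balls $\B^k(\upsigma_i,2\upmu_0)$ are pairwise disjoint, so any $x\in A_i$ has $u(x)\notin \B^k(\upsigma_j,\upmu_0/2)$ for every $j$, which simultaneously shows $A_i\cap A_j=\emptyset$ for $i\neq j$ and $A_i\subset \Theta(u,r)$. Summing over $i$ therefore yields
\[
\sum_{i=1}^q\int_{\upmu_0/2}^{\upmu_0}\Bigl(\int_{w_i^{-1}(s)}\vert \nabla u\vert\,\mathrm d\ell\Bigr)\mathrm d s
\leq \int_{\Theta(u,r)}\vert \nabla u\vert^2\,\mathrm d x.
\]
Since the interval of integration in $s$ has length $\upmu_0/2$, a standard mean-value argument produces some $\tilde\upmu\in[\upmu_0/2,\upmu_0]$ (which can be taken so that every $w_i^{-1}(\tilde\upmu)$ is regular by Sard's theorem, up to a negligible $s$-set) such that
\[
\sum_{i=1}^q\int_{w_i^{-1}(\tilde\upmu)}\vert \nabla u\vert\,\mathrm d\ell
\leq \frac{2}{\upmu_0}\int_{\Theta(u,r)}\vert \nabla u\vert^2\,\mathrm d x,
\]
which is the first inequality in \eqref{claudius2}. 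The second inequality is the pointwise bound $\vert \nabla u\vert^2 \leq (2/\eps)\,e_\eps(u)$, coming directly from the definition of $e_\eps$.

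No step is a real obstacle; the whole argument is essentially a disjointness-plus-mean-value packaging of the coarea identity. The only subtlety is to make sure that the level sets $w_i^{-1}(s)$ for $s\in[\upmu_0/2,\upmu_0]$ genuinely lie in the ``transition'' region $\Theta(u,r)$ and do not overlap for different $i$, which is what the well-separation \eqref{kiv} of the $\upsigma_i$ delivers.
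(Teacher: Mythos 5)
Your proof is correct and follows essentially the same route as the paper: apply the coarea formula with integrand $\vert \nabla u\vert$ to each $w_i$ (the paper writes it as $\vert u-\upsigma_i\vert$, which coincides with $w_i$ on the relevant range), restrict the $s$-integration to $[\upmu_0/2,\upmu_0]$, sum over $i$ using disjointness of the annular preimages to land in $\Theta(u,r)$, and conclude by a mean-value selection; the second inequality is the trivial pointwise bound $\vert\nabla u\vert^2\leq (2/\eps)\,e_\eps(u)$. You are slightly more explicit than the paper about why the sets $A_i$ are pairwise disjoint and contained in $\Theta(u,r)$ — a point the paper leaves implicit but which does rely on the well-separation condition \eqref{kiv} — but the argument is the same.
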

        
        \begin{proof}   It follows from identity \eqref{coarea3}, applied to $u-\upsigma_i$,  that 
     \begin{equation}
    \begin{aligned} 
     \underset{i=1}{\overset{q}  \sum }   \int_{\frac{\upmu_0}{2}}^{\upmu_0 }    \left( \int_{w_i^{-1}(s)} \vert \nabla u \vert (\ell){\rm d}\ell\right){\rm d} s&=   \int_{\frac{\upmu_0}{2}}^{\upmu_0 }  \underset{i=1}{\overset{q}  \sum }   \left( \int_{w_i^{-1}(s)} \vert \nabla u \vert (\ell){\rm d}\ell\right){\rm d} s \\
&\leq \int_{\Theta(u, r)} \vert \nabla u \vert^2  {\rm d} x. 
\end{aligned}
     \end{equation}
     We conclude once more by a mean-value argument.
        \end{proof}
        \section{Some  properties of the PDE}
        \label{pde}
       In this section, we recall first several classical  properties of the solutions to the equation \eqref{elipes}.  We then provide some energy and potential estimates (see e. g \cite{BBH}).
       
       \subsection{Uniform bound  through the maximum principle}
       We have:
       
       \begin{proposition}  Let $u_\eps\in H^1(\Omega)$  be a solution of \eqref{elipes}. Then we have the  uniform bound bound, for $x\in \Omega$
\begin{equation}
\label{princours}
\vert u(x) \vert^2 \leq  \frac{4 {\rm C}_{\rm unf}}{{\rm dist}(x, \partial \Omega)} {\E}_\eps(u_\eps) +2 \sup \{ \vert \sigma \vert^2, \upsigma \in \sigma \}.
\end{equation}
       \end{proposition}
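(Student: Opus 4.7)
The plan is to combine a two–dimensional energy slicing argument with the one–dimensional uniform estimate of Lemma \ref{valli} to control $u_\eps$ on a carefully chosen circle, and then to propagate the bound to the interior by the maximum principle applied to $|u_\eps|^2$ using the behavior of $V$ at infinity from hypothesis $(\text{H}_3)$.

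Fix $x\in\Omega$ and set $d=\mathrm{dist}(x,\partial\Omega)$, so that $\D^2(x,d)\subset\Omega$. As a first step I would apply the mean value Lemma \ref{moyenne} to the concentric annulus $\D^2(x,d)\setminus\D^2(x,d/2)$ (or directly to the pair of radii $d/2$ and $d$) to produce a radius $\mathfrak r_\eps\in[d/2,d]$ such that
\[
\int_{\S^1(x,\mathfrak r_\eps)} e_\eps(u_\eps)\,\mathrm d\ell \;\leq\; \frac{2}{d}\,\E_\eps(u_\eps).
\]
Assuming $\eps\leq \mathfrak r_\eps$ (the regime in which Lemma \ref{valli} applies; the complementary case $d\leq 2\eps$ requires only bookkeeping, since the right–hand side of \eqref{princours} then blows up and the estimate is trivial up to constants), I would apply Lemma \ref{valli} on the circle $\S^1(x,\mathfrak r_\eps)$ to obtain some $\upsigma\in\Sigma$ with
\[
|u_\eps(\ell)-\upsigma|^2 \;\leq\; \mathrm C_{\rm unf}^2\int_{\S^1(x,\mathfrak r_\eps)} e_\eps(u_\eps)\,\mathrm d\ell \;\leq\; \frac{2\,\mathrm C_{\rm unf}^2}{d}\,\E_\eps(u_\eps).
\]
Expanding $|u_\eps|^2\leq 2|u_\eps-\upsigma|^2+2|\upsigma|^2$ and using $|\upsigma|\leq\mathrm R_0$ then yields the desired pointwise bound on the circle $\S^1(x,\mathfrak r_\eps)$.

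Next I would propagate this circular bound into the full disk $\D^2(x,\mathfrak r_\eps)$ by a subharmonicity argument. Since $u_\eps$ solves \eqref{elipes},
\[
\Delta |u_\eps|^2 \;=\; 2|\nabla u_\eps|^2 + 2\,u_\eps\cdot \Delta u_\eps \;=\; 2|\nabla u_\eps|^2 + 2\eps^{-2}\, u_\eps\cdot\nabla V(u_\eps).
\]
By $(\text{H}_3)$, the last term is non-negative on the open set $\mathcal O=\{|u_\eps|>\mathrm R_\infty\}$, hence $|u_\eps|^2$ is subharmonic on $\mathcal O\cap \D^2(x,\mathfrak r_\eps)$. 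The weak maximum principle then forces
\[
\sup_{\D^2(x,\mathfrak r_\eps)} |u_\eps|^2 \;\leq\; \max\!\left(\mathrm R_\infty^2,\ \sup_{\S^1(x,\mathfrak r_\eps)}|u_\eps|^2\right),
\]
since any interior maximum in $\mathcal O$ has trace either on $\partial\mathcal O$ (where $|u_\eps|=\mathrm R_\infty$) or on $\partial\D^2(x,\mathfrak r_\eps)$. Combining with the circle bound of the previous paragraph, and absorbing $\mathrm R_\infty^2$ into the constant (which is legitimate up to adjusting the definition of $\mathrm R_0$, or of $\upmu_0$, to enforce $\mathrm R_\infty\leq\sqrt{2}\,\mathrm R_0$ plus the tail already controlled by the energy term), one recovers inequality \eqref{princours} at the point $x$.

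The main obstacle is the maximum principle step: one must argue cleanly on the possibly disconnected open set $\mathcal O$, handle the case where $\mathcal O$ meets the boundary circle only on part of it, and reconcile the constants $\mathrm R_\infty$ and $\mathrm R_0$ (recall that $\mathrm R_\infty\geq\mathrm R_0$ in general, since $\nabla V$ vanishes at each $\upsigma_i$). A convenient way around this is to replace $|u_\eps|^2$ by $\max(|u_\eps|^2-\mathrm R_\infty^2,0)$, or equivalently to work with the truncation $(|u_\eps|-\mathrm R_\infty)_+^2$, which vanishes on $\partial\mathcal O$ and is subharmonic in the whole of $\D^2(x,\mathfrak r_\eps)$; comparing with the boundary trace on $\S^1(x,\mathfrak r_\eps)$ then produces the clean bound \eqref{princours}, with the constant $2\sup\{|\upsigma|^2,\upsigma\in\Sigma\}$ being the natural size of the ``equilibrium part'' of $u_\eps$ and the energy term controlling the ``excursions'' from $\Sigma$.
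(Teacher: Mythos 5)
Your strategy is essentially the one the paper follows: select a good circle with Lemma \ref{moyenne}, control $|u_\eps-\upsigma|$ on that circle with the one-dimensional uniform estimate \eqref{bornuni2} (Lemma \ref{valli}), and propagate into the disk with a maximum principle for $|u_\eps|^2$. The difference is in how the maximum principle is set up. You work with subharmonicity of $|u_\eps|^2$ only on the open set $\{|u_\eps|>\mathrm R_\infty\}$ and propose to patch it up by passing to $(|u_\eps|-\mathrm R_\infty)_+^2$. The paper instead first integrates $(\text{H}_3)$ and uses smoothness of $V$ on the compact ball $\overline{\B^k(\mathrm R_\infty)}$ to obtain the \emph{global} bound $y\cdot\nabla V(y)\geq\upalpha_\infty|y|^2-\upbeta_\infty$ for all $y$ (equation \eqref{convexita}). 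This yields the single linear elliptic inequality $-\Delta W_\eps+\upalpha_\infty\eps^{-2}W_\eps\leq 0$ on the whole disk for $W_\eps=|u_\eps|^2-\upbeta_\infty/\upalpha_\infty$, so the maximum principle applies cleanly, and, crucially, the shift $\upbeta_\infty/\upalpha_\infty$ simply cancels when translating back to $|u_\eps|^2$, recovering exactly the constant $2\sup\{|\upsigma|^2\}$ in \eqref{princours}.

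This cancellation is what your version misses: your truncated argument produces a bound of the form $|u_\eps(x)|^2\leq\max\bigl(\mathrm R_\infty^2,\ \sup_{\S^1}|u_\eps|^2\bigr)$ or, after squaring out the truncation, $\frac{C}{d}\,\E_\eps(u_\eps)+C\,\mathrm R_\infty^2$, not $\frac{4\mathrm C_{\rm unf}}{d}\,\E_\eps(u_\eps)+2\sup\{|\upsigma|^2\}$. The proposed remedy — ``adjusting the definition of $\mathrm R_0$, or of $\upmu_0$, to enforce $\mathrm R_\infty\leq\sqrt2\,\mathrm R_0$'' — is not available: $\mathrm R_0=\sup\{|\upsigma|,\upsigma\in\Sigma\}$ is determined by $\Sigma$, while $\mathrm R_\infty$ comes from $(\text{H}_3)$ and can be much larger, and $\upmu_0$ plays no role here. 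If you want the exact constant of \eqref{princours}, replace the restriction to $\{|u_\eps|>\mathrm R_\infty\}$ by the global bound $y\cdot\nabla V(y)\geq\upalpha_\infty|y|^2-\upbeta_\infty$, shift $|u_\eps|^2$ by the additive constant $\upbeta_\infty/\upalpha_\infty$, and let the constant cancel after the maximum principle, as in the paper. (Your observation that the regime $d\lesssim\eps$ needs a separate remark applies equally to the paper's proof, which also implicitly requires the chosen radius to exceed $\eps$ in order to invoke the one-dimensional lemma.)
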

       
       \begin{proof} 
       Arguing as in \cite{BBH0}, we compute, using equation \eqref{elipes}
       \begin{equation}
       \label{multiplie}
       \begin{aligned}
       \Delta \vert u_\eps \vert^2&=u_\eps\cdot \Delta u_\eps+ \vert \nabla u_\eps \vert^2   
       =\eps^{-2} u_\eps\cdot \nabla  V_u( u_\eps)+ \vert \nabla u_\eps \vert^2  \\
     &  \geq \eps^{-2} u_\eps\cdot \nabla  V_u( u_\eps),   {\rm \ on \ } \Omega. 
       \end{aligned}
       \end{equation}
     On the other hand, it follows from assumption \eqref{condinfty} that, there exists some constant $\upbeta_\infty \geq 0$ such that 
     \begin{equation}
     \label{convexita}
     y.\nabla  V( y )\geq \upalpha_\infty \vert y \vert^2- \upbeta_\infty  {\rm \ for \ any \ } y \in \R^N.
     \end{equation}
     Hence, combining \eqref{multiplie} and \eqref{convexita} we obtain the  inequality
     \begin{equation*}
    -\Delta \vert u_\eps \vert^2  +\upalpha_\infty \eps^{-2} \left( \vert u_\eps\vert^2 -\frac{ \upbeta_\infty}{\upalpha_\infty}\right) \leq 0 {\rm \ on \ } \Omega.
     \end{equation*}
     We set $W_\eps=\vert u_\eps \vert^2-\frac{ \upbeta_\infty}{\upalpha_\infty}$,  so that  we obtain the inequlaity for $V_\eps$ 
       \begin{equation}
       \label{prince}
    -\Delta W_\eps   +\upalpha_\infty \eps^{-2} W_\eps \leq 0  {\rm \ on \ } \Omega. 
     \end{equation}
     Let $x\in \Omega$ and set $R_x={\rm dist}( x, \partial \Omega)$, so that $\D^2(x, R_x)\subset \Omega$. It follows from Lemma \ref{moyenne} and inequality \ref{bornuni2} that there exists some radius $\displaystyle{\uptau \in [\frac{R_x}{2}, R_x]}$ and some element $\upsigma \in \Sigma$ such that 
      \begin{equation*}
      \label{bornunivert}
     \vert u_\eps(\ell)-\upsigma \vert \leq \frac{\sqrt{2} {\rm C}_{\rm unf}}{\sqrt{{R_x}}} \sqrt{{\E}_\eps(u_\eps, \D^2(R_x)})
    \leq  \frac{\sqrt{2} {\rm C}_{\rm unf}}{\sqrt{{R_x}}} \sqrt{{\E}_\eps(u_\eps)}), 
      \  \   {\rm \ for \ all \ } \ell \in 
     \S^1(\uptau).
      \end{equation*}
      Hence 
     $$  W_\eps (\ell)  = \vert u_\eps (\ell) \vert^2 -\frac{ \upbeta_\infty}{\upalpha_\infty}\leq  \frac{4 {\rm C}_{\rm unf}}{{R_x}} {\E}_\eps(u_\eps) +
      2\sup \{ \vert \sigma \vert^2\}-\frac{ \upbeta_\infty}{\upalpha_\infty},
       \   { \rm \ for \ all \ } \ell \in    \S^1(\uptau).
     $$
Since $W_\eps$ satisfies inequality    \eqref{prince} we may apply the maximum principle to assert that 
$$  W_\eps(x)  \leq  \frac{4 {\rm C}_{\rm unf}}{{\rm dist}(x, \partial \Omega)} {\E}_\eps(u_\eps) +2 \sup \{ \vert \sigma \vert^2\}-\frac{ \upbeta_\infty}{\upalpha_\infty}  \   { \rm \ for \ all \ } x \in    \D^2(x, \frac{R}{2}) 
     $$
     so that the conclusion follows.
       \end{proof}
       
       \subsection{Regularity and gradient bounds}       
    The next  result is  a  standard  a consequence of the smoothness of the potential, the regularity theory for the Laplacian and the maximum principle. 
    
    \begin{proposition}
    \label{classic}  Let $u_\eps\in H^1(\Omega)$  be a solution of \eqref{elipes} and assume that $\E_\eps (u_\eps) \leq M$, where $M\geq 0$. Then $u_\eps$ is smooth on $\Omega$ and given any $\delta>0$,  there exists some constant $K_{\rm dr}(M, \delta)>0$,  depending only on the potential $V$, $M$ and $\delta$, such  that,
   \begin{equation}
   \label{classicic}
 \vert    \nabla u_\eps  \vert (x) \leq \frac{K_{\rm dr}(M, \delta)}{\eps}, {\rm \ if \ } {\rm dist} (x, \partial \Omega)\geq \delta.
 \end{equation}
       \end{proposition}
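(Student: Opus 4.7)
\medskip
\noindent
\textbf{Proof plan for Proposition \ref{classic}.}
The strategy is to combine the $L^\infty$ bound coming from the maximum principle (the preceding proposition) with a rescaling argument that reduces the problem to standard interior elliptic estimates at scale $1$.

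\medskip
First I would fix a point $x_0 \in \Omega$ with ${\rm dist}(x_0,\partial\Omega) \geq \delta$ and look at the slightly enlarged set of points with distance $\geq \delta/2$ from $\partial \Omega$. On this set, the estimate \eqref{princours} together with the energy bound $\E_\eps(u_\eps)\leq M$ yields
\begin{equation*}
\vert u_\eps(x) \vert \leq K_1(M,\delta),\qquad {\rm dist}(x,\partial\Omega)\geq \delta/2,
\end{equation*}
for some constant depending only on $V$, $M$ and $\delta$. Since $V$ is smooth, this bound immediately implies that $\vert \nabla V(u_\eps(x)) \vert \leq K_2(M,\delta)$ on the same set. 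At this point one also sees that bootstrap arguments give smoothness: the right-hand side $\eps^{-2}\nabla V(u_\eps)$ lies in $L^\infty_{\rm loc}$, then by $W^{2,p}$ estimates $u_\eps\in W^{2,p}_{\rm loc}$ for every $p<\infty$, hence in $C^{1,\alpha}_{\rm loc}$, and iterating with the smoothness of $V$ produces $u_\eps\in C^\infty$ on $\Omega$.

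\medskip
For the quantitative gradient bound I would rescale at the natural scale $\eps$. Assume first that $\eps\leq \delta/4$ and set
\begin{equation*}
\tilde u_\eps(y)=u_\eps(x_0+\eps y),\qquad y\in \D^2(1),
\end{equation*}
so that $\D^2(x_0,\eps)\subset \{{\rm dist}(\cdot,\partial\Omega)\geq \delta/2\}$. By the scale invariance of the equation recalled in Subsection \ref{squale}, $\tilde u_\eps$ solves
\begin{equation*}
-\Delta \tilde u_\eps=-\nabla V(\tilde u_\eps)\quad\text{on } \D^2(1),
\end{equation*}
with $\vert \tilde u_\eps\vert \leq K_1(M,\delta)$, hence $\vert \Delta \tilde u_\eps\vert \leq K_2(M,\delta)$ on $\D^2(1)$. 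Standard interior elliptic estimates (e.g. $W^{2,p}$ with $p>2$ and Sobolev embedding, or Schauder) then yield $\vert \nabla \tilde u_\eps(0)\vert \leq K_3(M,\delta)$. Scaling back via $\nabla u_\eps(x_0)=\eps^{-1}\nabla \tilde u_\eps(0)$ gives exactly the desired bound with $K_{\rm dr}(M,\delta)=K_3(M,\delta)$.

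\medskip
In the remaining range $\delta/4 < \eps \leq 1$, the estimate is trivial: the same bootstrap on $\vert u_\eps\vert \leq K_1(M,\delta)$, applied without rescaling on any ball of radius $\delta/4$ around $x_0$, gives $\vert \nabla u_\eps(x_0)\vert \leq K_4(M,\delta)$, and since $\eps^{-1}\geq 1$ we may absorb $K_4$ into a larger $K_{\rm dr}(M,\delta)$. There is no real obstacle here; the only care needed is to make sure the uniform $L^\infty$ bound on $u_\eps$ is valid on a neighborhood of $x_0$ large enough, at the rescaled scale $1$, to apply the interior regularity estimates, which is the reason for starting the argument on the larger set $\{{\rm dist}(\cdot,\partial\Omega)\geq \delta/2\}$.
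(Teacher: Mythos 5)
Your proof is correct and follows essentially the same route as the paper: both combine the $L^\infty$ bound on $u_\eps$ from Proposition \ref{princours} with interior elliptic regularity to get the $\eps^{-1}$ gradient bound. The only difference is one of packaging — the paper directly invokes the explicit gradient estimate of Lemma~A.1 of \cite{BBH0}, which already encodes the correct $\eps$-dependence through the $\|f\|_\infty\|v\|_\infty$ term, whereas you rederive that estimate by rescaling to unit scale $\eps$ and applying standard interior $W^{2,p}$/Schauder estimates; these are equivalent.
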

   
 \begin{proof}  The estimate is a  consequence of Lemma A.1 of \cite{BBH0}, which assert that, if $v$ is a solution on  some domain $\mathcal O$ of 
 $\R^n$ of  $-\Delta v=f$,  then we have the inequality
 \begin{equation}
 \label{bbhoo}
 \vert \nabla v \vert^2(x) \leq C(\Vert f \Vert_{L^{\infty}(\mathcal O)} \Vert v \Vert_{L^{\infty}(\mathcal O)}  +
 \frac{1}{{\rm dist } (x, \partial \mathcal O)^2} \Vert v \Vert_{L^{\infty}(\mathcal O)}^2,   {\rm \ for \ all \ } x \in \mathcal  O.
 \end{equation}
 We apply inequality \eqref{bbhoo} to the solution $u_\eps$, with  source term  $f=\eps^{-2}\nabla_u V (u_\eps) $  on the domain $\mathcal O= 
 \D^2(x,  \frac{\delta}{2})$.  In view of Proposition \ref{princours}, we have 
 \begin{equation*}
 \left\{
 \begin{aligned}
 \vert u(x) \vert^2 &\leq  C\left ( \frac{M}{\delta}+1\right) ,   {\rm \ for \ } x  \in \mathcal O= \D^2(x,  \frac{\delta}{2}) \\
 \vert f (x)\vert &\leq   \eps^{-2} C(M, \delta) ,   {\rm \ for \ } x  \in  \mathcal O=\D^2(x,  \frac{\delta}{2}). 
 \end{aligned}
 \right.
 \end{equation*}
 Combining with \eqref{bbhoo} we derive the conclusion.
 \end{proof}
   
    The gradient bound described in Proposition \ref{classic} has important consequences when one compares the two terms  involved in the energy, the gradient term and the potential term.   As we will see in Lemma \ref{bornepote} below, it shows that the  potential term yields an upper bound for  the gradient term, at least when $u_\eps $ takes values  far  from the potential wells. 
 Restricting ourselves to the  case $\Omega=\D^2$, we introduce for $r>0$  the set 
 \begin{equation}
 \label{nablala}
 \begin{aligned}
 \Theta_\eps(r)\equiv \Theta( u_\eps, r)&=\left\{  x \in \D^2(r)  {\rm \ such \ that \ } u_\eps(x) \in \R^k \setminus 
 \underset{i=1}{\overset {q} \cup} \B^k(\upsigma_i, \frac{\upmu_0}{4})
   \right \}   \\
  & ={({u_\eps}_{_{\vert \D^2(r)}})}^{-1}\left(\R^k\setminus  \underset{i=1}{\overset {q} \cup} \B^k(\upsigma_i, \frac{\upmu_0}{4})
 \right).
    \end{aligned}
 \end{equation}
 On  $\Theta_\eps$ the energy can be estimated by the potential as follows:
 
 \begin{lemma}
\label{bornepote}
 $u_\eps\in H^1(\D^2)$  be a solution of \eqref{elipes} such that $E_\eps(u_\eps) \leq M_0$. There exists a constant $C_{\rm T}$ depending only on the potential $V$ and $M_0$ such that 
 $$
e_\eps(u_\eps)  \leq  C_{\rm  T }  \frac{V(u_\eps)}{\eps} \ 
{\rm  \ on \ } \Theta_\eps (\frac 34). 
 $$
 \begin{proof} It follows from the definition of $\Theta_\eps$  and in view of inequality \eqref{extrut} that 
 $$
 V(u_\eps(x))  \geq  \frac{\alpha_0}{16}, \, {\rm \ for \ } x \in \Theta_\eps.
 $$
 Going back to  \eqref{classic} we obtain, for $x\in \Theta_\eps$
$$ \eps  \vert \nabla u_\eps \vert^2(x) \leq \frac{K_{\rm dr}^2}{\eps} = \frac{\alpha_0}{4\eps} \left( \frac{4K_{\rm dr}^2}{\alpha_0}\right)
\leq   \left( \frac{4K_{\rm dr}^2}{\alpha_0}\right) \frac{V(u_\eps(x))}{\eps}, 
$$
 so that 
 $$e(u_\eps) \leq \left( \frac{2K_{\rm dr}^2}{\alpha_0}+1\right) \frac{V(u_\eps)}{\eps}.$$
 The conclusion follows choosing the constant   $C_{\rm T}$ as $\displaystyle{C_{\rm T}=  \left( \frac{4K_{\rm dr}^2}{\alpha_0}\right)}$. 
 \end{proof}
 \end{lemma}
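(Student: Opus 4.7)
The plan is to combine the uniform gradient estimate of Proposition \ref{classic} with a lower bound on $V(u_\eps)$ valid on $\Theta_\eps$, and then trade the pointwise gradient bound for a multiple of $V(u_\eps)/\eps$.

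First I would establish the uniform lower bound $V(u_\eps(x)) \geq c_1$ for some $c_1 = c_1(V) > 0$ and every $x \in \Theta_\eps(3/4)$. By the definition \eqref{nablala}, such a point $x$ satisfies $|u_\eps(x) - \upsigma_i| \geq \upmu_0/4$ for every $i$. Either $u_\eps(x) \in \B^k(\upsigma_i, 2\upmu_0)$ for some $i$, in which case the quadratic lower bound \eqref{glutz} gives $V(u_\eps(x)) \geq \tfrac14 \lambda_0 (\upmu_0/4)^2$, or $u_\eps(x)$ lies outside every ball $\B^k(\upsigma_i,\upmu_0)$, in which case \eqref{extrut} provides $V(u_\eps(x)) \geq \upalpha_0$. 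Taking the smaller of these two constants yields the desired uniform lower bound $c_1 > 0$ on $\Theta_\eps(3/4)$.

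Next I would invoke Proposition \ref{classic} with $\delta = 1/4$: since $\Theta_\eps(3/4) \subset \D^2(3/4)$, every point $x \in \Theta_\eps(3/4)$ satisfies $\mathrm{dist}(x,\partial\D^2) \geq 1/4$, so
\[
|\nabla u_\eps|(x) \leq \frac{K_{\rm dr}(M_0, 1/4)}{\eps}.
\]
Consequently $\tfrac{\eps}{2}|\nabla u_\eps|^2(x) \leq \tfrac{K_{\rm dr}^2}{2\eps}$. Using the lower bound $V(u_\eps(x)) \geq c_1$ from the previous step, I rewrite this as
\[
\frac{\eps}{2}|\nabla u_\eps|^2(x) \leq \frac{K_{\rm dr}^2}{2 c_1} \cdot \frac{c_1}{\eps} \leq \frac{K_{\rm dr}^2}{2 c_1} \cdot \frac{V(u_\eps(x))}{\eps},
\]
which controls the gradient part of the energy density by a multiple of the potential part.

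Adding the trivial inequality $V(u_\eps)/\eps \leq V(u_\eps)/\eps$ to the gradient estimate yields
\[
e_\eps(u_\eps)(x) \leq \left(\frac{K_{\rm dr}^2}{2 c_1} + 1\right)\frac{V(u_\eps(x))}{\eps} \qquad \text{on } \Theta_\eps(3/4),
\]
and the conclusion follows with $C_{\rm T} = K_{\rm dr}^2/(2c_1) + 1$, a constant depending only on $V$ and $M_0$. The only real subtlety here is verifying the uniform lower bound on $V$ across the full set $\Theta_\eps(3/4)$, since $\Theta_\eps$ is \emph{not} defined by the condition $u_\eps \notin \cup \B^k(\upsigma_i, \upmu_0)$ (where \eqref{extrut} would apply directly) but by the stricter condition $|u_\eps-\upsigma_i|\geq \upmu_0/4$; splitting the analysis according to whether $u_\eps(x)$ lies in the annulus between $\upmu_0/4$ and $\upmu_0$ around some well, or further out, resolves this without any additional machinery.
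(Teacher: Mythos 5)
Your proposal is correct and follows essentially the same strategy as the paper: bound $V(u_\eps)$ below on $\Theta_\eps(\tfrac34)$, use the gradient estimate of Proposition~\ref{classic} to bound $\eps|\nabla u_\eps|^2$ above by a constant times $V(u_\eps)/\eps$, then add the potential term. Your explicit dichotomy (annulus $\upmu_0/4\leq|u_\eps-\upsigma_i|<\upmu_0$ via \eqref{glutz} vs.\ outside all $\B^k(\upsigma_i,\upmu_0)$ via \eqref{extrut}) is in fact a needed clarification that the paper glosses over when it invokes \eqref{extrut} alone to lower-bound $V$ on $\Theta_\eps$, since $\Theta_\eps$ only excludes balls of radius $\upmu_0/4$.
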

 
    \subsection{The stress-energy tensor}
    The stress-energy tensor is an important tool in the analysis of singularly perturbed gradient-type problems. In dimension two, its expression is simplified thanks to complex analysis.  
      \begin{lemma}
      \label{canardwc}
Let $u_\eps$ be a solution of \eqref{elipes} on $\Omega$. Given any vector field $\vec X \in \mathcal{D}(\Omega,\R^2)$ we have
\begin{equation}
\label{canardwc1}
\int_{\Omega}   {A_\eps(u_\eps)}_{i, j}
 \cdot
\frac{\partial X_i}{\partial x_j}\,dx =0 {\rm \ where \ }
A_\eps(u_\eps)=e_\eps(u_\eps)\delta_{ij}-\eps
\frac{\partial u_\eps}{\partial x_i} \cdot\frac{\partial u_\eps}{\partial x_j}.
\end{equation}
  \end{lemma}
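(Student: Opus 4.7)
The plan is to derive the identity by testing the PDE against the vector field $X_j \partial_j u_\eps$ (summation in $j$ understood), which is the standard inner-variation test for a critical point of $E_\eps$. Since $\vec X \in \mathcal{D}(\Omega,\R^2)$ is compactly supported, every boundary term that arises from integration by parts vanishes, so the computation reduces to careful algebra.

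First I would compute the right-hand side. Multiplying $-\Delta u_\eps = -\eps^{-2}\nabla_u V(u_\eps)$ scalarly by $X_j \partial_j u_\eps$ and integrating, the potential term gives
\[
\eps^{-2}\int_\Omega \nabla_u V(u_\eps)\cdot X_j \partial_j u_\eps\,dx = \eps^{-2}\int_\Omega X_j\,\partial_j\bigl(V(u_\eps)\bigr)\,dx = -\eps^{-2}\int_\Omega (\partial_j X_j)\,V(u_\eps)\,dx,
\]
where the chain rule was used in the first equality and compact support in the second.

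Next I would treat the Laplacian term by integrating by parts twice. Writing $\Delta u_\eps = \partial_i\partial_i u_\eps$, one has
\[
\int_\Omega \Delta u_\eps \cdot X_j\partial_j u_\eps\,dx = -\int_\Omega \partial_i u_\eps \cdot \partial_i(X_j\partial_j u_\eps)\,dx = -\int_\Omega (\partial_i X_j)\,\partial_i u_\eps\cdot\partial_j u_\eps\,dx - \tfrac{1}{2}\int_\Omega X_j\,\partial_j|\nabla u_\eps|^2\,dx,
\]
and a further integration by parts on the last term yields $+\tfrac12\int_\Omega (\partial_j X_j)|\nabla u_\eps|^2\,dx$. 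Equating the two sides and multiplying by $\eps$ gives
\[
-\eps\int_\Omega (\partial_i X_j)\,\partial_i u_\eps\cdot\partial_j u_\eps\,dx + \int_\Omega (\partial_j X_j)\left[\tfrac{\eps}{2}|\nabla u_\eps|^2 + \eps^{-1} V(u_\eps)\right]dx = 0,
\]
which is exactly $\int_\Omega A_\eps(u_\eps)_{ij}\,\partial_j X_i\,dx = 0$ after relabelling the dummy indices $i\leftrightarrow j$ in the first term and recognising $e_\eps(u_\eps)\delta_{ij}$ in the second.

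The main obstacle here is purely bookkeeping: one must keep track of two pairs of indices (those of $\R^2$ for the domain and those of $\R^k$ for the target, the latter hidden in the scalar products $\partial_i u_\eps\cdot\partial_j u_\eps$) and make sure no boundary terms are dropped improperly. Conceptually there is nothing deep: the identity is the expected Noether-type conservation law associated with translation invariance of the Lagrangian $\tfrac{\eps}{2}|\nabla u|^2 + \eps^{-1}V(u)$, and could equivalently be obtained by differentiating $E_\eps(u_\eps\circ\phi_t^X)$ at $t=0$, where $\phi_t^X$ is the flow of $\vec X$, and using that $u_\eps$ is critical.
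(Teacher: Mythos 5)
Your derivation is correct and follows exactly the route the paper indicates: multiply the equation by $v=\sum_i X_i\,\partial_i u_\eps$, integrate by parts (using the compact support of $\vec X$), and rearrange. The paper does not spell out the computation, citing it as standard, and it also notes the alternative inner-variation interpretation $\frac{d}{dt}\E_\eps(u_\eps\circ\Phi_t)\big|_{t=0}=0$ in Remark~\ref{scratch}, which you likewise mention.
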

 The proof is standard (see \cite{BOS2}
 and references therein): It is derived multiplying the equation \eqref{elipes} by the function $\displaystyle{v=\sum X_i 
 \partial _i u_\eps}$. The $2 \times 2$ stress-energy matrix $A_\eps$ may  be decomposed as
  \begin{equation}
  \label{matrixaeps}
 A_\eps\equiv A_\eps(u_\eps)= T_\eps(u_\eps)
  + \frac{V(u_\eps)}{\eps}  \,\text{I}_2\, ,
  \end{equation}
  where the matrix $T_\eps(u)$ is defined, for a map $u: \Omega \to \R^2$, by
  \begin{equation}\label{stresstensor}
  T_\eps(u)=\frac{\eps}{2} \left(
  \begin{array}{cc}
   |u_{x_2}|^2-|u_{x_1}|^2  &  -2u_{x_1}\cdot u_{x_2} \\
    -2u_{x_1}\cdot u_{x_2}  & |u_{x_1}|^2-|u_{x_2}|^2 \\
  \end{array}
  \right).
\end{equation}

\begin{remark}
\label{scratch}
{\rm  Formula \eqref{canardwc} corresponds to  the first variation of the energy when one performs  deformations of the domain induced by the diffeomorphism related to the vector field $\vec X$. More precisely, it can be derived from the fact that 
$$\frac{d}{dt} \E_\eps (u_\eps \circ \Phi_t)=0,$$
where, for $t \in \R$  $\Phi_t: \Omega \to \omega$ is a diffeomorphism such that 
$$\frac{d}{dt} \Phi_t(x)=\vec X(\Phi_t(x)),  \forall x \in \Omega.
$$
}
\end{remark}

In dimension two, one may use complex notation to obtain a simpler expression of  $\displaystyle{T_{ij}\, \frac{\partial
X_i}{\partial x_j}}$.  Setting  $\displaystyle{X= X_1+ iX_2}$ we consider the complex function $\omega_\eps: \Omega \to \C$ defined by 
\begin{equation}
\label{hopfique}
\omega_\eps=\eps \left( \vert {u_\eps}_{_{x_1}}\vert^2-\vert {u_\eps}_{_{x_2}} \vert^2-2i
{u_\eps}_{_{x_1}}\cdot {u_\eps}_{_{x_2}}\right) , 
\end{equation}
the quantity $\omega_\eps$ being usually termed the \emph{Hopf
differential} of $u_\eps$.
 We obtain the identity
\begin{equation*}
T_{ij}(u_\eps)\frac{\partial X_i}{\partial x_j} =
\mathrm{Re}\left( - \omega_\eps \frac{\partial X}{\partial
\bar{z} }\right).
\end{equation*}
 Identity  \eqref{canardwc1} is turned into
 \begin{equation}
 \label{canardwc2}
 \int_{\Omega}\mathrm{Re}\left(  \omega_\eps \frac{\partial X}{\partial
\bar{z} }\right)=\frac{4}{\eps} \int_{\Omega}  {V(u_\eps)}\, \mathrm {Re} \left(\frac{\partial X}{\partial
{z} }\right)=\frac{2}{\eps} \int_{\Omega}  {V(u_\eps)}\, \mathrm {div} \, \vec X.
 \end{equation}
  
  \begin{remark}
  \label{scratch2}  {\rm  Recall that the Dirichlet energy is invariant by conformal transformation. Such transformation are locally obtained through vector-fields $\vec X$ which are holomorphic.  
  
  }
  \end{remark}
  
 \subsection{Pohozaev's identity on disks}     
  Identity \eqref{canardwc2} allows to derive integral estimates of the potential $V(u_\eps)$ using a suitable choice of test vector fields. We restrict ourselves to the special case the domain is  $\Omega=\D^2(r)$, for some $r>0$. We notice that for the vector field  $X=z$, we have
   $$
   \frac{\partial X}{\partial\bar{z}}=0   {
   \rm \ and \ }
\frac{\partial X}{\partial{z}}=1.
$$
However $X=z$ is  not a test vector field, since in does not have compact support, so that we consider instead vector fields $X_\delta$ of the form 
$$X_\delta=z \varphi_\delta ({\vert z \vert })),$$
 where $0<\delta<\frac 12$ is a small parameter  and $\varphi_\delta$ is a scalar function defined on $[0, r]$ such that  
\begin{equation}
\varphi_\delta (s)=1 {\rm \ for \ } s \in [0, r-\delta) ,\ \,  \vert \varphi' (s)\vert \leq 2 \delta {\rm \ for \ } s\in [ r-\delta, r]{\rm \ and \ }  \varphi(r)=0. 
\end{equation}
  A rapid computation shows  that, dropping the subscript $\eps$ and writing $u=u_\eps$ 
  \begin{equation*}
 \frac{\partial X_\delta}{\partial
\bar{z}}=0  {\rm \ on \ }    \D^2( r-\delta) {\rm \ and  \ }
{\mathrm {Re}}\left( \omega  \frac{\partial X_\delta}{\partial\bar{z}}\right)
= \left( \vert u_r\vert^2-r^2 \vert u_\theta \vert^2\right) \vert z \vert \varphi'_\delta (\vert z \vert) {\rm \ on \ } 
\D^2 (r) \setminus \D^2(r-\updelta), 
  \end{equation*}   
   whereas
    \begin{equation*}
   \frac{\partial X_\delta}{\partial
{z}}=1   {\rm \ on \ }    \D^2( r-\delta)   {\rm \ and  \ } 
{\mathrm {Re}}\left( \frac{\partial X_\delta}{\partial{z}}\right)
=\frac{1}{2} \vert z \vert \varphi'(\vert z \vert) {\rm \ on \ } 
  \D^2 (r) \setminus \D^2(r-\updelta).
\end{equation*}
    Inserting these relations into \eqref{canardwc2} and passing to the limit $\delta\to 0$ yields the following identity, termed Pohozaev's identity:
   
   \begin{lemma}
    \label{poho}     Let $u_\eps$  be  a solution of \eqref{elipes} on $\D^2$. We have, for any  radius $0<r \leq 1$      
    \begin{equation}
   \label{poho1}
  \frac{1}{\eps^2} \int_{\D^2( r)} V(u_\eps) =\frac r 4\int_{\partial \D^2( r) } 
  \left(
  \left\vert \frac{\partial  u_\eps}{\partial \tau}\right \vert^2 -\left\vert \frac{\partial  u_\eps}{\partial r} \right\vert^2
  +\frac{2}{\eps^2}V(u_\eps)
  \right) {\rm d} \tau. 
    \end{equation}
      \end{lemma}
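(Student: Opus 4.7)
The plan is to apply the stress-energy identity \eqref{canardwc2} with the regularized radial test vector field $\vec X_\delta = z\varphi_\delta(|z|)$ introduced just before the statement, and then let $\delta \to 0$ so that $-\varphi'_\delta(s)\,ds$ concentrates into a Dirac mass at $s=r$, producing the boundary integrals on the right-hand side of \eqref{poho1}.

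Concretely, I would first invoke Proposition \ref{classic} to ensure $u_\eps$ is smooth on $\D^2$, so that \eqref{canardwc2} applies to $\vec X_\delta$. Substituting the formulas recorded just above the statement, the left-hand side of \eqref{canardwc2} reduces to an integral over the annulus $\D^2(r)\setminus \D^2(r-\delta)$ carrying the factor $|z|\varphi'_\delta(|z|)\bigl(|\partial_r u_\eps|^2-r^{-2}|\partial_\theta u_\eps|^2\bigr)$. Similarly, the right-hand side splits as $\tfrac{4}{\eps}\int_{\D^2(r-\delta)} V(u_\eps)$ from the interior (where $\mathrm{Re}(\partial X_\delta/\partial z)=1$), plus an annular piece carrying $\varphi_\delta + \tfrac{1}{2}|z|\varphi'_\delta$ against $\tfrac{4V(u_\eps)}{\eps}$.

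Next I would pass to the limit $\delta\to 0$. In polar coordinates the annular integrals take the form $\int_0^{2\pi}\int_{r-\delta}^r s^2\varphi'_\delta(s) F(s,\theta)\,ds\,d\theta$ for a bounded smooth $F$; since $\varphi_\delta\to \mathbf 1_{[0,r)}$ and $-\varphi'_\delta\,ds\rightharpoonup \delta_{s=r}$, dominated convergence yields a limit $-r^2\int_0^{2\pi}F(r,\theta)\,d\theta$, equivalently $-r\int_{\partial\D^2(r)}F\,d\tau$ in arc-length variables. The pure $\varphi_\delta$ contribution vanishes with the area of the annulus. After collecting the three resulting terms (interior potential, boundary potential, boundary gradient) and using the standard orthogonal-decomposition identity $|\nabla u_\eps|^2 = |\partial_r u_\eps|^2 + |\partial_\tau u_\eps|^2$ together with $|\partial_r u_\eps|^2-r^{-2}|\partial_\theta u_\eps|^2 = |\partial_r u_\eps|^2-|\partial_\tau u_\eps|^2$, a straightforward algebraic rearrangement produces exactly \eqref{poho1}.

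The computation is entirely mechanical; the only real point of care is the bookkeeping of numerical constants through the limit passage and making sure the annular term coming from $\varphi_\delta$ (as opposed to $\varphi'_\delta$) is indeed negligible, which requires only the local boundedness of $V(u_\eps)$ near $\partial\D^2(r)$ furnished by Proposition \ref{classic}. No deeper obstacle arises.
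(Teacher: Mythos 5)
Your proposal is exactly the paper's proof: the paper constructs the same test vector fields $\vec X_\delta=z\varphi_\delta(|z|)$, inserts them into \eqref{canardwc2}, and passes to the limit $\delta\to 0$, with the annular contributions concentrating onto $\partial\D^2(r)$ precisely as you describe. Your bookkeeping of the $\varphi_\delta$ versus $\varphi'_\delta$ pieces and the conversion from polar to arc-length variables is the step the paper compresses into one sentence, and it is correct.
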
 
         This identity has the remarkable property that it yields an identity   of the integral of the potential \emph{inside}   the disk involving only    energy terms on  the  \emph{boundary}. A straightforward consequence of Lemma \ref{poho} is  the estimate: 
        
         \begin{proposition}
     \label{pascap0}
     Let $u_\eps$  be  a solution of \eqref{elipes} on $\D^2$. We have, for any $0<r\leq 1$ 
     \begin{equation}
     \label{pascap}
      \frac{1}{\eps} \int_{\D^2( r)} V(u_\eps) \leq  \frac r 2 \int_{\S^1(r)} e_\eps(u_\eps) {\rm d}\ell.
     \end{equation}
     \end{proposition}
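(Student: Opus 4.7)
The plan is to deduce Proposition \ref{pascap0} directly from the Pohozaev identity \eqref{poho1} of Lemma \ref{poho}. Multiplying both sides of \eqref{poho1} by $\eps$, we obtain
\begin{equation*}
\frac{1}{\eps} \int_{\D^2(r)} V(u_\eps) = \frac{r\eps}{4}\int_{\S^1(r)} \left|\frac{\partial u_\eps}{\partial \tau}\right|^2 {\rm d}\tau - \frac{r\eps}{4}\int_{\S^1(r)} \left|\frac{\partial u_\eps}{\partial r}\right|^2 {\rm d}\tau + \frac{r}{2\eps} \int_{\S^1(r)} V(u_\eps) {\rm d}\tau.
\end{equation*}
The first main step is to drop the second term on the right-hand side, since it is non-positive: this yields an upper bound
\begin{equation*}
\frac{1}{\eps} \int_{\D^2(r)} V(u_\eps) \leq \frac{r\eps}{4}\int_{\S^1(r)} \left|\frac{\partial u_\eps}{\partial \tau}\right|^2 {\rm d}\tau + \frac{r}{2\eps} \int_{\S^1(r)} V(u_\eps) {\rm d}\tau.
\end{equation*}

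Next, we control the tangential derivative by the full gradient using the pointwise inequality $|\partial_\tau u_\eps|^2 \leq |\nabla u_\eps|^2$ on $\S^1(r)$. This gives
\begin{equation*}
\frac{1}{\eps} \int_{\D^2(r)} V(u_\eps) \leq \frac{r}{2}\int_{\S^1(r)} \left(\frac{\eps |\nabla u_\eps|^2}{2} + \frac{V(u_\eps)}{\eps}\right) {\rm d}\tau = \frac{r}{2}\int_{\S^1(r)} e_\eps(u_\eps) {\rm d}\tau,
\end{equation*}
which is precisely the claimed inequality \eqref{pascap}.

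There is no real obstacle here: the statement is simply a repackaging of Pohozaev's identity where one discards the (favourable) radial-derivative term and recombines the remaining boundary integrals into the energy density $e_\eps$. The only minor point worth stressing is that the factor $r/2$ in the conclusion, as opposed to $r/4$, is exactly what is produced by absorbing the potential boundary contribution $\frac{r}{2\eps}\int V(u_\eps)$ into the energy density alongside the gradient term.
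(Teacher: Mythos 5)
Your proof is correct and takes essentially the same route as the paper: both start from Pohozaev's identity \eqref{poho1} and bound its right-hand side by $\frac{r}{4}\int_{\S^1(r)} \frac{2}{\eps}e_\eps(u_\eps)\,{\rm d}\ell$. The paper does this in one stroke by bounding the absolute value of the integrand $\vert u_\tau\vert^2 - \vert u_r\vert^2 + 2\eps^{-2}V$ by $\vert\nabla u_\eps\vert^2 + 2\eps^{-2}V = 2\eps^{-1}e_\eps(u_\eps)$, while you achieve the same estimate in two steps (discard the non-positive $-\vert u_r\vert^2$ term, then use $\vert u_\tau\vert^2 \leq \vert\nabla u_\eps\vert^2$), which is the same inequality presented slightly differently.
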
  
     Proposition \ref{pascap0} follows immediately from Lemma \ref{poho} noticing that the absolute value of the integrand on the left hand  side is bounded by $2\eps^{-1}e_\eps(u_\eps)$.
     
\smallskip
Besides Proposition \ref{pascap0}, we notice      that     Pohozaev's identity leads directly to remarkable  consequences:  For instance,  all solutions which are constant with values in $\Sigma$ on $\D^2(r)$ are necessarily constant. 
      \begin{remark}{\rm 
        The previous  results are  specific to dimension $2$, however the use of the stress-energy tensor yields other results in higher dimensions (for instance monotonicity formulas).   
    }
    \end{remark}
   \begin{remark}
   {\rm Identity \eqref{poho1} leads to the monotonicity formula
   \begin{equation}
   \label{monantoine}
   \frac{d}{dr}\left( \frac{\Eps\left(u_\eps, \D^2(r)\right)}{r}\right)=\frac{1}{r^2}\int_{\D^2(r)} \xi_\eps(u_\eps){\rm d} x  +
   \frac{1}{r} \int_{S^1(r)}\vert \frac{\partial u_\eps}{\partial r} \vert ^2 {\rm d} \ell, 
   \end{equation}
   where  the discrepancy 
   $
   \xi_\eps(u_\eps)$
   is defined in \eqref{discretpanse}.
   }
   \end{remark}
   
 \subsection{Pohozaev's type inequalities on general subdomain}
  We present in this subsection a related tool which will be of interest in the proof of Theorem \ref{bordurer}. We consider a solution $u_\eps$ of \eqref{elipes} on a general domain $\Omega$, a  subdomain $\mathcal U$ of $\Omega$ and for $\updelta >0$ the domain 
  $\mathcal U_\updelta$ introduced in \eqref{Udelta}.  As  a variant of Proposition \ref{pascap0}, we have:   
     
          \begin{proposition}
     \label{pascapit}
     Let $u_\eps$  be  a solution of \eqref{elipes} on $\Omega$. We have, for any $0<\updelta $ 
     \begin{equation}
     \label{pascap36}
      \frac{1}{\eps} \int_{\mathcal U_{\frac{\updelta}{2}} }V(u_\eps) {\rm d} x \leq  C(\mathcal U, \updelta)  \int_{\mathcal V_\updelta} e_\eps(u_\eps) {\rm d}x,
     \end{equation}
      where the constant $C(\mathcal U, \updelta) >0$ depends on $\mathcal U$, $\updelta$ and $V$.
     \end{proposition}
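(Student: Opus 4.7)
The plan is to apply the stress-energy identity in complex form \eqref{canardwc2} to a test vector field built from an infinitesimal dilation localized in $\mathcal U_\updelta$. Fix a point $x_0 \in \R^2$ (for instance the center of a disk containing $\overline{\mathcal U_\updelta}$) and choose a smooth cutoff $\chi$, compactly supported in $\Omega$, with $\chi \equiv 1$ on $\mathcal U_{\updelta/2}$, $\mathrm{supp}\,\chi \subset \mathcal U_\updelta$, and $|\nabla \chi| \leq C/\updelta$. Set $\vec X(x) = (x-x_0)\chi(x)$, viewed in complex notation as $X(z) = (z-z_0)\chi(z)$. Since $\partial_{\bar z}(z-z_0) = 0$, one gets $\partial_{\bar z} X = (z-z_0)\partial_{\bar z}\chi$, which is supported in $\mathrm{supp}\,\nabla\chi \subset \mathcal U_\updelta \setminus \mathcal U_{\updelta/2} \subset \mathcal V_\updelta$. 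On the other hand, $\partial_z X = \chi + (z-z_0)\partial_z\chi$, so that
\[
\mathrm{div}\,\vec X \;=\; 2\,\mathrm{Re}(\partial_z X) \;=\; 2\chi + (x-x_0)\cdot\nabla\chi,
\]
and in particular $\mathrm{div}\,\vec X \equiv 2$ on $\mathcal U_{\updelta/2}$.

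Plugging $\vec X$ into \eqref{canardwc2} and splitting the divergence contribution into the interior $\mathcal U_{\updelta/2}$ part (where $\chi=1$, $\nabla\chi=0$) and the collar part (where $\nabla\chi$ is supported), I obtain
\[
\frac{4}{\eps}\int_{\mathcal U_{\updelta/2}} V(u_\eps)\,dx
\;=\; \int_{\Omega}\mathrm{Re}\!\left(\omega_\eps\,\partial_{\bar z} X\right) dx
\;-\; \frac{2}{\eps}\int_{\mathcal V_\updelta} V(u_\eps)\bigl(2\chi + (x-x_0)\!\cdot\!\nabla\chi - 2\bigr)\,dx,
\]
where both terms on the right-hand side are integrals over $\mathcal V_\updelta$. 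Using the pointwise bound $|\omega_\eps|\leq \eps|\nabla u_\eps|^2 \leq 2\,e_\eps(u_\eps)$ together with $|x-x_0|\leq \mathrm{diam}(\mathcal U_\updelta)$ and $|\nabla\chi|\leq C/\updelta$, the first term is controlled by $C(\mathcal U,\updelta)\int_{\mathcal V_\updelta} e_\eps(u_\eps)\,dx$. The second term, using $V(u_\eps)/\eps \leq e_\eps(u_\eps)$ pointwise, is similarly bounded by $C(\mathcal U,\updelta)\int_{\mathcal V_\updelta}e_\eps(u_\eps)\,dx$. Dividing by $4$ yields \eqref{pascap36}.

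The argument is essentially the one used to produce the disk version \eqref{pascap}: the key algebraic point is that the trace-free part $T_\eps$ of the stress-energy tensor pairs trivially with the identity matrix, so that an infinitesimal dilation (the vector field $x-x_0$) extracts exactly the potential term, and cutting off only introduces error terms supported where $\nabla\chi\neq 0$. The main technical nuisance is ensuring $\vec X$ has compact support in $\Omega$ when $\mathcal U_\updelta$ abuts $\partial \Omega$: this is handled by multiplying $\chi$ by a further cutoff that vanishes near $\partial\Omega$, absorbing the resulting extra boundary-type contributions into the constant $C(\mathcal U,\updelta)$ and using that the corresponding $\nabla$-support still sits inside a subset of $\Omega$ where $e_\eps(u_\eps)$ can be absorbed into $\int_{\mathcal V_\updelta}e_\eps(u_\eps)$ (possibly after slightly enlarging $\mathcal V_\updelta$ in the definition of the constant).
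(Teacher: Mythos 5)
Your proof is correct and follows essentially the same route as the paper: it plugs a cutoff dilation field $X(z) = (z-z_0)\chi(z)$ (the paper's $X_\updelta(z)=z\varphi_\delta(z)$, just translated) into the stress-energy identity \eqref{canardwc2}, using that $\partial_{\bar z}X$ and $\mathrm{div}\,\vec X-2$ are supported in the collar $\mathcal V_\updelta$, and then bounds $|\omega_\eps|$ and $V/\eps$ pointwise by $e_\eps(u_\eps)$. (There is a harmless slip in your displayed identity — as written its left side should be $\tfrac{4}{\eps}\int_{\mathcal U_\updelta}V$ rather than $\tfrac{4}{\eps}\int_{\mathcal U_{\updelta/2}}V$ — but since $\mathcal U_{\updelta/2}\subset\mathcal U_\updelta$ the final estimate is unaffected.)
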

     
     The main difference with Proposition \ref{pascap0} is that, in the case of a disk,  the form of the $C(\mathcal U, \updelta) >0$  is determined more accurately. 
\begin{proof} [Proof of Proposition \ref{pascapit}] Turning back to identity \eqref{canardwc2},  we choose once more a  test vector field $\vec X_\updelta $ of the form $X_\updelta(z)=z \varphi_\delta(z)$,  where the function $\varphi_\delta$ is a  smooth scalar positive function  such that 
$$
\varphi(z)=1 {\rm \ for \ } z \in {\mathcal U}_{\frac{\updelta}{2}}  {\rm \ and \ } \varphi(z)=0 {\rm \ for \ } z \in \R^2 \setminus,  
{\mathcal U}_{\updelta}
$$
 so that $\nabla \varphi_\updelta=0$ on the set ${\mathcal U}_{\frac{\updelta}{2}}$ and hence 
 \begin{equation*}
 \frac{\partial X_\delta}{\partial
\bar{z}}=0  
{\rm \ and \ } 
\frac{\partial X_\delta}{\partial
{z}}=1 \,  
 {\rm \ on \ } {\mathcal U}_{\frac{\updelta}{2}}.
 \end{equation*}   
Inserting these relations into \eqref{canardwc2}, we are led  to inequality \eqref{pascap36}.

\end{proof}
     

%
\subsection{  General energy estimates on level set }
  We consider again for given $0<\eps \leq 1$ a solution  $u_\eps: \D^2 \to \R^k$  to \eqref{elipes}.  We assume that we  are given a radius $\varrho_\eps \in [\frac 12, \frac 34]$, a number 
  $\displaystyle{0<\upkappa <\frac{\upmu_0}{4}}$   and an element $\upsigmam \in \Sigma$ such that 
   \begin{equation}
  \label{kappacite}
   \vert u_\eps-\upsigmam \vert  < \upkappa  {\rm \ on \ } \partial \D^2(\varrho_\eps).
  \end{equation}
 We introduce the subdomain $\Upsilon_\eps(\varrho_\eps, \upkappa)$ defined  by
   \begin{equation}
\label{lilu}
\begin{aligned}
\Upsilon_\eps(\varrho_\eps, \upkappa) &=\left\{ x \in \D^2 ( \varrho_\eps ) {\rm \ such \ that \  \ }   \vert  u_\eps(x) -\upsigma_i \vert  <\upkappa, \ \ 
{\rm\ for  \ some  \ } i=1\ldots q \ \right\} \\
&=\underset {i=1}{\overset {q} {\large  \cup}}   \Upsilon_{\eps, i}  (\varrho_\eps, \upkappa), 
\end{aligned}
\end{equation}
 where  we have set 
 $$
  \Upsilon_{\eps, i}(\varrho_\eps, \upkappa)=w_i^{-1} ([0,   \upkappa) \cap  
\D^2 ( \varrho_\eps )=\Upsilon_{\upsigma_i}(u_\eps,  \varrho_\eps,  \upkappa)=
\{x\in \D^2 ( \varrho_\eps ), \vert u_\eps-\upsigma_i \vert  \leq   \upkappa \}.
 $$
 The set $\Upsilon_{\upsigma}(u,  \varrho,  \upkappa)$ has alreday been introduced in \eqref{subsub}.
The set $\Upsilon_\eps(\varrho_\eps, \upkappa)$  corresponds  to a truncation of the domain 
  $\D^2(\varrho_\eps)$ where points with values far from the set $\Sigma$  have been removed. 
   By construction, the solution $u_\eps$ is close,  on $\Upsilon_\eps(\varrho, \upkappa)$,  to one of the points $\upsigma_i$ in $\Sigma$:  Near this point the potential is convex, close to a quadratic potential. 
  The main result of the present  section is to establish  an estimate on the integral of the energy on the domain $\Upsilon_\eps(\varrho_\eps, \upkappa)$ in terms of  the integral of the potential  as well as boundary integrals.

  \begin{proposition}
  \label{kappacity}
  Let $u_\eps$ be a solution of \eqref{elipes} on $\D^2$ and 
  assume that \eqref{kappacite} is satisfied.  We have, for some constant ${\rm C}_\Upsilon >0$,  depending only on the potential  $V$, 
  \begin{equation}
  \label{nonodes}
   \int_{\Upsilon_\eps (\varrho_\eps, \upkappa)} e_\eps(u_\eps)(x) {\rm d}x \leq  {\rm C_\Upsilon} \left[ 
 \upkappa  \int_{\D^2(\varrho_\eps)} \frac{V(u_\eps)}{\eps} {\rm d} x  +\eps    \int_{\partial \D^2(\varrho_\eps)} e_\eps(u_\eps){\rm d} \ell
     \right].
   \end{equation}
  \end{proposition}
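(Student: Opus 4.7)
\textbf{Decomposition.} Since the balls $\B^k(\upsigma_i, 2\upmu_0)$ are pairwise disjoint and $\upkappa<\upmu_0/4$, the sets $\Upsilon_{\eps,i}(\varrho_\eps, \upkappa)$ of \eqref{lilu} are pairwise disjoint, and their union equals $\Upsilon_\eps(\varrho_\eps, \upkappa)$. It therefore suffices to establish, for each $i \in \{1,\dots,q\}$, a bound of the form
\[
\int_{\Upsilon_{\eps,i}(\varrho_\eps, \upkappa)} e_\eps(u_\eps)\,dx \leq C_i \Bigl[\upkappa \int_{\D^2(\varrho_\eps)} \frac{V(u_\eps)}{\eps}\,dx + \eps\int_{\partial \D^2(\varrho_\eps)} e_\eps(u_\eps)\,d\ell \Bigr],
\]
and then sum.

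\textbf{Multiplier and integration by parts.} For each $i$, I would multiply \eqref{elipes} by the shift $(u_\eps - \upsigma_i)$ and integrate over the slightly enlarged sublevel set $\tilde \Upsilon_i := \{x \in \D^2(\varrho_\eps): |u_\eps(x) - \upsigma_i| \leq \tilde \upmu\}$, where $\tilde \upmu \in (\upkappa, \upmu_0/2)$ is a regular value to be selected. After integration by parts and multiplication by $\eps$, this yields
\[
\eps \int_{\tilde \Upsilon_i} |\nabla u_\eps|^2\,dx + \eps^{-1}\int_{\tilde \Upsilon_i}\nabla V(u_\eps)\cdot(u_\eps - \upsigma_i)\,dx = \eps\int_{\partial\tilde \Upsilon_i}(u_\eps - \upsigma_i)\cdot\partial_\nu u_\eps\,d\ell.
\]
Since $\tilde \Upsilon_i \subset \B^k(\upsigma_i, \upmu_0)$, Proposition \ref{potto} gives $\nabla V(u)\cdot(u-\upsigma_i) \geq c_i V(u)$ with $c_i = \lambda_i^-/(2\lambda_i^+)$. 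Combined with $\eps|\nabla u|^2 = 2(\eps|\nabla u|^2/2)$, the left-hand side therefore bounds $c_i \int_{\tilde \Upsilon_i} e_\eps(u_\eps) \geq c_i \int_{\Upsilon_{\eps,i}} e_\eps(u_\eps)$ from below.

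\textbf{Disk-boundary contribution.} The boundary $\partial\tilde \Upsilon_i$ splits into a part on $\partial\D^2(\varrho_\eps)$ (non-empty only for $i$ equal to the ``main'' index $\upsigmam$, by hypothesis \eqref{kappacite}, the other indices being separated by a distance $\geq 2\upmu_0 - \upkappa > \upmu_0$) and the interior level set $w_i^{-1}(\tilde\upmu)$. The key pointwise estimate obtained from AM-GM and Proposition \ref{potto},
\[
|u_\eps-\upsigma_i|\,|\nabla u_\eps| \leq \frac{|u_\eps-\upsigma_i|^2}{2\eps}+\frac{\eps|\nabla u_\eps|^2}{2} \leq \frac{2}{\lambda_i^-}\frac{V(u_\eps)}{\eps} + \frac{\eps|\nabla u_\eps|^2}{2} \leq C_i\, e_\eps(u_\eps)
\]
(valid on the quadratic regime $\B^k(\upsigma_i, \upmu_0)$), directly gives $\eps\int_{\partial\D^2(\varrho_\eps)\cap\tilde\Upsilon_i}|u_\eps - \upsigmam|\,|\partial_\nu u_\eps|\,d\ell \leq C_i\,\eps\int_{\partial\D^2(\varrho_\eps)} e_\eps(u_\eps)\,d\ell$, producing the second term of the claimed bound.

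\textbf{Extracting the $\upkappa$ factor.} The delicate step is controlling the contribution from $w_i^{-1}(\tilde\upmu)$ by $C\upkappa \int_{\D^2(\varrho_\eps)} V(u_\eps)/\eps$. I would choose $\tilde\upmu$ via a mean-value argument in the spirit of Lemma \ref{claudio}, applied to $w_i^2$ on the interval $[\upkappa^2, 4\upkappa^2]$: by the coarea inequality \eqref{coaforme} the total length $\int_{\upkappa^2}^{4\upkappa^2}\mathcal L((w_i^2)^{-1}(s))\,ds$ is controlled via $|\nabla(w_i^2)|\leq 4\sqrt{\lambda_0}^{-1}J(u_\eps)\leq 4\sqrt{\lambda_0}^{-1}e_\eps(u_\eps)$, and a good level $\tilde\upmu\in[\upkappa, 2\upkappa]$ can be picked. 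On $w_i^{-1}(\tilde\upmu)$ one has $V(u_\eps)\gtrsim\upkappa^2$, which is the origin of the $\upkappa$ gain; combining this with Pohozaev's inequality \eqref{pascap} (to trade the disk boundary energy against $\int V/\eps$) and with Lemma \ref{bornepote} to handle the complement of the wells, I expect to obtain $\eps\tilde\upmu\int_{w_i^{-1}(\tilde\upmu)}|\nabla u_\eps|\,d\ell \leq C\upkappa\int_{\D^2(\varrho_\eps)} V(u_\eps)/\eps\,dx$. This last step is the main obstacle: a naive use of Lemma \ref{claudius} only yields a constant coefficient $\int V/\eps$, and the sharper bound requires exploiting the fact that $\tilde\upmu\sim\upkappa$ together with the quadratic scaling $V\sim \upkappa^2$ of the potential on the annular transition region.

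Summing the resulting bounds over $i\in\{1,\dots,q\}$ then gives the proposition with a constant $C_\Upsilon$ depending only on $V$.
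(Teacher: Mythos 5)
Your decomposition into the well components $\Upsilon_{\eps,i}$, the choice of multiplier $u_\eps-\upsigma_i$, the use of the quadratic behavior \eqref{glutz} of $V$ to absorb the left-hand side into $\int e_\eps$, and the treatment of the disk-boundary contribution are all in line with the paper's argument (Lemma \ref{eliot}). But the step you yourself flag as ``the main obstacle'' is a genuine gap, and it cannot be closed by the device you propose.

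The trouble with your route: you integrate by parts on a set $\{\vert u_\eps-\upsigma_i\vert \leq\tilde\upmu\}$ with $\tilde\upmu\in[\upkappa,2\upkappa]$ chosen by a coarea/mean-value argument on the thin annulus $\{\upkappa\leq\vert u_\eps-\upsigma_i\vert\leq 2\upkappa\}$. Following that through, the level-set boundary term is bounded by $\eps\tilde\upmu\int_{w_i^{-1}(\tilde\upmu)}\vert\nabla u_\eps\vert\,d\ell \lesssim \upkappa\cdot\frac{1}{\upkappa}\,\eps\int_{\text{annulus}}\vert\nabla u_\eps\vert^2\,dx = \eps\int_{\text{annulus}}\vert\nabla u_\eps\vert^2\,dx$, i.e.\ a constant times (part of) the gradient energy, with no $\upkappa$ factor and no way to convert it into $\int V/\eps$. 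Lemma \ref{bornepote} does not help here, because the annulus $\{\upkappa\leq\vert u_\eps-\upsigma_i\vert\leq 2\upkappa\}$ is far inside the quadratic regime of $V$ and is disjoint from $\Theta_\eps=\{\vert u_\eps-\upsigma_i\vert\geq\upmu_0/4\}$ whenever $\upkappa$ is small; so the pointwise bound $e_\eps\lesssim V/\eps$ is simply not available where you would need it. You correctly sense that ``the sharper bound requires exploiting $\tilde\upmu\sim\upkappa$ and $V\sim\upkappa^2$'', but the arithmetic does not close.

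What the paper does instead is structurally different in two respects. First, it integrates by parts \emph{exactly} at level $\upkappa$, on $\Upsilon_{\eps,i}(\varrho_\eps,\upkappa)$. Then the interior boundary term is $\frac{\eps}{2}\int_{\Gamma_{\eps,i}(\varrho_\eps,\upkappa)}\partial_{\vec n}\vert u_\eps-\upsigma_i\vert^2 = \eps\upkappa\int_{\Gamma_{\eps,i}(\varrho_\eps,\upkappa)}\partial_{\vec n}\vert u_\eps-\upsigma_i\vert$, because $\vert u_\eps-\upsigma_i\vert\equiv\upkappa$ on the level set: the factor $\upkappa$ is extracted cleanly before any further estimate, rather than by trying to push the level to be close to $\upkappa$. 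Second — and this is the ingredient your outline is missing — the paper proves in Lemma \ref{repro} that the flux $\int_{\Gamma_{\eps,i}(\varrho_\eps,s)}\partial_{\vec n}\vert u_\eps-\upsigma_i\vert$ is nonnegative and monotone non-decreasing in the level $s$. This is obtained by multiplying the PDE by the unit vector $(u_\eps-\upsigma_i)/\vert u_\eps-\upsigma_i\vert$ and integrating over the annular region $\Upsilon_{\eps,i}(\varrho_\eps,\upkappa_1)\setminus\Upsilon_{\eps,i}(\varrho_\eps,\upkappa_0)$, where both the kinetic term $\vert\nabla u_\eps\vert^2-\vert\nabla\vert u_\eps-\upsigma_i\vert\vert^2\geq 0$ and the convexity term $\nabla V\cdot(u_\eps-\upsigma_i)\geq 0$ have a sign. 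Thanks to this monotonicity, the flux at level $\upkappa$ is dominated by the flux at a fixed level $\tilde\upmu_\eps$ of order $\upmu_0$, chosen by a coarea argument (Lemma \ref{claudius}); at that level one does land inside $\Theta(u_\eps,\varrho_\eps)$, so Lemma \ref{bornepote} converts $\eps\int_{\Gamma_{\eps,i}(\tilde\upmu_\eps)}\vert\nabla u_\eps\vert$ into $\int V(u_\eps)/\eps$, yielding Lemma \ref{denada} and hence the claimed estimate. Without the monotonicity step, the factor $\upkappa$ and the bound by $\int V/\eps$ cannot both be secured.
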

  
 Of major  importance in estimate \eqref{nonodes} is the presence of the term $\upkappa$ in front of the integral of the  potential, so that the energy on 
 $\Upsilon_\eps (\varrho_\eps, \upkappa)$ grows essentially at most linearly with respect to $\upkappa$.
 
 \smallskip
 The proof of Proposition \eqref{kappacity}  relies on a multiplication of the equation \eqref{elipes} by the solution $u_\eps$ itself,  and then integration by parts on appropriate sets. In order to simplify the presentation,  we  divide  the proof of Proposition \ref{kappacity} into several  intermediate results.

 \medskip
  Concerning first the behavior of $u_\eps$ on  the boundary 
  $\partial \D^2 ( \varrho_\eps)$, we may assume without loss of generality that $\upsigmam=\upsigma_1$, so that it follows from   assumption \eqref{bornuni3} that 
\begin{equation}
\label{versailles}
\vert u_\eps(\ell)-\upsigma_1\vert < \upkappa  {\rm \  for \ }  \ell \in \partial \D^2 ( \varrho_\eps). 
\end{equation}
We may also assume, since $u_\eps$ is smooth and in view of Sard's Lemma, that the boundary $\partial \Upsilon_\eps (\varrho_\eps, \upkappa)$ is a finite union of smooth curves. 
We deduce from  inequality  \eqref{versailles} that  
$\displaystyle { \partial \D^2 ( \varrho_\eps)  \subset \overline{\Upsilon_{\eps, 1}(\varrho_\eps, \upkappa)}}$, 
and that, for 
   $i=2, \ldots, q$, we have 
   $$\partial \D^2 ( \varrho_\eps) \cap  \partial   \Upsilon_{\eps, i} (\varrho_\eps, \upkappa)=\emptyset.$$ 
    Hence, for $i=2, \ldots, q$
 the set 	$\partial \Upsilon_{\eps, i}$ is an union of smooth curves intersecting the boundary $\partial \D^2(\varrho_\eps)$ transversally. We define the curves $\Gamma_\eps^i$ as  
   so that 
  \begin{equation}
  \left\{
  \begin{aligned}
\Gamma_\eps^i(\varrho_\eps, \upkappa) & \equiv  \partial   \Upsilon_{\eps, i}  (\varrho_\eps, \upkappa)= w_i^{-1}(\upkappa)\cap  \D^2 ( \varrho_\eps)  
 \  {\rm \ for \ } i=2 \ldots   q,  \\
\Gamma_\eps^1 (\varrho_\eps, \upkappa)&\equiv  \partial   \Upsilon_{\eps, 1}  (\varrho_\eps, \upkappa)\setminus \partial \D^2( \varrho_\eps) =
\left( \left[w_1^{-1}(\upkappa)\cap  \D^2 ( \varrho_\eps) \right] \right)\setminus \partial \D^2( \varrho_\eps).
  \end{aligned}
  \right. 
  \end{equation}										
    A first intermediate step in the proof of  Proposition \ref{kappacity}  is: 
  
    \begin{lemma} 
  \label{eliot}
  Assume that $0<\eps \leq 1$  and  that $u_\eps$ is a solution to \eqref{elipes} satisfies  assumption \eqref{kappacite}.  Then  we have 
  \begin{equation}
  \label{klisko}
  \int_{\Upsilon_\eps(\varrho_\eps, \upkappa)} e_\eps(u_\eps)  {\rm d}x \leq   
  C\left[ \upkappa  \,  \eps \, 
  \underset{i=1} {\overset {q}\sum}
   \int_{\Gamma_{\eps, i}} \frac{\partial \vert u_\eps(\ell)-\upsigma_i \vert}{\partial \vec n (\ell)} {\rm d}\ell+ 
   \eps  \int_{\partial \D^2(\varrho_\eps)} e_\eps(u_\eps(\ell)){\rm d}\ell
   \right], 
    \end{equation}
    where $C>0$ is some constant depending only on the potential $V$  and where $\vec n(\ell)$ denotes the unit vector normal to  $\Gamma_{\eps, i}$ pointing in the direction increasing $\vert u_\eps-\upsigma_i \vert $. 
      \end{lemma}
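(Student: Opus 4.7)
The plan is to multiply the equation \eqref{elipes} by $(u_\eps - \upsigma_i)$ and integrate over $\Upsilon_{\eps,i}(\varrho_\eps,\upkappa)$ for each $i=1,\ldots,q$, then sum. An integration by parts yields the identity
\begin{equation*}
\eps^2 \int_{\Upsilon_{\eps,i}} |\nabla u_\eps|^2\, dx + \int_{\Upsilon_{\eps,i}} \nabla V(u_\eps) \cdot (u_\eps - \upsigma_i)\, dx = \eps^2 \int_{\partial \Upsilon_{\eps,i}} \frac{\partial u_\eps}{\partial \vec n} \cdot (u_\eps - \upsigma_i)\, d\ell,
\end{equation*}
where $\vec n$ denotes the outward unit normal. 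The two integrands on the left are both pointwise nonnegative, the first trivially, the second by the second inequality in \eqref{glutz} combined with the fact that on $\Upsilon_{\eps,i}$ the values $u_\eps$ remain in $\B^k(\upsigma_i,\upkappa)\subset \B^k(\upsigma_i,\upmu_0)$.

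The next step is to convert the left-hand side into the full energy $\int e_\eps(u_\eps)$. From \eqref{glutz} one also gets $\nabla V(u_\eps)\cdot(u_\eps-\upsigma_i)\geq c\,V(u_\eps)$ on $\Upsilon_{\eps,i}$, where $c>0$ depends only on $\lambda_0$ and $\max_i \lambda_i^+$. Dividing through by $\eps$ and absorbing, I obtain
\begin{equation*}
\int_{\Upsilon_{\eps,i}} e_\eps(u_\eps)\, dx \leq C\,\eps \int_{\partial \Upsilon_{\eps,i}} \frac{\partial u_\eps}{\partial \vec n}\cdot (u_\eps - \upsigma_i)\, d\ell.
\end{equation*}

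Now I must control the boundary term. The boundary $\partial \Upsilon_{\eps,i}$ splits into the interior portion $\Gamma_\eps^i$ and, only for $i=1$, the piece $\partial \D^2(\varrho_\eps)$. On $\Gamma_\eps^i$ the function $|u_\eps-\upsigma_i|$ equals the constant $\upkappa$, so differentiating $|u_\eps-\upsigma_i|^2$ gives
\begin{equation*}
(u_\eps-\upsigma_i)\cdot \frac{\partial u_\eps}{\partial \vec n} = \upkappa\,\frac{\partial |u_\eps-\upsigma_i|}{\partial \vec n},
\end{equation*}
which produces exactly the first term in \eqref{klisko}. For the $\partial \D^2(\varrho_\eps)$ piece (only in the $i=1$ case), I apply Cauchy--Schwarz to obtain $\eps^2 |(u_\eps-\upsigma_1)\cdot \partial_{\vec n} u_\eps|\leq \tfrac{\eps^2}{2}|u_\eps-\upsigma_1|^2 + \tfrac{\eps^2}{2}|\nabla u_\eps|^2$. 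Invoking the first inequality of \eqref{glutz} (since $|u_\eps-\upsigma_1|<\upkappa<\upmu_0$ on $\partial \D^2(\varrho_\eps)$ by \eqref{kappacite}), I bound $\eps^2|u_\eps-\upsigma_1|^2\leq \tfrac{4\eps}{\lambda_0} V(u_\eps)$, and the gradient term is $\eps\cdot\tfrac{\eps|\nabla u_\eps|^2}{2}$; altogether the integrand on $\partial\D^2(\varrho_\eps)$ is bounded by $C\,\eps\,e_\eps(u_\eps)$, giving the second term in \eqref{klisko}.

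The main technical obstacle is the handling of the outer boundary piece $\partial \D^2(\varrho_\eps)$: one needs to trade the product $|u_\eps-\upsigma_1|\cdot|\nabla u_\eps|$ for a quantity bounded by $e_\eps(u_\eps)$, and the only available leverage is the quadratic control of $V$ near $\upsigma_1$ combined with the smallness of $\eps$. The interior pieces $\Gamma_\eps^i$, being level sets where $|u_\eps-\upsigma_i|$ is a known constant $\upkappa$, translate directly into the factor $\upkappa$ that appears explicitly in the statement. Summing the resulting bounds for $i=1,\ldots,q$ produces \eqref{klisko}.
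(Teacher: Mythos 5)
Your overall strategy matches the paper's: multiply \eqref{elipes} by $u_\eps-\upsigma_i$, integrate by parts on each $\Upsilon_{\eps,i}$, use the convexity bounds \eqref{glutz} to lower-bound the interior terms by $c\int e_\eps$, exploit $|u_\eps-\upsigma_i|=\upkappa$ on $\Gamma_\eps^i$ to pull out the factor $\upkappa$, and then estimate the extra outer boundary contribution on $\partial\D^2(\varrho_\eps)$ for $i=1$. The first three points are handled correctly. The gap is in the last one.

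The Young inequality you invoke,
\[
\eps^2\bigl|(u_\eps-\upsigma_1)\cdot\partial_{\vec n}u_\eps\bigr|
\le \frac{\eps^2}{2}|u_\eps-\upsigma_1|^2+\frac{\eps^2}{2}|\nabla u_\eps|^2,
\]
is the \emph{symmetric} one, and its weights are not compatible with the energy scaling. Work out what you need: after multiplying the integration-by-parts identity by $\eps^{-1}$ and lower-bounding the left side by $c\int_{\Upsilon_{\eps,1}}e_\eps$, the outer boundary contribution is $\eps\int_{\partial\D^2(\varrho_\eps)}\partial_{\vec n}u_\eps\cdot(u_\eps-\upsigma_1)$, and to get the second term of \eqref{klisko} you must show the \emph{pointwise} bound $\partial_{\vec n}u_\eps\cdot(u_\eps-\upsigma_1)\le C\,e_\eps(u_\eps)$. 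Your split gives, after dividing through by $\eps^2$, only $\partial_{\vec n}u_\eps\cdot(u_\eps-\upsigma_1)\le \frac12|\nabla u_\eps|^2+\frac12|u_\eps-\upsigma_1|^2$; the term $\frac12|\nabla u_\eps|^2$ compares to the gradient part $\frac{\eps}{2}|\nabla u_\eps|^2$ of $e_\eps$ with an unavoidable factor $\eps^{-1}$, and no amount of using $\eps\le 1$ repairs this. Your conclusion ``the integrand is bounded by $C\,\eps\,e_\eps$'' is $\eps^2|\partial_{\vec n}u_\eps\cdot(u_\eps-\upsigma_1)|\le C\eps e_\eps$, which, fed into the identity, yields only
\[
\int_{\Upsilon_\eps}e_\eps\le C\Bigl[\upkappa\eps\sum_i\int_{\Gamma_{\eps,i}}\partial_{\vec n}|u_\eps-\upsigma_i|
+\int_{\partial\D^2(\varrho_\eps)}e_\eps\Bigr],
\]
i.e.\ the outer term loses the factor $\eps$ that \eqref{klisko} requires (and which is essential downstream, e.g.\ in Proposition \ref{pasmain}).

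The fix is to use the Young split weighted by the energy, namely
\[
\partial_{\vec n}u_\eps\cdot(u_\eps-\upsigma_1)\le |\nabla u_\eps|\,|u_\eps-\upsigma_1|
\le \frac{\eps}{2}|\nabla u_\eps|^2+\frac{1}{2\eps}|u_\eps-\upsigma_1|^2
\le \frac{\eps}{2}|\nabla u_\eps|^2+\frac{2}{\lambda_0\eps}V(u_\eps)
\le C\,e_\eps(u_\eps),
\]
which is exactly what the paper does in a slightly repackaged form: $|u_\eps-\upsigma_1|\le 2\lambda_0^{-1/2}\sqrt{V(u_\eps)}$ from Lemma~\ref{watson}, so $\partial_{\vec n}u_\eps\cdot(u_\eps-\upsigma_1)\le 2\lambda_0^{-1/2}J(u_\eps)$, and then $J(u_\eps)\le e_\eps(u_\eps)$ by Lemma~\ref{ab0}. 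With this replacement, the rest of your argument goes through.
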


\begin{proof}    
 For $i=1, \ldots, q$, we multiply equation  \eqref{elipes}  by $u_\eps-\upsigma_i$ and integrate by parts on the domain 
  $ \Upsilon_{\eps, i} (\varrho_\eps, \kappa)$. This yields, for $i=2, \ldots, q$
\begin{equation}
  \label{stokounette0}
  \begin{aligned}
  \int_{ \Upsilon_{\eps, i} (\varrho_\eps, \upkappa)}\eps \vert \nabla u_\eps \vert^2 +\eps^{-1} \nabla_u V(u_\eps)\cdot (u_\eps-\upsigma_i)
  &=\int_{ \Gamma_{\eps, i}(\varrho_\eps, \upkappa)} \eps  \frac{\partial u_\eps}{\partial \vec n} \cdot (u_\eps-\upsigma_i)\\
  &=\frac{\eps}{2}\int_{ \Gamma_{\eps, i}(\varrho_\eps, \upkappa)}  \frac{\partial \vert u_\eps-\upsigma_i\vert^2}{\partial \vec n}.
  \end{aligned}
   \end{equation}
   Since, by the definition of $\Upsilon_{\eps, i}$,  we have $\displaystyle{\vert u-\upsigma_i  \vert \leq \upkappa \leq \frac{\upmu_0}{2}}$,   we  are  in position to invoke estimates \eqref{glutz},  which yields, for $i \in \{1, \ldots, q\}$,
\begin{equation}
\label{duglandmoins}
 \frac {\lambda_0}{2\lambda_{\rm max}}V(u_\eps) \leq \frac{1}{2}\lambda_0\vert u_\eps -\upsigma_i \vert^2  \leq  \nabla V(u_\eps) \cdot (u_\eps-\upsigma_i)
{\rm \ on \ }  \Upsilon_{\eps, i} (\varrho_\eps, \kappa), 
\end{equation}
where  $\displaystyle{\lambda_{\rm max}=\sup \{\lambda_i^+, i=1, \ldots, q_i\}}$.
Going back  to \eqref{stokounette0}, which we multiply by $\displaystyle{\frac{2\lambda_{\rm max}}{\lambda_0}\geq 2}$,  we deduce that 
\begin{equation}
\label{dugoin}
 \int_{ \Upsilon_{\eps, i}(\varrho_\eps, \upkappa) }e_\eps(u_\eps) {\rm d} x \leq  \frac{2\lambda_{\rm max}}{\lambda_0} 
  \int_{\Gamma_{\eps, i}(\varrho_\eps, \upkappa)} \frac{\eps}{2} \frac{\partial \vert u_\eps-\upsigma_i \vert^2}{\partial \vec n}.
\end{equation}
  Arguing similarly in the case $i=1$, we obtain
  \begin{equation}
  \label{dugoin2}
   \int_{ \Upsilon_{\eps, 1} }\eps \vert \nabla u_\eps \vert^2 +\eps^{-1} \nabla_u V(u_\eps)\cdot (u_\eps-\upsigma_i)
  =\int_{ \Gamma_{\eps, 1}} \eps \frac{\partial u_\eps}{\partial \vec n} \cdot (u_\eps-\upsigma_i)+
  \int_{\partial \D^2(\varrho_\eps)} \eps \frac{\partial u_\eps}{\partial r} \cdot (u_\eps-\upsigma_i),
   \end{equation}
    and hence
   \begin{equation}
\label{dugoin3}
\begin{aligned}
 \int_{ \Upsilon_{\eps, 1} }e_\eps(u_\eps) {\rm d} x &\leq  \frac{2\lambda_{\rm max}}{\lambda_0}  \left[
  \int_{\Gamma_{\eps, 1}}\frac {\eps}{2} \frac{\partial \vert u_\eps-\upsigma_i \vert^2}{\partial \vec n} 
  +
    \int_{\S^1(\varrho_\eps)} \eps \frac{\partial u_\eps}{\partial r} \cdot (u_\eps-\upsigmam) \right] \\
    &\leq  \frac{2\lambda_{\rm max}}{\lambda_0}  \left[
  \int_{\Gamma_{\eps, 1}}\frac {\eps}{2} \frac{\partial \vert u_\eps-\upsigma_i \vert^2}{\partial \vec n} 
  +  \frac{2}{\sqrt{\lambda_0}}
    \int_{\S^1(\varrho_\eps)} \eps J(u_\eps){\rm d} \ell \right] \\
    &\leq  C\left[
  \int_{\Gamma_{\eps, 1}}\upkappa  {\eps} \frac{\partial \vert u_\eps-\upsigma_i \vert}{\partial \vec n} 
  +
    \int_{\S^1(\varrho_\eps)} \eps e_\eps (u_\eps) {\rm d} x\right],
 \end{aligned}
\end{equation}
where we used Lemma \ref{ab0} for the last inequality. 
 Summing estimates \eqref{dugoin} for $i=2, \ldots, q$  together with  \eqref{dugoin3}, we are led to \eqref{klisko}. 
\end{proof}

  In order to deduce Proposition \ref{kappacity} from Lemma \ref{eliot}   we need one additional  ingredient.    
  
  \begin{lemma}  
  \label{repro}  Let  $\upkappa_1 \geq \upkappa_0 \geq \upkappa$. If $u_\eps$ satisfies condition \eqref{kappacity},  then we have, for $i=1, \ldots, q$,   the inequality
   \begin{equation}
     \label{sting}
 0\leq  \int_{\Gamma_{\eps, i}(\varrho_\eps, \upkappa_0)} \frac{\partial \vert u_\eps(\ell)-\upsigma_i \vert }{\partial \vec n(\ell)} {\rm d} \ell \leq 
   \int_{\Gamma_{\eps, i}(\varrho_\eps, \upkappa_1)}  \frac{\partial \vert u_\eps(\ell)-\upsigma_i \vert }{\partial \vec n (\ell)}{\rm d} \ell .
\end{equation}
  \end{lemma}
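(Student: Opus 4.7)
The first inequality in \eqref{sting} is immediate from the definition of $\vec n$. Indeed, on the regular part of the level set $\{|u_\eps - \upsigma_i| = \upkappa_0\}$ (which is all of $\Gamma_{\eps, i}(\varrho_\eps, \upkappa_0)$ after invoking Sard's lemma, as in the proof of Lemma \ref{eliot}), the unit normal $\vec n$ pointing in the direction of increasing $|u_\eps - \upsigma_i|$ is by definition $\nabla |u_\eps - \upsigma_i|/|\nabla |u_\eps - \upsigma_i||$, so $\partial|u_\eps - \upsigma_i|/\partial \vec n = |\nabla |u_\eps - \upsigma_i||$ pointwise, which is manifestly nonnegative.

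The substantive content is the monotonicity. Setting $\rho_i := |u_\eps - \upsigma_i|$, my plan is to apply the divergence theorem to $\rho_i$ on the annular region
$$A_i := \{ x \in \D^2(\varrho_\eps) : \upkappa_0 \leq \rho_i(x) \leq \upkappa_1\},$$
after showing $\Delta \rho_i \geq 0$ on $A_i$. First I would check that $A_i$ is compactly contained in the open disk, so that $\partial A_i = \Gamma_{\eps, i}(\varrho_\eps, \upkappa_0) \cup \Gamma_{\eps, i}(\varrho_\eps, \upkappa_1)$. For $i=1$ this follows directly from $\rho_1 < \upkappa \leq \upkappa_0$ on $\partial \D^2(\varrho_\eps)$ (assumption \eqref{versailles}); for $i \geq 2$, the same bound combined with $|\upsigma_i - \upsigma_1| \geq 2\upmu_0$ from \eqref{kiv} forces $\rho_i > 2\upmu_0 - \upkappa > \upkappa_1$ there, under the implicit regime $\upkappa_1 \leq \upmu_0$ of this section.

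Next I would establish $\Delta \rho_i \geq 0$ on $A_i$. Since $\rho_i \geq \upkappa_0 > 0$ on $A_i$, the function $\rho_i$ is smooth there. Using the PDE \eqref{elipes} in the form $\Delta u_\eps = \eps^{-2}\nabla V(u_\eps)$ one gets
$$\Delta \rho_i^2 = 2|\nabla u_\eps|^2 + 2\eps^{-2}(u_\eps - \upsigma_i) \cdot \nabla V(u_\eps),$$
and combining with the identity $\Delta \rho_i^2 = 2\rho_i \Delta \rho_i + 2|\nabla \rho_i|^2$ yields
$$\rho_i \Delta \rho_i = \bigl(|\nabla u_\eps|^2 - |\nabla \rho_i|^2\bigr) + \eps^{-2}(u_\eps - \upsigma_i)\cdot \nabla V(u_\eps).$$
The parenthesized difference is $\geq 0$ by Cauchy--Schwarz (since $|\cdot - \upsigma_i|$ is $1$-Lipschitz, one has $|\nabla \rho_i| \leq |\nabla u_\eps|$), and the remaining term is $\geq 0$ on $A_i \subset \Upsilon_{\eps,i}(\varrho_\eps, \upkappa_1)$ by \eqref{duglandmoins}. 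Hence $\rho_i \Delta \rho_i \geq 0$, and dividing by $\rho_i > 0$ gives $\Delta \rho_i \geq 0$.

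Finally I would conclude via the divergence theorem. The outward unit normal to $A_i$ equals $\vec n$ on $\Gamma_{\eps, i}(\varrho_\eps, \upkappa_1)$ (where $\rho_i$ attains its maximum on $A_i$) and $-\vec n$ on $\Gamma_{\eps, i}(\varrho_\eps, \upkappa_0)$ (where $\rho_i$ attains its minimum), so
$$0 \leq \int_{A_i} \Delta \rho_i\, dx = \int_{\Gamma_{\eps, i}(\varrho_\eps, \upkappa_1)} \frac{\partial \rho_i}{\partial \vec n} d\ell - \int_{\Gamma_{\eps, i}(\varrho_\eps, \upkappa_0)} \frac{\partial \rho_i}{\partial \vec n} d\ell,$$
which is exactly the second inequality of \eqref{sting}. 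The only technical nuisance I anticipate is the Sard's-theorem regularity of $\upkappa_0, \upkappa_1$ as values of $\rho_i$; failing that, I would approximate each by nearby regular values and pass to the limit using the smoothness of $u_\eps$, as is standard.
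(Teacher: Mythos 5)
Your proof is correct and is essentially the paper's argument in different clothing: proving $\Delta\rho_i\geq 0$ on $A_i$ and applying the divergence theorem is arithmetically the same as the paper's step of multiplying \eqref{elipes} by $\frac{u_\eps-\upsigma_i}{\vert u_\eps-\upsigma_i\vert}$ and integrating by parts on the annular region $\mathcal C(\upkappa_0,\upkappa_1)$, with the two nonnegative terms in your identity for $\rho_i\Delta\rho_i$ matching those on the right-hand side of the paper's \eqref{stokounette}. The subharmonicity framing is a slightly cleaner way to organize the same computation.
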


  \begin{proof}  
  The proof involves again Stokes formula, now on the domain 
  $$
  \mathcal C(\upkappa_0, \upkappa_1)=\Upsilon_{\eps, i} (\varrho_\eps, \upkappa_1)\setminus  
 \Upsilon_{\eps, i} (\varrho_\eps, \upkappa_0).
  $$
 It follows from assumption \eqref{kappacity} that  
 $$
 \overline{  \mathcal C(\upkappa_0, \upkappa_1)} \cap \partial \D^2(\varrho_\eps)=\emptyset, 
$$
 so that 
 $$\partial \mathcal C(\upkappa_0, \upkappa_1)=  \partial  \Upsilon_{\eps, i} (\varrho_\eps, \upkappa_1)\cup
\partial  \Upsilon_{\eps, i} (\varrho_\eps, \upkappa_0).
$$
We multiply the equation \eqref{elipes}  by $\displaystyle{ \frac{u_\eps-\upsigma_i} {\vert u_\eps-\upsigma_i \vert }}$ which is well defined   on $  \mathcal C(\upkappa_0, \upkappa_1)$ and integrate by parts.  Since, on $\Gamma_{\eps, i}(\varrho_\eps, \upkappa)$, we have 
$$
\frac{\partial u_\eps}{\partial  \vec n} \cdot \frac{u_\eps-\upsigma_i} {\vert u_\eps-\upsigma_i \vert }=\frac{\partial (u_\eps-\upsigma_i)}{\partial  \vec  n} \cdot \frac{u_\eps-\upsigma_i} {\vert u_\eps-\upsigma_i \vert }= \frac{\partial \vert u_\eps-\upsigma_i \vert }{\partial \vec n}, 
$$
whereas on  $\mathcal C(\upkappa_0, \upkappa_1)$, we have 
\begin{equation*}
\begin{aligned}
\nabla u_\eps\cdot \nabla \left( \frac{u_\eps-\upsigma_i} {\vert u_\eps-\upsigma_i \vert }\right)&=
\nabla( u_\eps-\upsigma_i) \cdot \nabla\left (\frac{u_\eps-\upsigma_i} {\vert u_\eps-\upsigma_i \vert }\right) \\
&=\frac{1}{\vert u_\eps-\upsigma_i \vert} \vert \nabla( u_\eps-\upsigma_i)  \vert^2 +
\left[\nabla( u_\eps-\upsigma_i) \cdot( u_\eps-\upsigma_i)\right] \cdot \nabla (\frac{1}{\vert u_\eps-\upsigma_i\vert})\\
&=\frac{1}{\vert u_\eps-\upsigma_i \vert} \left[\vert \nabla( u_\eps-\upsigma_i)  \vert^2-\left \vert \nabla \vert u_\eps-\upsigma_i \vert \right \vert ^2\right]
\end{aligned}
\end{equation*}
integration by parts  thus   yields
 \begin{equation}
  \label{stokounette}
  \begin{aligned}
  \int_{\Gamma_{\eps, i}(\varrho_\eps, \upkappa_1)} \frac{\partial \vert u_\eps-\upsigma_i \vert }{\partial \vec n}-
   \int_{\Gamma_{\eps, i}(\varrho_\eps, \upkappa_0)}  \frac{\partial \vert u_\eps-\upsigma_i \vert }{\partial \vec n} 
  &=
  \int_{ \mathcal C(\upkappa_0, \upkappa_1)} \frac{1}{\vert u-\upsigma_i \vert  }\left[\vert \nabla u_\eps \vert^2  -  
   \left \vert \nabla \vert u_\eps-\upsigma_i \vert \right \vert ^2\right]\\
+  & \int_{\mathcal C(\upkappa_0, \upkappa_1)} \eps^{-2} \nabla_u V(u_\eps)\cdot \frac{(u_\eps-\upsigma_i)}{\vert u-\upsigma_i \vert}.
  \end{aligned}
  \end{equation}
 Since 
 $$\displaystyle{\vert \nabla u_\eps \vert^2  -  
   \left \vert \nabla \vert u_\eps-\upsigma_i \vert \right \vert ^2}=\vert \nabla (u_\eps-\upsigma_i) \vert^2  -  
   \left \vert \nabla \vert u_\eps-\upsigma_i \vert \right \vert ^2\geq 0,$$  
   it follows that the r.h.s of inequality \eqref{stokounette} is positive.   Hence, we deduce \eqref{sting}.

  \end{proof}
  
   \begin{lemma} Assume that $0<\eps \leq 1$  and  that $u_\eps$ is a solution to \eqref{elipes} which  satisfies  \eqref{kappacite}. Then, there exits a constant $\rm C>0$ depending only on $V$ such that have 
  \label{denada}
  \begin{equation}
  \label{denada1}
     0\leq  \eps \int_{\Gamma_{\eps, i}(\varrho_\eps, \upkappa)} \frac{\partial \vert u_\eps-\upsigma_i \vert }{\partial \vec n(\ell)} {\rm d } \ell 
   \leq  
   C \int_{\D^2(\varrho_\eps)}\frac{V(u)}{\eps} {\rm d} x   \leq C\mathbb V(u_\eps, \D^2(\frac34)),  
     \end{equation}
     where, for a point $\ell \in \Gamma_\eps$, $\vec n (\ell)$  denotes the unit vector  perpendicular to $\Gamma_\eps$  and oriented in the direction  which  increases $\vert u-\upsigma_i \vert$. 
  \end{lemma}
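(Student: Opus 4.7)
The positivity statement $\eps \int_{\Gamma_{\eps,i}} \partial |u_\eps - \upsigma_i|/\partial \vec n \geq 0$ is immediate: at each regular point of $\Gamma_{\eps,i}(\varrho_\eps, \upkappa) \subset w_i^{-1}(\upkappa)$, the unit normal $\vec n$ has been oriented in the direction of increasing $|u_\eps - \upsigma_i|$, so that $\partial |u_\eps - \upsigma_i|/\partial \vec n = |\nabla|u_\eps - \upsigma_i||$ is pointwise non-negative.

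For the upper bound, the idea is to push the level set from height $\upkappa$ out to a value $\tilde\upmu \in [\upmu_0/2, \upmu_0]$ where $u_\eps$ is bounded away from every well, so that Lemma \ref{bornepote} dominates the gradient energy density by the potential density. Concretely, I first apply Lemma \ref{claudius} on the disk $\D^2(\varrho_\eps)$ to select such a value $\tilde\upmu \in [\upmu_0/2,\upmu_0]$ for which each $w_i^{-1}(\tilde\upmu)$ is a regular curve and
\[
\sum_{i=1}^q \int_{w_i^{-1}(\tilde\upmu)} |\nabla u_\eps|\, {\rm d}\ell \leq \frac{4}{\upmu_0\, \eps}\, \E_\eps\bigl(u_\eps, \Theta(u_\eps, \varrho_\eps)\bigr).
\]
Since $\tilde\upmu \geq \upmu_0/2 > \upmu_0/4 > \upkappa$, the monotonicity Lemma \ref{repro} applied with $\upkappa_0 = \upkappa$ and $\upkappa_1 = \tilde\upmu$ gives, for each $i$,
\[
\int_{\Gamma_{\eps,i}(\varrho_\eps, \upkappa)} \frac{\partial |u_\eps - \upsigma_i|}{\partial \vec n}\,{\rm d}\ell \leq \int_{\Gamma_{\eps,i}(\varrho_\eps, \tilde\upmu)} \frac{\partial |u_\eps - \upsigma_i|}{\partial \vec n}\,{\rm d}\ell,
\]
and the pointwise bound $\partial |u_\eps - \upsigma_i|/\partial \vec n \leq |\nabla u_\eps|$ together with $\Gamma_{\eps,i}(\varrho_\eps, \tilde\upmu) \subset w_i^{-1}(\tilde\upmu)$ allows me to replace the right-hand side by the coarea-type bound above.

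The final step is to control $\E_\eps(u_\eps, \Theta(u_\eps, \varrho_\eps))$ by the potential. Since $\Theta(u_\eps, \varrho_\eps)$ is contained in $\Theta_\eps(3/4)$ (the threshold $\upmu_0/2$ used in $\Theta$ is stricter than the $\upmu_0/4$ used in $\Theta_\eps$), Lemma \ref{bornepote} yields $e_\eps(u_\eps) \leq C_{\rm T}\, V(u_\eps)/\eps$ pointwise on $\Theta(u_\eps, \varrho_\eps)$, hence
\[
\E_\eps\bigl(u_\eps, \Theta(u_\eps, \varrho_\eps)\bigr) \leq C_{\rm T} \int_{\D^2(\varrho_\eps)} \frac{V(u_\eps)}{\eps}\,{\rm d}x.
\]
Multiplying by $\eps$, chaining the inequalities above, and using $\varrho_\eps \leq 3/4$ to enlarge the integration domain, I obtain \eqref{denada1} with $C$ proportional to $C_{\rm T}/\upmu_0$.

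The only non-mechanical difficulty is of a bookkeeping nature: making sure that $\Gamma_{\eps,i}(\varrho_\eps, \tilde\upmu)$ really coincides with $w_i^{-1}(\tilde\upmu) \cap \D^2(\varrho_\eps)$ (in particular for $i=1$, where the definition subtracts $\partial \D^2(\varrho_\eps)$). This is guaranteed by hypothesis \eqref{kappacite}: on $\partial \D^2(\varrho_\eps)$ we have $|u_\eps - \upsigma_1| < \upkappa < \upmu_0/4 < \tilde\upmu$, and using the separation \eqref{kiv} of the wells, $|u_\eps - \upsigma_i| > 3\upmu_0 > \tilde\upmu$ for $i \neq 1$. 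Thus $\partial \D^2(\varrho_\eps)$ lies strictly inside $\Upsilon_{\eps,1}(\varrho_\eps, \tilde\upmu)$ and strictly outside $\Upsilon_{\eps,i}(\varrho_\eps, \tilde\upmu)$ for $i \neq 1$, so each $\Gamma_{\eps,i}(\varrho_\eps, \tilde\upmu)$ is a closed smooth curve sitting in the interior of $\D^2(\varrho_\eps)$ and all the above applications of Lemmas \ref{claudius}, \ref{repro} and \ref{bornepote} are legitimate.
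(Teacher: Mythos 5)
Your proof is correct and follows essentially the same route as the paper's: select a level $\tilde\upmu$ via Lemma \ref{claudius}, push from $\upkappa$ up to $\tilde\upmu$ using the monotonicity of Lemma \ref{repro}, and then dominate the resulting boundary integral by $\eps^{-1}\int V(u_\eps)$ through Lemma \ref{bornepote}. Your additional verifications (the direct argument for the sign via the orientation of $\vec n$, and the check that $\partial\D^2(\varrho_\eps)$ does not interfere with $\Gamma_{\eps,i}(\varrho_\eps,\tilde\upmu)$) are sound and make explicit points that the paper leaves implicit.
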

  \begin{proof}
  We invoke first   Lemma \ref{claudius} with the choices $r=\varrho_\eps$ and  $u=u_\eps$.  This yields  a number 
  $\displaystyle{\tilde \upmu_\eps \in [\frac{\upmu_0}{4}, \frac{\upmu_0}{2}]}$ such that 
  $$
 \eps  \int_{\Gamma_{\eps, i}(\varrho, \tilde \upmu_\eps)} \frac{\partial \vert u_\eps-\upsigma_i \vert }{\partial \vec n(\ell)} {\rm d } \ell 
 \leq \eps \int_{\Gamma_{\eps, i}(\varrho, \tilde \upmu_\eps)} \vert \nabla u_\eps \vert  {\rm d } \ell
  \leq  \frac{1}{\upmu_0 } \E_\eps(u, \Theta (u_\eps,  \varrho_\eps))
    $$
where  $\Theta (u_\eps,  \varrho_\eps)$ is defined in \eqref{nablalala}. On the level set $\Theta (u_\eps,  \varrho_\eps)$, we may however bound point-wise the energy in terms of the potential, as stated in Lemma \ref{bornepote}. This yields by integration
  $$
 \E_\eps(u, \Theta (u_\eps, \varrho_\eps)) \leq {\rm C_T}  \mathbb V(u_\eps, \Theta  (u_\eps, \varrho_\eps) ). 
 $$
Combining the two previous inequalities, we obtain
 \begin{equation}
 \label{eau}
  \eps  \int_{\Gamma_{\eps, i}(\varrho, \tilde \upmu_\eps)} \frac{\partial \vert u_\eps-\upsigma_i \vert }{\partial \vec n(\ell)} {\rm d } \ell 
\leq \frac{\rm C_T} {\upmu_0}  \mathbb V(u_\eps, \Theta_\eps (u, \mathfrak  r_\eps )).
 \end{equation}
 On the other hand, we invoke  to Lemma \ref{repro} with the  $\upkappa_1=\tilde \upmu_\eps$ and $\upkappa_0=\upkappa$ to deduce  that
$$
 \int_{\Gamma_{\eps, i}(\varrho,  \upkappa)} \frac{\partial \vert u_\eps-\upsigma_i \vert }{\partial \vec n(\ell)} {\rm d } \ell  \leq \int_{\Gamma_{\eps, i}(\varrho, \tilde \upmu_\eps)} \frac{\partial \vert u_\eps-\upsigma_i \vert }{\partial \vec n(\ell)} {\rm d } \ell, 
 $$
 which together with \eqref{eau} leads  to   the desired result \eqref{denada1}. 
  \end{proof}
  
  \begin{proof}[Proof of Proposition \ref{kappacity} completed]   Combining \eqref{klisko} with \eqref{denada1}, we derive 
 the  desired inequality  \eqref{nonodes}.
      \end{proof}
      
      \subsection{Bounding  the total energy by the  integral of the potential}
      The main result of the present paragraph is the following result:
      
 \begin{proposition}
 \label{borneo}
  Let $u_\eps$ be a solution of \eqref{elipes} on $\D^2$ and $M_0>0$ be given. There exists  a constants $K_{\rm pot}(M_0)>0$,   depending   possibly on $V$ and $M_0$, and  a constant ${\rm C}_{\rm pot}$,  depending only on the potential  $V$, such that, if we have  
 \begin{equation}
 \label{lajoie}
 \E(u_\eps) \leq M_0 {\rm \ and \  }
 \eps^{-1} \int_{\D^2(\frac 34)} V(u_\eps) \leq K_{\rm pot}(M_0),   
 \end{equation}
 then  we have the estimate
 \begin{equation}
 \label{dupont}
 \int_{\D^2(\frac 12)} e_\eps(u_\eps)(x) {\rm d}x \leq  {\rm C_{\rm pot}} \left[ 
   \int_{\D^2(\frac34)} \frac{V(u_\eps)}{\eps} {\rm d} x  +\eps    \int_{ \D^2 \setminus \D^2(\frac 12) } e_\eps(u_\eps){\rm d} x. 
     \right].
 \end{equation}
 \end{proposition}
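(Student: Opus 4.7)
\medskip
\noindent \textbf{Proposal.} The plan is to split $\D^2(1/2)$ (after passing to a slightly larger concentric disk $\D^2(\varrho_\eps)$ with $\varrho_\eps\in[1/2,3/4]$) into the region where $u_\eps$ is close to the vacuum manifold, on which Proposition \ref{kappacity} yields the right energy control, and its complement, which is contained in $\Theta_\eps(\varrho_\eps)$ and where Lemma \ref{bornepote} converts the energy into a potential integral. The only real work consists in selecting a radius $\varrho_\eps$ which simultaneously makes the circle energy $\int_{\S^1(\varrho_\eps)}e_\eps(u_\eps)\,\mathrm d\ell$ comparable to the annular energy and forces $u_\eps$ to lie within $\upmu_0/4$ of a \emph{single} element $\upsigmam\in\Sigma$ on $\S^1(\varrho_\eps)$, so that assumption \eqref{kappacite} of Proposition \ref{kappacity} holds with $\upkappa=\upmu_0/4$.

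\medskip
First, by Fubini on the interval $[1/2,3/4]$ combined with Chebyshev's inequality applied to the two bounds $\E_\eps(u_\eps,\D^2(3/4)\setminus \D^2(1/2))\leq M_0$ and $\eps^{-1}\int_{\D^2(3/4)}V(u_\eps)\leq K_{\rm pot}(M_0)$, the set of radii $r\in[1/2,3/4]$ for which both
$$\int_{\S^1(r)}e_\eps(u_\eps)\,\mathrm d\ell \leq 16\,M_0 \qquad\text{and}\qquad \int_{\S^1(r)}\frac{V(u_\eps)}{\eps}\,\mathrm d\ell \leq 16\,K_{\rm pot}(M_0)$$
hold has measure at least $1/8$. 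I pick such a radius $\varrho_\eps$. Applying Cauchy--Schwarz to $J(u_\eps)=\sqrt{\eps}\,|\nabla u_\eps|\cdot\sqrt{V(u_\eps)/\eps}$ along $\S^1(\varrho_\eps)$ yields
$$\int_{\S^1(\varrho_\eps)}J(u_\eps)\,\mathrm d\ell \leq \sqrt{2\int_{\S^1(\varrho_\eps)}e_\eps(u_\eps)\,\mathrm d\ell}\cdot \sqrt{\int_{\S^1(\varrho_\eps)}\frac{V(u_\eps)}{\eps}\,\mathrm d\ell}\leq C(M_0)\sqrt{K_{\rm pot}(M_0)},$$
and since $\varrho_\eps^{-1}\int_{\S^1(\varrho_\eps)}V(u_\eps)\,\mathrm d\ell\leq 2\eps\cdot 16\,K_{\rm pot}(M_0)$, Lemma \ref{valli} provides an element $\upsigmam\in\Sigma$ (depending on the radius) with
$$|u_\eps(\ell)-\upsigmam|^2\leq C_{\rm unf}^2\bigl(C(M_0)\sqrt{K_{\rm pot}(M_0)}+C K_{\rm pot}(M_0)\bigr),\qquad\forall\,\ell\in\S^1(\varrho_\eps).$$
Choosing $K_{\rm pot}(M_0)$ small enough depending on $M_0$, $\upmu_0$ and $C_{\rm unf}$ forces $|u_\eps-\upsigmam|<\upmu_0/4$ uniformly on $\S^1(\varrho_\eps)$, that is, condition \eqref{kappacite}.

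\medskip
With $\varrho_\eps$ in hand, Proposition \ref{kappacity} applied with $\upkappa=\upmu_0/4$ provides
$$\int_{\Upsilon_\eps(\varrho_\eps,\upmu_0/4)}e_\eps(u_\eps)\,\mathrm dx \leq C_\Upsilon\left[\frac{\upmu_0}{4}\int_{\D^2(\varrho_\eps)}\frac{V(u_\eps)}{\eps}\,\mathrm dx+\eps\int_{\partial\D^2(\varrho_\eps)}e_\eps(u_\eps)\,\mathrm d\ell\right].$$
The complement $\D^2(\varrho_\eps)\setminus\Upsilon_\eps(\varrho_\eps,\upmu_0/4)$ is exactly $\Theta_\eps(\varrho_\eps)\subset\Theta_\eps(3/4)$, so Lemma \ref{bornepote} gives
$$\int_{\D^2(\varrho_\eps)\setminus\Upsilon_\eps}e_\eps(u_\eps)\,\mathrm dx \leq C_{\rm T}\int_{\D^2(3/4)}\frac{V(u_\eps)}{\eps}\,\mathrm dx.$$
Summing these and using $\D^2(1/2)\subset\D^2(\varrho_\eps)\subset\D^2(3/4)$ gives the desired interior bound, provided the boundary term is absorbed. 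This last point is ensured by adjusting the selection of $\varrho_\eps$: among the $1/8$-measure set of good radii already isolated, Lemma \ref{moyenne} (mean value on the annulus $\D^2(3/4)\setminus\D^2(1/2)$) shows that some such $\varrho_\eps$ also satisfies $\int_{\partial\D^2(\varrho_\eps)}e_\eps(u_\eps)\,\mathrm d\ell\leq C\,\E_\eps(u_\eps,\D^2\setminus\D^2(1/2))$, after which \eqref{dupont} follows with $C_{\rm pot}$ depending only on $V$.

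\medskip
The main obstacle is the circular dependence in step two: Proposition \ref{kappacity} requires a fixed $\upkappa$ independent of $M_0$ (so that the complement lies in $\Theta_\eps(3/4)$ for which Lemma \ref{bornepote} applies with $V$-only constant $C_{\rm T}$), but Lemma \ref{valli} only bounds $|u_\eps-\upsigmam|$ by a quantity controlled by the energy on the circle, which is not small. Resolving this requires using the smallness of the potential integral through Cauchy--Schwarz to manufacture smallness of $\int J$ without any smallness on $\int e_\eps$, and then calibrating $K_{\rm pot}(M_0)$ accordingly; this is precisely where the threshold $K_{\rm pot}$ must be allowed to depend on $M_0$.
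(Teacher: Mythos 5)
Your proposal is correct and follows the same path as the paper's own proof: a mean-value/Chebyshev selection of $\varrho_\eps\in[\frac12,\frac34]$ controlling both the circle potential and the circle energy, Cauchy--Schwarz to extract smallness of $\int_{\S^1(\varrho_\eps)}J(u_\eps)$ from the smallness of $\int V/\eps$ alone, Lemma \ref{valli} plus a calibration of $K_{\rm pot}(M_0)$ to obtain uniform closeness to a single vacuum point, and then Proposition \ref{kappacity} with $\upkappa=\upmu_0/4$ on $\Upsilon_\eps$ together with Lemma \ref{bornepote} on $\Theta_\eps$. The paper merely factors the last two steps into an intermediate statement (Proposition \ref{bornepoting}) which you have inlined; otherwise the argument is identical, including your closing observation that $K_{\rm pot}$ must be allowed to depend on $M_0$ precisely because only the potential integral, not the energy, is small.
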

 
 In  the context of the present paper, the main contribution of the r.h.s  of inequality \eqref{dupont}  is given by the potential terms, so that Proposition \ref{borneo}  yields an estimate of the energy by the integral of  potential, provided  the later is sufficiently small, according to assumption \eqref{lajoie}.

 \smallskip
Before turning to the proof of Proposition \ref{borneo}, we observe, as a preliminary remark, that the result of proposition \ref{borneo} is, at first sight,  rather close to the result of Proposition  \ref{kappacity}.  However, let us emphasize thar  estimate  \eqref{nonodes} yields an energy bound  only  for the domain where the  value of \emph{$u_\eps$ is close to one of the wells}. on the other hand, estimate \eqref{nonodes} presents also some improvement compared to \eqref{dupont}, since it involves an  additional factor $\upkappa$,  measuring the distance to the well.  

\smallskip
Starting from Proposition \ref{kappacity}, a first step in the proof of Proposition \ref{borneo} is to  deduce \emph{ global estimates}, that means on the whole domain,  using  Proposition \ref{bornepote}. Indeed,  if we  choose the constant $\upkappa$ in the statement of Proposition \ref{bornepote} so that 
$\displaystyle{\upkappa=\frac{\upmu_0}{4}}$, then  assumption \eqref{kappacite}  is turned into 
 \begin{equation}
  \label{kappaciting}
   \vert u_\eps-\upsigmam \vert  < \frac{\upmu_0}{4} {\rm \ on \ } \partial \D^2(\varrho_\eps), 
  \end{equation}
   for some element $\upsigmam \in \Sigma$ and some radius $\varrho_\eps   \in [\frac 12, \frac 34].$  We then have the following:

\begin{proposition}
\label{bornepoting}
Let $\varrho_\eps \in [\frac 12, \frac34]$ and  let $u_\eps$ be a solution of \eqref{elipes} on $\D^2$ and 
  assume that \eqref{kappaciting} is satisfied.  We have, for some constant $\rm C_{\rm pot} >0$ depending only on the potential  $V$
    \begin{equation}
  \label{nonodesing}
   \int_{\D^2(\varrho_\eps)} e_\eps(u_\eps)(x) {\rm d}x \leq  {\rm C_{\rm pot}} \left[ 
   \int_{\D^2(\varrho_\eps)} \frac{V(u_\eps)}{\eps} {\rm d} x  +\frac{\eps}{4}    \int_{\partial \D^2(\varrho_\eps)} e_\eps(u_\eps){\rm d} \ell
     \right].
   \end{equation}
  \end{proposition}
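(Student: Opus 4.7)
The plan is to write $\D^2(\varrho_\eps)$ as the disjoint union of $\Upsilon_\eps(\varrho_\eps, \upmu_0/4)$ and its complement (which coincides with $\Theta_\eps(\varrho_\eps)\cap\D^2(\varrho_\eps)$, in the notation of \eqref{nablala}) and to treat the two pieces separately, using Proposition \ref{kappacity} on the first and Lemma \ref{bornepote} on the second. The choice $\upkappa = \upmu_0/4$ is what makes the decomposition match up with both results.

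For the first piece, the hypothesis \eqref{kappaciting} is precisely hypothesis \eqref{kappacite} of Proposition \ref{kappacity} with $\upkappa = \upmu_0/4$, so that proposition applies directly and yields
$$
\int_{\Upsilon_\eps(\varrho_\eps, \upmu_0/4)} e_\eps(u_\eps)\,{\rm d}x \;\le\; {\rm C}_\Upsilon\Bigl[\,\tfrac{\upmu_0}{4}\int_{\D^2(\varrho_\eps)}\tfrac{V(u_\eps)}{\eps}\,{\rm d}x + \eps\int_{\partial\D^2(\varrho_\eps)} e_\eps(u_\eps)\,{\rm d}\ell\,\Bigr],
$$
which already has exactly the structure of the right-hand side of \eqref{nonodesing}.

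For the second piece, by definition every point there has $u_\eps$ at distance at least $\upmu_0/4$ from each $\upsigma_i$, which is the situation of Lemma \ref{bornepote}; and since $\varrho_\eps \le 3/4$, the whole set sits at distance at least $1/4$ from $\partial\D^2$, so the gradient bound of Proposition \ref{classic} is valid uniformly on it. Lemma \ref{bornepote} then produces a pointwise estimate $e_\eps(u_\eps) \le {\rm C}_{\rm T}\,V(u_\eps)/\eps$, and integration absorbs the full contribution of the complement into the potential term:
$$
\int_{\Theta_\eps(\varrho_\eps)\cap\D^2(\varrho_\eps)} e_\eps(u_\eps)\,{\rm d}x \;\le\; {\rm C}_{\rm T}\int_{\D^2(\varrho_\eps)}\tfrac{V(u_\eps)}{\eps}\,{\rm d}x.
$$
Summing the two bounds and readjusting the constant yields \eqref{nonodesing}.

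I expect the main obstacle to be a bookkeeping matter rather than an analytic difficulty: the constant ${\rm C}_{\rm T}$ produced by Lemma \ref{bornepote} depends on $V$ \emph{and} on a global energy bound $M_0$ (through the gradient bound of Proposition \ref{classic}), so the constant ${\rm C}_{\rm pot}$ in \eqref{nonodesing} must be read as carrying the same implicit dependence. This is exactly how the estimate is used in Proposition \ref{borneo}, where $M_0$ is part of the hypothesis; with this understood, the two bounds above combine to give the result.
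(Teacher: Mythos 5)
Your proof is essentially identical to the paper's: both decompose $\D^2(\varrho_\eps)$ into $\Upsilon_\eps(\varrho_\eps, \upmu_0/4)$ and $\Theta_\eps(\varrho_\eps)$, apply Proposition \ref{kappacity} on the former (noting \eqref{kappaciting} is \eqref{kappacite} with $\upkappa = \upmu_0/4$) and Lemma \ref{bornepote} on the latter, and sum. Your remark about the constant is well-taken and in fact catches a small imprecision in the paper: Lemma \ref{bornepote}'s constant $C_{\rm T}$ depends on $M_0$ via the gradient bound of Proposition \ref{classic}, yet Proposition \ref{bornepoting} neither hypothesizes an energy bound nor records an $M_0$-dependence in $\rm C_{\rm pot}$; this is harmless in context because the result is only invoked (in Proposition \ref{borneo}) under the assumption $\E_\eps(u_\eps) \le M_0$, but your reading of the constant as implicitly $M_0$-dependent is the correct one.
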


\begin{proof}  We observe first  that 
\begin{equation}
\label{fruc}
\D^2(\varrho_\eps)=\Theta(\varrho_\eps) \cup \Upsilon_\eps( \varrho_\eps, \frac{\upmu_0}{4}).
\end{equation}
In view  of Lemma \ref{bornepote},  we have
$$
 \int_{\Theta(\varrho_\eps)} e_\eps(u_\eps){\rm d} x  \leq {\rm C}_{\rm T} \int_{\Theta(\varrho_\eps)} \frac{V(u_\eps)}{\eps} 
 {\rm d}x, 
$$
whereas Proposition \ref{kappacite} yields
 \begin{equation*}
    \int_{\Upsilon_\eps (\varrho_\eps, \frac{\upmu_0}{4})} e_\eps(u_\eps)(x) {\rm d}x \leq  {\rm C_\Upsilon} \left[ 
\frac{ \upmu_0 }{4} \int_{\D^2(\varrho_\eps)} \frac{V(u_\eps)}{\eps} {\rm d} x  +\eps    \int_{\partial \D^2(\varrho_\eps)} e_\eps(u_\eps){\rm d} \ell
     \right].
   \end{equation*}
The proof  of \eqref{nonodesing}  then follows straightforwardly from our first observation  \eqref{fruc}. 
\end{proof}

 \begin{proof}[Proof of Proposition \ref{borneo} completed] Inequality \eqref{dupont} is for a large part a rather direct consequence of Proposition \ref{bornepoting}, the main point being  a suitable choice of the radius $\varrho_\eps$, so that condition \eqref{kappaciting} can be deduced from condition \eqref{lajoie}. As usual,  a mean-value argument allows us to 
choose  some radius $\displaystyle{\varrho_\eps \in [\frac 12, \frac 34]}$  such that 
\begin{equation}
\label{zebulon}
\left\{
\begin{aligned} 
\int_{\partial \D^2(\varrho_\eps)}   V(u_\eps){\rm d} \ell & \leq 8\int_{ \D^2(\frac 34) \setminus \D^2(\frac 12) }  V(u_\eps){\rm d} x 
{\rm \   \  and \ } \\
\int_{\partial \D^2(\varrho_\eps)}   e_\eps(u_\eps) {\rm d} \ell &\leq 
8\int_{\D^2(\frac 34) \setminus \D^2(\frac 12)}  e_\eps(u_\eps) {\rm d} x.
\end{aligned}
\right.
\end{equation}
 It follows from inequality \eqref{ab} that 
\begin{equation}
\label{zebulon2}
\int_{\partial \D^2(\varrho_\eps)}  J(u_\eps) {\rm d} \ell\leq
 \left(\int_{\partial \D^2(\varrho_\eps)}  \eps^{-1} V(u_\eps){\rm d} \ell\right)^{\frac 12}
 \left(\int_{\partial \D^2(\varrho_\eps)}   e_\eps(u_\eps){\rm d} \ell \right)^{\frac 12}.
\end{equation}
We assume next that the bound \eqref{lajoie} holds, for some constant $K_{\rm pot}(M_0)$ to be determined later.  Inequalities \eqref{zebulon} and \eqref{zebulon2} yield
$$
\int_{\partial \D^2(\varrho_\eps)} \left( J(u_\eps(\ell)) +\eps^{-1}V(u_\eps(\ell) \right)
{\rm d} \ell  \leq 8\left( M_0\, K_{\rm pot}(M_0)\right)^{\frac 12} + 8 K_{\rm pot}(M_0).
$$ 
Applying Lemma \ref{valli}, we deduce that there exists some element $\upsigmam \in \Sigma$ such that
\begin{equation}
      \label{born}
           \vert u(\ell)-\upsigmam \vert \leq  {\rm C}_{\rm unf}\sqrt{8\left( M_0\, K_{\rm pot}(M_0)\right)^{\frac 12} + 8 K_{\rm pot}(M_0)}. 
\end{equation} 
Next we choose the constant  $K_{\rm pot}(M_0)$ so small that 
$$
 {\rm C}_{\rm unf}\sqrt{8\left( M_0\, K_{\rm pot}(M_0)\right)^{\frac 12} + 8 K_{\rm pot}(M_0)} 
\leq \frac
{\upmu_0}{4}.
$$
For such a choice of the constant, we obtain, combining with \eqref{born}
\begin{equation}
      \label{borninf}
           \vert u(\ell)-\upsigmam \vert \leq  \frac{\upmu_0}{4}.
\end{equation}
Hence,  condition \eqref{kappaciting} is fullfilled on $\S^1 (\varrho_\eps)$ so that we are in position to apply Proposition \ref{bornepoting}, which yields 
 \begin{equation}
  \label{gaston}
  \begin{aligned}
   \int_{\D^2(\frac 12)} e_\eps(u_\eps)(x) {\rm d}x &\leq \int_{\D^2(\varrho_\eps)} e_\eps(u_\eps)(x) {\rm d}x  \\
 &  \leq  {\rm C_{\rm pot}} \left[ 
   \int_{\D^2(\varrho_\eps)} \frac{V(u_\eps)}{\eps} {\rm d} x  +\frac{\eps}{4}    \int_{\partial \D^2(\varrho_\eps)} e_\eps(u_\eps){\rm d} \ell
     \right] \\
     &\leq  {\rm C_{\rm pot}} \left[ 
   \int_{\D^2(\frac 34)} \frac{V(u_\eps)}{\eps} {\rm d} x +
  \eps    \int_{\D^2(\frac 34)} e_\eps(u_\eps){\rm d} \ell
     \right].
        \end{aligned}
   \end{equation}
The proof  of Proposition \ref{borneo} is complete. 
 \end{proof}
 
 \begin{remark}{\rm    In the course of the paper, we will invoke the  scaled version of Proposition \ref{borneo}. Given 
 $\varrho>\eps>0$ and $x_0 \in \Omega$, we consider a solution $u_\eps$ on $\Omega$ and assume it satisfies the bound
 \begin{equation}
 \label{lajoiebis}
 \E(u_\eps,\D^2(x_0, \varrho)) \leq M_0  \varrho {\rm \ and \  }
\eps^{-1} \int_{\D^2(x_0, \frac 34\varrho)} V(u_\eps) \leq K_{\rm pot}(M_0) \varrho.
 \end{equation}
Then,  thanks to the relations \eqref{scalingv}, we have the scaled version of \eqref{dupont}
 \begin{equation}
 \label{dupontbis}
 \int_{\D^2(x_0, \frac 12\varrho)} e_\eps(u_\eps)(x) {\rm d}x \leq  {\rm C_{\rm pot}} \left[ 
   \int_{\D^2(x_0,\frac34\varrho)} \frac{V(u_\eps)}{\eps} {\rm d} x  +\frac{\eps}{\varrho}    
   \int_{ \D^2(x_0, \varrho) \setminus \D^2(x_i, \frac 12\varrho) } e_\eps(u_\eps){\rm d} x 
     \right].
 \end{equation}
 }
 \end{remark}

\subsection{Bounds energy by integrals on    external domains}
      Our next result paves the way for the proof of Theorem \ref{bordurer}. As there, we consider a open subset $\mathcal U$ of $\Omega$ and define $\mathcal U_\delta$ and $\mathcal V_\delta$ according to \eqref{Udelta}. 

  \begin{proposition}
  \label{pave}
let $u_\eps$ be a solution of \eqref{elipes} on $\Omega$, $\mathcal U$ be an open bounded subset of $\Omega$ and 
$1>\delta>\eps >1>0$ be given such that $\mathcal U_\delta \subset \Omega$. Assume that 
 \begin{equation}
 \label{carpediem}
 \int_{\mathcal V_\updelta} 
      e_{\eps}(u_{\eps}) \, {\rm d} x  \leq {\rm  K}_{\rm ext} (\mathcal  U, \delta), 
 \end{equation}
  where  ${\rm  K}_{\rm ext} (\mathcal  U, \delta)>0$ denotes some constant depending possibly on $\mathcal U$ and $\delta$. 
 Then, we have the bound,   for some constant  ${\rm C}_{\rm ext} (\mathcal  U, \delta)$  depending possibly on $\mathcal U$ and $\delta$
\begin{equation}
\label{borges}
 \int_{\mathcal U_{\frac{\delta}{4}} }e_\eps (u_\eps) {\rm d} x \leq {\rm C}_{\rm ext} (\mathcal  U, \delta)
  \left(
  \int_{\mathcal V_\updelta} 
      e_{\eps}(u_{\eps})  + \eps \int_{\mathcal U_\delta} e_\eps(u_\eps) {\rm d} x.
\right)
 \end{equation}
 \end{proposition}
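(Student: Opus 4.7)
The plan is to mirror the proof of Proposition \ref{borneo}, replacing the concentric disks $\D^2(1/2)\subset \D^2(3/4)\subset \D^2(1)$ by the concentric thickenings $\mathcal U_{\updelta/4}\subset \mathcal U_{\updelta/2}\subset \mathcal U_\updelta$. The assumption \eqref{carpediem} plays the role of the smallness hypothesis \eqref{lajoie}: it lets us, on a suitable intermediate surface sitting inside $\mathcal V_\updelta$, bring $u_\eps$ uniformly close to $\Sigma$, which then unlocks the convexity-based integration by parts of Proposition \ref{kappacity}.

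The first step is Pohozaev: Proposition \ref{pascapit} directly gives
\[
\eps^{-1}\int_{\mathcal U_{\updelta/2}}V(u_\eps)\,dx \leq C(\mathcal U,\updelta)\int_{\mathcal V_\updelta}e_\eps(u_\eps)\,dx.
\]
For the second step, introduce a smooth foliation $\{\mathcal W_t\}_{t\in(\updelta/4,\updelta/2)}$ of $\mathcal U_{\updelta/2}\setminus\overline{\mathcal U_{\updelta/4}}$, obtained by mollifying the distance function $\mathrm{dist}(\cdot,\mathcal U)$, with $\mathcal U_{\updelta/4}\subset\mathcal W_t\subset \mathcal U_{\updelta/2}$ and $\partial\mathcal W_t$ smooth for a.e.\ $t$. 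A Fubini / mean-value argument selects $t^*$ such that
\[
\int_{\partial\mathcal W_{t^*}}e_\eps(u_\eps)\,d\ell \leq \frac{C(\mathcal U,\updelta)}{\updelta}\int_{\mathcal V_\updelta}e_\eps(u_\eps)\,dx.
\]
If $\mathrm{K}_{\mathrm{ext}}(\mathcal U,\updelta)$ in \eqref{carpediem} is chosen small enough, then on each connected component of $\partial\mathcal W_{t^*}$ the arc-length analogue of Lemma \ref{valli} produces some $\upsigma^j\in\Sigma$ with $|u_\eps-\upsigma^j|<\upmu_0/4$ uniformly along that component.

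The third step transports Proposition \ref{bornepoting} to the subdomain $\mathcal W_{t^*}$. Split $\mathcal W_{t^*}$ as $\Theta(u_\eps,\mathcal W_{t^*})\cup\bigcup_i\Upsilon_{\eps,i}$; on $\Theta$ bound the energy pointwise by the potential via Lemma \ref{bornepote}, and on each $\Upsilon_{\eps,i}$ apply the integration-by-parts chain of Lemmas \ref{eliot}, \ref{repro}, \ref{denada}. Each of these arguments is local to a sublevel set $\Upsilon_{\eps,i}$, so the fact that $\partial\mathcal W_{t^*}$ is disconnected and that different components may sit near different wells $\upsigma^j$ poses no essential difficulty. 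This yields
\[
\int_{\mathcal W_{t^*}}e_\eps(u_\eps)\,dx \leq C\Bigl[\,\int_{\mathcal W_{t^*}}\tfrac{V(u_\eps)}{\eps}\,dx+\eps\int_{\partial\mathcal W_{t^*}}e_\eps(u_\eps)\,d\ell\Bigr].
\]
Combining the three displays with $\mathcal U_{\updelta/4}\subset\mathcal W_{t^*}\subset\mathcal U_{\updelta/2}$ and $\partial\mathcal W_{t^*}\subset\mathcal V_\updelta$ gives \eqref{borges} (in fact with $\mathcal V_\updelta$ in place of $\mathcal U_\updelta$ in the $\eps$-term, which is stronger than the stated conclusion).

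The principal obstacle is step two: constructing the foliation $\{\mathcal W_t\}$ and selecting $t^*$ so that $\partial \mathcal W_{t^*}$ is a finite union of smooth simple closed curves suitable for integration by parts, while keeping the constants $C(\mathcal U,\updelta)$ truly dependent only on $\mathcal U$ and $\updelta$. Since $\mathrm{dist}(\cdot,\mathcal U)$ is only Lipschitz, one must work with a regularized signed distance and track how its gradient compares with unity; Sard's theorem applied to the mollified distance then supplies smoothness of the level set for a.e.\ $t$. A secondary point is to verify that the proofs of Lemmas \ref{eliot}--\ref{denada} genuinely localize to each connected piece of the boundary, which is immediate from their structure.
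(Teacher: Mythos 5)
Your strategy is genuinely different from the paper's. After the common first step — Proposition~\ref{pascapit} to control $\eps^{-1}\int_{\mathcal U_{\updelta/2}}V(u_\eps)$ by $\int_{\mathcal V_\updelta}e_\eps$ — the paper does \emph{not} attempt to reproduce the level-set integration-by-parts on a general domain. Instead it covers $\mathcal U_{\updelta/4}$ by finitely many disks $\D^2(x_i,\updelta/8)$ with $x_i\in\overline{\mathcal U_{\updelta/4}}$, so that each $\D^2(x_i,\updelta/4)\subset\mathcal U_{\updelta/2}$, and then applies the already-established scaled form \eqref{dupontbis} of Proposition~\ref{borneo} on each disk and sums. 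The whole machinery of Lemmas~\ref{eliot}--\ref{denada}, Proposition~\ref{kappacity}, Proposition~\ref{bornepoting}, and Lemma~\ref{valli} is thereby used only on disks, exactly as it was proved, with no re-derivation needed. What the covering buys is precisely what you are trying to manufacture by hand with your foliation: it trades a single global integration by parts on an irregular domain for many local ones on model domains, at the cost of a combinatorial factor $\sharp(I)$ which is harmless since the constant is allowed to depend on $\mathcal U$ and $\updelta$.

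Your foliation route is not wrong in spirit, but it requires considerably more re-verification than your final paragraph concedes. The lemmas you invoke are stated and proved under the hypothesis that the \emph{entire} boundary circle is close to a \emph{single} well $\upsigma_{\rm main}$ (assumption~\eqref{kappacite}), and the proof of Lemma~\ref{eliot} begins by normalizing $\upsigma_{\rm main}=\upsigma_1$ and concluding $\partial\D^2(\varrho_\eps)\subset\overline{\Upsilon_{\eps,1}}$, so that the outer boundary contributes to exactly one of the integration-by-parts identities \eqref{stokounette0}. With a boundary $\partial\mathcal W_{t^*}$ that has several components near distinct wells, the bookkeeping of which outer-boundary pieces enter which $\Upsilon_{\eps,i}$ has to be redone; similarly Lemma~\ref{repro} uses $\overline{\mathcal C(\upkappa_0,\upkappa_1)}\cap\partial\D^2(\varrho_\eps)=\emptyset$, which is a single-well statement. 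None of this is fatal, but calling it "immediate from their structure" undersells the work, and the conversion $L^{-1}\int V\le\int e_\eps$ used in passing from \eqref{bornuni} to \eqref{bornunif} in the analogue of Lemma~\ref{valli} needs the length $L$ of each component of $\partial\mathcal W_{t^*}$ to dominate $\eps$, a point that must be argued from the mollified-distance construction and the standing hypothesis $\updelta>\eps$. The paper's disk-covering argument sidesteps all of this cleanly, and is the shorter and more robust path; your claimed strengthening (replacing $\mathcal U_\updelta$ by $\mathcal V_\updelta$ in the $\eps$-term) is real but minor, and in any case can also be extracted from the covering argument with a little extra care.
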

 \begin{proof}  The proof combines Proposition \ref{borneo}, Proposition \ref{pascapit} with a standard covering by disks. 
We first bound the potential on the set $\mathcal U_{\frac{\delta}{2}}$  thanks to  of Proposition \ref{pascapit}, which yields 
 \begin{equation}
     \label{pascap360}
      \frac{1}{\eps} \int_{\mathcal U_{\frac{\updelta}{2}} }V(u_\eps) {\rm d} x \leq  C(U, \updelta)  \int_{\mathcal V_\updelta} e_\eps(u_\eps) {\rm d}x\leq  C(U, \updelta)  {\rm  K}_{\rm ext} (\mathcal  U, \delta). 
     \end{equation}
   In inequality \eqref{pascap360}, we have  assumed  that the bound \eqref{carpediem} is fullfilled for some constant  ${\rm  K}_{\rm ext} (\mathcal  U, \delta)$, which we choose now as
   \begin{equation}
   \label{thechoice} 
  {\rm  K}_{\rm ext} (\mathcal  U, \delta) = \frac{{\rm  K}_{\rm pot}(M_0) \delta}{8C(\mathcal U, \delta)}.
   \end{equation}
    Inequality  \eqref{pascap360} then yields 
    \begin{equation}
    \label{pascap361}
     \frac{1}{\eps} \int_{\mathcal U_{\frac{\updelta}{2}} }V(u_\eps) {\rm d} x  \leq  \frac{\delta}{8} K_{\rm pot}(M_0). 
    \end{equation}
This bound will allow us  to apply inequality \eqref{lajoiebis} on disks of radius $\frac{\delta}{8}$ covering $\mathcal U_{\frac{\delta}{4}}$. In this direction, we   claim that there exists  a finite collections of disks 
  $\displaystyle{\left\{\D^2\left(x_i, \frac{\delta}{8}\right)\right\}_{i \in I}}$  such that
  \begin{equation}
  \label{rez}
  \mathcal U_{\frac{\delta}{4}} \subset  \underset{i \in I} \cup \D^2\left(x_i, \frac{\delta}{8}\right)  {\rm \ and \ } x_i \in  \overline{\mathcal U_{\frac{\delta}{4}}}, {\rm \ for \ any \ } i \in I.
  \end{equation}
   Indeed, such a collections may be obtained invoking the  collection of  disks $\displaystyle{\left\{D^2\left(x, \frac{\delta}{8}\right)\right\}}$  with $x \in \overline{\mathcal U_{\frac{\delta}{4}}}$ and then extracting a finite subcover thanks to Lebesgue's Theorem. Notice that we also have 
 \begin{equation}
 \label{chausson}
  \underset{i \in I} \cup \D^2\left(x_i, \frac{\delta}{4}\right)  \subset \mathcal U_{ \frac{\delta}{2}}.
 \end{equation}
 On each of  the disks $\D^2\left(x_i, \frac{\delta}{4}\right)$, we  have, thanks to  \eqref{pascap361}
 $$
  \frac{1}{\eps} \int_{\D^2(x_i, {\frac{\delta}{4}} )}V(u_\eps) {\rm d} x  \leq  \frac{\delta}{8} K_{\rm pot}(M_0),
  $$
  so that we may apply the scaled version \eqref{dupontbis} of Proposition \ref{borneo}  on the disk 
  $\D^2(x_i, \frac{1}{4} \delta)$: 
   This  yields  the estimate
   \begin{equation*}
 \int_{\D^2(x_i, \frac 18\delta )} e_\eps(u_\eps)(x) {\rm d}x \leq  {\rm C_{\rm pot}} \left[ 
   \int_{\D^2(x_i,\frac3{16}\delta )} \frac{V(u_\eps)}{\eps} {\rm d} x  +\frac{\eps}{\delta}    
   \int_{ \D^2(x_i, \frac{\delta}{4})} e_\eps(u_\eps){\rm d} x 
     \right].
 \end{equation*}
 Adding these relations for $i \in I$ and invoking relations \eqref{rez} and \eqref{chausson}
 we are led to
 \begin{equation}
  \int_{\mathcal U_{\frac{\delta}{4}}} e_\eps(u_\eps)(x) {\rm d}x \leq \sharp (I) 
   {\rm C_{\rm pot}} \left[ 
   \int_{\mathcal U_{\frac{\delta}{2}}} \frac{V(u_\eps)}{\eps} {\rm d} x  +\frac{\eps}{\delta}  
    \int_{\mathcal U_{\frac{\delta}{2}} } e_\eps(u_\eps){\rm d} x 
     \right].
 \end{equation}
 Invoking again the first inequality in \eqref{pascap360} we may bound the potential term on the right hand side,  so that   we obtain 
$$
\int_{\mathcal U_{\frac{\delta}{4}}} e_\eps(u_\eps)(x) {\rm d}x \leq \sharp (I) 
   {\rm C_{\rm pot}} \left[  C(U, \updelta)  \int_{\mathcal V_\updelta} e_\eps(u_\eps) {\rm d}x
     +\frac{\eps}{\delta}  
    \int_{\mathcal U_{\frac{\delta}{2}} } e_\eps(u_\eps){\rm d} x 
     \right].
$$
This inequality finally leads to the conclusion \eqref{borges}.
 \end{proof}

 \section {Proof of the energy decreasing property}
 \label{challengedata}
  The purpose of this section is to provide a proof to  Proposition \ref{sindec}.

  \subsection{An improved estimate of the energy on level sets}
  In this paragraph,   we consider again for given $0<\eps \leq 1$ a solution  $u_\eps: \D^2 \to \R^k$  to \eqref{elipes} and  specify the result of Proposition \ref{kappacity} for special choices of $\upkappa$ and $\varrho_\eps$. More precisely, we choose
  \begin{equation}
  \label{cabrovski}
  \varrho_\eps=\mathfrak r_\eps  {\rm \ and \ } \upkappa_\eps=C_{\rm bd} \sqrt{\E_\eps(u_\eps)}, 
  \end{equation}
   where $\frac 3 4 \leq \mathfrak r_\eps \leq 1$ is  the radius introduced in  
  subsection \ref{radamel}, Lemma \ref{moyenne}  for   the choice $\displaystyle{r_1=1, r_0=\frac 34}$ and where the constant  $C_{\rm bd}$ is choosen as 
  \begin{equation}
  \label{Cbd}
  C_{\rm bd}=\sup \{ 2{C_{\rm unf}}, \sqrt{\frac{1}{16\sqrt{\lambda_0}}}\},
  \end{equation}
   $C_{\rm unf}$ being  the constant provided in Lemma \ref{valli}. With this choice, we have 
   \begin{equation}
   \label{minos}
   \upkappa_\eps^2 \geq \frac{1}{16\sqrt{\lambda_0}}, 
   \end{equation}
so that the bound \eqref{camembert} is  satisfied for $\upkappa=\upkappa_\eps$. 
  We notice that, in view of \eqref{bornuni2},  there exists some element $\upsigmam \in \Sigma$ such that 
  \begin{equation}
      \label{bornuni20}
     \vert u(\ell)-\upsigmam \vert \leq  2{\rm C}_{\rm unf} \sqrt{{\E}_\eps(u_\eps, \D^2)})\leq \upkappa_\eps,  \  \   {\rm \ for \ all \ } \ell \in 
     \S^1(\tilde {\mathfrak r}_\eps),
      \end{equation}
      so that condition \eqref{kappacite} is automatically fullfilled in view of our choice  our choices of parameters, in particular \eqref{Cbd}.
   The main result of this subsection is  the following:

  \begin{proposition}  
  \label{pasmain}
Assume that $0<\eps \leq 1$ and that $u_\eps$ is a solution of \eqref{elipes} on $ \D^2$.   There exists a constant $C_{\Upsilon}>0$ such 
 \begin{equation}
 \label{smalto}
  \int_{\Upsilon_\eps( \mathfrak r_\eps, \upkappa_\eps)} e_\eps(u_\eps)(x) {\rm d}x \leq  C_{\Upsilon} \left[ \left(\int_{\D^2} e_\eps(u_\eps)(x) {\rm d}x\right)^{\frac32} 
  +
  \eps \int_{\D^2} e_\eps(u_\eps)(x) {\rm d}x\right]. 
  \end{equation}
  \end{proposition}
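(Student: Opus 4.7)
The plan is to deduce Proposition \ref{pasmain} as a direct specialization of Proposition \ref{kappacity} for the explicit choices $\varrho_\eps=\mathfrak r_\eps$ and $\upkappa=\upkappa_\eps=C_{\rm bd}\sqrt{\E_\eps(u_\eps)}$ given in \eqref{cabrovski}. The first task is to verify that the hypothesis \eqref{kappacite} holds for these choices. By construction, $\mathfrak r_\eps\in[\tfrac34,1]$ comes from Lemma \ref{moyenne} applied with $r_0=\tfrac34$, $r_1=1$, which yields
\begin{equation*}
\int_{\S^1(\mathfrak r_\eps)} e_\eps(u_\eps)\,{\rm d}\ell \leq 4\,\E_\eps(u_\eps,\D^2).
\end{equation*}
Inserting this into Lemma \ref{valli} (which applies since $\eps\leq 1\leq \mathfrak r_\eps$) produces an element $\upsigmam\in\Sigma$ such that $|u_\eps(\ell)-\upsigmam|\leq 2{\rm C}_{\rm unf}\sqrt{\E_\eps(u_\eps)}$ on $\S^1(\mathfrak r_\eps)$. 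The choice of $C_{\rm bd}$ in \eqref{Cbd} is precisely tailored so that $2{\rm C}_{\rm unf}\leq C_{\rm bd}$, which gives $|u_\eps-\upsigmam|\leq \upkappa_\eps$ on $\partial\D^2(\mathfrak r_\eps)$, i.e. exactly condition \eqref{kappacite}.

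Having verified the hypothesis, the plan is to apply Proposition \ref{kappacity} and bound each term of its right-hand side separately. For the potential term, the elementary pointwise inequality $\tfrac{V(u_\eps)}{\eps}\leq e_\eps(u_\eps)$ yields
\begin{equation*}
\upkappa_\eps \int_{\D^2(\mathfrak r_\eps)} \frac{V(u_\eps)}{\eps}\,{\rm d}x \leq C_{\rm bd}\sqrt{\E_\eps(u_\eps)}\cdot \E_\eps(u_\eps)= C_{\rm bd}\bigl(\E_\eps(u_\eps)\bigr)^{3/2},
\end{equation*}
which accounts for the subquadratic $3/2$ power on the right-hand side of \eqref{smalto}. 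For the boundary term, the mean-value bound on $\mathfrak r_\eps$ recorded above immediately gives
\begin{equation*}
\eps\int_{\partial\D^2(\mathfrak r_\eps)} e_\eps(u_\eps)\,{\rm d}\ell \leq 4\eps\,\E_\eps(u_\eps),
\end{equation*}
which delivers the linear $\eps$-correction. Adding these two contributions and adjusting the constant yields \eqref{smalto} with $C_\Upsilon$ depending only on $V$, $C_{\rm bd}$ and ${\rm C}_\Upsilon$ from Proposition \ref{kappacity}.

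The only minor obstacle is a book-keeping issue about the admissible range of $\upkappa$: the setup of Section \ref{pde} was stated under $0<\upkappa<\upmu_0/4$, whereas our $\upkappa_\eps$ could \emph{a priori} exceed this bound when $\E_\eps(u_\eps)$ is not small. This is however harmless, since in that regime one simply has $\E_\eps(u_\eps)\leq (\E_\eps(u_\eps))^{3/2}\cdot \E_\eps(u_\eps)^{-1/2}$, making inequality \eqref{smalto} trivially true after enlarging $C_\Upsilon$; equivalently one may truncate $\upkappa_\eps$ at $\upmu_0/4$ and observe that the chain of estimates above is unaffected by such truncation since the only place $\upkappa_\eps$ enters quantitatively is as a prefactor of the potential integral. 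The substantive content of the proposition therefore lies entirely in Proposition \ref{kappacity}, and the present statement is essentially its rescaled specialization combined with the mean-value choice of the radius.
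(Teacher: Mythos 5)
Your proof is correct and follows essentially the same route as the paper: verify hypothesis \eqref{kappacite} via the mean-value selection of $\mathfrak r_\eps$ (Lemma \ref{moyenne}) and the one-dimensional uniform bound of Lemma \ref{valli}, then apply Proposition \ref{kappacity} with the choice $\upkappa=\upkappa_\eps=C_{\rm bd}\sqrt{\E_\eps(u_\eps)}$, bounding the potential term by $\frac{V}{\eps}\leq e_\eps$ and the boundary term by the same mean-value estimate. Your final paragraph about the admissible range of $\upkappa_\eps$ is precisely the case split the paper formalizes via the smallness threshold $\upnu_1=\upmu_0^2/(4C_{\rm bd}^2)$, with the large-energy case handled by absorbing $\E_\eps\leq \upnu_1^{-1/2}(\E_\eps)^{3/2}$ into $C_\Upsilon$, so the two arguments coincide.
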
    
   
  \begin{proof} Notice first that   the  result \eqref{smalto} is non trivial only when the energy is small, otherwise it is obvious. We introduce therefore  the smallness condition  on the energy
    \begin{equation}
  \label{smallo}
  \int_{\D^2} e_\eps(u_\eps){\rm d}x \leq \upnu_1\equiv  \frac{\upmu_0^2}{4C_{\rm bd}^2}, 
  \end{equation}
    and distinguish two cases.
   
   \medskip
  \noindent
   {\bf Case 1}: {\it   Inequality \eqref{smallo} does not hold, that is} $\E(u_\eps) \geq \upnu_1$. In this case  \eqref{smalto} is straightforwardly satisfied, provided we choose the constant $C_{\Upsilon}$ sufficiently large so that
      $$ C_{\Upsilon} \geq \frac{1}{\sqrt{\upnu_1}}. 
       $$
     Indeed, we obtain, since \eqref{smallo} is not  satisfied, 
      \begin{equation}
      \begin{aligned}
      C_{\Upsilon}  \left(\int_{\D^2} e_\eps(u_\eps)(x) {\rm d}x\right)^{\frac32} &\geq  C_{\Upsilon} (\upnu_1)^{\frac 12} \int_{\D^2} e_\eps(u_\eps)(x) {\rm d}x \\
      &\geq \int_{\D^2} e_\eps(u_\eps)(x) {\rm d}x  
      \geq  \int_{\Upsilon_\eps( \mathfrak r_\eps, \upkappa_\eps)} e_\eps(u_\eps)(x) {\rm d}x.
      \end{aligned}
      \end{equation}

      \medskip
     \noindent
   {\bf Case 2}: {\it Inequality \eqref{smallo} does  hold}.     Since assumption \eqref{kappacite} is satisfied  for $\varrho_\eps=\mathfrak r_\eps$  thanks to \eqref{bornuni20}, we are in position to apply Proposition \ref{kappacity}.   It yields 
       \begin{equation}
  \label{nonodes}
   \int_{\Upsilon_\eps( \mathfrak r_\eps, \upkappa_\eps)} e_\eps(u_\eps)(x) {\rm d}x \leq  {\rm C} \left[ 
 \upkappa_\eps  \int_{\D^2(\mathfrak r_\eps)} \frac{V(u_\eps)}{\eps} {\rm d} x  +\eps    \int_{\partial \D^2(\mathfrak  r_\eps)} e_\eps(u_\eps){\rm d} \ell
     \right].
        \end{equation}
   Inequality  \eqref{smalto}   then follows directly  from \eqref{nonodes} in view of the definition  $\upkappa_\eps=C_{\rm bd} \sqrt{\E_\eps(u_\eps)}$ of $\upkappa_\eps$ and the fact that, by definition of the energy,  
   $\displaystyle{\frac{V(u_\eps)}{\eps} \leq e_\eps(u_\eps)}$. 
   \end{proof}

     At this stage,  we have already derived an inequality very close to \eqref{bien}, namely  inequality \eqref{smalto} of Proposition \ref{pasmain}. However it holds only on a domain where points on which the value of $\vert u_\eps-\upsigma_i \vert $ is large in some suitable sense have been removed. To go further, we invoke iimproved estimates on the potential $V$ which are derived in the next subsection.     
     
     \subsection{Improved potential estimates}
   
   \begin{proposition}  
   \label{propsac}  Assume that $0<\eps \leq 1$ and that $u_\eps$ is a solution of \eqref{elipes} on $ \D^2$.
   There exists a constant $C_{\rm V}>0$  such that
   \begin{equation}
   \label{propsac0}
   \frac{1}{\eps} \int_{\D^2(\frac{5}{8})}V(u_\eps) {\rm d} x \leq   C_{\rm V} \left[ 
  \left(\int_{\D^2} e_\eps(u_\eps)(x) {\rm d}x\right)^{\frac32} 
  +
  \eps \int_{\D^2} e_\eps(u_\eps)(x) {\rm d}x
\right]. 
 \end{equation}  
 \end{proposition}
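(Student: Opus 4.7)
The plan is to reduce the potential estimate on $\D^2(5/8)$ to a boundary integral via Pohozaev's inequality (Proposition \ref{pascap0}), applied at a radius whose circle sits entirely inside the ``good'' set $\Upsilon_\eps(\mathfrak r_\eps, \upkappa_\eps)$. Once the mean-value radius is selected inside this good set, the circle integral will inherit the sharp $3/2$-power bound of Proposition \ref{pasmain} rather than the linear bound a generic mean-value argument would produce.

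Concretely, I would keep $\mathfrak r_\eps \in [3/4, 1]$ and $\upkappa_\eps = C_{\rm bd}\sqrt{\E_\eps(u_\eps)}$ exactly as fixed in \eqref{cabrovski}. The boundary hypothesis \eqref{kappacite} at $\varrho = \mathfrak r_\eps$ is provided by \eqref{bornuni20} thanks to the choice $C_{\rm bd} \geq 2 C_{\rm unf}$, and the quantitative hypothesis \eqref{camembert} is exactly \eqref{minos}. Lemma \ref{remoyen}, applied with $u = u_\eps$, $\varrho = \mathfrak r_\eps$, $\upkappa = \upkappa_\eps$ and $\upsigma = \upsigmam$, then produces a radius $\uptau_\eps \in [5/8, \mathfrak r_\eps]$ with $\S^1(\uptau_\eps) \subset \Upsilon_\eps(\mathfrak r_\eps, \upkappa_\eps)$ and
\begin{equation*}
\int_{\S^1(\uptau_\eps)} e_\eps(u_\eps)\, d\ell \leq \frac{1}{\mathfrak r_\eps - 11/16}\int_{\Upsilon_\eps(\mathfrak r_\eps, \upkappa_\eps)} e_\eps(u_\eps)\, dx \leq 16\int_{\Upsilon_\eps(\mathfrak r_\eps, \upkappa_\eps)} e_\eps(u_\eps)\, dx,
\end{equation*}
where the last step uses $\mathfrak r_\eps \geq 3/4$.

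Combining this with Proposition \ref{pasmain} and then Proposition \ref{pascap0} at radius $\uptau_\eps \leq 1$, together with the inclusion $\D^2(5/8) \subset \D^2(\uptau_\eps)$ (since $\uptau_\eps \geq 5/8$), I obtain
\begin{equation*}
\frac{1}{\eps}\int_{\D^2(5/8)} V(u_\eps)\, dx \leq \frac{\uptau_\eps}{2}\int_{\S^1(\uptau_\eps)} e_\eps(u_\eps)\, d\ell \leq 8\, C_\Upsilon \left[ \bigl(\E_\eps(u_\eps)\bigr)^{3/2} + \eps\, \E_\eps(u_\eps) \right],
\end{equation*}
which is \eqref{propsac0} with $C_{\rm V} = 8 C_\Upsilon$.

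The only delicate point is the regime in which $\E_\eps(u_\eps)$ is not small, so that $\upkappa_\eps$ may leave the admissible range $\upkappa \leq \upmu_0/2$ required by Lemma \ref{remoyen}. As in Case 1 of the proof of Proposition \ref{pasmain}, this regime is handled trivially: when $\E_\eps(u_\eps) \geq \upnu_1$ for a fixed threshold $\upnu_1 > 0$, the pointwise bound $V(u_\eps)/\eps \leq e_\eps(u_\eps)$ gives $\frac{1}{\eps}\int_{\D^2} V(u_\eps) \leq \E_\eps(u_\eps) \leq \upnu_1^{-1/2}\bigl(\E_\eps(u_\eps)\bigr)^{3/2}$, which is absorbed into the right hand side of \eqref{propsac0} by enlarging $C_{\rm V}$.
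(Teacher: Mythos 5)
Your proposal reproduces the paper's proof exactly: choose $\uptau_\eps\in[5/8,\mathfrak r_\eps]$ by Lemma \ref{remoyen} so that $\S^1(\uptau_\eps)$ lies inside the good set $\Upsilon_\eps(\mathfrak r_\eps,\upkappa_\eps)$, bound the circle energy by Proposition \ref{pasmain}, and close via Pohozaev's inequality on $\D^2(\uptau_\eps)\supset\D^2(5/8)$. Your explicit treatment of the large-energy regime (to guarantee $\upkappa_\eps\leq\upmu_0/2$ so that Lemma \ref{remoyen} applies) is a welcome precaution that the paper leaves implicit at this stage, and your Pohozaev constant $\uptau_\eps/2\leq 1/2$ corrects a slip in the paper's writeup, which writes $2\uptau_\eps$ and thereby inflates the final constant to $32C_\Upsilon$ rather than your $8C_\Upsilon$.
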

   \begin{proof}   The proof  combines the energy estimates  of Proposition \ref{pasmain},   the avering argument of Lemma  \ref{remoyen} together with the potential estimate provided in Proposition \ref{pascap0}.  We first apply Proposition \ref{remoyen} with the choice $\varrho=\mathfrak r_\eps$ and $\upkappa=\upkappa_\eps$, where  $\mathfrak r_\eps$ and $\upkappa_\eps$ have been defined in  \eqref{cabrovski}. Since in view of definitions \eqref{cabrovski}, \eqref{Cbd} and \eqref{minos}  the lower-bound \eqref{camembert} is verified for $\upkappa_\eps$, we may invoke Proposition \ref{remoyen} to assert that there exists some radius $\displaystyle{\uptau_\eps \in [\mathfrak r_\eps, \varrho]}$ such that 
   \begin{equation*}
   \label{rugby}
     \int_{\S^1(\uptau_\eps)}e_\eps(u_\eps){\rm  d } \ell  \leq   \frac{1}{\varrho_\eps-\frac{11}{16}} \, \E_\eps(u_\eps, \Upsilon( \uptau_\eps,  \upkappa_\eps)).
     \leq   16 \, \E_\eps(u, \Upsilon_\eps(\tilde {\uptau}_\eps,  \upkappa_\eps)).
  \end{equation*}
Invoking Inequality \eqref{smalto} of  Proposition   \ref{pasmain}, are led to 
  \begin{equation}
   \label{rugby2}
     \int_{\S^1(\uptau_\eps)}e_\eps(u_\eps ){\rm  d } \ell  \leq   16C_{\Upsilon} \left[ \left(\int_{\D^2} e_\eps(u_\eps)(x) {\rm d}x\right)^{\frac32} 
  +
  \eps \int_{\D^2} e_\eps(u_\eps)(x) {\rm d}x\right]. 
 \end{equation}
On the other hand, thanks to Proposition \ref{pascap0}, we have    
   \begin{equation}
     \label{pascabul0}
      \frac{1}{\eps} \int_{\D^2( \uptau_\eps)} V(u_\eps){\rm d}x \leq  2\uptau_\eps \int_{\S^1(\uptau_\eps)} e_\eps(u_\eps) {\rm d}\ell\leq  
      2 \int_{\S^1(\uptau_\eps)} e_\eps(u_\eps) {\rm d}\ell.
       \end{equation}
Combining \eqref{rugby2} and \eqref{pascabul0} with the fact that $\uptau_\eps \geq \frac 58$, we derive \eqref{propsac} with
$$C_{\rm V}=32  C_\Upsilon. $$
The proof is complete.    
   \end{proof}

\subsection{Proof of Proposition \ref{sindec} completed}

We introduce   first a new radius $\displaystyle{\tilde{\mathfrak r}_\eps\in [\frac {9}{16},\frac 5 8]}$ corresponding  to  the intermediate  radius defined in Lemma \ref{moyenne}  for   the choice  $\displaystyle{r_1=\frac{9}{16}, r_0=\frac7 8}$  so that it satisfies 
\begin{equation}
\label{igloo}
\int_{\S^1(\tilde {\mathfrak r}_\eps)}e_\eps(u){\rm  d } \ell  \leq 16 \, \E_\eps(u, \D^2(\frac 5 8)). 
\end{equation}
It follows  as above from Lemma \ref{valli}  that there exists   some  element $  \upsigman \in \Sigma$, possibly different from $\upsigmam$ defined in \eqref{bornuni20}, such that 
   \begin{equation}
      \label{bornuni3}
     \vert u(\ell)-\upsigman \vert \leq  4 {\rm C}_{\rm unf} \sqrt{ \E_\eps\left(u, \D^2(\frac5 8)\right)},  \  \   {\rm \ for \ all \ } \ell \in 
     \S^1( \tilde {\mathfrak r}_\eps).
      \end{equation}
In order to  apply Proposition \ref{bornepoting}, we introduce once more a smallness condition on the energy, namely
\begin{equation}
\label{smallitude}
\E_\eps( u_\eps) \leq \upeta_2\equiv \frac{ \upmu_0^2}{256  {\rm C}_{\rm unf}^2}.
\end{equation}
 We then  distinguish two cases:

\medskip
\noindent
{\bf Case 1:} {\it The smallness condition \eqref{smallitude} holds}. In this case, we have, in view of \eqref{bornuni3}
  \begin{equation*}
     \vert u(\ell)-\upsigman \vert \leq  4 {\rm C}_{\rm unf} \sqrt{\upeta_2}=\frac{\upmu_0}{4},  \  \   {\rm \ for \ all \ } \ell \in 
     \S^1( \tilde {\mathfrak r}_\eps), 
      \end{equation*}
 so that condition \eqref{kappaciting} holds fo $\varrho_\eps=  \tilde {\mathfrak r}_\eps$
(with $\upsigmam$ replaced by $\upsigman$). We are therefore in position to apply Proposition \ref{bornepoting} on the disk 
$\D^2(\tilde{ \mathfrak r}_\eps)$,  which yields
 \begin{equation}
  \label{nonodesing0}
   \int_{\D^2( \tilde {\mathfrak r}_\eps)} e_\eps(u_\eps)(x) {\rm d}x \leq  {\rm C_{\rm pot}} \left[ 
   \int_{\D^2(\frac 5 8)} \frac{V(u_\eps)}{\eps} {\rm d} x  +\eps    \int_{\partial \D^2(\tilde {\mathfrak r}_\eps)} e_\eps(u_\eps){\rm d} \ell
     \right].
   \end{equation}
 Invoking Proposition \ref{propsac}  and inequality \eqref{igloo} we are hence led to 

$$\int_{\D^2( \tilde {\mathfrak r}_\eps)} e_\eps(u_\eps)(x) {\rm d}x   \leq   {\rm C_{\rm pot}} C_{\rm V} 
  \left(\int_{\D^2} e_\eps(u_\eps)(x) {\rm d}x\right)^{\frac32} 
  + {\rm C_{\rm pot}} \left( {\rm C}_{\rm V}  + 16  \right)
  \eps \int_{\D^2} e_\eps(u_\eps)(x) {\rm d}x,  
$$
 which yields \eqref{bien}, fore a suitable choice of the constant ${\rm C}_{\rm dec}$.

 \bigskip
\noindent
{\bf Case 2:} {\it The smallness condition \eqref{smallitude}  \emph{does not } holds}. In this case, inequality \eqref{bien} is straightforwardly fullfilled, provided we choose
$${\rm C}_{\rm dec} \geq  \upeta_2^{-\frac 12}.$$
The proof is  hence complete in both cases.
\qed
\section{  Proof of the Clearing-out theorem}
\label{solde}
The purpose of this section  is to provide the proof of the clearing-out property stated in Theorem \ref{clearingoutth}.
 We  first turn to  the uniform bound \eqref{benkon}.   As a matter of fact, we  will first prove  a slightly weaker version of \eqref{benkon}.
 
 \begin{proposition}
 \label{benko0}
Let  $0<\eps \leq 1$ and  $u_\eps$  be a solution of 
  \eqref{elipes}  on $\D^2$. There exists a constant $\upeta_1>0$ such that 
  if 
 \begin{equation}
 \label{benkito}
 E_\eps(u_\eps, \D^2) \leq  \upeta_1
 \end{equation}
  then, we have, for  some $\upsigma \in \Sigma$, the bound 
  $\displaystyle{
  \vert u_\eps(0)-\upsigma  \vert  \leq \frac{\upmu_0}{2}.
  }$
 \end{proposition}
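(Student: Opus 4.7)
The plan is to iterate the scale-invariant decay estimate \eqref{bienscale} on concentric disks $\D^2(r_n)$, $r_n=(9/16)^n$, centred at the origin, as long as $\eps\le r_n$, and then convert the resulting smallness of the total energy into a pointwise bound at the origin by rescaling. Writing $\E_n=\E_\eps(u_\eps,\D^2(r_n))$ and $e_n=\E_n/r_n$, inequality \eqref{bienscale} reads
\[
e_{n+1}\;\le\;\tfrac{16}{9}\,\Cdec\,\bigl(\sqrt{e_n}+\eps/r_n\bigr)\,e_n
\]
whenever $\eps\le r_n$. I will pick $\upeta_1>0$ small and $C_\star>0$ large so that $\tfrac{16}{9}\Cdec(\sqrt{\upeta_1}+1/C_\star)\le 1/2$; then an induction on $n$ yields $e_{n+1}\le e_n/2$ for every $n$ with $r_n\ge C_\star\eps$, and for the largest such index $N$ one has $r_N$ of order $\eps$ and $e_N\le 2^{-N}\upeta_1$, arbitrarily small as $\eps\to 0$.

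Next I rescale: setting $\tilde\eps=\eps/r_N\in[9/(16C_\star),1/C_\star]$ and $\tilde u(x)=u_\eps(r_Nx)$, the map $\tilde u$ solves \eqref{elipes} on $\D^2$ with a fixed-order parameter $\tilde\eps$ bounded away from both $0$ and $1$, and it satisfies $\E_{\tilde\eps}(\tilde u,\D^2)=e_N$, which I can drive to be as small as I wish by shrinking $\upeta_1$. Lemma \ref{moyenne} yields a radius $\mathfrak r\in[3/4,1]$ with $\int_{\S^1(\mathfrak r)} e_{\tilde\eps}(\tilde u)\,d\ell\le 4e_N$, and since $\tilde\eps<\mathfrak r$, Lemma \ref{valli} then furnishes an element $\upsigma_0\in\Sigma$ such that $|\tilde u-\upsigma_0|\le 2{\rm C}_{\rm unf}\sqrt{e_N}\le\upmu_0/8$ uniformly on $\S^1(\mathfrak r)$.

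To propagate this boundary bound to the origin I would argue by contradiction. Assume $|\tilde u(x^\star)-\upsigma_0|>\upmu_0$ at some $x^\star\in\D^2(\mathfrak r)$. The coarea formula applied to $|\tilde u-\upsigma_0|$, combined with a mean-value argument on $A\in[\upmu_0/2,\upmu_0]$, extracts a regular level $A_0$ whose level curve $\{|\tilde u-\upsigma_0|=A_0\}\cap\D^2(\mathfrak r)$ has length at most $C\sqrt{e_N}/\upmu_0$; since $|\tilde u-\upsigma_0|\le\upmu_0/8<A_0$ on $\partial\D^2(\mathfrak r)$, this curve is the full boundary of the connected component of $\{|\tilde u-\upsigma_0|>A_0\}$ containing $x^\star$, so that component has diameter bounded by the same quantity. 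On the other hand the interior gradient bound of Proposition \ref{classic}, applied to $\tilde u$ whose parameter $\tilde\eps$ is bounded below by a positive constant, forces $|\tilde u-\upsigma_0|$ to change from $A_0$ to a value exceeding $\upmu_0$ only across a distance at least $\upmu_0\tilde\eps/(2K_{\rm dr})$, a fixed positive constant: a contradiction for $\upeta_1$ small enough. Thus $|\tilde u-\upsigma_0|\le\upmu_0$ on all of $\D^2(\mathfrak r)$, and the inequality $(y-\upsigma_0)\cdot\nabla V(y)\ge 0$ on $\B^k(\upsigma_0,\upmu_0)$ from \eqref{glutz} makes $|\tilde u-\upsigma_0|^2$ subharmonic there; the maximum principle then yields $|\tilde u(0)-\upsigma_0|\le\upmu_0/8<\upmu_0/2$, i.e.\ $|u_\eps(0)-\upsigma_0|\le\upmu_0/2$.

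The main obstacle is the bookkeeping of constants: a single threshold $\upeta_1$ must initiate the geometric iteration of step one, bring the rescaled boundary bound $2{\rm C}_{\rm unf}\sqrt{e_N}$ below $\upmu_0/8$, and close the length-versus-diameter contradiction of the last step, while remaining compatible with the value of $\tilde\eps$ determined by $C_\star$. Each condition has the form $\upeta_1\le F(V,C_\star)$, so a sufficiently small single choice handles all of them; once the constants are fixed, the remaining ingredients (coarea, interior gradient bound, subharmonicity via $({\rm H}_2)$) combine cleanly.
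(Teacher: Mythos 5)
Your argument is correct in outline and takes a genuinely different route from the paper, mainly in two respects: the iteration scheme and the endgame.

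In Step~2 the paper does not run a simple geometric contraction on $e_n=\E_n/r_n$; it tracks $A_n=-\log \E_n$ via Lemma~\ref{suites} and obtains a \emph{super-exponential} decay $\E_n\le \exp[-(3/2)^n]$, with the iteration stopped at $n_\eps=\sup\{n:\E_n\ge 2^n\eps^2,\ r_n\ge\eps\}$. The price it pays is a delicate Step~4 estimating $n_\eps$ and $r_{n_\eps}$ in terms of $|\log\eps|$, which is what lets it land exactly in the hypotheses of Proposition~\ref{brioche} (namely $\E_{\tilde\eps}(\tilde u)\le\upeta_2\tilde\eps$). Your geometric contraction $e_{n+1}\le e_n/2$, stopped at the largest $n$ with $r_n\ge C_\star\eps$, is indeed sufficient for this proposition: you only need $e_N\le\upeta_1$ with $\tilde\eps$ pinned in a fixed window $(\tfrac{9}{16C_\star},\tfrac1{C_\star}]$, after which the constants can all be arranged. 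This is a real simplification. In fact, with $\tilde\eps$ bounded away from $0$ and $1$ you could even invoke Proposition~\ref{brioche} directly (with $\upeta_1\le\upeta_2\cdot\tfrac{9}{16C_\star}$) instead of re-deriving the endgame; you chose instead to close with coarea (Lemma~\ref{claudio}) plus the interior gradient bound (Proposition~\ref{classic}) and the subharmonicity of $|\tilde u-\upsigma_0|^2$ via $(\text{H}_2)$, which is an equally valid and arguably cleaner finish, and the maximum principle gives a \emph{uniform} bound on $\D^2(\mathfrak r)$ rather than just at the origin.

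Two small repairs. First, choose the mean-value radius $\mathfrak r$ in, say, $[\tfrac12,\tfrac34]$ rather than $[\tfrac34,1]$, so that all of $\D^2(\mathfrak r)$ stays at distance $\ge\tfrac14$ from $\partial\D^2$; otherwise the gradient bound $K_{\rm dr}(M,\delta)$ from Proposition~\ref{classic} degenerates near $\partial\D^2$ and your lower bound on the diameter of the bad component is not uniform. Second, take the regular level $A_0$ in a range such as $[\tfrac{\upmu_0}{4},\tfrac{\upmu_0}{2}]$ rather than $[\tfrac{\upmu_0}{2},\upmu_0]$: there $w_i$ coincides exactly with $|\,\cdot-\upsigma_i|$ (so Lemma~\ref{claudio} applies verbatim), the level value strictly dominates the boundary value $\upmu_0/8$, and the gap $\upmu_0-A_0\ge\upmu_0/2$ is bounded below, which is what makes the gradient-bound/diameter contradiction quantitative. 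Also, Lemma~\ref{claudio} bounds the level-curve length by $C\,e_N/\upmu_0^2$ (not $C\sqrt{e_N}/\upmu_0$), but this only improves your estimate. Finally, you should explicitly cover the degenerate case $\eps>1/C_\star$ (so $N$ is undefined): there you simply take $\tilde u=u_\eps$, $\tilde\eps=\eps$, and the same endgame works with $\mathfrak r\in[\tfrac12,\tfrac34]$ since $\tilde\eps<\mathfrak r$.
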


For the proof of Proposition \ref{benko0}, we rely on weaker form of the clearing-out statement we present in  the next subsection. 
\subsection{A weak form of the clearing-out}
The following result is classical in the field (see e.g. \cite{ilmanen, BBH}.

\begin{proposition}
\label{brioche}
   Let $u_\eps$ be a solution of \eqref{elipes} on $\D^2$ with $0< \eps \leq 4$. There exists a constant $\upeta_2>0$ such that if 
$\displaystyle{ \E_\eps (u) \leq \upeta_2 \eps}$, 
 then \eqref{benkon} holds.
\end{proposition}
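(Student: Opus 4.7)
The plan is to combine the uniform Lipschitz estimate of Proposition \ref{classic} with the pointwise lower bounds on the potential from Proposition \ref{potto}, and derive a contradiction from an elementary area computation on a disk of radius of order $\eps$. The heuristic: the hypothesis $\E_\eps(u_\eps) \leq \upeta_2 \eps$ controls $\int V(u_\eps)/\eps$ by $\upeta_2 \eps$, while the Lipschitz control on $u_\eps$ forces $V(u_\eps)$ to remain bounded below on an $\eps$-disk around any point at which $u_\eps$ is far from $\Sigma$, producing a competing lower bound of the same order $\eps$.

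Fixing $M_0 = 1$ and $\delta = 1/8$, and restricting attention to $\upeta_2 \leq 1/4$, the bound $\eps \leq 4$ guarantees $\E_\eps(u_\eps) \leq M_0$. Proposition \ref{classic} then furnishes a constant $K = K_{\rm dr}(M_0, \delta) > 0$, depending only on $V$, such that $\vert \nabla u_\eps(x) \vert \leq K/\eps$ on $\D^2(7/8)$. Suppose by contradiction that some $x_0 \in \D^2(3/4)$ satisfies $\min_i \vert u_\eps(x_0) - \upsigma_i \vert > \upmu_0/2$, and set $r_\eps = \min\{\upmu_0 \eps/(4K), 1/8\}$, so that $\D^2(x_0, r_\eps) \subset \D^2(7/8)$ and $\vert u_\eps(x) - u_\eps(x_0) \vert \leq \upmu_0/4$ on this disk. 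I claim that $V(u_\eps) \geq c_0$ on $\D^2(x_0, r_\eps)$ for some $c_0 > 0$ depending only on $V$. If $\min_i \vert u_\eps(x_0) - \upsigma_i \vert \geq \upmu_0$, then on the whole disk $\min_i \vert u_\eps(x) - \upsigma_i \vert \geq 3\upmu_0/4$ and \eqref{extrut} gives $V(u_\eps) \geq \upalpha_0$. Otherwise \eqref{kiv} forces the existence of a unique index $j$ with $\vert u_\eps(x_0) - \upsigma_j \vert \in (\upmu_0/2, \upmu_0)$; the triangle inequality then yields $\upmu_0/4 \leq \vert u_\eps(x) - \upsigma_j \vert \leq 5\upmu_0/4 \leq 2\upmu_0$ on $\D^2(x_0, r_\eps)$, so the quadratic bound \eqref{glutz} gives $V(u_\eps) \geq \lambda_0 \upmu_0^2/64$.

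Integrating this pointwise lower bound over the disk produces
$$
\upeta_2 \eps \geq \E_\eps(u_\eps) \geq \int_{\D^2(x_0, r_\eps)} \frac{V(u_\eps)}{\eps} \, {\rm d}x \geq \frac{\pi c_0 r_\eps^2}{\eps}.
$$
By the definition of $r_\eps$ and the restriction $\eps \leq 4$, the right-hand side is bounded below by $c_V \eps$ for some constant $c_V > 0$ depending only on $V$: if $r_\eps = \upmu_0 \eps/(4K)$ one obtains $c_V = \pi c_0 \upmu_0^2/(16 K^2)$, while in the complementary regime $r_\eps = 1/8$ one has $\eps \geq K/(2\upmu_0)$ so that $\eps$ stays in a bounded interval and an explicit positive constant is again produced. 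Choosing $\upeta_2$ strictly below $c_V$ delivers the contradiction, whence $\min_i \vert u_\eps(x) - \upsigma_i \vert \leq \upmu_0/2$ for every $x \in \D^2(3/4)$. Uniformity of the index $i$ across $\D^2(3/4)$, required by \eqref{benkon}, then follows from continuity of $u_\eps$, connectedness of $\D^2(3/4)$, and the disjointness of the balls $\B^k(\upsigma_i, \upmu_0/2)$. The only delicate point is the case analysis for the pointwise lower bound on $V(u_\eps)$: the intermediate annulus $\B^k(\upsigma_j, \upmu_0) \setminus \B^k(\upsigma_j, \upmu_0/2)$ is the subtle region and it is what dictates both the magnitude $\upmu_0/4$ of the allowed Lipschitz displacement and the corresponding choice of $r_\eps$.
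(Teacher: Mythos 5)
Your argument is correct and takes essentially the same route as the paper's proof: combine the $K/\eps$ Lipschitz bound from Proposition \ref{classic} with a pointwise lower bound on $V(u_\eps)$ near a putative point far from $\Sigma$, then derive a contradiction with the energy hypothesis by integrating over a disk of radius of order $\eps$. One small slip occurs in your Case 1: you cite \eqref{extrut} for $\min_i \vert u_\eps(x) - \upsigma_i \vert \geq 3\upmu_0/4$, but \eqref{extrut} only applies outside $\cup_i \B^k(\upsigma_i, \upmu_0)$; in the annulus $3\upmu_0/4 \leq \vert y - \upsigma_j \vert < \upmu_0$ one must instead fall back on \eqref{glutz}, which gives $V \geq \tfrac{9}{64}\lambda_0 \upmu_0^2$. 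Since this still exceeds the Case 2 bound $\tfrac{1}{64}\lambda_0 \upmu_0^2$, the constant $c_0$ and the conclusion are unaffected, so this is a harmless misattribution rather than a gap. The only structural difference from the paper is that the paper differentiates $V(u_\eps)$ (obtaining $\vert \nabla V(u_\eps) \vert \leq C_1/\eps$) rather than $u_\eps$, and reaches $\upmu_0/2$ in two separate contradiction stages (first $V(u_\eps) \leq \upalpha_0$, then the improvement), whereas you collapse both into a single case analysis on the distance of $u_\eps(x_0)$ to $\Sigma$; your variant is marginally more economical but the core idea is identical.
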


\begin{proof} Assume that the bound $\displaystyle{ \E_\eps (u) \leq \upeta_2 \eps}$ holds, for some constant $\upeta_2$ to be determined later. 
Imposing first $\upeta_2\leq 1$, it follows from Proposition \ref{princours}  and Proposition \ref{classic} that there exists a constant $C_0>0$ depending only on $V$ such that 
\begin{equation*}
 \vert \nabla u_\eps (x) \vert \leq \frac{C_0}{\eps}   {\rm \ and \ } \vert u_\eps(x) \vert \leq C_0 ,  {\rm \ for \ } x \in \D^2(\frac{7}{8}).
\end{equation*}
Since the potential $V$ is smooth, and hence its gradient is bounded on the disc $\B^k(C_0)$, we deduce that there exists a constant $C_1$ such that 
\begin{equation}
\label{bbq}
\vert \nabla  V(u_\eps) (x) \vert \leq \frac{C_1}{\eps}    {\rm \ for \ } x \in \D^2(\frac{7}{8}).
\end{equation}
Since $\E_\eps (u_\eps) \leq \upeta_2 \eps$, we deduce from the definition of the energy that 
\begin{equation}
\label{lucien}
\int_{\D^2 (\frac{7}{8})} V(u_\eps(x)) {\rm d} x \leq \int_{\D^2} V(u_\eps(x)) {\rm d} x \leq \upeta_2 \eps^2. 
\end{equation}
We claim that 
\begin{equation}
\label{ouidada}
V(u_\eps(x)) \leq \upalpha_0 {\rm \ for \  any  \   } x \in \D^2(\frac{3}{4}).
\end{equation}
Indeed, assume by contradiction  that there exists  some $x_0 \in \D^2(\frac{3}{4})$ such that 
$\displaystyle{V(u(x_0)>\upalpha}$.  Invoking the gradient bound \eqref{bbq}, we deduce that 
$$
V(u_\eps(x) )\geq \frac{\upalpha_0}{2}  {\rm \ for \ }   x\in \D^2\left(x_0, \frac {\upalpha_0 \eps}{2C_1}\right).
$$
Without loss of generality, we may assume that  $C_1$ is chosen sufficiently large so that   
$\displaystyle{\frac {4\upalpha_0}{2C_1}\leq  \frac 18}$ and hence 
$\displaystyle{ \D^2\left(x_0, \frac {\upalpha_0 \eps}{2C_1}\right)\subset \D^2(\frac 78)}$. Integrating  \eqref{ouidada}, on the disk  
$\displaystyle{\D^2\left(x_0, \frac {\upalpha_0 \eps}{2C_1}\right)}$, we are led to 
\begin{equation*}
 \int_{\D^2(\frac 78)}  V(u_\eps(x)) {\rm d} x  \geq  \int_{\D^2(x_0, \frac {\upalpha_0 \eps}{2C_1})}  V(u_\eps(x)) {\rm d} x \geq 
\pi  \frac{\upalpha_0^3}{8C_1^2} \eps^2.
\end{equation*}
This yields a contradiction with \eqref{lucien}, provided we impose  the upper bound on $\upeta_2$ given by
\begin{equation}
\label{imposition}
\upeta_2 \leq \pi  \frac{\upalpha_0^3}{8C_1^2} \eps^2, 
\end{equation}
and  established the claim \eqref{ouidada}.   To complete the proof, we may invoke  Lemma \ref{watson}   and the continuity of the map $u_\eps$ to   asserts that 
there exists some $\upsigma \in \Sigma$ such that
\begin{equation}
\label{presque}
 \vert u_\eps(x)-\upsigma \vert \leq \upmu_0  {\rm \ for \ any \ } x \in \D^2(\frac{3}{4}).
 \end{equation}
This yields almost estimate \eqref{benkon}, except that we   still have to replace $\upmu_0$ by  $\upmu_0 \slash 2$ on the right-hand side of \eqref{presque}. In  order to improve the constant, we merely rely on the  same  type of argument. Arguing as above by contradiction, let us assume that there exists a point $x_1\in \D^3(3\slash 4)$ such that 
\begin{equation}
\label{claimobil}
\vert u_\eps(x_1)-\upsigma \vert > \frac{\upmu_0}{2} {\rm \ and \ hence  \ } V(u_\eps(x_1))> \frac{\lambda_0 \,  \upmu_0^2}{16}, 
\end{equation}  
the second inequality in \eqref{claimobil} being a consequence of the  second statement in  Lemma \ref{watson}.  Invoking again  the gradient bound \eqref{bbq}, we deduce that 
$$V(u_\eps(x) )\geq \frac{\lambda_0 \,  \upmu_0^2}{32}, {\rm \ for \ }   x\in \D^2\left(x_1, \frac {\lambda_0 \upmu_0^2\eps}{32C_1}\right). $$
Integrating the previous inequality, we obtain 
\begin{equation*}
 \int_{\D^2(\frac 78)}  V(u_\eps(x)) {\rm d} x  \geq  \int_{\D^2(x_0, \frac {\upalpha_0 \eps}{2C_1})}  V(u_\eps(x)) {\rm d} x 
 \geq \pi  \frac{ \lambda_0^3 \upmu_0^5}{32768 C_1} \eps^2, 
\end{equation*}
 a contradiction with  \eqref{lucien}, provided we impose that $\upeta_2$  is sufficiently small.
\end{proof}

\subsection {Proof of the Proposition \ref{benko0}}   
 The proof   of the Proposition \ref{benko0} relies  on   inequality  \eqref{bienscale}  of Proposition \ref{sindec}, a standard scaling argument combined  with an iteration procedure. 
 
 \medskip
 \noindent
 {\it   Step 1: A scaled version of inequality \eqref{bienscale}}. Set for   $0<r\leq 1$, 
 $\displaystyle{\E_\eps(r)= \E_\eps \left(u_\eps, \D^2(r)\right)}$, and assume that
 \begin{equation}
  \label{atletico}
  E_\eps (r) \geq  \frac{\eps^2}{r}. 
  \end{equation}
   Then, we have 
   \begin{equation}
  \label{bienscale2}
 \E_\eps(\frac r 2)  \leq  2\Cdec
 \frac{  {\E_\eps (r)\ }^{\frac 32}
}{\sqrt{r}}, 
   {\rm \  \   \ provided  \ }  r\geq \eps. 
    \end{equation}
Indeed, scaling inequality \eqref{bienscale}, we obtain 
  \begin{equation}
  \label{bienscale2}
 \E_\eps(\frac r 2)  \leq  \Cdec
  \left[ \frac{1}{\sqrt{r}}
    {\E_\eps (r)\ }^{\frac 32}+ 
  \frac{ \eps}{r} \E_\eps (r)
    \right],
     {\rm  \ provided  \ }  r\geq \eps, 
    \end{equation}
 which yields \eqref{atletico}.
 
 \medskip
 \noindent
 {\it Step 2: The iteration procedure}.   
     We  consider the  sequence $(r_n)_{n \in \N}$ of decreasing  radii $r_n$  defined as   $\displaystyle{r_n=\frac{1}{2^n}}$,  for $n \in \N$,  and set
     $\displaystyle{\E^\eps_n=   \E_\eps(r_n)= \E_\eps(\frac {1}{2^n})}$, dropping the superscript in case this  induces  no ambiguity.  We  introduce the number
     \begin{equation}
     \label{benkita}
     n_\eps=\sup\left \{ n \in \N,   {\rm \ such \ that \ }   \E^\eps_n \geq   2^n \eps^2 {\rm \  and \ } r_n=\frac{1}{2^n} \geq \eps \right\}.
     \end{equation}
 If we impose that  $\upeta_1\leq 1$, then condition \eqref{benkito} implies that $\E_\eps (u_\eps)\leq 1$, so that $0$ belongs to the set of the r.h. s of \eqref{benkita}, which is hence not empty. On the other hand,    since $2^n$ tends to infinity as $n$ tends to infinity, and since the sequence $(\E_n)_{n \in \N}$ is bounded by $\E^\eps_0$,   the set of the r.h. s of \eqref{benkita} is bounded and the number $n_\eps$ is a well-defined integer. In view of the defintion of $n_\eps$, inequality \eqref{atletico} is satisfied for every $r_n< r_{n_\eps}$.   We have hence  the inequality
 $$
 \E_{n+1} \leq 2{\sqrt 2}^n\Cdec \left(\E_{n} \right)^{\frac 32},   {\rm \ for \  } n=0, \ldots n_\eps-1.
 $$    
  Set, for $n\in \N$,   $A_n= -\log E_n $. The previous inequality  is turned into
     \begin{equation}
     \label{turnlog}
     A_{n+1} \geq  \frac 3 2 A_n -\frac{(\log 2)}{2} \, n -\log (2\Cdec),   \   {\rm \ for \  } n=0, \ldots n_\eps-1.
     \end{equation}
   In order to study   the sequence $(A_n)_{n \in \N}$,  we will invoke  the next result. 
   
   \begin{lemma} 
   \label{suites}
    Let $n_\star \in \N^*$,  $(a_n)_{n \in \N}$  and $(f_n)_{n \in \N}$ be two  sequences of numbers such that
   \begin{equation} 
   \label{suitineq}
   a_{n+1} \geq {\rm c}_0 \, a_n  -f_n, {\rm \ for \ all \ } n \in \N, n \leq  n_\star,  
   \end{equation}
    where ${\rm c}_0>1$  represents a  given constant.    Then we have  the inequality,
    \begin{equation}
    \label{suiteconc}
    a_n\geq {\rm c}_0^n \left(a_0 -\underset{k=0}{\overset {n}\sum  }\frac{1}{{\rm c}_0^{k+1}} f_k\right)  {\rm \ for \ }
    n \in \N^*n \leq n_\star.
       \end{equation}
  \end{lemma}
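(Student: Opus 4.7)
The plan is to prove Lemma \ref{suites} by a direct iteration (telescoping) of the recurrence, after suitably renormalizing the sequence by powers of $c_0$. I expect no real obstacle here: the argument is essentially a one-line divide-and-sum, and the hypothesis $c_0>1$ is only needed so that the geometric series appearing on the right-hand side converges nicely, though strictly speaking $c_0 \neq 0$ is all that is used for the finite sum.

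First I would introduce the renormalized sequence $b_n = a_n / c_0^n$. Dividing the hypothesized inequality \eqref{suitineq} by $c_0^{n+1}$ immediately yields the telescopic bound
\begin{equation*}
b_{n+1} \geq b_n - \frac{f_n}{c_0^{n+1}}, \qquad 0\leq n\leq n_\star.
\end{equation*}
Next I would iterate this inequality, i.e.\ add the bounds for indices $k=0,1,\dots,n-1$, which telescopes to
\begin{equation*}
b_n \geq b_0 - \sum_{k=0}^{n-1} \frac{f_k}{c_0^{k+1}}, \qquad 1\leq n\leq n_\star.
\end{equation*}
Multiplying through by $c_0^n$ and recalling that $b_0 = a_0$, $b_n = a_n/c_0^n$, I obtain
\begin{equation*}
a_n \geq c_0^n\left( a_0 - \sum_{k=0}^{n-1} \frac{f_k}{c_0^{k+1}}\right),
\end{equation*}
which is slightly stronger than the claimed inequality \eqref{suiteconc}: the stated sum runs up to $k=n$ rather than $k=n-1$, and since the additional term $f_n/c_0^{n+1}$ is non-negative in the intended application (indeed, the $f_k$ arising from \eqref{turnlog} are eventually positive), extending the summation only weakens the bound. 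Hence \eqref{suiteconc} follows.

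If one wishes to avoid relying on positivity of $f_n$, the cleanest statement is in fact the one with $\sum_{k=0}^{n-1}$, and one could equally well present the argument as a straightforward induction on $n$: the base case $n=1$ is just the hypothesis, and the induction step substitutes the hypothesized lower bound for $a_n$ into $a_{n+1} \geq c_0 a_n - f_n$ and regroups the sums. Either way the proof is a few lines, so the main thing to watch is bookkeeping of the index range and of the power of $c_0$ multiplying each $f_k$.
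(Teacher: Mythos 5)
Your proof is correct and takes essentially the same route as the paper: both renormalize by setting $b_n = a_n/c_0^n$, divide the recurrence by $c_0^{n+1}$, and telescope. You also correctly flag the index discrepancy (the telescoping naturally yields $\sum_{k=0}^{n-1}$ while \eqref{suiteconc} is stated with $\sum_{k=0}^{n}$), which is present in the paper's own proof as well and is harmless only because the $f_k$ in the intended application are non-negative.
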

  
  We postpone the proof of Lemma \ref{suites} and complete first the proof of Proposition \ref{benko0}.

\medskip
\noindent
{\it Step 3: Choice of $\upeta_1$ and energy decay estimates}. 
Applying Lemma \ref{suites} to the sequences 
  $(A_n)_{n \in \N}$ and  $(f_n)_{n \in \N}$ with $\displaystyle{f_n= \frac{(\log 2)}{2} \, n+\log (2\Cdec)}$, for any $n \in \N$, so that inequality \eqref{suitineq} is satisfied with  $\displaystyle{{\rm c}_0=\frac 32}$, $n_\star=n_\eps-1$,   we are led to the inequality, for $n=0, \ldots n_\eps-1,$
\begin{equation}
\label{meray0}
\begin{aligned}
A_n =-\log E_n &\geq   \left(\frac32 \right)^n \left[ 
\log  \left( \frac{1}{E_\eps(u_\eps) }\right)
- \gamma_0 \right] \\
&\geq \left(\frac32 \right)^n \left[ 
  -\log \upeta_1
- \gamma_0 \right], \\
\end{aligned}
\end{equation}
where we have used, for the second inequality, the assumption  that inequality \eqref{benkito} holds and where we have set
$$\gamma_0  =\underset{k=0}{\overset {\infty}\sum} \left(\frac 23\right)^{k+1} (\frac{(\log 2)}{2} \, k+\log (2\Cdec) )<+\infty.$$
  We impose a first constraints on the constant $\upeta_1$  namely  
  \begin{equation}
  \label{firstconstra}
   \upeta_1 \leq \exp\left[- (1+\gamma_0)\right] {\rm \ so \ that \ }    -\log \upeta_1 \geq 1+\gamma_0,  
   \end{equation}
 It follows that inequality \eqref{meray0} becomes, provided inequality \eqref{benkito} holds,   
 \begin{equation}
 \label{meray}
 \E_n \leq  \exp \left[-\left(\frac32 \right)^n\right]   \   {\rm \ for \  } n=0, \ldots n_\eps-1.  
  \end{equation}
yielding a very fast decay of the energy.

\medskip
\noindent
{\it Step 4:  Estimates of $n_\eps$ and $r_{n_\eps}$}. 
It follows from \eqref{meray}  and the definition of $n_\eps$ that 
$$
 \exp (2 \log \eps ) \leq 2^{-n}  \E_n \leq \exp \left[-\left(\frac32 \right)^n-n \log 2 \right]  \   {\rm \ for \  } n=0, \ldots n_\eps-1, 
$$
So that 
$$\left(\frac32 \right)^{n_\eps} +n_\eps \log 2\leq  2 \vert \log \eps \vert {\rm \ and \ hence \ }  
\left(\frac32 \right)^{n_\eps} \leq  2 \vert \log \eps \vert$$
Taking the logarithm of both sides, we are led to the upper bound for $n_\eps$
$$ n_\eps \leq\frac{  \log (2\vert \log \eps \vert)}{\log 3-\log 2}.   $$
It  yields a lower bound for $n_{r_\eps}$, given by 
\begin{equation}
\label{rateau}
\begin{aligned}
r_{n_\eps}=2^{-n_\eps}=\exp (-(\log 2)\,  n_\eps)
& \geq 
\exp \left(-\log (2\vert \log \eps \vert)\frac{\log 2} {\log 3- \log 2}\right)   \\
\geq (2\vert \log \eps \vert)^{-\upgamma_1}, 
\end{aligned}
\end{equation}
where we have set 
$$\upgamma_1=\frac{\log 2} {\log 3- \log 2},   {\rm \ so \ that \ } 1\leq \upgamma_1\leq 2.$$
On the other hand, the defintion of $n_\eps$ yields
\begin{equation}
\E_{n_\eps}^\eps \leq \eps^2  r_{n_\eps}^{-1}=2^{n_\eps} \eps^2
\leq (2\vert \log \eps \vert)^{\upgamma_1} \eps^2.
\end{equation}

\medskip
\noindent
{\it Step 3: Use of Proposition \ref{brioche}}. We consider the scaled map
 $\tilde u_\eps$ and the scaled parameter $\tilde \eps \geq \eps$ defined by
 $$
  \tilde u_\eps(x)= u_\eps (r_{n_\eps} x),  {\rm \ for \ } x \in \D^2, {\rm \ 	and \ the \ scaled \ parameter \ } 
  \tilde \eps=r_{n_\eps}^{-1}\eps \leq (2\vert \log \eps \vert)^{\upgamma_1}\eps, 
  $$
  where the last inequality is a consequence of  inequality \eqref{rateau}. Turning back to \eqref{scalingv},  we  are led to 
  the estimate for the energy
  $$
  \E_{ \tilde \eps}(\tilde u_{ \eps}) = r_{n_\eps}^{-1} \Eps (u_\eps, \D^2(r_{n_\eps})) \leq
  \frac{\eps^2}{r_{n_\eps}^2}
  \leq 
  \vert \log \eps \vert)^{2\upgamma_1}\,  \eps^2    $$
  and 
  $$\frac{\E_{ \tilde \eps}(\tilde u_{ \eps})}{\tilde \eps}=\frac{\Eps (u_\eps, \D^2(r_{n_\eps}))}{\eps}\leq  (2\vert \log \eps \vert)^{\upgamma_1} \eps.
  $$
  Since the map $s\to  \vert \log s  \vert^{\upgamma_1} s$ is decreasing on the interval $(0, e^{-\upgamma_1})$, assuming that the constant $\upeta_2$ is choosen to be sufficiently small, there exists a unique  number $\eps_1 \in(0, e^{-\upgamma_1})$, such that  
  \begin{equation}
  \label{epson}
(2\vert   \log \eps_1  \vert) ^{\upgamma_1}\eps_1 =\upeta_2. 
  \end{equation}

  \medskip
  \noindent
  {\it Proof of Proposition \ref{benko0} completed}. We distinguish two cases:
  
  \smallskip
  \noindent
 {\bf Case 1: $0<\eps \leq \eps_1$}. It follows in this case  from the definition \eqref{epson} that 
 \begin{equation}
 \label{kakuta}
 \left\{
 \begin{aligned}
 \E_{ \tilde \eps}(\tilde u_{ \eps})&\leq \upeta_2\,  \tilde \eps  {\rm \ and \ } \\
 \tilde \eps &\leq 1. 
  \end{aligned}
 \right. 
 \end{equation}
 so that it follows  from \eqref{kakuta} 
 that we are in position to apply Proposition \ref{brioche} to the map $\tilde u_\eps$  with parameter $\tilde \eps$:  Hence  there exists some point $\upsigma \in \Sigma$ such that 
 $$\vert \tilde u_\eps(0)-\upsigma \vert \leq \frac{\upmu_0}{2}. $$
  since  $u_\eps(0)= \tilde u_\eps(0)$ the conclusion of Proposition \ref{benko0} follows. 
  
   \medskip
  \noindent
 {\bf Case 2: $1 \geq \eps > \eps_1$}. Besides \eqref{firstconstra} we impose the additional condition 
$\displaystyle{\upeta_1 \leq {\upeta_2}{\eps_1}}$ on $\upeta_1$,  so that we finally may choose the constant $\upeta_1$ as 
 \begin{equation}
 \label{seconstraint}
 \upeta_1=\inf\{ {\upeta_2}\, {\eps_1}, \exp [-(1+\upgamma_1), 1] \}. 
 \end{equation}
  With this choice, we have, for $\eps \geq \eps_1$, 
 $$ \Eps(u_\eps) \leq \upeta_1 \leq  {\upeta_2}{\eps_1}  \leq  \upeta_2 \eps.  $$
  Hence $u_\eps$ fullfills the assumptions of Proposition \ref{brioche}, so that its conclusion yields  again  the existence of an element $\upsigma \in \Sigma$ such that  $\displaystyle{\vert  u_\eps(0)-\upsigma \vert \leq \frac{\upmu_0}{2}.} $

  \medskip
  In both cases, we have hence established  the conclusion of Proposition \ref{benko0} so that the proof is complete. 
  

  In the course of the proof, we have used Lemma \ref{suites}, which has not been proved yet.
   
        \begin{proof} [Proof of Lemma \ref{suites}]
     We introduce, inspired by the method of variation of constant,  the sequence $(b_n)_{n \in \N}$ defined by $a_n={\rm c}_0^n  \, b_n$, for any $n \in \N$.  Substituting into \eqref{suitineq},  we obtain
     $${\rm c}_0^{k+1} b_{k+1} \geq {\rm c}_0^{k+1}b_k-f_k, {\rm \ for \ all \ } k \in \{0, \ldots, n_\star\},  $$
      so that 
      $$b_{k+1}-b_k \geq - \frac{1}{{\rm c}_0^{k+1}} f_k,  {\rm \ for \ all \ } k \in \{0, \ldots, n_\star\}.$$
     Let $n\in \N$, $n\leq n_\star$. Summing these relations for $k=0$ to $k=n-1$, we are led to
      $$b_n \geq b_0 -\underset{k=0}{\overset {n}\sum  }\frac{1}{{\rm c}_0^{k+1}} f_k=
   	a_0 -\underset{k=0}{\overset {n}\sum  }\frac{1}{{\rm c}_0^{k+1}} f_k,$$
   which, in view of the definition of $b_n$,  yields the desired conclusion \eqref{suiteconc}. 
     \end{proof}
     \subsection {Proof of assertion  \eqref{benkon}}
      We  impose a first constraint to  the value of the constant $\upeta_0$ of Theorem \ref{clearingoutth}  by requiring that
   \begin{equation}
   \label{pinault}
      \upeta_0\leq  \frac{\upeta_1}{4}, {\rm \ where \ } \upeta_1 {\rm \ is \ introduced \ in \ Proposition \ } \ref{brioche}.
      \end{equation}
    Next let 
     $\displaystyle{x_0 \in \D^2(\frac 3 4)}$  be an arbitrary point.  We consider the  scaled parameter $\tilde \eps= 4\eps$  scaled-translated map $\tilde u_\eps$ defined on $\D^2$ by 
     $$\tilde u_\eps (x)=u_\eps (x_0+ \frac{1}{4} x) {\rm \ for \ every \ } x \in \D^2, $$
      so that
      \begin{equation}
      \label{arnault}
       \E_{\tilde \eps}(\tilde u_\eps)=4 \E_\eps\left (u_\eps, \D^2 (x_0, \frac 14)\right)
       \leq 4  \E_\eps (u_\eps) \leq 
      4 \eta_0
      \leq   \eta_1, 
     \end{equation}
  where we have used assumption \eqref{petitou} and  \eqref{pinault} for the last  inequality. As above, we distinguish two cases. 
  
  \medskip
  \noindent
  {\bf Case 1: $\eps\leq \frac{1}{4}$}. In this case $\tilde\eps \leq 1$, so that, in view of \eqref{arnault}, we are in position to apply Proposition \ref{brioche}: It yields  an element $\upsigma_{x_0} \in \Sigma$, depending possibly on the point $x_0$,   such that  
  $$\displaystyle{\vert  \tilde u_\eps(0)-\upsigma_{x_0} \vert \leq \frac{\upmu_0}{2}.} $$
    Since 
  $\tilde u_\eps (0)=u_\eps (x_0)$, we conclude that
 \begin{equation}
 \label{upsigmax0}
  \vert u_\eps(x_0)-\upsigma_{x_0} \vert \leq \frac{\upmu_0}{2}. 
  \end{equation}
  Since inequality \eqref{upsigmax0} holds for \emph{any point} $x_0\in \D^2(3 \slash 4)$, a continuity argument shows  that the point $\upsigma_{x_0}$ does not depend on $x_0$, so that the proof  of Proposition \ref{benko0} is complete in Case 1. 
  
   \medskip
  \noindent
  {\bf Case  2: $1\geq \eps\geq \frac{1}{4}$}. In this case $\tilde \eps \leq 4$. we impose the additional constraint on the constant $\upeta_0$ requiring that $16 \upeta_0\leq \upeta_2$, so that we may choose 
  $$ \upeta_0=\inf \{\frac{1}{4} \upeta_1, \frac{1}{16} \upeta_2\}. $$  
  It then follows from assumption \eqref{petitou} that 
        \begin{equation}
      \label{arnault2}
       \E_{\tilde \eps}(\tilde u_\eps)=4 \E_\eps\left (u_\eps, \D^2 (x_0, \frac 14)\right)
       \leq 4  \E_\eps (u_\eps) \leq 
      4 \eta_0
      \leq   \frac{\eta_2} {4}\leq \upeta_2 \tilde \eps. 
     \end{equation}
Hence, we are once more  in position to apply Proposition \ref{brioche}, so that there exists  an element $\upsigma_{x_0} \in \Sigma$, depending possibly on the point $x_0$  such that  
  $\displaystyle{\vert  \tilde u_\eps(0)-\upsigma_{x_0} \vert \leq \frac{\upmu_0}{2}.} $  Since 
  $\tilde u_\eps (0)=u_\eps (x_0)$, we conclude that
  $$
  \vert u_\eps(x_0)-\upsigma_{x_0} \vert \leq \frac{\upmu_0}{2}. 
  $$
The proof of assertion \eqref{benkon} is hence complete. 
     \subsection{Proof of Theorem \ref{clearingoutth} completed}
    The only remaining unproved assertion is  the energy estimate \eqref{engie}, which we establish next. The proof is  parallel and actually much easier then our earlier energy estimate.  We first invoke Lemma \ref{moyenne} with 
    $\displaystyle{r_1=\frac 34}$  and  $\displaystyle{r_0= \frac{5}{8}}$: This yields a radius $\displaystyle{\mathfrak r_\eps\in [\frac 58, \frac 34]}$  and an element $\upsigma \in \Sigma$ such that  
    \begin{equation}
    \label{ret}
     \int_{\S^1(\mathfrak r_\eps)}e_\eps(u_\eps){\rm  d } \ell  \leq 8 \, \E_\eps(u, \D^2)) {\rm \ and \ }
         \int_{\S^1(\mathfrak r_\eps) }\vert u_\eps- \upsigma\vert \vert \nabla  u_\eps\vert \leq 16\sqrt{\lambda_0^{-1}} \E_\eps(u_\eps, \D^2)).
     \end{equation}
 We multiply the equation by $(u_\eps-\upsigma)$ and integrate on the disk $\D^2(\mathfrak r_\eps)$ which yields, as in \eqref{stokounette}   
     \begin{equation}
  \label{stokom}
  \begin{aligned}
  \int_{\D^2(\mathfrak r_\eps)} \eps \vert \nabla u_\eps \vert^2 +\eps^{-1} \nabla_u V(u_\eps)\cdot (u_\eps-\upsigma)
  &= \eps  \int_{\S^1(\mathfrak r_\eps)}  \frac{\partial u_\eps}{\partial r} \cdot (u_\eps-\upsigma). 
 \end{aligned}
  \end{equation} 
     We deduce from \eqref{ret}   that
     \begin{equation}
     \label{reti}
     \int_{\S^1(\mathfrak r_\eps)}  \frac{\partial u_\eps}{\partial r} \cdot (u_\eps-\upsigma)
  \leq     \int_{\S^1(\mathfrak r_\eps) }\vert u_\eps- \upsigma\vert \vert \nabla  u_\eps \vert
  \leq 16\sqrt{\lambda_0^{-1}} \E_\eps(u_\eps, \D^2)). 
     \end{equation}
 We use next  the fact that, in view of assertion \eqref{benkon}, we have  $\displaystyle{\vert u_\eps-\upsigma  \vert   \leq \frac{\upmu_0}{2}}$ on the disk 
 $\D^2(\mathfrak r_\eps)$.  Arguing  as in \eqref{duglandmoins}, we have  the point-wise inequality
     \begin{equation}
     \label{retina}
     \eps \vert \nabla u_\eps \vert^2 +\eps^{-1} \nabla_u V(u_\eps)\cdot (u_\eps-\upsigma)\geq  \frac{\lambda_0} {2\lambda_{\rm max}} e_\eps(u). 
     \end{equation}
  Combining \eqref{stokom}    with \eqref{retina} and \eqref{reti},  we obtain
  $$
   \int_{\D^2(\mathfrak r_\eps)} e_\eps(u_\eps){\rm d}x \leq  {16 }{\lambda_0^{-\frac 32}}\lambda_{\rm max}\, \eps \E_\eps(u, \D^2)), 
  $$
 Which yields the energy estimate \eqref{engie}  choosing $\displaystyle{\Cnrg= {16}{\lambda_0^{-\frac 32}}\lambda_{\rm max}}$. The proof of Theorem \ref{clearingoutth} is hence complete. 
 \section{Properties of the concentration set $\mathfrak S_\star$}
 \label{sectionsix}
 The purpose of this section is to provide the proof of  assertion i) of  Theorem \ref{maintheo}.  We start with the proof of Theorem \ref{claire}, the clearing-out property for the measure $\upnu_\star$.
\subsection{Proof of Theorem \ref{claire}} 
   Assume that  that $x_0$ and $r>0$ are such that 
   $$\upnu_\star (\D^2(x_0, r))<\upeta_0\, r.$$
 It follows from the definition of the measure $\upnu_\star$, which is a limit of energy densities, that there exists some integer $n_0 \in \N$ such  that, for $n \geq n_0$ we have 
 $$\E_\eps(u_{\eps_n}, \D^2(x_0, r)) \leq \upeta_0\, r.$$
   Hence, we are in position to apply Proposition \ref{cestclair}, so that
 $$
  \E_\eps\left(u_{\eps_n} , \D^2\left(x_0, \frac{ 5r}{8}\right)\right) \leq \Cnrg \, \frac{\eps_n}{r} E_{\eps_n} \left(u_{\eps_n}, \D^2\left(x_0, r\right)\right) \to 0 {\rm \ as \ } n \to + \infty.
$$
 It follows that $\displaystyle{\upnu_\star \left(\D^2(x_0,\frac{r}{2})\right)=0}$ and the proof is complete. 
 \subsection{Elementary consequences of the clearing-out property}
We present here some simple consequences of the definition of $\mathfrak S_\star$,  as well as of the clearing out property stated in Proposition \ref{danes}.

  \begin{proposition} 
  \label{proptheo1}
  The set $\mathfrak S_\star$ is  a closed subset of $\Omega$.
 \end{proposition}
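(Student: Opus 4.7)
The plan is to show that $\Omega \setminus \mathfrak S_\star$ is open. Let $x_0 \in \Omega \setminus \mathfrak S_\star$, so by definition of $\mathfrak S_\star$ in \eqref{mathfrakSstar} we have $\theta_\star(x_0) < \upeta_0$. Using the definition of the lower density as a $\liminf$, I would first select a radius $r>0$ satisfying $\overline{\D^2(x_0, 2r)} \subset \Omega$ and
\begin{equation*}
\frac{\upnu_\star\bigl(\overline{\D^2(x_0, r)}\bigr)}{r} < \upeta_0,
\end{equation*}
which is possible since the $\liminf$ is strictly less than $\upeta_0$.

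Next I would invoke the clearing-out property of Theorem \ref{claire} at the point $x_0$ with this radius $r$, which yields $\upnu_\star\bigl(\overline{\D^2(x_0, r/2)}\bigr) = 0$. The idea is then to propagate this vanishing to a whole neighborhood of $x_0$. Let $\rho = r/8$. For any $y \in \D^2(x_0, \rho)$ and any $0 < s \leq \rho$, one checks that $\overline{\D^2(y, s)} \subset \overline{\D^2(x_0, r/2)}$, since $|y-x_0| + s \leq 2\rho = r/4 < r/2$. By monotonicity of the measure $\upnu_\star$, this gives $\upnu_\star(\overline{\D^2(y, s)}) = 0$ for every such $s$, and therefore $\theta_\star(y) = 0 < \upeta_0$, so $y \in \Omega \setminus \mathfrak S_\star$.

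This shows that $\D^2(x_0, \rho) \subset \Omega \setminus \mathfrak S_\star$, hence $\Omega \setminus \mathfrak S_\star$ is open and $\mathfrak S_\star$ is closed in $\Omega$. There is no real obstacle here: the argument is an immediate packaging of the clearing-out statement of Theorem \ref{claire}, together with the monotonicity of $\upnu_\star$ and a careful triangle-inequality choice of nested disks. The only point requiring minor care is the initial selection of $r$ from the $\liminf$ condition, and the arithmetic ensuring that small concentric disks around nearby points $y$ sit inside the disk on which $\upnu_\star$ has already been shown to vanish.
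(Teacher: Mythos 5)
Your proof is correct and follows essentially the same route as the paper: show the complement is open by selecting a radius from the $\liminf$ condition, applying the clearing-out result of Theorem \ref{claire}, and then using monotonicity of $\upnu_\star$ with a nested-disk argument to conclude that the density vanishes in a neighborhood of $x_0$. The only cosmetic difference is your slightly more conservative choice $\rho = r/8$ where the paper uses $r_0/4$.
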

 \begin{proof} It suffices to prove that its complement, the set $\mathfrak U_\star=\Omega\setminus \mathfrak S_\star$ is an \emph{open  subset}  of $\Omega$. This property is actually a direct consequence of the clearing out  property stated in Theorem \ref{claire}. Indeed let $x_0$ be an arbitrary point in $\mathfrak U_\star$. It follows from the definition  \eqref{mathfrakSstar} of $\mathfrak S_\star$ that  $\theta_\star (x_0) <\upeta_0$,  so that there exists some radius 
 $r_0>0$ such that $\D^2(x_0, r_0) \subset \Omega$ and such that 
  $$ \upnu_\star (\D^2(x_0, r_0) ) <r_0\upeta_0.$$
  In view   of Theorem \ref{claire}, we  deduce  that
  $\displaystyle{ \upnu_\star (\D^2(x_0, \frac{r_0}{2}) )=0.}$
  Hence, for any point $\displaystyle{x\in \D^2(x_0, \frac{r_0}{4})}$, we  have $\theta_\star (x)=0$ and  therefore
  $$  \D^2(x_0, \frac{r_0}{4}) \subset \mathfrak U_\star. $$
Hence,  $\mathfrak U_\star$ is an open set.
 \end{proof}

 \begin{proposition}
 \label{danes}
 The set $\mathfrak S_\star$ has finite one-dimensional Hausdorff dimension. There exist a  constant ${\rm C_H} >0$ depending only on the potential $V$ such that 
 $$\mathcal H^1 (\mathfrak S_\star)\leq {\rm C_H} M_0.$$
 \end{proposition}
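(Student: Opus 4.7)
The plan is to deduce the bound on $\mathcal H^1(\mathfrak S_\star)$ from the definition of $\mathfrak S_\star$ as a super-level set of the lower $1$-density $\theta_\star$, via a standard Vitali-type covering argument applied to the Radon measure $\upnu_\star$, using only that $\upnu_\star(\Omega)\leq M_0$ and that $\theta_\star(x)\geq \upeta_0$ on $\mathfrak S_\star$.

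First I would fix an arbitrary $\updelta>0$ and show that $\mathcal H^1_\updelta(\mathfrak S_\star)\leq C\,M_0/\upeta_0$, where $\mathcal H^1_\updelta$ is the standard $\updelta$-approximating Hausdorff pre-measure. For each $x\in \mathfrak S_\star$, the inequality $\theta_\star(x)\geq \upeta_0$ means
\begin{equation*}
\liminf_{r\to 0}\frac{\upnu_\star(\overline{\D^2(x,r)})}{r}\geq \upeta_0,
\end{equation*}
so there exists an arbitrarily small radius $r_x\in(0,\updelta/5)$ with $\overline{\D^2(x,r_x)}\subset \Omega$ and
\begin{equation*}
\upnu_\star\!\left(\overline{\D^2(x,r_x)}\right)\geq \frac{\upeta_0}{2}\, r_x.
\end{equation*}
The family $\mathcal F=\{\overline{\D^2(x,r_x)}\}_{x\in \mathfrak S_\star}$ is then a fine cover of $\mathfrak S_\star$ by closed disks of uniformly bounded radius.

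Next I would apply the Vitali $5r$-covering lemma to extract a countable, pairwise disjoint subfamily $\{\overline{\D^2(x_i,r_i)}\}_{i\in \mathbb N}\subset \mathcal F$ such that
\begin{equation*}
\mathfrak S_\star\subset \bigcup_{i\in\mathbb N} \overline{\D^2(x_i,5r_i)}.
\end{equation*}
Since $2\cdot 5 r_i \leq 2\updelta$, this is an admissible covering for the computation of $\mathcal H^1_{2\updelta}$. Using disjointness of the original disks and the density lower bound, we estimate
\begin{equation*}
\sum_{i\in\mathbb N} r_i\;\leq\; \frac{2}{\upeta_0}\sum_{i\in\mathbb N} \upnu_\star\!\left(\overline{\D^2(x_i,r_i)}\right)\;\leq\;\frac{2}{\upeta_0}\,\upnu_\star(\Omega)\;\leq\;\frac{2\,M_0}{\upeta_0},
\end{equation*}
where the second inequality follows from the disjointness of the closed disks inside $\Omega$ together with the fact that $\upnu_\star$ is a finite positive measure. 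Consequently,
\begin{equation*}
\mathcal H^1_{2\updelta}(\mathfrak S_\star)\leq \sum_{i\in\mathbb N} 2\cdot 5 r_i\;\leq\;\frac{20\,M_0}{\upeta_0}.
\end{equation*}

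Finally, letting $\updelta\to 0$ yields $\mathcal H^1(\mathfrak S_\star)\leq 20\,M_0/\upeta_0$, so the conclusion holds with $\rm C_H=20/\upeta_0$, a constant depending only on $V$ since $\upeta_0$ is furnished by Theorem \ref{claire}. There is no serious obstacle here: the only points requiring care are (i) verifying that the disjoint Vitali subfamily really fits inside $\Omega$ (which is ensured by choosing $r_x$ small enough relative to $\mathrm{dist}(x,\partial\Omega)$, adding this mild constraint to the fine cover) and (ii) using \emph{closed} disks so that the density inequality is realised for the actual value of $\upnu_\star$ on the cover elements; both are standard.
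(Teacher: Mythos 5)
Your proof is correct, but it takes a somewhat different route from the paper's, so a comparison is useful. The paper does \emph{not} invoke a Vitali-type covering lemma and does \emph{not} exploit the liminf in the definition of $\theta_\star$ directly. Instead it fixes a lattice cover of $\Omega_\rho$ at scale $\delta$ (with disjoint half-disks), picks a point $y_i\in\mathfrak S_\star\cap\D^2(x_i,\delta)$ in each cell that meets $\mathfrak S_\star$, and then uses the clearing-out Theorem \ref{claire} to conclude that $\upnu_\star(\D^2(y_i,r))\ge\upeta_0 r$ for \emph{every} $0<r\le\delta$: if the bound fails at one such $r$, Theorem \ref{claire} forces $\upnu_\star(\D^2(y_i,r/2))=0$ and hence $\theta_\star(y_i)=0$, a contradiction. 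The key difference is that the paper upgrades the density lower bound to a fixed macroscopic scale, whereas your argument works with the density bound only at arbitrarily small scales (which is exactly what the liminf provides, and exactly what the $5r$-lemma consumes). Your route is arguably the more canonical geometric-measure-theoretic one, and it sidesteps a small imprecision in the paper's bookkeeping (the paper estimates $\upnu_\star(\D^2(x_i,\delta/2))$ after having proved the bound at $y_i$, not $x_i$; this is harmless to fix but your Vitali version avoids it cleanly). Conversely, the paper's version only requires the clearing-out theorem, not the definition of lower density as a liminf, and it is slightly more elementary in that it avoids Vitali. Minor terminological nit in your write-up: the family $\{\overline{\D^2(x,r_x)}\}_{x\in\mathfrak S_\star}$ with one disk per point is not a ``fine cover'' in the Vitali sense; but the $5r$-covering lemma needs only boundedness of the radii, which you have, so the argument stands unchanged.
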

 
 \begin{proof} The proof relies on a standard covering argument. Let 
$0<\rho<\frac{1}{4}$ be given, and  consider the set 
$$\displaystyle{\Omega_\rho=\{ x \in \Omega, {\rm dist}(x, \partial \Omega) \geq \rho\}}.$$
 Next let $0<\delta<\rho$ be given.  We introduce    a standard finite  covering of $\Omega_\rho$  of size $\delta$, that is such that
$$
\Omega_\rho \subseteq \underset{ j \in I}\cup \D^2\left(x_j,\delta\right) \,  \  \text{and} \ 
\D^2\left(x_i,\frac{\delta}{2}\right) \cap \D^2\left(x_j,\frac{\delta}{2}\right) = \emptyset
\text{ for } i\neq j \in I.
$$
One may take for instance the points $x_i$ on a uniform  square lattice of $\R^2$, with nearest neighbor distance being 
$\displaystyle{\frac \delta 2}$. 
We introduce  then the set of indices 
$$\displaystyle{
I_\delta = \left\{ i  \in I, \text{ such \ that \  } \D^2(x_i,\delta)\cap \mathfrak S_\star \neq \emptyset\right\}, }$$
 so that 
given any arbitrary index  $i\in I_\delta,$ there exists a point  $y_i \in \mathfrak S_\star \cap
\D^2(x_i,\delta)$. It follows from the definition of $\mathfrak S_\star$ that 
\begin{equation}
\label{davy}
\theta_\star (y_i) \geq \upeta_0.
\end{equation}
We claim that, for any $0<r\leq \delta$, we have 
\begin{equation}
\label{pret}
\upnu_\star(\D^2(y_i, r )) \geq  \upeta_0 \,  r.
\end{equation}
Indeed, if \eqref{pret} were not true, then  we would be in position to apply Theorem  \ref{claire}, which would imply  that 
$\displaystyle{\upnu_\star(B(y_i, \frac{\delta}{2} ))=0}$, and  hence that $\theta_\star (y_i)=0,$ a contradiction which \eqref{davy}. hence \eqref{pret} is established.

Since the balls $\displaystyle{\D^2(x_i,\frac{\delta}{2})}$ are
disjoints,
 we have
\begin{equation}
 M_0 \geq \upnu_\star (\Omega_\rho) \geq   {\underset {i \in I_\delta } \sum} \upnu_\star \left( \D^2(x_i,\frac{\delta}{2})\right) \geq {\underset {i \in I_\delta } \sum} \upeta_0 \frac{ \delta}{2} =\upeta_0 \, \sharp (I_\delta) \frac{\delta}{2}. 
\end{equation}
It follows therefore that
$$\sharp (I_\delta) \delta \leq \frac{2M_0} {\upeta_0}.$$
Therefore, letting $\delta\to 0$, it follows, as a consequence of the  definition of the  one-dimensional Hausdorff measure that 
$$
\mathcal H^1 (\mathfrak S_\star \cap\Omega_\rho) \leq \underset { \delta \to 0}\liminf \, 2\, \sharp (I_\delta) \delta  \leq 
 \frac{4M_0} {\upeta_0}.
$$
The conclusion follows letting $\rho \to 0$, choosing $\displaystyle{{\rm C_H}=\frac{4}{\upeta_0}  }$. 
\end{proof}
\subsection{ Proof of Theorem \ref{bordurer}}
Theorem \ref{bordurer}  is a direct consequence of  Proposition \ref{pave} which has actually been taylored for this purpose.      
 Indeed,  since $\upnu_\star(\mathcal V_\updelta)=0$,  we have the convergence
  $$
  \int_{\mathcal V_\updelta} 
      e_{\eps_n}(u_{\eps_n}) \, {\rm d}\, x \to  0 {\rm \ as \ } n \to +\infty, 
  $$    
   so that condition \eqref{carpediem} is fullfilled for $\eps=\eps_n$ and the map $u_{\eps_n}$, provided $n$ is sufficiently large, say larger than some given value $n_0$. We are therefore in position to conclude, thanks to Proposition \ref{pave}, provided $n\geq n_0$ is sufficiently large,  that 
 \begin{equation*}
\label{borges2}
\begin{aligned}
 \int_{\mathcal U_{\frac{\delta}{4}} }e_{\eps_n} (u_{\eps_n}) {\rm d} x 
 &\leq {\rm C}_{\rm ext} (\mathcal  U, \delta)
  \left(
  \int_{\mathcal V_\updelta} 
      e_{\eps_n}(u_{\eps_n}){\rm d}x  + \eps_n \int_{\mathcal U_\delta} e_{\eps_n}(u_{\eps_n}) {\rm d} x 
\right)\\
& \leq   {\rm C}_{\rm ext} (\mathcal  U, \delta)
  \left(
  \int_{\mathcal V_\updelta} 
      e_{\eps_n}(u_{\eps_n}){\rm d}x  + \eps_n {\rm M}_0 \right).
\end{aligned}
 \end{equation*}
It follows that 
$$
 \int_{\mathcal U_{\frac{\delta}{4}} }e_{\eps_n} (u_{\eps_n}) {\rm d} x  \to 0 {\rm \  as \ } n \to +\infty,
$$ 
 so that the proof is complete. 
      \qed

\subsection{ Connectedness properties of  $\mathfrak S_\star$ }
 The purpose of the present  section is, among other things,  to provide the proof of Proposition \ref{connective}.  Given $r>0$ and $x_0\in \Omega$  such that $\D^2(x_0, 2r) \subset \Omega$, we consider the  closed  set 
 $$\mathfrak S_{\star, \varrho}=  \mathfrak S_{\star, \varrho}(x_0)\equiv\mathfrak S_\star \cap \overline{\D^2(x_0, \varrho)} {\rm \ for \  } \varrho \in [0, 2r). $$ 
The main result of this section is:

  \begin{proposition}
  \label{localconnect}
   Let $r>0$ and $x_0\in \Omega$ be such that $\D^2(x_0, 2r) \subset \Omega$. Then the set $\mathfrak S_{\star, r}(x_0)$ contains a finite number of 
   path-connected components. 
  \end{proposition}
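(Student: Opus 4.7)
The strategy is proof by contradiction, combining the Hausdorff measure bound of Proposition \ref{danes} with the separation principle of Theorem \ref{bordurer} and the fact, proved inside the proof of Proposition \ref{proptheo1}, that $\mathfrak S_\star$ equals the topological support of $\upnu_\star$ (so every open set meeting $\mathfrak S_\star$ carries positive $\upnu_\star$-mass). Assume for contradiction that $\mathfrak S_{\star,r}(x_0)$ has infinitely many path-connected components. Since each connected component of a compact set of finite $\mathcal H^1$-measure is path-connected (the classical continuum result invoked via \cite{falconer}), path-components coincide with connected components, which I denote $(C_n)_{n\in\mathbb N}$. For any compact connected set one has $\mathcal H^1(C_n)\geq \mathrm{diam}(C_n)$, and Proposition \ref{danes} gives $\sum_n \mathrm{diam}(C_n)\leq \mathcal H^1(\mathfrak S_\star)\leq {\rm C}_{\rm H}{\rm M}_0<\infty$. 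Hence $\mathrm{diam}(C_n)\to 0$, and by compactness a subsequence of the $C_n$ accumulates at some point $p^*\in\mathfrak S_{\star,r}(x_0)$.

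The heart of the argument is to construct an open set $\mathcal U\subset\Omega$ containing some $C_n$ but with $\mathfrak S_\star\cap\mathcal V_\delta=\emptyset$ for some $\delta>0$, in the sense of \eqref{Udelta}. Because $\mathfrak S_\star$ is the support of $\upnu_\star$, this gives $\upnu_\star(\mathcal V_\delta)=0$, so Theorem \ref{bordurer} yields $\upnu_\star(\overline{\mathcal U})=0$; but since $C_n\subset\mathcal U$ contains points of $\mathfrak S_\star$, the clearing-out Theorem \ref{claire} implies $\upnu_\star(\mathcal U)>0$, a contradiction. To produce such $\mathcal U$, I localize near $p^*$: treating the generic case $p^*\in\D^2(x_0,r)$ (the boundary case being handled analogously in a half-disk), I pick $\sigma>0$ with $\overline{\D^2(p^*,\sigma)}\subset\D^2(x_0,r)$ so that infinitely many $C_n$ lie inside $\D^2(p^*,\sigma/2)$. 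Applying the Eilenberg coarea inequality to the 1-Lipschitz function $x\mapsto|x-p^*|$ yields
\begin{equation*}
\int_0^\sigma \mathcal H^0\bigl(\mathfrak S_\star\cap\S^1(p^*,\rho)\bigr)\, d\rho \leq \mathcal H^1\bigl(\mathfrak S_\star\cap\overline{\D^2(p^*,\sigma)}\bigr),
\end{equation*}
so that the set $T_\sigma=\{\rho\in(0,\sigma):\mathfrak S_\star\cap\S^1(p^*,\rho)\neq\emptyset\}$ has Lebesgue measure at most $\mathcal H^1(\mathfrak S_\star\cap\overline{\D^2(p^*,\sigma)})$.

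The plan is then to find $\rho^*\in(0,\sigma/2)$ and $\delta^*>0$ so that $(\rho^*-\delta^*,\rho^*+\delta^*)\cap T_\sigma=\emptyset$ while at least one $C_n$ lies inside $\D^2(p^*,\rho^*-\delta^*)$. Choosing $\mathcal U=\D^2(p^*,\rho^*-\delta^*)$ and $\delta=\delta^*$, the annulus $\mathcal V_{\delta^*}\subset\{\rho^*-\delta^*\leq|x-p^*|\leq\rho^*+\delta^*\}$ is by construction disjoint from $\mathfrak S_\star$, which closes the argument as indicated above. The existence of such a radial gap comes from two ingredients working together: first, the Eilenberg bound above, which shows that $T_\sigma$ has finite measure controlled by $\mathcal H^1$ of $\mathfrak S_\star$ near $p^*$; second, the summability $\sum_n\mathrm{diam}(C_n)<\infty$, which guarantees that the contribution of the accumulating small components to the radial projection is a tail of a convergent series and hence sub-linear in $\sigma$.

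The main obstacle is precisely the construction in the previous paragraph: the bound $|T_\sigma|\leq\mathcal H^1(\mathfrak S_\star\cap\overline{\D^2(p^*,\sigma)})$ alone is not enough, since the ``central'' component $C^*$ of $\mathfrak S_\star$ passing through $p^*$ may contribute $\mathcal H^1$-mass of order $\sigma$ and potentially fill nearly all radii. To circumvent this one refines the localization: rather than centering the analysis at $p^*$, one shifts to a nearby point $\tilde p$ chosen so that $C^*$ locally lies on one side and the accumulating $C_n$'s are still dense around $\tilde p$, so that the occupied radii $\{|x-\tilde p|:x\in\mathfrak S_\star\}$ near $\tilde p$ come essentially from the accumulating tail, whose total diameter is small and produces gaps. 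Alternatively, one may apply Theorem \ref{bordurer} with $\mathcal U$ a thin tubular neighborhood of a segment of $C^*$, using the same coarea--summability tandem to ensure a clean boundary. In all variants, the key mechanism is the incompatibility between infinitely many disjoint clustering components and the rigid no-isolation constraint imposed by Theorem \ref{bordurer} on the limit measure $\upnu_\star$.
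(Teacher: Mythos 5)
Your strategy is genuinely different from the paper's and, as written, has a gap at the crucial step that you yourself flag. The paper proves finiteness in two moves: first (Proposition~\ref{lesconti}) it shows via Theorem~\ref{bordurer} and a Besicovitch-covering approximation that $\mathfrak Q_{\star,r}=\mathfrak S_{\star,r}\cup\S^1(x_0,r)$ is a continuum, hence arcwise connected (Corollary~\ref{lesconti2}, Wazewski's theorem); second, it chooses by a Fubini/mean-value argument a radius $r_0\in[r,2r)$ at which $\mathfrak S_\star\cap\partial\D^2(x_0,r_0)$ consists of finitely many points, and then shows every point of $\mathfrak S_{\star,r_0}$ is joined by a path inside $\mathfrak S_{\star,r_0}$ to one of these finitely many boundary points, giving a bound on the number of components. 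Your approach instead tries to derive a contradiction directly from an accumulating sequence of small components by carving out an annulus around the accumulation point $p^*$ that is disjoint from $\mathfrak S_\star$.

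The difficulty is the one you name and do not resolve: the ``central'' strand of $\mathfrak S_\star$ through (or near) $p^*$ can occupy \emph{all} radii, so no annulus centered at $p^*$ works, and the two fixes you sketch (recentering at a nearby $\tilde p$; replacing the annulus by a thin tube around a segment of $C^*$) are not carried out and, I believe, cannot be carried out in the generality required. There is also a structural problem with the case split: the isolation argument via Theorem~\ref{bordurer} (which you invoke to get the contradiction) actually shows that \emph{every} component of $\mathfrak S_{\star,r}(x_0)$ must meet $\S^1(x_0,r)$ --- if a component $C$ lay entirely in the open disk, a standard compactness/quasi-component separation would produce an open $\mathcal U\supset C$ with $\mathcal V_\delta\cap\mathfrak S_\star=\emptyset$, and Theorem~\ref{bordurer} would kill $\upnu_\star$ on $\overline{\mathcal U}$, contradicting $C\subset\mathfrak S_\star$. (This is precisely the content of Proposition~\ref{lesconti}.) Consequently the accumulation point $p^*$ must lie on $\S^1(x_0,r)$, so your ``generic'' interior case is vacuous and the boundary case, which you defer with ``handled analogously in a half-disk,'' is the only one --- and it is not handled. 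Even if the interior annulus construction could be made rigorous, it would only re-prove Proposition~\ref{lesconti}, not finiteness. A further minor gap: you index the components by $\mathbb N$, but without first ruling out uncountably many single-point components (which have zero diameter and therefore contribute nothing to your summability bound), countability is not automatic; the paper avoids this by only counting components through the finitely many boundary crossing points at the chosen radius $r_0$.
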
 

 The proof of Proposition \ref{localconnect} relies on several intermediate properties we present next. 

 \begin{proposition} 
 \label{lesconti}
 Let $r>0$ and $x_0\in \Omega$ be as above.
  The closed set 
  \begin{equation}
  \label{theunion}
  \mathfrak Q_{\star, r}(x_0)=\mathfrak S_{\star, r}(x_0)\cup \S^2(x_0, r)
  \end{equation}
   is  a continuum, that is,   it is compact and  connected. 
 \end{proposition}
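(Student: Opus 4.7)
Compactness is immediate: the set $\mathfrak S_{\star,r}(x_0)$ is a closed subset of the compact set $\overline{\D^2(x_0,r)}$, and the circle $\S^1(x_0,r)$ is compact, so their union $\mathfrak Q_{\star,r}(x_0)$ is compact. The core of the argument is connectedness, which I would prove by contradiction using Theorem~\ref{bordurer}.

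Suppose $\mathfrak Q_{\star,r}(x_0)$ splits as a disjoint union $A\sqcup B$ of two nonempty relatively closed (hence compact) subsets. Since $\S^1(x_0,r)$ is connected, it lies entirely in one of the pieces, say $A$; then $B$ is a nonempty compact subset of $\mathfrak S_\star \cap \D^2(x_0,r)$ (the open disk). Set $\delta=\mathrm{dist}(A,B)>0$; in particular $\mathrm{dist}(B,\S^1(x_0,r))\geq \delta$, so $B$ sits at positive distance from $\partial \D^2(x_0,r)$.

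Next I would introduce the open set
\[
\mathcal U=\{x\in\Omega:\mathrm{dist}(x,B)<\delta/4\},
\]
which is contained in $\D^2(x_0,r)\subset\Omega$ by the distance bound above. With $\mathcal U_{\delta/8}$ and $\mathcal V_{\delta/8}$ defined as in \eqref{Udelta}, the triangle inequality gives $\mathcal V_{\delta/8}\subset\{x:\delta/4\leq\mathrm{dist}(x,B)\leq 3\delta/8\}$. I would then verify that $\mathcal V_{\delta/8}\cap\mathfrak S_\star=\emptyset$ by distinguishing two cases: for $x\in\mathfrak S_\star\cap\overline{\D^2(x_0,r)}\subset A\cup B$ the distance to $B$ is either $0$ (if $x\in B$) or at least $\delta$ (if $x\in A$), both of which exclude $x$ from $\mathcal V_{\delta/8}$; for $x\in\mathfrak S_\star$ lying outside $\overline{\D^2(x_0,r)}$, since $B\subset\{|y-x_0|\leq r-\delta\}$, the distance $\mathrm{dist}(x,B)\geq\delta>3\delta/8$.

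Since the support of $\upnu_\star$ is contained in $\mathfrak S_\star$ (a direct consequence of Theorem~\ref{claire} and the definition \eqref{mathfrakSstar}), the condition $\mathcal V_{\delta/8}\cap\mathfrak S_\star=\emptyset$ yields $\upnu_\star(\mathcal V_{\delta/8})=0$. Applying Theorem~\ref{bordurer} with the open set $\mathcal U$ and the parameter $\delta/8$ then gives $\upnu_\star(\overline{\mathcal U})=0$. To reach the contradiction, I pick any point $y\in B\subset\mathfrak S_\star$: by definition of $\mathfrak S_\star$, the lower density satisfies $\theta_\star(y)\geq\upeta_0>0$, so $\upnu_\star(\overline{\D^2(y,r')})>0$ for all sufficiently small $r'>0$, and for $r'<\delta/4$ these closed disks lie inside $\overline{\mathcal U}$. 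This forces $\upnu_\star(\overline{\mathcal U})>0$, a contradiction. The main technical obstacle is setting up the geometric data ($\mathcal U$ and the separation parameter) so that Theorem~\ref{bordurer} applies cleanly, in particular ensuring that $\mathcal U$ avoids $\partial\Omega$ and that the fattened neighborhood still stays clear of $\mathfrak S_\star$; the argument above is tailored so the scale $\delta$ governs both aspects simultaneously.
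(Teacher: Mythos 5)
Your proof is correct, and it is a more direct version of the paper's argument. You both prove connectedness by contradiction, using Theorem~\ref{bordurer} to show that any ``floating'' piece of $\mathfrak Q_{\star,r}(x_0)$ (not touching the circle) would have to carry zero $\upnu_\star$-mass, contradicting the positive lower density at points of $\mathfrak S_\star$. The difference lies in how the positive separation is produced: the paper first replaces $\mathfrak S_{\star,r}$ by the finite union of disks $\mathfrak S_{\updelta,r}$ coming from a Besicovitch covering, establishes connectedness of the approximate set $\mathfrak Q_{\updelta,r}$ (where the relevant separation $\upbeta$ is automatically positive since there are only finitely many components), and then lets $\updelta\to 0$ and invokes the fact that Hausdorff limits of continua are continua (\cite{falconer}, Theorem~3.18). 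You instead observe that disconnectedness of a compact set already yields a decomposition $A\sqcup B$ into two \emph{closed} disjoint pieces with $\mathrm{dist}(A,B)>0$, which sidesteps the need for approximation and for any control on the number of connected components. What the paper's approach buys is that the same Besicovitch construction is reused for other purposes (e.g.\ counting bounds via \eqref{sharpa}); what your approach buys is a shorter, cleaner proof that avoids one external result from Falconer. Both are sound, and your verification that $\mathcal V_{\delta/8}$ misses $\mathfrak S_\star$ --- including the case of points of $\mathfrak S_\star$ outside $\overline{\D^2(x_0,r)}$, which the paper glosses over --- is done carefully.
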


 

 \noindent
 {\it Proof}.  The proof of compactness  of $ \mathfrak Q_{\star, r}(x_0)$  is a straightforward consequence of Proposition \ref{proptheo1}, since both sets composing the union \eqref{theunion} are compact. The  proof of  connectedness of $ \mathfrak Q_{\star, r}(x_0)$ is more involved, and strongly  relies  on Theorem \ref{bordurer}, as we will see next.  In order to invoke Theorem \ref{bordurer},  a first step is to   approximate $\mathfrak S_{\star, r}$ by  sets $\mathfrak S_{\updelta, r}$ with a simpler structure.
 
 \medskip
 \noindent
 {\it Definition of the approximating sets $\mathfrak S_{\updelta, r}$}. These sets are defined  using a \emph{Besicovitch covering} of $\mathfrak S_{\star, r}$. 
 Let   
 $$\updelta_{x_0, r}={\rm dist}(\D^2(x_0, r), \partial \Omega)>0.$$
  For given $0<\updelta<\updelta_{x_0, r}$, we consider the covering of $\mathfrak S_{\star, r}$ by the collection of open  disks 
 $\displaystyle{\{\D^2(x_0, \updelta)\}_{x \in \mathfrak S_{\star, r}}}$, which is obviously a covering of $\mathfrak S_{\star, r}$, and actually a Besicovitch covering.  We may therefore invoke  Besicovitch covering theorem, to  asserts that there exists 
 a  universal constant $\mathfrak p$, depending only on the dimension $N=2$,  and  $\mathfrak p$ families of  points 
 $\{x_{i_1}\}_{i_1\in A_1}$,  $\{x_{i_2}\}_{i_2\in A_1}, \ldots,  \{x_{i_{\mathfrak  p}}\}_{i_{\mathfrak p}\in A_{\mathfrak p}}$, 
  such that $x_i \in \mathfrak S_{\star, r}(x_0)$, for any $i\in  A\equiv A_1\cup A_2 \ldots\cup A_{\mathfrak p}$, 
  \begin{equation}
  \label{labbe}
  \mathfrak S_{\star, r} \subset \mathfrak V_{\updelta, r}\equiv  \underset {\ell=1} {\overset {\mathfrak p}\cup} \, 
  \left( { \underset{i_\ell  \in A_\ell} \cup} \D^2(x_{i_\ell}, \updelta)
  \right) = \underset{i \in A } \cup \D^2(x_{i}, \updelta), 
  \end{equation}
   and such that the balls in each collection $\{\D^2(x_i, \updelta)\}_{i \in A_\ell}$ are disjoint, that is, for any 
   $\ell=1, \ldots, \mathfrak p$, we have 
   \begin{equation}
   \label{zikovich}
   \D^2(x_i, \updelta) \cap \D^2(x_j, \updelta)=\emptyset {\rm \ for \ } i\not=j {\rm \ with \ }  i, j \in A_\ell.
   \end{equation}
   As a consequence of the above constructions, a point $x \in \mathfrak   V_{\updelta, r}$, where $\mathfrak   V_{\updelta, r}$ is defined in \eqref{labbe}, belongs to at most $\mathfrak p$ distinct disks of the collection 
   $\{\overline{\D^2(x_{i}, \updelta)}\}_{i\in A}$.
   We define the set $\mathfrak S_{\updelta, r}$ as the closure  of the set $\mathfrak V_{\updelta, r}$ that is 
 $$ 
 \mathfrak S_{\updelta, r}\equiv \overline{\mathfrak V_{\updelta, r}}= \underset {\ell=1} {\overset {\mathfrak p}\cup} \, 
  { \underset{i_\ell  \in A_\ell} \cup}\overline{\D^2(x_{i_\ell}, \updelta)},  
  $$
     Notice that, by construction,  the total number $\sharp (A)$ of distinct disks  is finite. Actually, we have the bound
   \begin{equation}
   \label{sharpa}
    \sharp (A) \leq  \frac{4\mathfrak p {r^2}}{\updelta^2}. 
   \end{equation} 
   Indeed, since the famille of balls  $\{\D^2(x_{i_\ell}, \updelta)\}_{i\in A_\ell}$ are disjoint disks of radius $\updelta$ which are included in a ball of radius $2r$, we have 
   $$
   \sharp (A_\ell) \leq  \frac{ {4r^2}}{\updelta^2}  {\rm \ for \ } \ell=1, \ldots, \mathfrak p,
   $$
    so that \eqref{sharpa} follows by summation.
    
   We next consider  the set 
   $$
   \mathfrak Q_{\updelta, r}= \mathfrak S_{\updelta, r} \, \cup \S^2(x_0, r)
   $$
 and    its  distinct connected components  $\{\mathfrak T^k_{\updelta, r}\}_{_{k \in \mathcal J_\updelta }} $.  In view of the structure of $\mathfrak T_{\updelta, r}$,  which is an union of $\sharp (A)$ disks with a circle,  the total  number of connected components $\sharp {\mathcal J_\updelta}$ is  finite and actually bounded by $\sharp(A)+1$, hence the number on the right hand side of inequality \eqref{sharpa} plus one. As a matter of  fact,  we claim 
  
\begin{equation}
\label{fauconnier}
{\it    The  \ set \   } \mathfrak Q_{\updelta, r}   {\it  \ is  \ simply  \ connected, \ so  \ that \  }  \sharp(\mathcal J_\updelta)=1.
\end{equation}
   
   \medskip
   \noindent
 {\it Proof of the claim  \eqref{fauconnier}. }
   We assume by contradiction that $\mathfrak Q_{\updelta, r}$ has at least two distinct connected components and denote  by 
   $\mathfrak Q^1_{\updelta, r}$  the connected component which contains the circle $\S^1(x_0, r)$. Let 
   $\mathfrak Q^2_{\updelta, r}$ be a connected component distinct  from $\mathfrak Q^1_{\updelta, r}$, and set
   $$
   \upbeta\equiv  \inf\left\{ {\rm dist} (\mathfrak Q^2_{\updelta, r}, \mathfrak Q^j_{\updelta, r}),
    j \in \mathcal J_\updelta, j\not =2\right\}>0.
   $$
     We consider the open set 
     $$ \mathcal U=\left\{ x\in \R^2, {\rm dist }\left(x, \mathfrak Q^2_{\updelta, r}\right)<\frac{\upbeta}{4}\right\}
      \subset  \D^2(x_0, r) \setminus { \underset{ j \in \mathcal J_\updelta \setminus \{2\}}  \cup}\mathfrak Q^j_{\updelta, r}, 
      $$
      so that using the notation \eqref{Udelta}, we have
     $$
     \mathcal U_{\frac{\upbeta}{4}}=\left\{ x\in \R^2, {\rm dist }\left(x, \mathcal  U\right)<\frac{\upbeta}{4}\right\}  \subset 
      \D^2(x_0, r) \setminus { \underset{ j \in \mathcal J_\updelta \setminus \{2\}}  \cup}\mathfrak Q^j_{\updelta, r}
     $$
    and 
    \begin{equation}
    \label{udelta2}
    \mathcal V_{\frac{\upbeta}{4}}\equiv     \mathcal U_{\frac{\upbeta}{4}} \setminus \mathcal U \subset 
    \left\{ x \in \R^2, \frac{\upbeta}{4}\leq   {\rm dist }\left(x, \mathfrak Q^2_{\updelta, r}\right)\leq \frac{\upbeta}{2}
    \right\}
    \end{equation}
     and hence, combining \eqref{udelta2}  with the definition of $\upbeta$, we obtain
 \begin{equation}
 \label{udelta3}
 \mathcal V_{\frac{\upbeta}{4}} \cap \mathfrak S_{\star}=\emptyset  {\rm \ and \ }
 \upnu_\star\left(\mathcal V_{\frac{\upbeta}{4}}\right)=0.
 \end{equation}
     We are therefore in position to apply Theorem \ref{bordurer}  to assert that
   $ \displaystyle{\upnu_\star (\mathcal U)=0.}$
     However, since  by definition $\mathfrak Q^2_{\updelta, r}  \subset  \mathcal U $, it follows that 
     $\mathcal U \cap \mathfrak S_\star \not = \emptyset,$ so that $\upnu_\star(\mathcal U)>0$. We have hence reached a contradiction, which establishes the proposition.
       
 \begin{proof}[Proof of Proposition \ref{lesconti} completed]   It follows from the definition of $\mathfrak S_{\updelta, r}$ that
  $${\rm dist} (  \mathfrak Q_{\updelta, r} ,\mathfrak Q_{\star, r} ) \leq \updelta, $$
 so that  $\mathfrak Q_{\updelta, r}$ converges as $\updelta \to 0$ to $\mathfrak Q_{\star, r}$ in the Hausdorff metric.  Since for every $\updelta$, the set $\mathfrak S_{\updelta, r}$ is a continuum,    it then follows (see e.g. \cite{falconer}, Theorem 3.18)   that the Hausdorff limit $\mathfrak Q_{\star, r}$ is also a continuum and the proof is complete. 
 \end{proof}     
      
 We deduce as a consequence of Proposition \ref{lesconti}:
 
 \begin{corollary}  
 \label{lesconti2}
 The set $\mathfrak Q_{\star, r}$ is arcwise connected. 
 \end{corollary}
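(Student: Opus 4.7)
The plan is to combine Proposition \ref{lesconti} with the classical fact from geometric measure theory that a continuum of finite one-dimensional Hausdorff measure is automatically arcwise connected. First, I would verify that $\mathfrak Q_{\star, r}$ has finite one-dimensional Hausdorff measure. By Proposition \ref{danes},  $\mathcal H^1(\mathfrak S_\star) \leq {\rm C_H}\, {\rm M}_0$, and the circle $\S^1(x_0, r)$ has length $2\pi r$, so
\begin{equation*}
\mathcal H^1(\mathfrak Q_{\star, r}) \leq \mathcal H^1(\mathfrak S_{\star, r}) + \mathcal H^1(\S^1(x_0, r)) \leq {\rm C_H}\, {\rm M}_0 + 2\pi r <+\infty.
\end{equation*}

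Next, by Proposition \ref{lesconti}, the set $\mathfrak Q_{\star, r}$ is a continuum, i.e. compact and connected. The classical theorem from the theory of continua (see e.g. Falconer \cite{falconer}, Lemma 3.12 or Theorem 3.14, also known as the result of Eilenberg--Harrold) asserts that \emph{any compact connected set in $\R^N$ having finite one-dimensional Hausdorff measure is pathwise connected, and indeed any two of its points can be joined by a rectifiable arc inside the set}. Applying this result to $\mathfrak Q_{\star, r}$ yields the conclusion immediately.

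The main obstacle in this step is essentially hidden in the invocation of the quoted theorem from \cite{falconer}; the argument there proceeds by constructing an arc joining two arbitrary points as a Hausdorff limit of suitable polygonal paths of uniformly bounded length inside the continuum, using the fact that such limits remain within the set by closedness, and the length bound together with compactness yields a parametrizable arc. Since the proof is standard and does not rely on the specificities of the PDE, I would content myself with a direct citation. One minor caveat: the set $\mathfrak Q_{\star, r}$ is defined as the union of a one-dimensional piece with the boundary circle, and the argument applies globally because we have already verified the finiteness of $\mathcal H^1(\mathfrak Q_{\star, r})$ and its connectedness as a whole.
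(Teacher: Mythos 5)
Your proof is correct and takes essentially the same approach as the paper: invoke Proposition \ref{lesconti} (compactness and connectedness), note finiteness of $\mathcal H^1(\mathfrak Q_{\star, r})$, and cite the classical result (Falconer, Lemma 3.12) that a continuum of finite $\mathcal H^1$ measure is arcwise connected. Your version is slightly more explicit than the paper's one-line proof in that it spells out the finiteness verification (adding the circle's length to the bound from Proposition \ref{danes}), but this is the same argument.
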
    
   
  \begin{proof}  Indeed, any continuum with finite one-dimensional Hausdorff dimension is arc wise connected, see e.g \cite{falconer}, Lemma 3.12, p 34. 
  \end{proof}

   \begin{remark} {\rm  In the present context arcwise connected is equivalent to path-wise connected.
  }
  \end{remark} 
  
    \subsubsection{Proof of Proposition \ref{connective}}
      Invoking Fubini's theorem together with a mean value argument, we may choose some radius 
    $r_0 \in [r, 2r)$ such that  the number of points in $\mathfrak S_\star \cap \partial \D^2(x_0, r_0)$ is finite, more precisely
    $$
 m_0\equiv    \sharp \left( \mathfrak S_\star \cap \partial \D^2(x_0, r_0)\right)  \leq  \frac{ {\rm C}_{\rm H}}{r} M_0, 
    $$
   where we have used estimate \eqref{herbert} of the $\mathcal H^1$ measure of $\mathfrak S_\star$.  We may hence write
   \begin{equation} 
   \label{vacherol}
   \mathfrak S_\star \cap \partial \D^2(x_0, r_0)=\{a_1, \ldots, a_{m_0}\}.
   \end{equation}
    Next, we claim  that for any point $y \in \mathfrak S_{\star, r_0}$, there exists a  continuous path $p: [0, 1] \mapsto   \mathfrak S_{\star, r_0}$ connecting  the point $y$  to one of the points $a_1, \ldots, a_{m_0}$, that is  such that 
   \begin{equation}
   \label{lebof}
   p(0)=y {\rm \ and \ }  p(1)\in \{a_1, \ldots, a_{m_0}\}.
   \end{equation}
   \noindent
  {\it Proof of the claim \eqref{lebof}}.
If $\vert y-x_0 \vert =r_0$, then $y \in \mathfrak S_\star \cap \partial \D^2(x_0, r_0)$, and it therefore suffices to choose $p(s)=y$, for all $s \in [0, 1]$.    Otherwise, since, in view of Corollary  \ref{lesconti2}  applied at $x_0$ with radius $r_0$, the set  $\mathfrak S_{\star, r_0}\cup  \partial \D^2(x_0, r_0) $ is path-connected, there exists a continuous path $ \tilde p: [0, 1]\to \mathfrak S_{\star, r_0}\cup  \partial \D^2(x_0, r_0)$ such that 
   $$\tilde p(0)=y  {\rm \ and \ }  \tilde  p(1) \in  \partial \D^2(x_0, r_0).$$
 By continuity, there exists some number $s_0 \in [0, 1]$  such that
 $$ \vert   \tilde p(s)  \vert <r_0, {\rm \ for \ } 0\leq s<s_0  {\rm \ and \ } \vert   \tilde p(s_0)  \vert=r_0.
 $$
 It follows that 
 $$\tilde p(s_0)  \in \mathfrak S_\star \cap \partial \D^2(x_0, r_0)=\{a_1, \ldots, a_{m_0}\}.$$
 We then set 
 $$ p(s)=\tilde p(s), {\rm \ for \ } 0\leq s<s_0,   {\rm \ and \ } p(s)=\tilde p(s_0),  {\rm \ for \ }  s_0 \leq s\leq 1,$$
 and verify that $p$ has the desired property, so that the proof of the claim is complete.

 \medskip
 \noindent
  {\it Proof of  Proposition \ref{connective} completed}.  It follows from the claim \eqref{lebof} that any point  $y \in \mathfrak S_{\star, r_0}$ is connected to one of the points $a_1, \ldots, a_{m_0}$ given in \eqref{vacherol}. Hence $\mathfrak S_{\star, r_0}$ has at most $m_0$ connected components and the proof is complete. 
\qed 

       
       \subsection{Rectifiability of $\mathfrak S_\star$}
       In this section,  we prove:
       
       \begin{theorem}
       \label{rectifiable}
        The set $\mathfrak S_\star$ is rectifiable. 
       \end{theorem}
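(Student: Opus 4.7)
The plan is to derive rectifiability from the local connectedness property established in Proposition \ref{connective}, combined with the finite $\mathcal H^1$ bound of Proposition \ref{danes} and a classical result from continuum theory on  planar continua of bounded one-dimensional Hausdorff measure.

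First, I would cover $\Omega$ by a countable collection of disks $\D^2(x_j, r_j)_{j \in \N}$ with $\D^2(x_j, 2r_j) \subset \Omega$, so that $\mathfrak S_\star = \cup_{j} \mathfrak S_\star \cap \overline{\D^2(x_j, r_j)}$. For each $j$, Proposition \ref{connective} provides a radius $\varrho_j \in (r_j, 2r_j)$ such that $\mathfrak S_{\star, \varrho_j}(x_j) = \mathfrak S_\star \cap \overline{\D^2(x_j, \varrho_j)}$ is a finite disjoint union of path-connected closed sets $K_{j,1}, \ldots, K_{j, m_j}$. Since $\mathfrak S_\star$ is closed in $\Omega$ by Proposition \ref{proptheo1}, each $K_{j,\ell}$ is compact, and by Proposition \ref{danes} it satisfies $\mathcal H^1(K_{j,\ell}) \leq \mathcal H^1(\mathfrak S_\star) \leq \rm C_H M_0 < +\infty$.

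Next, each $K_{j,\ell}$ is a planar continuum (compact, connected) of finite one-dimensional Hausdorff measure. I then invoke the classical structure theorem for such continua (see for instance \cite{falconer}, which asserts that any planar continuum $K$ with $\mathcal H^1(K) < +\infty$ is rectifiable, i.e., $\mathcal H^1$-almost all of $K$ is contained in a countable union of $C^1$ (in fact, Lipschitz) images of compact subsets of $\R$. Applying this to each component $K_{j,\ell}$ yields rectifiability of  $\mathfrak S_\star \cap \overline{\D^2(x_j, \varrho_j)}$ as a finite union of rectifiable sets. Taking the countable union over $j$, and using that rectifiability is stable under countable unions, we conclude that $\mathfrak S_\star$ itself is rectifiable.

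The heavy lifting has already been carried out in the body of the paper: the PDE-based clearing-out arguments yielded Theorem \ref{bordurer}, which in turn produced the crucial local connectedness property of Proposition \ref{connective}, and the covering argument of Proposition \ref{danes} supplied the finite $\mathcal H^1$ bound. The remaining step is a purely measure-theoretic and topological fact about continua in the plane; it does not rely on the PDE and presents no genuine obstacle. The only mild care required is to check that the finite decomposition provided by Proposition \ref{connective} produces honest continua (which it does, thanks to the closedness of $\mathfrak S_\star$), and that the countable union across the covering does not inflate the exceptional $\mathcal H^1$-null set — which is automatic since rectifiability is a $\sigma$-local property.
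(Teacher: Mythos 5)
Your proof is correct in substance and invokes the same key classical fact (the Wa\.zewski--Besicovitch theorem that any planar continuum of finite $\mathcal H^1$ measure is rectifiable, Falconer Theorem 3.12), but takes a more roundabout route than the paper. The paper argues directly from Proposition~\ref{lesconti}: for each disk $\D^2(x_0,r)\subset\Omega$, the set $\mathfrak Q_{\star,r}(x_0)=\mathfrak S_{\star,r}(x_0)\cup\S^1(x_0,r)$ is \emph{already} a continuum, it has finite $\mathcal H^1$ measure (by Proposition~\ref{danes} together with the finite length of the circle), hence it is rectifiable, hence so is the subset $\mathfrak S_{\star,r/2}$, and one concludes by the locality of rectifiability. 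You instead pass through Proposition~\ref{connective} and the finite decomposition into path-connected components $K_{j,\ell}$. This works, but it is less economical and introduces a small gap: you assert that each $K_{j,\ell}$ is compact because $\mathfrak S_\star$ is closed, but closedness of $\mathfrak S_{\star,\varrho_j}$ does not by itself imply that each of its \emph{path-connected} components is closed (think of the topologist's sine curve). In the present setting this can be repaired -- for instance by observing that the path-connected components coincide with the connected components here, of which there are finitely many, and that finitely many connected components of a compact set are each closed and hence compact -- but the extra verification is exactly what the paper's direct use of the continuum $\mathfrak Q_{\star,r}(x_0)$ avoids.
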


       \begin{proof}  The result is actually an immediate consequence of Proposition \ref{lesconti} and the fact that any 1-dimensional continuum is rectifiable, a result due to  Wazewski and independently Besicovitch (see e.g \cite{falconer}, Theorem 3.12). Indeed, given any $x_0 \in \Omega$,  $r>0$  such that $\D^2(x_0,r)\subset \Omega$, the set $\mathfrak S_{\star, r} \cup  \S^2(x_0, r)$ is a continuum, hence  rectifiable in view of the result quoted above, and hence so is the set $\mathfrak S_{\star, \frac{r}{2}}$. Since rectifiability is a local property,  the conclusion follows.
      \end{proof}
      
      \section{Proof of Theorem \ref{maintheo} completed}
      \label{proofmain}
      All statements in Theorem \ref{maintheo} have been obtained so far. Indeed, assertions i) follows combining several result in Section \ref{sectionsix}, namely Proposition \ref{proptheo1}, Proposition \ref{danes}, Proposition \ref{lesconti}, Proposition \ref{localconnect}  and Theorem  \ref{rectifiable}.

      \section{Additional properties of $\mathfrak S_\star$ and $\mu_\star$}
      \subsection{ On the tangent line at regular points  of $\mathfrak S_\star$}
      In this subsection, we provide the proof to Proposition \ref{tangentfort}. It relies on the following Lemma, which is actually a weaker statement:
      
      \begin{lemma}
      \label{ratus}
       Let $x_0$ be a regular point of $\mathfrak S_\star$. Given any $\uptheta>0$ there exists  a radius $R_{\rm cone}(\uptheta, x_0)$ such that 
 \begin{equation}
 \label{radius}
\mathfrak S_\star \cap \left(  \D^2\left(x_0, \uptau \right) \setminus \D^2\left(x_0, \frac{\uptau}{2}\right)\right)
 \subset   
 \left(  
  \mathcal C_{\rm one}\left(x_0, \vec e_{x_0}, \uptheta  \right)
   \right)
  {\rm  \  for \   any \  }  0<\uptau \leq  R_{\rm cone}(\uptheta, x_0).
 \end{equation}
      \end{lemma}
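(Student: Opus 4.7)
The plan is to argue by contradiction, exploiting the arcwise connectedness of $\mathfrak{Q}_{\star,r}$ established in Proposition \ref{lesconti} and Corollary \ref{lesconti2} to produce too much $\mathcal{H}^1$-mass of $\mathfrak{S}_\star$ outside a slightly narrower cone, which then contradicts the very definition of a regular point applied with aperture $\uptheta/2$. Assume the conclusion fails, so that there exist $\uptheta>0$, a sequence $\uptau_n \to 0$, and points
$$y_n \in \mathfrak{S}_\star \cap \bigl(\D^2(x_0,\uptau_n)\setminus \D^2(x_0,\uptau_n/2)\bigr), \qquad y_n \notin \mathcal{C}_{\rm one}(x_0,\vec{e}_{x_0},\uptheta).$$

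First I would carry out an elementary geometric step. Introducing polar coordinates centered at $x_0$ with axis $\vec{e}_{x_0}$, the condition $y_n \notin \mathcal{C}_{\rm one}(x_0,\vec{e}_{x_0},\uptheta)$ forces the angular coordinate of $y_n$ to be at distance at least $\arctan(\uptheta) - \arctan(\uptheta/2)$ from every boundary ray of $\mathcal{C}_{\rm one}(x_0,\vec{e}_{x_0},\uptheta/2)$. Combined with $|y_n - x_0| \geq \uptau_n/2$, this yields a positive constant $c(\uptheta) = \tfrac{1}{2}\sin(\arctan \uptheta - \arctan(\uptheta/2))$ such that
$$\mathrm{dist}\bigl(y_n,\, \mathcal{C}_{\rm one}(x_0,\vec{e}_{x_0},\uptheta/2)\bigr) \geq c(\uptheta)\,\uptau_n.$$
Setting $\rho_n = c(\uptheta)\uptau_n/2$, the closed disk $\overline{\D^2(y_n,\rho_n)}$ is disjoint from $\mathcal{C}_{\rm one}(x_0,\vec{e}_{x_0},\uptheta/2)$ and is contained in $\D^2(x_0, 2\uptau_n)$; for $n$ large enough we also have $\D^2(y_n, 2\rho_n) \subset \Omega$, as required by Proposition \ref{lesconti}.

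Next I would invoke Proposition \ref{lesconti} and Corollary \ref{lesconti2} at the point $y_n$ with radius $\rho_n$. Since $y_n \in \mathfrak{S}_\star \cap \overline{\D^2(y_n,\rho_n)}$, the set $\mathfrak{Q}_{\star,\rho_n}(y_n) = \bigl(\mathfrak{S}_\star \cap \overline{\D^2(y_n,\rho_n)}\bigr) \cup \S^1(y_n,\rho_n)$ is an arcwise connected continuum, so there is a continuous path $p_n : [0,1] \to \mathfrak{Q}_{\star,\rho_n}(y_n)$ with $p_n(0)=y_n$ and $p_n(1) \in \S^1(y_n,\rho_n)$. Restricting $p_n$ to the subinterval before its first hit with $\S^1(y_n,\rho_n)$ produces an arc lying in $\mathfrak{S}_\star \cap \D^2(y_n,\rho_n)$ and of diameter at least $\rho_n$; since any connected set has one-dimensional Hausdorff measure at least its diameter,
$$\mathcal{H}^1\bigl(\mathfrak{S}_\star \cap \overline{\D^2(y_n,\rho_n)}\bigr) \geq \rho_n.$$

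Combining the last inequality with the inclusion $\D^2(y_n,\rho_n) \subset \D^2(x_0,2\uptau_n)\setminus \mathcal{C}_{\rm one}(x_0,\vec{e}_{x_0},\uptheta/2)$ gives
$$\frac{\mathcal{H}^1\bigl(\mathfrak{S}_\star \cap \D^2(x_0,2\uptau_n) \setminus \mathcal{C}_{\rm one}(x_0,\vec{e}_{x_0},\uptheta/2)\bigr)}{2\uptau_n} \geq \frac{c(\uptheta)}{4},$$
along the sequence $2\uptau_n \to 0$, contradicting the regular-point property \eqref{tangent} of $x_0$ applied with aperture $\uptheta/2$. This contradiction establishes the existence of the desired radius $R_{\rm cone}(\uptheta,x_0)$. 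The main subtlety is the first step: one must verify that the separation between $y_n$ and the smaller cone is linear in $\uptau_n$ with a constant depending only on $\uptheta$ (not on $n$), so that the disk produced around $y_n$ is simultaneously large enough to contribute comparable $\mathcal{H}^1$-mass and small enough to remain in the complement of $\mathcal{C}_{\rm one}(x_0,\vec{e}_{x_0},\uptheta/2)$; once this scaling is set up, the connectedness machinery of Section \ref{sectionsix} finishes the argument without further work.
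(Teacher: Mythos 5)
Your argument is correct, and it reaches the same contradiction as the paper (with the regular-point property applied at aperture $\uptheta/2$ and the path-connectedness from Corollary \ref{lesconti2}), but it localizes the continuum argument differently. The paper applies Corollary \ref{lesconti2} to $\mathfrak Q_{\star,2\uptau}(x_0)$ \emph{centered at} $x_0$: it then has to split into cases according to whether the path from the offending point $x_1$ to $\partial\D^2(x_0,2\uptau)$ enters the smaller cone or not, and it controls the contribution outside the cone via a distance estimate from the annular region $A(x_0,\uptau,\uptheta)$ to $\mathcal C_{\rm one}(x_0,\vec e_{x_0},\uptheta/2)\cup\partial\D^2(x_0,2\uptau)$. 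You instead apply Corollary \ref{lesconti2} to $\mathfrak Q_{\star,\rho_n}(y_n)$ \emph{centered at the offending point} $y_n$, with $\rho_n$ chosen small enough that $\overline{\D^2(y_n,\rho_n)}$ is entirely disjoint from $\mathcal C_{\rm one}(x_0,\vec e_{x_0},\uptheta/2)$. This makes the geometry simpler: the arc from $y_n$ to $\S^1(y_n,\rho_n)$ automatically lies in the complement of the smaller cone, so there is no case analysis and no need for the paper's estimate \eqref{distus}; one only needs diameter $\geq \mathcal H^1$ for connected sets, plus the elementary fact that the disk $\D^2(y_n,\rho_n)$ fits inside $\D^2(x_0,2\uptau_n)$ and misses the cone. (Your angular separation bound $\arctan\uptheta-\arctan(\uptheta/2)$ is in fact slightly more careful than the corresponding expression in the paper, which uses $\arctan(\uptheta/2)$; both give the same small-$\uptheta$ asymptotics.) The price you pay is an extra reduction — you must check that $\D^2(y_n,2\rho_n)\subset\Omega$ so that Proposition \ref{lesconti} applies at $y_n$ — but since $y_n\to x_0\in\Omega$ and $\rho_n\to 0$ this is immediate for $n$ large. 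Overall your version is a clean and valid alternative to the paper's argument.
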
      
      
\begin{proof}  Since we have the inclusion 
 $$\mathcal C_{\rm one}\left(x_0, \vec e_{x_0}, \uptheta  \right)  \subset  \mathcal C_{\rm one}\left(x_0, \vec e_{x_0}, \uptheta'  \right)$$
for  $<0\leq \uptheta \leq \uptheta'$, it suffices to establish the statement for $\uptheta$ arbitrary small. 
For  a given regular point $x_0$ of $\mathfrak S_\star$, we may invoke the convergence \eqref{tangent}  to assert that  there exists some 
$r_1> 0$ such that for $0<\uptau \leq r_1$  we have 
 \begin{equation}
 \label{tangenti}
 \mathcal H^1 \left(\mathfrak S_\star \cap  \D^2\left(x_0, 2\uptau \right) 
   \setminus   \mathcal C_{\rm one}\left(x_0, \vec e_{x_0}, \frac{\uptheta}{2}  \right)\right)\leq   
 \frac{\theta \uptau}{8}.
 \end{equation}
 Set
 $$A(x_0, \uptau, \uptheta)=\left(  \mathfrak S_\star \cap   \D^2\left(x_0, \uptau \right)
 \right)
 \setminus  \left(   \mathcal C_{\rm one}\left(x_0, \vec e_{x_0}, \uptheta  \right)
 \cup 
 \D^2\left(x_0, \frac{\uptau}{2}\right) 
 \right).
 $$
We have
\begin{equation}
\label{quesaisje}
A(x_0, \uptau, \uptheta) \cap  \mathcal C_{\rm one}\left(x_0, \vec e_{x_0}, \frac{\uptheta}{2}  \right)=\emptyset {\rm \ and \ hence \ }
\mathcal H^1 \left( A(x_0, \uptau, \uptheta) \right)\leq   
 \frac{\theta \uptau}{8}, 
\end{equation}
In view of \eqref{tangenti}.
We notice that 
\begin{equation*}
\left\{
\begin{aligned}
{\rm dist } \left( A(x_0, \uptau, \uptheta),  \mathcal C_{\rm one}\left(x_0, \vec e_{x_0}, \frac{\uptheta}{2}  \right)
\right)  &\geq   \frac{\uptau} {2} \sin \left( \arctan \frac{\uptheta}{2}\right) \\
{\rm dist } \left( A(x_0, \uptau, \theta),\partial \D^2(x_0, 2\uptau) \right)
&\geq \uptau.
\end{aligned}
\right.
\end{equation*}
So that, if $\uptheta>0$ is sufficiently small
\begin{equation}
\label{distus}
{\rm dist } \left( A(x_0, \uptau, \uptheta), \mathcal C_{\rm one}\left(x_0, \vec e_{x_0}, \frac{\uptheta}{2}  \right) 
\cup \partial \D^2(x_0, 2\uptau) 
\right)  \geq  \frac{\uptau} {2} \sin \left( \arctan \frac{\uptheta}{2}\right).
\end{equation}
Next we assume by contradiction that the set $A(x_0, \uptau, \uptheta)$ is not empty, so that exists a point $x_1 \in A(x_0, \uptau, \uptheta)$.  Since the set $\mathfrak Q_{\star, 2\uptau}(x_0)\equiv\mathfrak S_\star \cap  \D^2\left(x_0, 2\uptau \right)$ is path-connected, there exists a continuous path $p$ joining $x_1$ to some  point $x_2\in \partial \D^2(x_0, 2\uptau)$  which stays inside $\mathfrak S_{\star, 2\uptau}(x_0)$. On the other hand, since $x_1\in \D^2(x_0, \uptau)$ the length $\mathcal H^1(p)$ of this path is larger than $\uptau$.  We claim that 
\begin{equation}
\label{grouinox}
p \, \cap  \mathcal C_{\rm one}\left(x_0, \vec e_{x_0}, \frac{\uptheta}{2}  \right) \not =\emptyset. 
\end{equation}
Otherwise, indeed, $p$ would be a path inside $\mathfrak S_\star \cap  \D^2\left(x_0, 2\uptau \right) 
   \setminus   \mathcal C_{\rm one}\left(x_0, \vec e_{x_0}, \frac{\uptheta}{2}  \right)$. Since its length is larger then $\uptau$, this would contradict \eqref{tangenti}.  Next, combining  \eqref{grouinox}  and \eqref{distus}, we obtain 
   $$\mathcal H^1\left (p\cap \mathcal C_{\rm one}\left(x_0, \vec e_{x_0}, \frac{\uptheta}{2}  \right)  \right) \geq  \frac{\uptau} {2} \sin \left( \arctan \frac{\uptheta}{2}\right)\underset{\uptheta \to 0} \sim\frac{ \uptau \uptheta} {4}. 
   $$
   Since $p$ is a path inside $\mathfrak S_{\star, 2\uptau}(x_0)$ this contradicts \eqref{tangenti}, provided $\uptheta$ is chosen sufficiently small. This completes the proof of the Lemma, choosing $R_{\rm cone}(\uptheta, x_0)=r_1$. 
   \end{proof}      
      
      \begin{proof} [Proof of Proposition \ref{tangentfort} completed] Given $\uptau <R_1$,  we apply Lemma \ref{ratus},  the sequence of radii 
      $(\uptau_k)_{k \in \N}$ given by 
      $$\uptau_k=\frac{\uptau}{2^k}  {\rm \ for \  } k \in \N,  $$
      so that 
      \begin{equation*}
 \mathfrak S_\star \cap 
\left(  \D^2\left(x_0, \uptau_k \right) \cap \D^2\left(x_0, \uptau_{k+1}\right) \right)
 \subset      \mathcal C_{\rm one}\left(x_0, \vec e_{x_0}, \uptheta  \right), 
 {\rm  \  for \   any \  }  k \in \N.
 \end{equation*}
We take the union of these sets  on theft hand side, we obtain
\begin{equation*}
\mathfrak S_\star \setminus \{x_0\} =\underset {k \in \N}  \cup\mathfrak S_\star \cap 
\left(  \D^2\left(x_0, \uptau_k \right) \cap \D^2\left(x_0, \uptau_{k+1}\right) \right)
\subset \mathcal C_{\rm one}\left(x_0, \vec e_{x_0}, \uptheta  \right).
\end{equation*} 
This yields the result. 
      \end{proof}
      
   \subsection{The   limiting Hopf differential $\omega_\star$}
    The Hopf differential  
   $$\omega_\eps\equiv \eps \left( \vert (u_\eps)_{x_1} \vert^2-\vert (u_\eps)_{x_2} \vert^2 -2i (u_\eps)_{x_1}\cdot (u_\eps)_{x_2}\right) $$
   defined in \eqref{hopfique} has turned out to be  a central tool in our analysis so far.  We show in this subsection how it may yield some additional properties.  Since  the bound \eqref{naturalbound} holds true throughout our discussion, the measure 
   $\omega_{\eps_n}{\rm d}x_1 {\rm d}x_1$   
 is uniformly bounded, so that   we may assume, passing possibly to  further subsequence,  that 
   \begin{equation}
   \label{gloubi}
   \omega_{\eps_n} \rightharpoonup  \omega_\star,  {\rm \ in \ the \ sense \ of \  measures \ on  \  } \Omega,  {\rm \ as \ } n \to + \infty. 
   \end{equation}
   and similarly
   \begin{equation}
   \label{boulga}
 \frac{  V(u_{\eps_n})}{\eps_n}\rightharpoonup  \upzeta_\star,  {\rm \ in \ the \ sense \ of \  measures \ on  \  }, \Omega  {\rm \ as \ } n \to + \infty.
   \end{equation}
We present in this subsection some additional properties of the Hopf differential, which might be of interest for further studies. .

   \subsubsection{The limiting differential relation for $\omega_\star$}    
   Passing to the limit in \eqref{canardwc2}, we are  led to:

\begin{lemma}  
\label{scratch}
Let $(u_{\eps_n})_{n \in \N}$ be a sequence of solutions to \eqref{elipes} on $\Omega$ with $\eps_n \to 0$ as $n \to +\infty$ and assume that \eqref{naturalbound} holds. Let $\omega_\star$ and $\upzeta_\star$ be the bounded measures on $\Omega$ given by \eqref{gloubi} and \eqref{boulga} respectively. Then, we have 
  \begin{equation}
 \label{canardwcstar}
 \int_{\Omega}\mathrm{Re}\left(  \omega_\star \frac{\partial X}{\partial
\bar{z} }\right)=4 \int_{\Omega}  {\upzeta_\star}\, \mathrm {Re} \left(\frac{\partial X}{\partial
{z} }\right),  {\rm \ for  \ any \ } X\in C_0^\infty (\Omega, \C).
\end{equation}
 \end{lemma}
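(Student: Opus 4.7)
The plan is a direct passage to the limit in identity \eqref{canardwc2} applied to the sequence $u_{\eps_n}$, namely
\begin{equation*}
\int_{\Omega}\mathrm{Re}\left(\omega_{\eps_n}\, \frac{\partial X}{\partial \bar{z}}\right) = \frac{4}{\eps_n}\int_{\Omega} V(u_{\eps_n})\, \mathrm{Re}\left(\frac{\partial X}{\partial z}\right),
\end{equation*}
for an arbitrary test vector field $X\in C_0^\infty(\Omega,\C)$. Since $X$ is smooth and compactly supported, both $\partial X/\partial\bar z$ and $\partial X/\partial z$ are continuous functions compactly supported in $\Omega$, and hence are legitimate test functions against bounded Radon measures. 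The weak convergences \eqref{gloubi} and \eqref{boulga} then yield, as $n\to+\infty$,
\begin{equation*}
\int_\Omega \mathrm{Re}\left(\omega_{\eps_n}\frac{\partial X}{\partial\bar z}\right)\longrightarrow \int_\Omega \mathrm{Re}\left(\omega_\star\frac{\partial X}{\partial\bar z}\right),
\qquad
\frac{4}{\eps_n}\int_\Omega V(u_{\eps_n})\, \mathrm{Re}\left(\frac{\partial X}{\partial z}\right)\longrightarrow 4\int_\Omega \upzeta_\star\, \mathrm{Re}\left(\frac{\partial X}{\partial z}\right),
\end{equation*}
which directly delivers identity \eqref{canardwcstar}.

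The only preliminary point to verify is that the weak limits $\omega_\star$ and $\upzeta_\star$ exist (along a common subsequence), which reduces to checking uniform bounds in total variation. The pointwise estimate
\begin{equation*}
\vert\omega_{\eps_n}\vert \leq 2\eps_n \vert\nabla u_{\eps_n}\vert^2 \leq 4\, e_{\eps_n}(u_{\eps_n}),
\end{equation*}
combined with the energy bound \eqref{naturalbound} yields $\Vert \omega_{\eps_n}\Vert_{L^1(\Omega)}\leq 4{\rm M}_0$, so that the real and imaginary parts of $\omega_{\eps_n}\,\mathrm d x$, decomposed into their positive and negative parts, form four families of bounded positive Radon measures on $\Omega$ of mass at most $4{\rm M}_0$. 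Analogously, $\eps_n^{-1}V(u_{\eps_n})\leq e_{\eps_n}(u_{\eps_n})$ yields a uniform mass bound of ${\rm M}_0$ for the potential measure. A standard diagonal extraction from these five tight families (together with \eqref{choux}) gives a single subsequence along which \eqref{gloubi}--\eqref{boulga} both hold.

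There is essentially no technical obstacle here: the statement is a routine test-function pairing, and the smoothness of $V$ and the equation play no further role, since the PDE information has already been encoded at the $\eps$-level in \eqref{canardwc2}. The only nuance worth flagging is that $\omega_\star$ is a complex-valued (signed) measure, so the integration against the complex-valued continuous test function $\partial X/\partial\bar z$ is to be interpreted componentwise via the real and imaginary parts of $\omega_\star$, and no cancellation issue arises because the weak convergence holds separately for each of the four non-negative components.
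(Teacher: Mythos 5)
Your proof is correct and takes essentially the same route as the paper, which simply declares the lemma follows by "passing to the limit in \eqref{canardwc2}": since $\partial X/\partial\bar z$ and $\partial X/\partial z$ are continuous and compactly supported in $\Omega$, the weak-$\ast$ convergences \eqref{gloubi} and \eqref{boulga} let you pass each side of \eqref{canardwc2} to the limit directly. Your preliminary paragraph on the mass bounds (via $\vert\omega_{\eps_n}\vert\leq 4\,e_{\eps_n}(u_{\eps_n})$ and $\eps_n^{-1}V(u_{\eps_n})\leq e_{\eps_n}(u_{\eps_n})$) just re-derives the compactness already invoked by the paper immediately before the lemma to define $\omega_\star$ and $\upzeta_\star$, so it is sound but not strictly needed given the lemma's hypotheses.
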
  
 
 \begin{remark}{\rm The definition of the Hopf differential clearly depends on the choice of coordinates. Let $(\vec {\bf  e'}_1, \beprime_2)$  be a new orthonormal basis such that 
 \begin{equation*}
 \label{}
 \left\{
 \begin{aligned}
 \beprime_1&=\cos \theta \, \be_1+\sin\theta \, \be_2\\
 \beprime_2&=-\sin \theta  \, \be_1+\cos \theta \, \be_2, 
 \end{aligned}
 \right.
 \end{equation*}
let $(x'_1, x'_2)=(\cos \theta \, x_1-\sin \theta \,  x_2, \sin \theta\, x_1+ \cos \theta\, x_2)$ denote the  coordinates related  to the new basis and $\omega'_\eps$ the corresponding Hopf differential. Then, we have, for any map $u: \Omega \to \R^2$
\begin{equation*}
\left\{
\begin{aligned}
 \vert u_{x'_1} \vert^2-\vert u_{x'_2} \vert^2&=\cos 2 \theta \left ( \vert u_{x_1} \vert^2-\vert u_{x_2} \vert^2 \right)  -2\sin 2 \theta \,  u_{x_1}\cdot u_{x_2} \\
2 u_{x'_1} \cdot u_{x'_2}&= \sin 2 \theta \left ( \vert u_{x_1} \vert^2-\vert u_{x_2} \vert^2 \right) +2 \cos  2 \theta u_{x_1}\cdot u_{x_2},
\end{aligned}
\right.
\end{equation*}
and therefore
\begin{equation}
\omega' (u)=(\cos 2 \theta -i \sin 2  \theta) \omega (u)=\exp (-2 i \theta) \omega(u).
\end{equation}
 It follows in particular from the above relations that, if the limits \eqref{gloubi} and \eqref{boulga} exist for a given orthonormal  basis, then they exist also for any other one. 
 }
 \end{remark}
 
  We describe next some  additional properties of the measures$\omega_\star$ et $\zeta_\star$, mostly bases on Lemma \eqref{scratch}, choosing various kinds of test vector fields $\vec X$.
 Whereas we have used so far vector fields yielding dilatations of  the domain, we consider  also vector fields of different nature. Given a point 
   $x_0=(x_{0, 1}, x_{0, 2})\in  \Omega$, $r>0$ such that $\D^2(x_0, 2R) \subset \Omega$,   the fields we  will consider in the next paragraphs are  are of the form
 \begin{equation}
 \label{sheraton}
 \vec X_{f}(x_1, x_2)=  f_1(x_1)f_2(x_2) \be_2=i f_1(x_1)f_2(x_2),
 \end{equation}
 where,  $f_i$ represents, for $i=1, 2$ an arbitrary function in $C_c^\infty\left (\left(x_{0, i}-r, x_{0, i}+r\right)\right)$.   Thse vector fields have hence support on  the square $Q_r(x_0)$,   defined by
  \begin{equation}
  \label{cubuc}
  Q_r(x_0)   = \mathcal I_r(x_{0, 1}) \times  \mathcal I_r(x_{0, 2}),{\rm \ where \   }  \mathcal I_r(s)=[s-r, s+r]=\B^1(s, r), {\rm \ for \ } s>0, 
 \end{equation}
A short computation shows that
   \begin{equation}
   \left\{
   \label{lagaffe}
   \begin{aligned}
    \frac{\partial X_f}{\partial z}&=\frac{1}{2}f(x_1)f'_2 (x_2) +\frac i2 f'(x_1)f_2 (x_2),\\
     \frac{\partial X_f}{\partial \bar{z}}&=-\frac{1}{2}f(x_1)f'_2(x_2) +\frac i2 f'(x_1)f_2(x_2), \\
   \end{aligned}
   \right.
   \end{equation}
   and hence
  \begin{equation}
  \label{lagaffe2}
  \left\{
  \begin{aligned}
    {\upzeta_\star}\, \mathrm {Re} \left(\frac{\partial X}{\partial
{z} }\right)  &=   \frac{1}{2}f(x_1)f'_2(x_2) {\upzeta_\star}\, {\rm \ and \ } \\
\mathrm{Re}\left(  \omega_{\star} \frac{\partial X}{\partial
\bar{z} }\right)&=-\frac{\mathrm{Re}(\omega_\star)}{2}f(x_1)f'_2 (x_2)-
\frac{{\rm Im} (\omega_\star)}{2}f'(x_1)f (x_2).
\end{aligned}
  \right.
   \end{equation}

  \subsubsection{Shear vector fields}
 We choose, in this subsection  as functions $f_1,  f_2$ in  \eqref{sheraton}   $f_1=f$, where $f$ is an arbitrary function in $C_\infty(\mathcal I_r(x_0)$ and,  for $f_2$, a function of the form 
$$\displaystyle{f_2(x_2)=\varphi( \frac{ x_2-x_{0, 2}}{r})}, $$ where $\varphi$ is a
  non-negative given  smooth plateau function such that 
 \begin{equation}
 \label{varphiness}
 \varphi(s)=1, {\rm \ for \ } s \in [-\frac  34, \frac 34],  {\rm \ and \ } \varphi (s)=0,  {\rm \ for \ } \vert s \vert \geq 1.
\end{equation}
Such a vector field  corresponds to  \emph{shear vector field}.   We consider the subset $\mathcal R_r(x_0)$ of $Q_r(x_0)$ given by 
  $$\mathcal R_r(x_0)\equiv \mathcal I_r(x_{0, 1}) \times \mathcal I_{\frac {3r}{4}}(x_{0, 2})\subset  Q_r(x_0),$$
  so that $Q_r(x_0) \setminus \mathcal R_r(x_0)$ is the union of to disjoint rectangles
  $$
   Q_r(x_0) \setminus \mathcal R_r(x_0)= \left(\mathcal I_r(x_{0, 1}) \times (x_{0, 2}+\frac{3r}{4}, x_0+r) \right) \cup 
    \left(\mathcal I_r(x_{0, 1}) \times( x_{0, 2}-r, 
     x_{0,2}-\frac{3r}{4}) \right).
  $$
Using shear vector fields,  as test vector fields in \eqref{canardwc},  we obtain:

\begin{proposition}
\label{scratchiness}
Assume that $\upnu_\star (\overline{Q_r(x_0) \setminus \mathcal R_r(x_0)}) =0$. Then, the functions $J$ defined on $\overset{\circ}  { \mathcal I_r} (x_0)$ by 
$$
J(s)=\int_{\{s\} \times \mathcal I_{\frac{3r}{4}}(x_{0, 2})} {\rm Im}(\omega_\star) \rd x_2,  { \rm \ for \   } s \in  (x_{0, 1}-r, x_{0, 1}+r), 
$$
is constant.
\end{proposition}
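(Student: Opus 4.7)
The plan is to apply Lemma \ref{scratch} with the shear test vector field $X_f = i f(x_1) f_2(x_2)$, where $f \in C_c^\infty(\mathring{\mathcal I_r}(x_{0,1}))$ is arbitrary and $f_2(x_2) = \varphi((x_2 - x_{0,2})/r)$ is the plateau cutoff fixed in \eqref{varphiness}. By direct substitution of \eqref{lagaffe2} into \eqref{canardwcstar} we obtain
\begin{equation*}
-\int_{\Omega}\tfrac{1}{2}\mathrm{Re}(\omega_\star)\,f(x_1)f_2'(x_2)
-\int_{\Omega}\tfrac{1}{2}\mathrm{Im}(\omega_\star)\,f'(x_1)f_2(x_2)
=2\int_{\Omega}\upzeta_\star\,f(x_1)f_2'(x_2).
\end{equation*}
All test functions have support inside $Q_r(x_0)$, so each integral is really taken over $Q_r(x_0)$.

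Next I would exploit the key cancellation. Observe that $f_2'$ is supported in the two horizontal strips making up $Q_r(x_0)\setminus \mathcal R_r(x_0)$, since $\varphi' \equiv 0$ on $[-3/4,3/4]$. I would then invoke the pointwise bounds $|\omega_\eps| \leq 2e_\eps(u_\eps)$ and $V(u_\eps)/\eps \leq e_\eps(u_\eps)$, which pass to the limit to give the absolute continuity $|\omega_\star| \leq 2\upnu_\star$ and $\upzeta_\star \leq \upnu_\star$ as bounded Borel measures on $\Omega$. The hypothesis $\upnu_\star(\overline{Q_r(x_0)\setminus \mathcal R_r(x_0)})=0$ therefore forces both $\omega_\star$ and $\upzeta_\star$ to vanish on $\overline{Q_r(x_0)\setminus \mathcal R_r(x_0)}$, hence on the support of $f_2'$. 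Consequently the first and third integrals above are both zero, and we are left with
\begin{equation*}
\int_{Q_r(x_0)} \mathrm{Im}(\omega_\star)\,f'(x_1) f_2(x_2) = 0.
\end{equation*}
Since $f_2 \equiv 1$ on $\mathcal R_r(x_0) = \mathcal I_r(x_{0,1})\times \mathcal I_{3r/4}(x_{0,2})$ while the complementary strip carries no mass of $\omega_\star$, this reduces to
\begin{equation*}
\int_{\mathcal R_r(x_0)} \mathrm{Im}(\omega_\star)(x_1,x_2)\,f'(x_1) = 0.
\end{equation*}

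Finally I would disintegrate this integral along the $x_1$-direction via Fubini: writing the projection of the restricted measure $\mathrm{Im}(\omega_\star)\!\restriction_{\mathcal R_r(x_0)}$ along horizontal slices produces exactly the function (or distribution) $J(s)$ of the statement, so the identity above becomes $\int_{\mathcal I_r(x_{0,1})} f'(s) J(s)\,ds = 0$ for every $f \in C_c^\infty(\mathring{\mathcal I_r}(x_{0,1}))$. This is precisely the distributional characterization $J' = 0$ on the open interval, and hence $J$ is constant there.

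The only real subtlety is the measure-theoretic justification of the slicing: $\omega_\star$ is a priori only a bounded complex Radon measure, so $J$ must initially be interpreted distributionally. The absolute continuity $|\omega_\star|\leq 2\upnu_\star$ together with the vanishing of $\upnu_\star$ on the strip $Q_r\setminus\mathcal R_r$ is what makes the cutoff argument clean and lets us write the sliced integral unambiguously; this is the main technical point, but it is a direct consequence of the passage to the limit in the pointwise bound $|\omega_\eps| \le 2e_\eps(u_\eps)$.
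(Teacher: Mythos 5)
Your proof is correct and follows the same route as the paper: plug the shear vector field into \eqref{canardwcstar} via \eqref{lagaffe2}, observe that the terms involving $f_2'$ are supported in $Q_r(x_0)\setminus \mathcal R_r(x_0)$, use the hypothesis $\upnu_\star(\overline{Q_r\setminus\mathcal R_r})=0$ to kill them, and slice the surviving integral by Fubini to deduce $J'=0$ distributionally. You spell out one step the paper leaves implicit, namely the domination $|\omega_\star|\leq 2\upnu_\star$ and $\upzeta_\star\leq\upnu_\star$ (obtained by passing the pointwise bounds $|\omega_\eps|\leq 2e_\eps(u_\eps)$, $V(u_\eps)/\eps\leq e_\eps(u_\eps)$ to the limit), which is precisely what makes the hypothesis on $\upnu_\star$ force the vanishing of $\omega_\star$ and $\upzeta_\star$ on the strip; this is a useful clarification, but not a different approach.
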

\begin{proof}
We first show  that, for   any function $f \in C_c^\infty(\mathcal I_r(x_{0, 1}))$, we have 
\begin{equation}
\label{scratchitude}
\int_{\mathcal R_r(x_0) } f'(x_1) {\rm Im } (\omega_\star)\rd x_1 \rd  x_2=0.
\end{equation}
Indeed, identity \eqref{scratchitude} follows  combining \eqref{canardwcstar} and  \eqref{lagaffe2} and the fact that $\upnu_\star (\overline{Q_r(x_0) \setminus \mathcal R_r(x_0)}) =0$ .   Applying Fubini's theorem, we notice that 
$$
\int_{\mathcal R_r(x_0) } f'(x_1) {\rm Im } (\omega_\star)\rd x_1 \rd  x_2=\int_{I_r(x_{0, 1})} f'(x_1)J(x_1) \rd x_1, 
$$
which, combined with \eqref{scratchitude},  implies that $J_1$ is constant. 
\end{proof}
\subsubsection{Stretching vector fields}
In this subsection, we assume that
   $f_1=f$, where $f$ is an arbitrary function in $C_\infty(\mathcal I_r(x_0))$ as above, and,  that $f_2$ is given by  
   $$\displaystyle{f_2(x_2)=x_2\varphi( \frac{ x_2-x_{0, 2}}{r})}, $$ 
   where $\varphi$ is a
  non-negative given  smooth plateau function such that \eqref{varphiness} holds. Combining as above  \eqref{canardwcstar} and  \eqref{lagaffe2} we obtain:

 \begin{lemma} Assume that  $\upnu_\star (\overline{Q_r(x_0) \setminus \mathcal R_r(x_0)}) =0$.  We have, for any function in $C_\infty(\mathcal I_r(x_0))$
$$
\int_{\mathcal R_r(x_0) }f(x_1)(\left(  \mathrm{Re} (\omega_\star)-\upzeta_\star\right)  +(f'(x_1) {\rm Im } (\omega_\star)\left[x_2\varphi( \frac{ x_2-x_{0, 2}}{r})\right] \rd x_1 \rd  x_2=0.
$$
\end{lemma}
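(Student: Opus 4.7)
The plan is to apply the limiting differential identity \eqref{canardwcstar} of Lemma \ref{scratch} to the explicit test vector field $\vec X_f(x_1,x_2) = i\, f(x_1)\, x_2\, \varphi\!\left(\tfrac{x_2-x_{0,2}}{r}\right)$, and then exploit the assumption $\upnu_\star(\overline{Q_r(x_0)\setminus \mathcal R_r(x_0)})=0$ to restrict the resulting integrals to the rectangle $\mathcal R_r(x_0)$, where the cut-off $\varphi$ and its derivative take simple values.

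More precisely, I would first record, directly from the general computation \eqref{lagaffe}--\eqref{lagaffe2} with $f_1=f$ and $f_2(x_2)=x_2\,\varphi((x_2-x_{0,2})/r)$, the two identities
\begin{equation*}
\mathrm{Re}\!\left(\frac{\partial X_f}{\partial z}\right)=\tfrac12 f(x_1)\, f_2'(x_2),\qquad
\mathrm{Re}\!\left(\omega_\star\frac{\partial X_f}{\partial\bar z}\right)=-\tfrac12 \mathrm{Re}(\omega_\star)\, f(x_1)\, f_2'(x_2)-\tfrac12\mathrm{Im}(\omega_\star)\, f'(x_1)\, f_2(x_2).
\end{equation*}
Substituting these into \eqref{canardwcstar} yields a global identity on $\Omega$ involving $f_2'$ and $f_2$.

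Next, I would split the domain of integration as $\Omega=\mathcal R_r(x_0)\cup(Q_r(x_0)\setminus\mathcal R_r(x_0))\cup(\Omega\setminus Q_r(x_0))$. On $\Omega\setminus Q_r(x_0)$ the vector field $\vec X_f$ vanishes, so nothing is contributed. On $Q_r(x_0)\setminus\mathcal R_r(x_0)$ the assumption $\upnu_\star(\overline{Q_r(x_0)\setminus\mathcal R_r(x_0)})=0$ combined with the pointwise bounds $|\omega_\varepsilon|\le 2\, e_\varepsilon(u_\varepsilon)$ and $\varepsilon^{-1}V(u_\varepsilon)\le e_\varepsilon(u_\varepsilon)$ (which pass to the limit to yield $|\omega_\star|,\upzeta_\star\le C\upnu_\star$ as measures) shows that both measures $\omega_\star$ and $\upzeta_\star$ vanish on $\overline{Q_r\setminus\mathcal R_r}$; hence the integrals over this set contribute nothing. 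Finally, on $\mathcal R_r(x_0)$ the choice of $\varphi$ with \eqref{varphiness} forces $\varphi((x_2-x_{0,2})/r)=1$ and $\varphi'((x_2-x_{0,2})/r)=0$, so that $f_2'(x_2)=1$ and $f_2(x_2)=x_2$ throughout $\mathcal R_r$.

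Putting everything together, \eqref{canardwcstar} collapses to the stated identity on $\mathcal R_r(x_0)$, up to the constants inherited from the factor $4$ in \eqref{canardwcstar} and the normalisations in \eqref{lagaffe2}. The main obstacle is not the computation itself, which is straightforward, but rather the justification that the ``boundary layer'' $Q_r\setminus\mathcal R_r$ contributes nothing in the limit: this uses precisely the hypothesis that $\upnu_\star$ charges no mass there, together with the domination of $|\omega_\star|$ and $\upzeta_\star$ by $\upnu_\star$, which must be carefully recorded before passing to the limit in the $\varepsilon$-level identity \eqref{canardwc2}.
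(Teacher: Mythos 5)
Your plan reproduces exactly the route the paper intends: the paper's entire justification for this lemma is the phrase ``Combining as above \eqref{canardwcstar} and \eqref{lagaffe2} we obtain'', and you correctly spell out the three steps it hides --- substitute the stretching field $\vec X_f=i\,f(x_1)\,x_2\,\varphi\bigl(\tfrac{x_2-x_{0,2}}{r}\bigr)$ into \eqref{canardwcstar}, kill the boundary layer $Q_r(x_0)\setminus\mathcal R_r(x_0)$ using the domination $|\omega_\star|,\upzeta_\star\le 2\upnu_\star$ (inherited from the pointwise bounds $|\omega_\eps|\le\eps|\nabla u_\eps|^2\le 2 e_\eps(u_\eps)$ and $\eps^{-1}V(u_\eps)\le e_\eps(u_\eps)$) together with $\upnu_\star(\overline{Q_r\setminus\mathcal R_r})=0$, and then simplify on $\mathcal R_r$ where $\varphi\equiv1$, $\varphi'\equiv0$, so that $f_2'\equiv1$ and $f_2(x_2)=x_2$.

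The one place you must not be casual is the phrase ``up to the constants''. When you actually do the bookkeeping, the result does not collapse to the stated identity --- not because of a harmless multiplicative constant, but because the sign in front of $\upzeta_\star$ comes out wrong. Start directly from $\int A_{ij}\partial_j X_i=0$ in \eqref{canardwc1} with $X_1=0$, $X_2=f(x_1)f_2(x_2)$; on $\mathcal R_r$ the integrand is
\[
A_{22}\,\partial_2 X_2 + A_{21}\,\partial_1 X_2
=\Bigl(\tfrac12\mathrm{Re}(\omega_\eps)+\eps^{-1}V(u_\eps)\Bigr)\,f(x_1)
+\tfrac12\mathrm{Im}(\omega_\eps)\,f'(x_1)\,x_2 ,
\]
since $A_{22}=\tfrac12\mathrm{Re}(\omega_\eps)+\eps^{-1}V(u_\eps)$ and $A_{21}=\tfrac12\mathrm{Im}(\omega_\eps)$. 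Passing to the limit along the subsequence and multiplying by $2$ yields
$\int_{\mathcal R_r}\bigl(\mathrm{Re}(\omega_\star)+2\upzeta_\star\bigr)f+\mathrm{Im}(\omega_\star)\,f'\,x_2\,\rd x_1\,\rd x_2=0$,
that is, a coefficient $+2$ rather than $-1$ on $\upzeta_\star$. (You would also notice along the way that re-deriving Pohozaev's identity \eqref{poho1} from the same source identity forces the factor $4$ in \eqref{canardwc2}--\eqref{canardwcstar} to in fact be $2$.) These inconsistencies live in the paper's normalizations, not in your strategy, but deferring the constants to the end is precisely how one misses them; a complete proof must carry them through, at which point it will either correct the lemma's statement or expose the sign discrepancy in \eqref{canardwcstar}.
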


\subsubsection {Dilation vector fields}
We use here   as test vector fields in \eqref{canardwc}, vector fields of  the form
$$
X_d(x_1, x_2)=\varphi( \frac{ x_2-x_{0, 2}}{r}) f(x_1)\be_1.
$$
  Computations similar to the proof of Proposition \ref{scratchiness} then yield:
  
  \begin{proposition}
\label{scratchinou}
Assume that $\upnu_\star (\overline{Q_r(x_0) \setminus \mathcal R_r(x_0)}) =0$. Then, the functions $L$  	   defined on $\overset{\circ}  { \mathcal I_r} (x_0)$ by 
$$
L(s)=\int_{\{s\} \times \mathcal I_{\frac{3r}{4}}(x_{0, 2})}\left(  \mathrm{Re} (\omega_\star)-\upzeta_\star\right) \rd x_2,  { \rm \ for \   } s \in  (x_{0, 1}-r, x_{0, 1}+r), 
$$
is constant.
\end{proposition}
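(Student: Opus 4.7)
The plan is to mirror the proof of Proposition \ref{scratchiness} exactly, now feeding into the limiting identity \eqref{canardwcstar} a \emph{real} dilation-type vector field rather than a shear one. First I would take
$$X_d(x_1,x_2) = f(x_1)\,\varphi\!\left(\tfrac{x_2-x_{0,2}}{r}\right)\vec{\bf e}_1,$$
with $f\in C_c^\infty(\overset{\circ}{\mathcal I_r}(x_{0,1}))$ and $\varphi$ the plateau function of \eqref{varphiness}. Viewing $X_d$ as a real-valued complex function and abbreviating $\varphi_r(x_2):=\varphi((x_2-x_{0,2})/r)$, a direct computation of $\partial_z X_d$ and $\partial_{\bar z}X_d$ yields
$$\mathrm{Re}\!\left(\tfrac{\partial X_d}{\partial z}\right) = \tfrac{1}{2}f'(x_1)\varphi_r(x_2),\qquad
\mathrm{Re}\!\left(\omega_\star\tfrac{\partial X_d}{\partial\bar z}\right) = \tfrac{1}{2}\mathrm{Re}(\omega_\star)\,f'(x_1)\varphi_r(x_2) - \tfrac{1}{2r}\mathrm{Im}(\omega_\star)\,f(x_1)\varphi_r'(x_2),$$
the precise counterpart of identities \eqref{lagaffe}--\eqref{lagaffe2} for the shear case.

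Next I would insert these expressions into Lemma \ref{scratch} and rearrange the resulting integral identity as
$$\int_{Q_r(x_0)}\bigl(\mathrm{Re}(\omega_\star)-c\,\upzeta_\star\bigr)\,f'(x_1)\,\varphi_r(x_2)\,dx_1dx_2 \;=\; \tfrac{1}{r}\int_{Q_r(x_0)} \mathrm{Im}(\omega_\star)\,f(x_1)\,\varphi_r'(x_2)\,dx_1dx_2,$$
for the appropriate numerical constant $c$ coming from \eqref{canardwcstar} (which is the same one for which the analog of Proposition \ref{scratchinou} is stated; I would fix $c$ by the direct computation above and adjust the statement if necessary). The crucial observation is that $\mathrm{supp}(\varphi_r')\subset Q_r(x_0)\setminus\mathcal R_r(x_0)$, while on $\mathcal R_r(x_0)$ one has $\varphi_r\equiv 1$.

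The key step is then to kill both the right-hand boundary term and the part of the left-hand integral outside $\mathcal R_r(x_0)$ using the hypothesis $\upnu_\star(\overline{Q_r(x_0)\setminus\mathcal R_r(x_0)})=0$. For this I would invoke the pointwise bounds $|\omega_\eps|\leq 4\,e_\eps(u_\eps)$ (immediate from \eqref{hopfique}) and $V(u_\eps)/\eps \leq e_\eps(u_\eps)$ (from Lemma \ref{ab0}), which pass to the limit and give $|\omega_\star|\leq 4\upnu_\star$ and $\upzeta_\star\leq \upnu_\star$ as bounded measures. Hence both $\omega_\star$ and $\upzeta_\star$ vanish on $\overline{Q_r(x_0)\setminus\mathcal R_r(x_0)}$, and the identity collapses to
$$\int_{\mathcal R_r(x_0)}\bigl(\mathrm{Re}(\omega_\star)-c\,\upzeta_\star\bigr)\,f'(x_1)\,dx_1dx_2 \;=\; 0.$$

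Finally, Fubini's theorem rewrites this as $\int_{\overset{\circ}{\mathcal I_r}(x_{0,1})} f'(s)\,\tilde L(s)\,ds = 0$, where $\tilde L(s):=\int_{\{s\}\times\mathcal I_{3r/4}(x_{0,2})}(\mathrm{Re}(\omega_\star)-c\,\upzeta_\star)\,dx_2$. Since $f\in C_c^\infty(\overset{\circ}{\mathcal I_r}(x_{0,1}))$ is arbitrary, $\tilde L$ is constant, which is the conclusion. I expect the only real obstacle to be accounting bookkeeping: one must confirm that the constant multiplying $\upzeta_\star$ is the one stated (the calculation above strongly suggests $c=4$, consistent with the shear identity in Proposition \ref{scratchiness} where the $\upzeta_\star$-term drops out because $f_2'\equiv 0$ on $\mathcal R_r$); the structural argument, however, is unaffected.
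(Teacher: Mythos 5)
Your approach is exactly the one the paper intends: the paper's ``proof'' of Proposition \ref{scratchinou} is only the one-line remark ``Computations similar to the proof of Proposition \ref{scratchiness} then yield,'' and you have carried out precisely the indicated computation with the real dilation-type field $X_d=f(x_1)\varphi_r(x_2)\vec{\bf e}_1$, localized the identity to $\mathcal R_r(x_0)$ using the domination $|\omega_\star|\leq 2\upnu_\star$ and $\upzeta_\star\leq\upnu_\star$ to kill everything on $\overline{Q_r(x_0)\setminus\mathcal R_r(x_0)}$, and concluded by Fubini plus the arbitrariness of $f'$. Your bookkeeping remark is also on target: feeding $X_d$ into \eqref{canardwcstar} as written produces
\begin{equation*}
\int_{\mathcal R_r(x_0)}\bigl(\mathrm{Re}(\omega_\star)-4\,\upzeta_\star\bigr)\,f'(x_1)\,\rd x_1\,\rd x_2=0,
\end{equation*}
so the stated integrand $\mathrm{Re}(\omega_\star)-\upzeta_\star$ in the proposition does not match; the coefficient in front of $\upzeta_\star$ must be that appearing in \eqref{canardwcstar}. (In fact, re-deriving \eqref{canardwc2} from $A_\eps=T_\eps+\frac{V}{\eps}\mathrm{I}_2$ and $T_{ij}\partial_j X_i=-\mathrm{Re}(\omega_\eps\partial_{\bar z}X)$ gives $\int\mathrm{Re}(\omega_\eps\partial_{\bar z}X)=\frac{1}{\eps}\int V\,\mathrm{div}\,\vec X=\frac{2}{\eps}\int V\,\mathrm{Re}(\partial_z X)$, so the true coefficient is $2$ rather than the $4$ printed in \eqref{canardwc2}; either way, the constant $1$ in the proposition's $L$ is a misprint.) Two micro-remarks: the sharp pointwise bound is $|\omega_\eps|\leq\eps|\nabla u_\eps|^2\leq 2e_\eps(u_\eps)$ (not $4e_\eps$), which is of course still sufficient; and your factor $\frac{1}{2r}$ in front of $\mathrm{Im}(\omega_\star)$ is a notational redundancy (the $\frac{1}{r}$ is already inside $\varphi_r'$ if $\varphi_r'$ denotes $\partial_{x_2}\varphi_r$), but since that term is annihilated by the hypothesis it affects nothing.
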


    

\end{document}